\newcommand{\E}{E}
\newcommand{\K}{K}
\newcommand{\dom}{\text{dom }}
\newtheorem{definition}{Definition}
\newtheorem{proposition}{Proposition}
\newtheorem{theoreme}{Theorem}
\newtheorem{corollaire}{Corollary}
\newtheorem{remark}{Remark}
\newtheorem{example}{Example}
\newtheorem{lemme}{Lemma}
\newcommand{\R}{\mathbb{R}}
\newcommand{\N}{\mathbb{N}}
\newcommand{\id}{\operatorname{Id}}
\newcommand{\prox}{\operatorname{prox}}
\newcommand{\proj}{\operatorname{Proj}}
\newcommand{\rprox}{\operatorname{Rprox}}
\newcommand{\ps}[2]{\langle #1, #2 \rangle}
\newcommand{\norm}[1]{\| #1 \|}
\newcommand{\be}{\begin{equation}}
\newcommand{\ee}{\end{equation}}
\newcommand{\beq}{\begin{equation}}
\newcommand{\eeq}{\end{equation}}
\newcommand{\bi}{\begin{itemize}}
\newcommand{\ei}{\end{itemize}}
\newcommand{\comment}[1]{}
\DeclareMathOperator{\argmin}{argmin}
\newcommand{\uargmin}[1]{\underset{#1}{\argmin}\;}
\newcommand{\pa}[1]{\left( #1 \right)}
\newcommand{\Uu}{\mathcal{U}}
\newcommand{\Vv}{\mathcal{V}}
\title{\vspace*{2cm}Optimization with First Order Algorithms\vspace{0.5cm}}
\date{}
\author[1]{Charles Dossal} 
\author[2]{Samuel Hurault} 
\author[3]{Nicolas Papadakis}
\affil[1]{IMT, Univ. Toulouse, INSA Toulouse, Toulouse, France}
\affil[2]{ENS Paris - CNRS}
\affil[3]{Univ. Bordeaux, CNRS, INRIA, Bordeaux INP, IMB, UMR 5251, F-33400 Talence, France}
\begin{document}

\maketitle

\vspace{.5cm}
\tableofcontents

\newpage

\section{Introduction}
These notes focus on the minimization of convex functionals using first-order optimization methods, which are fundamental in many areas of applied mathematics and engineering. The primary goal of this document is to introduce and analyze the most classical first-order optimization algorithms. We aim to provide readers with both a practical and theoretical understanding in how and why these algorithms converge to minimizers of convex functions.

The main algorithms covered in these notes include gradient descent, Forward-Backward splitting, Douglas-Rachford splitting, the Alternating Direction Method of Multipliers (ADMM), and Primal-Dual algorithms. All these algorithms fall into the class of first order methods, as they only involve gradients and subdifferentials, that are first order derivatives of the functions to optimize. For each method, we provide convergence theorems, with precise assumptions and conditions under which the convergence holds, accompanied by complete proofs.

Beyond convex optimization, the final part of this manuscript extends the analysis to nonconvex problems, where we discuss the convergence behavior of these same first-order methods under broader assumptions. To contextualize the theory, we also include a selection of practical examples illustrating how these algorithms are applied in different image processing problems. 

We mention the books \cite{BauschkeCombettes,bauschke2012firmly} for more complete results and proofs, as well as the lecture notes~\cite{weiss2015elements,rondepierre2017methodes}, the handbook~\cite{garrigos2023handbook} including stochastic aspects  and the review paper~\cite{condat2023proximal} that contains many details on primal-dual algorithms.

\paragraph{Organization} In section~\ref{sec:background}, we  give the necessary background on convex functions, non expansive operators and smooth (differentiable) functions, that will used all along the document.
Section~\ref{sec:smooth_opt}  focuses on the minimization of smooth  convex functions with gradient descent schemes. Section~\ref{sec:nonsmooth_opt} is dedicated to non smooth convex functions and proximal splitting algorithms.
We introduce duality tools and primal dual algorithms in section~\ref{sec:duality}.  Section~\ref{sec:nonconvex_opt} explores the extension of previous algorithms to the optimization of nonconvex functionals. 
Finally,  examples of the application of proximal splitting algorithms to imaging problems are presented in section~\ref{sec:examples}.

\section{Definitions and background}\label{sec:background}
We recall that the objective of this document is to present algorithm computing minimizers of functions $f$. 
We first introduce the main notations and definitions in section~\ref{sec:def} in order to give condition for the existence of minimizers of $f$.
As we will consider the optimization of smooth and non smooth functions, we finally give definitions related to differentiable functions with Lipschitz gradient in section~\ref{sec:diff}.

\subsection{Existence of minimizers for convex functions}\label{sec:def}

In the whole document, $\E$ is an Euclidean vectorial space of finite dimension, equipped with  an inner product $\langle\cdot,\cdot\rangle$ and 	a norm $||.||=\langle\cdot,\cdot\rangle^{\frac12}$. We can typically consider that $\E=\R^d$.

\begin{definition}[Domain]
Let $f$ be a function defined from $\E$ to $\bar{\R}=\R\cup{+\infty}$, 
we denote the domain of  $f$ as
$dom(f)=\{x\in \E\text{ such that }f(x)\neq+\infty\}$.
\end{definition}

\begin{definition}[Coercivity]
A function  $f$  is  coercive if 
$\underset{\norm{x}\to+\infty}{\lim}f(x)=+\infty$.
\end{definition}
\begin{definition}[Lower semi-continuity, l.s.c]
A function $f$ defined from $\E$ to $\R\cup +\infty$ 
is lower semi-continuous (l.s.c) if, $\forall x\in\E$, 
$\underset{y\to x}{\lim\inf}f(y)\geqslant f(x)$. 
\end{definition}
\begin{remark}For later use, we recall interesting properties related to lower semi-continuous functions:
\begin{itemize}
    \item 
Lower  semi-continuity is stable by sum.
  \item Continuous functions are lower semi-continuous. 
    \item The characteristic function $\iota_C(x)$ of a closed convex set $C$ is lower semi-continuous.
    \item If $f$ is l.s.c, for all $ \alpha\in\R$, the sets
$$\{x\in\E,\, f(x)\leqslant \alpha\}\text{ and } \{(x,\alpha)\in \E\times \R\text{ such that }f(x)\leqslant \alpha\}$$
are closed.
\end{itemize}
\end{remark}
\begin{definition}[Proper]
A function $f$ from $\E$ to $\bar{\R}=\R\cup \{\pm\infty\}$ 
is proper if $\exists x\in \E$ such that  $f(x)<+\infty$ and  $f(x)>-\infty$, $\forall x\in \E$.
\end{definition}

\begin{definition}[Convexity]
Let $f$ be a function defined from $\E$ to $\bar{\R}$, 
$f$ is convex iff for all pairs $(x,y)\in \E\times E$ and  $\forall \lambda\in[0,1]$, 
$f(\lambda x+(1-\lambda)y)\leqslant \lambda f(x)+(1-\lambda)f(y)$. 
If the inequality is strict, then $f$ is strictly convex.
\end{definition}

\begin{proposition}\label{PropExistence}
Let $f$ be a convex, proper, l.s.c and coercive function defined on $\E$ with values in $\bar{\R}$, then 
$f$ is bounded from below and admits at least one minimizer. If $f$ is strictly convex then the minimizer is unique.
\end{proposition}

\begin{proof}
     For all $r\in\R$, the set $H_r=\{x\in\E \text{ such that  }f(x)\leqslant r\}$ 
is closed since  $f$ is l.s.c and bounded since $f$ is coercive. Then for all $r\in\R$,
the set  $H_r$ is compact.
The set  $\displaystyle H=\bigcap_{r\in\R}H_r$ is an intersection of nested compact sets. As $f$ is proper, $H=\emptyset$  and there exists $r_0\in\R$ such that
$H_{r_0}$ is empty. As a consequence, $r_0$ is a lower bound of~$f$.

Next, as  $f$ is proper, there exists $x_0\in\E$ such that $f(x_0)<+\infty$ and $f(x_0)\neq-\infty$. The set 
$H=\{x\in\E \text{ such that }f(x)\leqslant f(x_0)\}$ is closed since  $f$ is l.s.c and bounded as $f$ is coercive.
$H$ is thus compact as the dimension of $\E$ is finite.
We have $\underset{x\in\E}{\inf}f(x)=\underset{x\in H}{\inf}f(x)>-\infty$ since  $f$ is lower bounded. Let $(x_n)_{n\in\N}$ be a minimizing sequence of elements of 
$H$ such that  $\underset{n\to\infty}{\lim}f(x_n)=\underset{x\in H}{\inf}f(x)$. This sequence admits a subsequence converging to an element $x^{\infty}$ of $H$. Since $f$ is l.s.c, we get  on a 
$$\underset{n\to \infty}{\lim\inf}f(x_n)\geqslant f(x^{\infty}).$$
As  $\underset{n\to\infty}{\lim}f(x_n)=\underset{x\in H}{\inf}f(x)$, we deduce that  
$f(x^{\infty})=\underset{x\in H}{\inf}f(x)=\underset{x\in \E}{\inf}f(x)$.
The uniqueness of the minimizer finally comes with the strict convexity. 
\end{proof}

Using the notion of convexity, we can also define {\it strong and weak convexity}. Strong convexity allows better local control of the function to minimize.
\begin{definition}[$\alpha$-strong  and weak convexity]\label{def:strong_weak_convex}
Let $f$ be a function defined from $\E$ to $\R\cup\{+\infty\}$ and $\alpha>0$. We say that the function $f$ is $\alpha-$strongly convex, or $\alpha-$convex (resp. $\alpha-$weakly convex) if the function $g$ defined by $g(x)=f(x)-\frac{\alpha}{2}\norm{x}^2$ (resp. $g(x)=f(x)+\frac{\alpha}{2}\norm{x}^2$) is convex.
\end{definition} 
By definition, if $f$ is convex and $y$ is an element of $E$, the function $x\mapsto f(x)+\frac{1}{2\gamma}\norm{x-y}^2$ is $\frac{1}{\gamma}$-strongly convex. Strong convexity involves strict convexity and ensures the uniqueness of the  minimizer.

\subsection{$L$-smoothness}\label{sec:diff}
In this document, we consider the minimization of smooth and non-smooth functions. We will refer to a smooth function as soon as it is differentiable.

\begin{definition}[Differentiability]
A function $f$ defined from $\E$ to $\R\cup{+\infty}$, is  differentiable at point $x\in \E$ if there exists a unique point $l_x\in \E$ such that $$\lim_{h\in\E, ||h||\rightarrow 0} \frac{f(x+h)-f(x)-\langle l_x,h\rangle}{||h||}=0.$$
The gradient of $f$ at point $x$ is then denoted as $\nabla f(x)=l_x$. 

A function is said differentiable as soon as it is differentiable $\forall x\in\E$.
A function $f$ is called {\bf non-smooth} when it is not a differentiable function.
\end{definition}

\begin{definition}[Lipschitz continuity]\label{def:lip}
A function $T$ defined from $\E$ to $\E$ is said to be $L-$Lipschitz (or $L-$Lipschitz continuous) if for all $(x,y)\in \E^2$, we have
\begin{equation*}
\norm{T(x)-T(y)}\leqslant L\norm{x-y}.
\end{equation*} 
\end{definition}

\begin{definition}[$L$-smoothness]
A smooth  function $f$ with $L$-Lipschitz gradient $\nabla f$ is called a $L$-smooth function. 
\end{definition}

A classical theoretical framework ensuring $L-$smoothness is the case of twice differentiable functions $f$. If we are able to bound the operator norm of the Hessian matrix of $f$ by $L$, then we deduce that the gradient of $f$ is $L-$Lipschitz (see Remark~\ref{rem:Lsmooth} in the next section).

\section{Optimization of smooth convex functions}\label{sec:smooth_opt}
We consider the  general problem:
\begin{equation}\label{eq1}
\min_{x\in E}f(x)
\end{equation} 
where $f$ is a real valued, convex, coercive, $L$-smooth function defined on a closed, convex set $\E$.  As $f$ is differentiable, it is lower semi-continuous and proper by definition. Following Proposition~\ref{PropExistence}, these hypotheses ensure that the optimization problem \eqref{eq1} admits at least one solution. Additionally, if $f$ is strictly convex, then the solution is unique.

The minimizers $x^*$ of a convex, differentiable function $f$ defined on $\E$ are simply characterized by the Euler equation:
\begin{equation}\label{eq1_opt}
\nabla f(x^*)=0.
\end{equation} 
A different way to formulate this is that all the critical points of a convex function are global minima.
To estimate such critical points, we will rely on the gradient descent algorithm:
\begin{equation}\label{algo:gd0}
x_{n+1}=x_n-\gamma \nabla f(x_n).
\end{equation} 
This algorithm   consists in applying recursively from a point $x_0\in \E$ the gradient descent operator $\id-\gamma\nabla f$, defined for a strictly positive real number $\gamma$ called {\it stepsize}.\\

The organization of this section is as follows. We first give general properties on the gradient of differentiable convex functions $f$ in section~\ref{sec:nabla}. Then we study in section~\ref{sec:gd_conv} the properties of the gradient descent operator $\id-\gamma\nabla f$ and exploit this operator to find fixed points of~\eqref{algo:gd0} satisfying the optimality condition~\eqref{eq1_opt} and thus minimizing problem~\eqref{eq1}. We provide an analysis of the convergence rate of the gradient descent scheme in section~\ref{sec:convergence_rate} and finally discuss several variants and accelerations of this algorithm in section~\ref{sec:other_gd}.

\subsection{Properties of gradient of smooth functions}\label{sec:nabla}
We here recall important properties related to differentiable and $L$-smooth functions.
\begin{proposition}\label{prop:affine}
Let $f$ be a convex, differentiable function defined from $\E$ to $\R$. Then for all $(x,y)\in\E^2$, we have
\begin{equation} \label{eq:low_bound_affine}
f(y)\geqslant f(x)+\ps{\nabla f(x)}{y-x}
\end{equation}
\end{proposition}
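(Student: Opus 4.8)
The plan is to prove the standard first-order characterization of convexity for differentiable functions, namely that a convex differentiable function lies above each of its tangent hyperplanes. The inequality \eqref{eq:low_bound_affine} is exactly this tangent-line lower bound, so the natural approach is to start from the defining inequality of convexity and take a limit that produces the directional derivative, which for a differentiable function equals $\ps{\nabla f(x)}{y-x}$.

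First I would fix $(x,y)\in\E^2$ and $\lambda\in(0,1]$, and apply the definition of convexity to the point $\lambda y+(1-\lambda)x = x+\lambda(y-x)$. This yields
\begin{equation*}
f\pa{x+\lambda(y-x)} \leqslant \lambda f(y) + (1-\lambda)f(x).
\end{equation*}
The next step is to rearrange this to isolate a difference quotient: subtracting $f(x)$ from both sides and dividing by $\lambda>0$ gives
\begin{equation*}
\frac{f\pa{x+\lambda(y-x)}-f(x)}{\lambda} \leqslant f(y)-f(x).
\end{equation*}
The left-hand side is precisely a difference quotient of $f$ at $x$ in the direction $h=\lambda(y-x)$, and I would want to recognize its limit as $\lambda\to 0^{+}$.

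The key analytic step is then to pass to the limit $\lambda\to 0^{+}$ on the left-hand side and identify the limit with $\ps{\nabla f(x)}{y-x}$. This follows directly from the definition of differentiability: writing $h=\lambda(y-x)$ so that $\norm{h}=\lambda\norm{y-x}\to 0$, the differentiability condition gives $f(x+h)-f(x)-\ps{\nabla f(x)}{h} = o(\norm{h})$, hence dividing by $\lambda$ and using the linearity of the inner product in its second argument shows $\tfrac{1}{\lambda}\pa{f(x+\lambda(y-x))-f(x)} \to \ps{\nabla f(x)}{y-x}$. Since the inequality above holds for every $\lambda\in(0,1]$ with a fixed right-hand side, taking the limit preserves the inequality (the limit of a quantity bounded above by a constant is bounded above by that constant), which yields $\ps{\nabla f(x)}{y-x}\leqslant f(y)-f(x)$, i.e.\ \eqref{eq:low_bound_affine}.

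The only delicate point, and the step I expect to require the most care, is the limit computation: one must justify cleanly that the difference quotient converges to $\ps{\nabla f(x)}{y-x}$ rather than merely to the one-sided directional derivative, and handle the degenerate case $y=x$ separately (where the inequality is trivially an equality). The convexity inequality itself is routine, so the substance of the argument lies entirely in correctly invoking the definition of the gradient to evaluate the one-sided limit.
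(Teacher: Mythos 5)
Your proposal is correct and follows essentially the same route as the paper's proof: apply the convexity inequality at $x+\lambda(y-x)$, form the difference quotient, and let $\lambda\to 0^{+}$ using differentiability to identify the limit with $\ps{\nabla f(x)}{y-x}$. The extra care you take in justifying the limit and noting the trivial case $y=x$ is fine but does not change the argument.
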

\begin{proof}
By convexity of $f$, we have for all $\lambda\in]0,1[$,
\begin{equation*}
f(x+\lambda(y-x))\leqslant (1-\lambda)f(x)+\lambda f(y).
\end{equation*}
which can be rewritten
\begin{equation*}
\frac{f(x+\lambda(y-x))-f(x)}{\lambda}\leqslant f(y)-f(x).
\end{equation*}
By letting $\lambda$ go to zero we obtain
\begin{equation*}
\ps{\nabla f(x)}{y-x}\leqslant f(y)-f(x)
\end{equation*}
\end{proof}

By applying this proposition to two different points $x$ and $y$, we deduce that the gradient of $f$ is monotone.
\begin{definition}[Monotonicity]

Let $T$ be a function defined from  $\E$ to  $\E$. We say that $T$ is {\it monotone} if for all $(x,y)\in\E^2$,
\begin{equation*}
\ps{T(x)-T(y)}{x-y}\geqslant 0.
\end{equation*}
\end{definition} 

\begin{corollaire}\label{cor:grad_mono}

Let $f$ be a convex, differentiable function defined from $\E$ to $\R$. Then the gradient of $f$ is monotone, for all $(x,y)\in\E^2$:
\begin{equation*}
\ps{\nabla f(x)-\nabla f(y)}{x-y}\geqslant 0.
\end{equation*}
\end{corollaire}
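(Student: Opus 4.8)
The plan is to apply Proposition~\ref{prop:affine} twice, exchanging the roles of $x$ and $y$, and then add the two resulting inequalities. This is the approach suggested by the text preceding the corollary (``By applying this proposition to two different points $x$ and $y$''), and it turns the lower-bound inequality into the desired monotonicity statement by a simple algebraic combination.

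Concretely, I would first fix an arbitrary pair $(x,y)\in\E^2$. Applying \eqref{eq:low_bound_affine} as stated gives
\begin{equation*}
f(y)\geqslant f(x)+\ps{\nabla f(x)}{y-x}.
\end{equation*}
Then I would apply the same proposition with the roles of $x$ and $y$ interchanged, which yields
\begin{equation*}
f(x)\geqslant f(y)+\ps{\nabla f(y)}{x-y}.
\end{equation*}

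Next I would add these two inequalities. The terms $f(x)$ and $f(y)$ cancel on both sides, leaving
\begin{equation*}
0\geqslant \ps{\nabla f(x)}{y-x}+\ps{\nabla f(y)}{x-y}.
\end{equation*}
Using bilinearity of the inner product, $\ps{\nabla f(y)}{x-y}=-\ps{\nabla f(y)}{y-x}$, so the right-hand side becomes $\ps{\nabla f(x)-\nabla f(y)}{y-x}$; rearranging and flipping the sign gives exactly $\ps{\nabla f(x)-\nabla f(y)}{x-y}\geqslant 0$, which is the claim.

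I do not anticipate any real obstacle here, as the argument is a direct two-line combination. The only point requiring a little care is the sign bookkeeping when factoring out the common inner-product argument $y-x$ (equivalently $x-y$) from the two cross terms; getting the direction of the final inequality right depends on tracking these signs correctly. Since $(x,y)$ was arbitrary, the monotonicity inequality holds for all pairs in $\E^2$.
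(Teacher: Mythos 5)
Your proposal is correct and follows exactly the paper's argument: the paper's proof likewise sums the inequality of Proposition~\ref{prop:affine} taken at $(x,y)$ and at $(y,x)$. The sign bookkeeping you carry out is accurate, so nothing more is needed.
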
 
\begin{proof}
The result is given by summing the inequality in  Proposition~\ref{prop:affine} taken in $(x,y)$ and $(y,x)$.
\end{proof}
\begin{remark} \label{rem:bauschke}
Note that Corollary~\ref{cor:grad_mono} is in fact an equivalence \cite[Proposition 17.10]{BauschkeCombettes}. As the monoticity result is directly obtained by the inequality in  Proposition~\ref{prop:affine}, we get that, for $f : E \to \R$ differentiable, the following points are equivalent:
\begin{itemize}
    \item[(i)] $f$ is convex.
    \item[(ii)]$\forall (x,y)\in\E^2$, we have $f(y)\geqslant f(x)+\ps{\nabla f(x)}{y-x}$
    \item[(iii)] $\nabla f$ is monotone.
\end{itemize}
Moreover, if $f$ is twice-differentiable, this is also equivalent to \cite[Proposition 17.10]{BauschkeCombettes}
\begin{itemize}
\item[(iv)]  for all $(x,y)\in\E^2$, we have $\langle \nabla^2 f(x) y, y \rangle \geq 0$
\end{itemize}
\end{remark}
We just saw that convex and differentiable functions are bounded from below by their affine approximations. In addition, if the gradient of $f$ is $L$-Lipschitz, we can obtain an upper bound of $f$ regardless of its convexity.
\begin{lemme}[Descent Lemma]\label{LemmeMajQuad}
Let $f$ be a $L$-smooth function (i.e. differentiable with $L$-Lipschitz gradient) defined from $\E$ to $\E$. Then for all $(y,z)\in\E^2$, we have
\begin{equation}\label{eqGradLip}
f(z)\leqslant f(y)+\ps{\nabla f(y)}{z-y}+\frac{L}{2}\norm{z-y}^2.
\end{equation} 
\end{lemme}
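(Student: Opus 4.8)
The plan is to prove the Descent Lemma by reducing the difference $f(z) - f(y)$ to an integral of the gradient along the segment from $y$ to $z$, and then controlling the deviation of $\nabla f$ along that segment using the $L$-Lipschitz hypothesis. The key observation is that although $f$ is only assumed differentiable (not twice differentiable), we can still write $f(z) - f(y)$ as an exact integral via the fundamental theorem of calculus applied to a one-dimensional auxiliary function.

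First I would introduce the function $\varphi:[0,1]\to\R$ defined by $\varphi(t) = f(y + t(z-y))$, which is differentiable with derivative $\varphi'(t) = \ps{\nabla f(y+t(z-y))}{z-y}$ by the chain rule. Then by the fundamental theorem of calculus,
\begin{equation*}
f(z) - f(y) = \varphi(1) - \varphi(0) = \int_0^1 \ps{\nabla f(y+t(z-y))}{z-y}\,dt.
\end{equation*}
Next I would subtract the target linear term $\ps{\nabla f(y)}{z-y}$, which itself equals $\int_0^1 \ps{\nabla f(y)}{z-y}\,dt$, to obtain
\begin{equation*}
f(z) - f(y) - \ps{\nabla f(y)}{z-y} = \int_0^1 \ps{\nabla f(y+t(z-y)) - \nabla f(y)}{z-y}\,dt.
\end{equation*}

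The final step bounds the integrand. By the Cauchy--Schwarz inequality the integrand is at most $\norm{\nabla f(y+t(z-y)) - \nabla f(y)}\,\norm{z-y}$, and the $L$-Lipschitz property of $\nabla f$ (Definition~\ref{def:lip}) gives $\norm{\nabla f(y+t(z-y)) - \nabla f(y)} \leqslant L\,\norm{t(z-y)} = Lt\,\norm{z-y}$. Substituting and integrating $\int_0^1 Lt\,dt = \frac{L}{2}$ yields the claimed bound
\begin{equation*}
f(z) - f(y) - \ps{\nabla f(y)}{z-y} \leqslant \frac{L}{2}\norm{z-y}^2.
\end{equation*}

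I expect the main subtlety to be justifying the use of the integral representation under the bare differentiability assumption: strictly speaking one should note that $t\mapsto\nabla f(y+t(z-y))$ is continuous (which follows from $\nabla f$ being Lipschitz, hence continuous) so that the integrals are well-defined and the fundamental theorem of calculus applies to $\varphi$. An entirely equivalent route, avoiding integration, would apply Cauchy--Schwarz and the Lipschitz bound after a mean-value argument, but the integral form is cleanest since convexity is explicitly \emph{not} needed here and the Lipschitz estimate on the increment of $\nabla f$ plugs in directly.
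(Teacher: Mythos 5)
Your proof is correct and follows essentially the same route as the paper: both reduce to the one-dimensional function $t\mapsto f(y+t(z-y))$, express the increment via the fundamental theorem of calculus, and bound $\int_0^1\ps{\nabla f(y+t(z-y))-\nabla f(y)}{z-y}\,dt$ by $\frac{L}{2}\norm{z-y}^2$ using Cauchy--Schwarz and the Lipschitz hypothesis. The paper merely packages this as an abstract statement about a scalar function $g$ with $K$-Lipschitz derivative ($K=L\norm{z-y}^2$), which is the same estimate you carry out inline.
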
 
\begin{proof}
Let $g$ be a differentiable function on $\R$ such that $g'$ is $K-$Lipschitz (see Definition~\ref{def:lip}). Then we have
\begin{equation*}
g(1)= g(0)+\int_0^1g'(t)dt=g(0)+g'(0)+\int_0^1(g'(t)-g'(0))dt\leqslant g(0)+g'(0)+\frac{K}{2}.
\end{equation*}
Let $(y,z)\in\E^2$. For all  $t\in[0,1]$, we set $v=z-y$, $y_t=y+t(z-y)$ and $g(t)=f(y_t)$. The function $g$ is differentiable and $g'(t)=\ps{\nabla f(y_t)}{v}$. According to the hypotheses on $f$, $g'$ is $K$-Lipschitz with $K=L\norm{v}^2$.\\ From this we deduce that for all $(y,z)\in\E^2$,
\begin{equation*}
f(z)\leqslant f(y)+\ps{\nabla f(y)}{z-y}+\frac{L}{2}\norm{z-y}^2.
\end{equation*} 
\end{proof}
\begin{remark} \label{rem:Lsmooth}
Using Remark \ref{rem:bauschke}, we have that the descent lemma equation~\eqref{eqGradLip} is equivalent to $x \to \frac{L}{2}\norm{x}^2 - f$ being convex. And if $f$ is twice differentiable, this is also equivalent to: for all $(x,y)\in\E^2$, $\langle \nabla^2f(x) y, y \rangle \leq L $.
\end{remark}
Next we recall a useful inequality for convex and $L$-smooth functions.
\begin{lemme}[Co-coercivity]\label{coco}
Let $f$ be a convex $L$-smooth function, then 
\be\label{eq:coco}
\forall (x,y)\in\E^2,\quad \ps{\nabla f(x)-\nabla f(y)}{x-y}\geqslant \frac{1}{L}\norm{\nabla f(x)-\nabla f(y)}^2.
\ee
This property is called co-coercivity of the function $\nabla f$.
\end{lemme}
\begin{proof}
Using inequality~\eqref{eqGradLip} with $z=x-\dfrac{1}{L}\nabla f(x)$ and $y=x$, we obtain
$$\frac{1}{2L}\norm{\nabla f(x)}^2\leqslant f(x)-f\left(x-\frac{1}{L}\nabla f(x)\right)\leqslant f(x)-f(x^*).$$
where $x^*$ is a minimizer of $f$. We also have, for all $x\in\E$,
\be\label{eqnablaf}
\frac{1}{2L}\norm{\nabla f(x)}^2\leqslant f(x)-f(x^*).
\ee
Let $(x,y)\in\E^2$ and introduce the functions $h_1(z)=f(z)-\ps{\nabla f(x)}{z}$ and $h_2(z)=f(z)-\ps{\nabla f(y)}{z}$. These two functions are convex and  respectively admit the minimizers $x$ and $y$. Applying inequality~\eqref{eqnablaf} to these two functions and obtain
$$h_1(x)\leqslant h_1(y)-\frac{1}{2L}\norm{\nabla h_1(y)}^2\text{ and }h_2(y)\leqslant h_2(x)-\frac{1}{2L}\norm{\nabla h_2(y)}^2.$$
By adding these two inequalities we obtain the result of the lemma.
\end{proof}
Let us finally mention that we can obtain a lower bound for $f$ when the function $f$ is also $\alpha$-strongly (or weakly) convex, 

\begin{lemme}\label{LemmeMinQuadSC}
Let $\alpha \in \R$, let $f$ be a smooth and $\alpha$-convex function defined from $\E$ to $\E$. Then for all $(y,z)\in\E^2$, we have
\begin{equation}\label{eqIneqSC}
f(y)\geq f(x)+\langle\nabla f(x),y-x\rangle+\frac{\alpha}{2}\norm{y-x}^2
\end{equation} 
so that 
\begin{equation}\label{eq:strong}\langle\nabla f(x)-\nabla f(y),y-x\rangle\geq \alpha\norm{y-x}^2.\end{equation}
\end{lemme}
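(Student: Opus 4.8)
The plan is to reduce the statement to the already-proven convex case by subtracting off the quadratic that defines $\alpha$-convexity. Concretely, I would introduce $g(x)=f(x)-\frac{\alpha}{2}\norm{x}^2$. By Definition~\ref{def:strong_weak_convex}, $g$ is convex when $\alpha>0$ ($\alpha$-strong convexity); the same construction with $\alpha<0$ captures the $\alpha$-weakly convex case, so a single argument handles all $\alpha\in\R$. Since $f$ is smooth, $g$ is differentiable with $\nabla g(x)=\nabla f(x)-\alpha x$.

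Next I would apply the affine lower bound of Proposition~\ref{prop:affine} to the convex differentiable function $g$ at the pair $(x,y)$, which gives $g(y)\geq g(x)+\ps{\nabla g(x)}{y-x}$. Substituting $g=f-\frac{\alpha}{2}\norm{\cdot}^2$ and $\nabla g(x)=\nabla f(x)-\alpha x$, this becomes $f(y)-\frac{\alpha}{2}\norm{y}^2\geq f(x)-\frac{\alpha}{2}\norm{x}^2+\ps{\nabla f(x)}{y-x}-\alpha\ps{x}{y-x}$. Collecting the purely quadratic terms and using the identity $\frac{\alpha}{2}\norm{y}^2-\frac{\alpha}{2}\norm{x}^2-\alpha\ps{x}{y-x}=\frac{\alpha}{2}\norm{y-x}^2$ yields exactly \eqref{eqIneqSC}.

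Finally, to obtain \eqref{eq:strong}, I would write \eqref{eqIneqSC} once as stated and once with the roles of $x$ and $y$ interchanged, then add the two inequalities. The function values $f(x)$ and $f(y)$ cancel, the two quadratic contributions combine into $\alpha\norm{y-x}^2$, and the gradient terms collapse to $\ps{\nabla f(x)-\nabla f(y)}{\,\cdot\,}$; after rearrangement this is the claimed strong-monotonicity bound. This is precisely the analogue of how Corollary~\ref{cor:grad_mono} is deduced from Proposition~\ref{prop:affine}, now carrying the extra $\alpha\norm{y-x}^2$ term.

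I do not expect a serious obstacle: the only points requiring care are the bookkeeping of the quadratic terms in the polarization step and the sign conventions when adding the two copies of \eqref{eqIneqSC}. The conceptual content is entirely inherited from the convex case through the substitution $g=f-\frac{\alpha}{2}\norm{\cdot}^2$.
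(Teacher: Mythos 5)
Your proposal is correct and follows essentially the same route as the paper: subtract $\frac{\alpha}{2}\norm{\cdot}^2$ to reduce to the convex case, apply Proposition~\ref{prop:affine} to the resulting convex function, and then symmetrize in $x$ and $y$ to obtain the monotonicity estimate. The only caveat is the orientation of the inner product in \eqref{eq:strong}: your summation actually yields $\ps{\nabla f(x)-\nabla f(y)}{x-y}\geq\alpha\norm{y-x}^2$, so the sign as printed in the statement appears to be a typo that neither your argument nor the paper's resolves differently.
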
 

\begin{proof}
The function $f(x)-\frac{\alpha}{2}\norm{x}^2$ is smooth and convex, from the Definition~\ref{def:strong_weak_convex} of $\alpha$-strong convexity. Using Proposition~\ref{prop:affine}, we have for all $(x,y)\in\E^2$
\begin{equation}
f(y)-\frac{\alpha}{2}\norm{y}^2\geqslant f(x)-\frac{\alpha}{2}\norm{x}^2+\ps{\nabla f(x)-\alpha x}{y-x},
\end{equation}
which gives~\eqref{eqIneqSC}.
Switching the roles of  $x$ and $y$ in~\eqref{eqIneqSC} and summing both relations, we obtain~\eqref{eq:strong}.

\end{proof}

\subsection{Gradient descent algorithm}\label{sec:gd_conv}
In this section we describe the most classical method using  the gradient of a convex and  differentiable function $f$, to approach a solution of problem~\eqref{eq1}:
\begin{equation}\label{eq1bis}
\min_{x\in \E}f(x).
\end{equation}

\subsubsection{Gradient descent operator}\label{sec:gd}
In order to compute minimizers of problem~\eqref{eq1bis}, we now introduce the gradient descent operator $\id-\gamma \nabla f$, defined for  a strictly positive real number $\gamma$ called {\it stepsize}.  In this section, we analyze the properties of this operator using the characteristics of $L$-smooth functions studied in section~\ref{sec:nabla}.
First of all, notice that the gradient descent operator allows to  characterize minimizers of~\eqref{eq1}.
\begin{proposition}\label{PropGDArgmin}
Let $f$ be a convex, proper and differentiable function defined in $\E$, then for all $\gamma\neq 0$: 
$$\text{Fix}\, (\id-\gamma \nabla f)=\argmin f.$$
\end{proposition}
\begin{proof}
$x=(\id-\gamma \nabla f)(x)\Leftrightarrow \nabla f(x)=0
\Leftrightarrow x\in\argmin f$.
\end{proof}

The idea behind gradient descent algorithms is  to find fixed points of the gradient descent operator, as these fixed points correspond to minimizers of $f$.
In that perspective, it is interesting to note that for a suitable choice of the stepsize $\gamma$, the gradient descent operator enjoys non expansive properties.
\begin{definition}[Non expansive operators and contractions]
We recall that a function $T$ defined from $\E$ to $\E$ is said to be $L-$Lipschitz (or $L-$Lipschitz continuous) if for all $(x,y)\in \E^2$, we have
\begin{equation*}
\norm{T(x)-T(y)}\leqslant L\norm{x-y}.
\end{equation*} 
If $T$ is 1-Lipschitz we say that $T$ is non expansive and if $T$ is $L-$Lipschitz with $L<1$ we say that $T$ is $L$-contracting or just a contraction.
\end{definition}

\begin{proposition}\label{LemmeGradLip}
Let $f$ be a convex, differentiable function with a $L-$Lipschitz gradient. For $\gamma>0$ we note $T=\id-\gamma \nabla f$. If $\gamma\leqslant \frac{2}{L}$, the operator $T$ is 1-Lipschitz.
\end{proposition}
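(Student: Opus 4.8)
The plan is to show that the gradient descent operator $T = \id - \gamma\nabla f$ is $1$-Lipschitz by directly estimating $\norm{T(x)-T(y)}^2$ and exploiting the co-coercivity of $\nabla f$ established in Lemma~\ref{coco}. First I would fix $(x,y)\in\E^2$, write $u = x-y$ and $v = \nabla f(x)-\nabla f(y)$, and expand
\begin{equation*}
\norm{T(x)-T(y)}^2 = \norm{u - \gamma v}^2 = \norm{u}^2 - 2\gamma\ps{u}{v} + \gamma^2\norm{v}^2.
\end{equation*}
The goal is to bound this by $\norm{u}^2 = \norm{x-y}^2$, which amounts to showing $-2\gamma\ps{u}{v} + \gamma^2\norm{v}^2 \leqslant 0$, i.e. $\gamma^2\norm{v}^2 \leqslant 2\gamma\ps{u}{v}$.

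The key step is to invoke co-coercivity, which gives $\ps{u}{v} = \ps{\nabla f(x)-\nabla f(y)}{x-y} \geqslant \frac{1}{L}\norm{v}^2$. Substituting the lower bound $2\gamma\ps{u}{v} \geqslant \frac{2\gamma}{L}\norm{v}^2$, it suffices to check that
\begin{equation*}
\gamma^2\norm{v}^2 \leqslant \frac{2\gamma}{L}\norm{v}^2,
\end{equation*}
which holds precisely when $\gamma \leqslant \frac{2}{L}$ (after dividing by $\gamma\norm{v}^2 \geqslant 0$; the inequality is trivial when $v=0$). Combining these yields $\norm{T(x)-T(y)}^2 \leqslant \norm{x-y}^2$, and taking square roots gives the claimed non-expansiveness.

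I do not anticipate a serious obstacle here, since co-coercivity does all the heavy lifting and the remaining manipulation is an elementary one-parameter inequality. The only point requiring a little care is ensuring the sign bookkeeping is correct when passing from $\gamma^2\norm{v}^2 \leqslant 2\gamma\ps{u}{v}$ to the stepsize condition, and handling the degenerate case $v=0$ separately so that the division by $\gamma\norm{v}^2$ is legitimate. An alternative route would be to expand $\norm{T(x)-T(y)}^2$ and directly use the strong-convexity-type inequality~\eqref{eq:strong} together with co-coercivity, but the single application of Lemma~\ref{coco} above is the cleanest path and is exactly why co-coercivity was proved beforehand.
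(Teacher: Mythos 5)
Your proposal is correct and follows exactly the paper's own argument: expand $\norm{T(x)-T(y)}^2$, apply the co-coercivity Lemma~\ref{coco} to absorb the cross term, and observe that the leftover coefficient $\gamma(\gamma-\tfrac{2}{L})$ is nonpositive for $\gamma\leqslant\tfrac{2}{L}$. No gaps; the only difference is that you spell out the harmless degenerate case $v=0$, which the paper leaves implicit.
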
 

\begin{proof}
Taking $(x,y)\in\E^2$ and using Lemma~\ref{coco} on the co-coercivity of $\nabla f$, we get 
\begin{align*}
&\;||x-\gamma\nabla f(x)-(y-\gamma\nabla f(y))||^2\\=&\;||x-y||^2+\gamma^2 ||\nabla f(x)-\nabla f(y)||^2-2\gamma\langle x-y,\nabla f(x)-\nabla f(y)\rangle\\
\leq&\; ||x-y||^2+\gamma \left(\gamma-\frac{2}L\right)||\nabla f(x)-\nabla f(y)||^2.
\end{align*}
We deduce that $\id-\nabla f$ is $1$-Lipschitz for $\gamma\leq \frac2L$.
\end{proof}

\subsubsection{Gradient descent with fixed stepsize}\label{sec:gd_standard}
The gradient descent method with fixed stepsize consists in applying recursively the gradient descent operator $\id-\gamma\nabla f$. Considering a strictly positive real number $\gamma$ called {\it stepsize} and an element $x_0\in \E$, the gradient descent algorithm writes $\forall n\in\mathbb{N}$:
\begin{equation}\label{algo:gd}
x_{n+1}=x_n-\gamma \nabla f(x_n).
\end{equation}
If we note $T$ the operator defined from $\E$ to $\E$ by
$$Tx=x-\gamma \nabla f(x)=(\id-\gamma \nabla f)(x),$$
the sequence $(x_n)_{n\in\N}$ is simply defined by $x_{n+1}=Tx_n$.

If no specific hypothesis is made on the gradient of $f$, the method~\eqref{algo:gd} may not converge to a minimizer of $f$. The algorithm may indeed oscillate and diverge if $\gamma$ is chosen too large. On the other hand, if the gradient of $f$ is  $L$-Lipschitz, we now show that the algorithm~\eqref{algo:gd} converges to a minimizer of $f$ for a correct choice of~$\gamma$.

\begin{theoreme}[Gradient descent algorithm]\label{TheoGradPasFixe}
Let $f$ be a convex, differentiable function with a $L-$Lipschitz gradient, admitting a minimizer. If $\gamma<\frac{2}{L}$ then for all $x_0\in \E$, the sequence defined  $\forall n\in\N$ by $x_{n+1}=(\id-\gamma \nabla f)x_n$ converges to a minimizer of $f$.
\end{theoreme}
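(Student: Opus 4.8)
The plan is to combine the non-expansiveness of $T = \id - \gamma\nabla f$ (Proposition~\ref{LemmeGradLip}) with a strict descent estimate: first show that the gradients vanish along the sequence, then identify an accumulation point as a minimizer, and finally bootstrap to convergence of the whole sequence.

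First I would fix a minimizer $x^*$ of $f$, which exists by hypothesis and satisfies $x^* = Tx^*$, equivalently $\nabla f(x^*) = 0$, by Proposition~\ref{PropGDArgmin}. Reusing the computation in the proof of Proposition~\ref{LemmeGradLip} with $y = x^*$ and $\nabla f(x^*) = 0$, I obtain
\begin{equation*}
\norm{x_{n+1}-x^*}^2 \leqslant \norm{x_n - x^*}^2 + \gamma\left(\gamma - \frac{2}{L}\right)\norm{\nabla f(x_n)}^2.
\end{equation*}
Since $\gamma < 2/L$, the constant $c := \gamma\left(\frac{2}{L}-\gamma\right)$ is strictly positive, so the sequence $\bigl(\norm{x_n - x^*}\bigr)_n$ is non-increasing (hence bounded). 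Telescoping the rearranged inequality
\begin{equation*}
c\,\norm{\nabla f(x_n)}^2 \leqslant \norm{x_n - x^*}^2 - \norm{x_{n+1}-x^*}^2
\end{equation*}
over $n$ yields $c\sum_{n\geqslant 0}\norm{\nabla f(x_n)}^2 \leqslant \norm{x_0 - x^*}^2 < +\infty$, so in particular $\nabla f(x_n) \to 0$.

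Next, since $(x_n)$ is bounded and $\E$ is finite-dimensional, I extract a subsequence $x_{n_k} \to \bar x$. As $\nabla f$ is $L$-Lipschitz, it is continuous, so $\nabla f(\bar x) = \lim_k \nabla f(x_{n_k}) = 0$, which means $\bar x$ is a minimizer of $f$ by Proposition~\ref{PropGDArgmin}.

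The main obstacle is upgrading this subsequential convergence to convergence of the entire sequence. The key observation is that $\bar x$ is itself a fixed point of $T$, so applying the monotonicity argument of the second step with $x^*$ replaced by $\bar x$ shows that $\bigl(\norm{x_n - \bar x}\bigr)_n$ is non-increasing. Since the subsequence $\norm{x_{n_k}-\bar x}$ tends to $0$, the non-increasing sequence $\norm{x_n - \bar x}$ must also tend to $0$, that is $x_n \to \bar x$. This finite-dimensional Fejér-monotonicity argument plays the role of the Opial lemma required in the general Hilbert-space setting.
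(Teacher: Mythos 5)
Your proof is correct. It shares the overall architecture of the paper's argument (nonexpansiveness of $T$, Fejér monotonicity with respect to a fixed point, compactness in finite dimension, identification of a cluster point, and the standard upgrade from subsequential to full convergence), but the key quantitative estimate is different. The paper establishes the asymptotic regularity $\lim_n\norm{x_{n+1}-x_n}=0$ by applying the descent lemma (Lemma~\ref{LemmeMajQuad}) to the \emph{function values}, obtaining $f(x_{n+1})+\tfrac{2-\gamma L}{2\gamma}\norm{x_{n+1}-x_n}^2\leqslant f(x_n)$ and using that $(f(x_n))_n$ is bounded below; it then delegates the remaining work to the reusable Lemma~\ref{LemmeConvergence}. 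You instead telescope the \emph{distance to a fixed minimizer}, reusing the co-coercivity computation from Proposition~\ref{LemmeGradLip} at $y=x^*$ to get $c\sum_n\norm{\nabla f(x_n)}^2\leqslant\norm{x_0-x^*}^2$, hence $\nabla f(x_n)\to 0$, which is equivalent to $\norm{x_{n+1}-x_n}=\gamma\norm{\nabla f(x_n)}\to 0$. Both routes are valid under $\gamma<2/L$. Yours yields summability of the squared gradient norms directly from the Fejér inequality and never tracks $f(x_n)$, while the paper's factorization through Lemma~\ref{LemmeConvergence} is designed to be reused verbatim for the proximal point and Forward-Backward algorithms; your last step (monotonicity of $\norm{x_n-\bar x}$ plus a vanishing subsequence) is exactly the content of that lemma, re-derived inline.
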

\noindent There exist several proofs for this convergence result. The one we propose is based on the nonexpansiveness of the operator $T$ and the following  lemma. Such a sketch of proof  will be useful to prove the convergence of other algorithms in this document. 
\begin{lemme}\label{LemmeConvergence}
Let $T$ be a 1-Lipschitz operator defined from $\E$ to $\E$ admitting a fixed point. Let $x_0\in \E$ and let $(x_n)_{n\in\N}$ be the sequenced defined by $x_{n+1}=Tx_n$. If $\lim_{n\to\infty}\norm{x_{n+1}-x_n}=0$ then the sequence $(x_n)_{n\in\N}$ converges to a fixed point of $T$.
\end{lemme}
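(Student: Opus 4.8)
The plan is to combine three ingredients: the Fej\'er-type monotonicity coming from nonexpansiveness, a compactness argument in finite dimension, and a final ``upgrade'' from subsequential convergence to convergence of the whole sequence. By hypothesis $T$ admits a fixed point, so I start by fixing one, say $x^*$ with $Tx^*=x^*$.

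First I would establish monotonicity of the distance to a fixed point. Since $T$ is $1$-Lipschitz,
\begin{equation*}
\norm{x_{n+1}-x^*}=\norm{Tx_n-Tx^*}\leqslant\norm{x_n-x^*},
\end{equation*}
so the sequence $(\norm{x_n-x^*})_{n\in\N}$ is non-increasing and bounded below by $0$, hence convergent. In particular $(x_n)_{n\in\N}$ is bounded. As $\E$ is finite-dimensional, Bolzano--Weierstrass provides a subsequence $(x_{n_k})_k$ converging to some $\bar x\in\E$.

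Next I would identify $\bar x$ as a fixed point of $T$. Being $1$-Lipschitz, $T$ is continuous, so $x_{n_k+1}=Tx_{n_k}\to T\bar x$. On the other hand, the hypothesis $\norm{x_{n+1}-x_n}\to 0$ gives $\norm{x_{n_k+1}-x_{n_k}}\to 0$, so $x_{n_k+1}\to\bar x$ as well. By uniqueness of the limit, $T\bar x=\bar x$.

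Finally I would upgrade the subsequential convergence to full convergence by re-running the first step with $\bar x$ in place of $x^*$: since $\bar x$ is itself a fixed point, $(\norm{x_n-\bar x})_{n\in\N}$ is non-increasing, and it has the subsequence $\norm{x_{n_k}-\bar x}\to 0$; a non-increasing sequence with a subsequence tending to $0$ tends to $0$, whence $x_n\to\bar x$. The main subtlety is precisely this last move: Fej\'er monotonicity holds with respect to \emph{any} fixed point, and it is essential to apply it to the limit point $\bar x$ of the subsequence (not merely to the arbitrary $x^*$), since only then does the subsequence's convergence pin down the monotone sequence $\norm{x_n-\bar x}$ to the limit $0$.
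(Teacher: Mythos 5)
Your proof is correct and follows essentially the same route as the paper's: Fej\'er monotonicity of $\norm{x_n-y}$ for any fixed point $y$, boundedness and extraction of a convergent subsequence in finite dimension, identification of the limit as a fixed point via $\norm{x_{n+1}-x_n}\to 0$, and then reapplying the monotonicity to that limit point to conclude. You merely spell out more explicitly the continuity/uniqueness-of-limit step that the paper leaves implicit.
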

\begin{proof}
Let $y$ be a fixed point of $T$. As $T$ is 1-Lipschitz, 
$$ \norm{x_{n+1}-y} =  \norm{T(x_n)-T(y)} \leq \norm{x_{n}-y}, $$
the sequence $(\norm{x_n-y})_{n\in\N}$ is decreasing and therefore bounded. The sequence $(x_n)_{n\in\N}$ is thus bounded.
As $\E$ is of finite dimension, the sequence $(x_n)_{n\in\N}$ admits an adherent point (or closure point) $z\in \E$. As  ${\lim_{n\to\infty}\norm{x_{n+1}-x_n}=0}$, this adherent point satisfies $Tz=z$ and it is  a fixed point of $T$.
The sequence $\norm{z-x_n}$ is thus decreasing and admits a subsequence that goes to zero. Hence the sequence converges to zero. 
\end{proof}

\begin{proof}[Proof of Theorem~\ref{TheoGradPasFixe}]
We just have to show that the operator $T=\id-\gamma \nabla f$ satisfies all the hypotheses of Lemma~\ref{LemmeConvergence}. The fact that $T$ is 1-Lipschitz is the result of Proposition~\ref{LemmeGradLip}. Furthermore, we may note from Proposition~\ref{PropGDArgmin} that there is an equivalence between being a fixed point of $T$ and being a minimizer of $f$, as $Tx=x$ is equivalent to $\nabla f(x)=0$.
Theorem \ref{TheoGradPasFixe} assumes the existence of such a minimizer and thus the existence of a fixed point of $T$. To finish the proof, it is sufficient to show that $\lim_{n\to \infty}\norm{x_{n+1}-x_n}=0$. This result is obtained by applying the inequality~\eqref{eqGradLip} satisfied by $L$-smooth functions (Lemma~\ref{LemmeMajQuad}) to the points $y=x_n$ and $z=x_{n+1}$. We then have $z-y=\gamma \nabla f(x_n)$ and thus
\begin{equation}
f(x_{n+1})\leqslant f(x_n)-\frac{1}{\gamma}\norm{x_{n+1}-x_n}^2+\frac{L}{2}\norm{x_{n+1}-x_n}^2.
\end{equation}
If $\gamma<\frac{2}{L}$, we get
\begin{equation} \label{eq:SDC_GD}
f(x_{n+1})+\left(\frac{2-\gamma L}{2\gamma}\right)\norm{x_{n+1}-x_n}^2\leqslant f(x_n).
\end{equation}
As the sequence $(f(x_n))_{n\in\N}$ is bounded from below, we deduce that $\lim_{n\to \infty}\norm{x_{n+1}-x_n}=0$, which concludes the proof of the theorem. 
\end{proof}

The gradient descent method with fixed stepsize is a {\it descent method}, i.e. the value of $f(x_n)$ decreases. In the next section, we will see that it is possible to control the rate at which $f(x_n)-f(x^*)$ decreases towards zero, where $x^*$ refers to an arbitrary minimizer of $f$.

\subsubsection{Strongly convex function}

If the function $f$ is $L$-smooth and $\alpha$-strongly convex, the  following result shows that the gradient descent operator is a contraction for $\tau<\frac1L$, 
so that Banach's Theorem directly ensures the convergence of the gradient descent method~\eqref{algo:gd} to a fixed point of the gradient descent operator, which is a global minimizer of problem~\eqref{eq1bis}.

\begin{proposition}\label{prop:contratSC}
Let $f$ be a $\alpha$-strongly  convex function. If $f$ is $L$-smooth, then $\id-\gamma \nabla f$ is a $\sqrt{1-\gamma \alpha}$-Lipschitz application for $\tau<\frac1L$.
\end{proposition}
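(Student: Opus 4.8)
The plan is to bound $\norm{Tx-Ty}^2$ directly, where $T=\id-\gamma\nabla f$, and to show it does not exceed $(1-\gamma\alpha)\norm{x-y}^2$. First I would expand the square,
\begin{equation*}
\norm{Tx-Ty}^2=\norm{x-y}^2-2\gamma\ps{\nabla f(x)-\nabla f(y)}{x-y}+\gamma^2\norm{\nabla f(x)-\nabla f(y)}^2,
\end{equation*}
so that the two pieces of available information must be combined: the strong convexity lower bound \eqref{eq:strong} of Lemma~\ref{LemmeMinQuadSC} will control the cross term, while the $L$-smoothness must be used to tame the last term, which is quadratic in $\gamma$.

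The main obstacle is precisely this term $\gamma^2\norm{\nabla f(x)-\nabla f(y)}^2$. The crude estimate $\norm{\nabla f(x)-\nabla f(y)}^2\leq L^2\norm{x-y}^2$ coming directly from Lipschitzness of $\nabla f$ is too weak: combined with strong convexity it would only produce the factor $1-2\gamma\alpha+\gamma^2L^2$, which stays below $1-\gamma\alpha$ only for $\gamma\leq\alpha/L^2$, a strictly smaller range than the claimed $\gamma<1/L$. The right tool is instead the co-coercivity of $\nabla f$ (Lemma~\ref{coco}), rewritten as $\norm{\nabla f(x)-\nabla f(y)}^2\leq L\ps{\nabla f(x)-\nabla f(y)}{x-y}$, which converts the quadratic gradient term into the very same inner product that already appears in the expansion.

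Substituting this bound into the expansion, I would obtain
\begin{equation*}
\norm{Tx-Ty}^2\leq\norm{x-y}^2-\gamma(2-\gamma L)\ps{\nabla f(x)-\nabla f(y)}{x-y}.
\end{equation*}
It then remains to exploit the hypothesis $\gamma<1/L$, which forces $2-\gamma L\geq 1$, together with the monotonicity of $\nabla f$ (Corollary~\ref{cor:grad_mono}) guaranteeing that the inner product is nonnegative; hence the coefficient $2-\gamma L$ may be replaced by $1$ without increasing the right-hand side. A final application of the strong convexity inequality \eqref{eq:strong}, namely $\ps{\nabla f(x)-\nabla f(y)}{x-y}\geq\alpha\norm{x-y}^2$, yields $\norm{Tx-Ty}^2\leq(1-\gamma\alpha)\norm{x-y}^2$. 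Since an $\alpha$-strongly convex $L$-smooth function necessarily satisfies $\alpha\leq L$, the quantity $1-\gamma\alpha$ lies in $(0,1)$ whenever $\gamma<1/L$, so $\sqrt{1-\gamma\alpha}$ is a genuine contraction factor and the proof is complete.
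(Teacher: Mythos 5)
Your proof is correct and follows essentially the same route as the paper: both expand $\norm{Tx-Ty}^2$, use the co-coercivity of $\nabla f$ (Lemma~\ref{coco}) to absorb the $\gamma^2\norm{\nabla f(x)-\nabla f(y)}^2$ term under the condition $\gamma<\frac1L$, and then apply the strong convexity inequality \eqref{eq:strong} to the remaining cross term. The only difference is cosmetic bookkeeping (you convert the quadratic gradient term into the inner product, while the paper converts one copy of the inner product into a nonnegative gradient term), so the two arguments are equivalent.
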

\begin{proof}
We here rely on Lemmas~\ref{LemmeMajQuad} and~\ref{LemmeMinQuadSC}.
If $f$ is $\alpha$-strongly convex, then relation~\eqref{eqIneqSC} states that $\forall x,y\in\E^2$ 
\begin{equation}\label{eq:tmp1}f(y)\geq f(x)+\langle\nabla f(x),y-x\rangle+\frac{\alpha}{2}\norm{y-x}^2.\end{equation}
If $f$ is $L$-smooth, we also have from relation~\eqref{eqGradLip} that 
$$f(y)\leq f(x)+\langle\nabla f(x),y- x\rangle+\frac{L}{2}\norm{y-x}^2,$$
so that $\alpha<L$. 
Combining the co-coercivity property~\eqref{eq:coco} with the strong convexity proper~\eqref{eq:strong}, we get:

\begin{equation}\label{eq:tempo}\begin{split}
&||x-y||^2-||(\id-\gamma \nabla f)(x)-(\id-\gamma\nabla f)(y)||^2\\=\;&2\gamma\langle  \nabla f(x)-\nabla f(y),x-y\rangle-\gamma^2\norm{\nabla f(x)-\nabla f(y)}^2\\=\;&\gamma\hspace{-1pt}\left(\hspace{-1pt}\langle  \nabla f(x)\hspace{-1pt}-\hspace{-1.5pt}\nabla f(y),x\hspace{-1pt}-\hspace{-1pt}y\rangle\hspace{-1pt}-\hspace{-1pt}\gamma\norm{\nabla f(x)\hspace{-1pt}-\hspace{-1pt}\nabla f(y)}^2\hspace{-1.5pt}+\hspace{-1.5pt}\langle  \nabla f(x)\hspace{-1.5pt}-\hspace{-1pt}\nabla f(y),x\hspace{-1pt}-\hspace{-1pt}y\rangle\hspace{-1pt}\right)\\
\geq\;&\gamma\left(\frac1{L}-\gamma\right)\norm{\nabla f(x)-\nabla f(y)}^2 +\gamma\alpha\norm{y-x}^2.
\end{split}\end{equation} 
We conclude that for all $\gamma<\frac1L$:
$$||(\id-\gamma \nabla f)(x)-(\id-\gamma \nabla f)(y)||^2\leq (1-\gamma\alpha) ||x-y||^2,$$
so that $\id-\gamma \nabla f$ is a $\sqrt{1-\gamma \alpha}$-Lipschitz application.
One can also check that since  $\alpha<L$, it implies that $\gamma \alpha<1$ for $\gamma<\frac1L$.
\end{proof}

\subsection{Convergence rates of the gradient descent algorithm}\label{sec:convergence_rate}
The decay rate on $f$ of the gradient descent depends on the hypothesis made on $f$. If $f$ is convex and differentiable, the decay is $O(\frac{1}{n})$. If $f$ is strongly convex or even if $f$ satisfies a quadratic growth condition, then the decay may be exponential. We give here two theorems based on the inequality~\eqref{eq:SDC_GD} with $\gamma=\frac{1}{L}$ proved in section \ref{sec:gd}:
\begin{equation} \label{eq:SDC_GD2}
f(x_{n+1})+\frac{L}{2}\norm{x_{n+1}-x_n}^2\leqslant f(x_n),
\end{equation}
that is satisfied by the sequence defined by $x_{n+1}=x_n-\frac{1}{L} \nabla f(x_n)$. 
The first theorem provides a decay of $O(\frac{1}{n})$ for differentiable convex functions. 
\begin{theoreme}\label{th:gdrate}
If $f$ is convex differentiable and $\nabla f$ is $L-$Lipschitz, having at least one minimizer $x^*$. The sequence $x_n$ defined by $x_0\in E $ and 
\begin{equation}
    x_{n+1}=x_n-\frac{1}{L}\nabla f(x_n)
\end{equation}
satisfies 
\begin{equation}
f(x_n)-f(x^*)\leqslant \frac{L}{2n}\norm{x_0-x^*}^2.
\end{equation}
\end{theoreme}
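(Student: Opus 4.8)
The plan is to track the optimality gap $\delta_n = f(x_n) - f(x^*) \geq 0$ and to establish a single per-iteration inequality relating $\delta_{n+1}$ to a telescoping difference of squared distances to the minimizer $x^*$. Once such an inequality is available, summing it collapses the right-hand side to $\norm{x_0 - x^*}^2$, and the monotonicity of $(f(x_n))_{n\in\N}$ already recorded in~\eqref{eq:SDC_GD2} converts the resulting bound on the cumulated gap into a bound on $\delta_n$ itself.

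First I would produce the key one-step estimate. Applying the descent lemma~\eqref{eqGradLip} (Lemma~\ref{LemmeMajQuad}) at $y=x_n$, $z=x_{n+1}$ and using $x_{n+1}-x_n=-\frac1L\nabla f(x_n)$ gives $f(x_{n+1}) \leq f(x_n) - \frac{1}{2L}\norm{\nabla f(x_n)}^2$. Combining this with the convexity lower bound of Proposition~\ref{prop:affine}, written as $f(x_n) \leq f(x^*) + \ps{\nabla f(x_n)}{x_n - x^*}$, yields
$$\delta_{n+1} \leq \ps{\nabla f(x_n)}{x_n - x^*} - \frac{1}{2L}\norm{\nabla f(x_n)}^2.$$
The crucial observation is then that, expanding $\norm{x_{n+1}-x^*}^2 = \norm{x_n - x^* - \tfrac1L\nabla f(x_n)}^2$, the right-hand side above is exactly $\frac{L}{2}\left(\norm{x_n - x^*}^2 - \norm{x_{n+1} - x^*}^2\right)$. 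Hence $\delta_{n+1} \leq \frac{L}{2}\left(\norm{x_n - x^*}^2 - \norm{x_{n+1}-x^*}^2\right)$.

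Next I would sum this inequality for $n=0,\dots,N-1$. The right-hand side telescopes, and dropping the nonpositive term $-\norm{x_N - x^*}^2$ leaves $\sum_{k=1}^{N}\delta_k \leq \frac{L}{2}\norm{x_0 - x^*}^2$. Finally, since~\eqref{eq:SDC_GD2} shows that $(f(x_n))_{n\in\N}$ is nonincreasing, we have $\delta_N \leq \delta_k$ for every $k \leq N$, so that $N\delta_N \leq \sum_{k=1}^{N}\delta_k \leq \frac{L}{2}\norm{x_0-x^*}^2$, which is the announced rate after renaming $N$ as $n$.

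I expect the only genuine obstacle to be the middle step: recognizing that the combination $\ps{\nabla f(x_n)}{x_n-x^*} - \frac{1}{2L}\norm{\nabla f(x_n)}^2$ produced by convexity and the descent lemma is precisely the completed-square quantity $\frac{L}{2}\left(\norm{x_n-x^*}^2 - \norm{x_{n+1}-x^*}^2\right)$, which is what makes the sum telescope. Everything else, namely the expansion of the squared norm, the telescoping, and the final monotonicity argument, is routine.
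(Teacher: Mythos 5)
Your proof is correct: the one-step estimate, the completed-square identity $\ps{\nabla f(x_n)}{x_n-x^*}-\frac{1}{2L}\norm{\nabla f(x_n)}^2=\frac{L}{2}\bigl(\norm{x_n-x^*}^2-\norm{x_{n+1}-x^*}^2\bigr)$, the telescoping, and the final use of monotonicity all check out. The route is, however, organized differently from the paper's. The paper runs a Lyapunov analysis: it defines $S_n=n\,(f(x_n)-f(x^*))+\frac{L}{2}\norm{x_n-x^*}^2$ and shows directly that $S_{n+1}-S_n\leqslant 0$ by combining the sufficient decrease inequality~\eqref{eq:SDC_GD2} with the convexity bound $f(x_n)-f(x^*)\leqslant\ps{\nabla f(x_n)}{x_n-x^*}$, then reads off the rate from $S_n\leqslant S_0$. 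You use exactly the same two ingredients (descent lemma plus convexity at $x^*$) but assemble them as a telescoping bound on $\sum_k\delta_k$ followed by an averaging step $N\delta_N\leqslant\sum_{k=1}^N\delta_k$; the weight $n$ in the paper's Lyapunov energy is precisely what absorbs that averaging step into a single monotone quantity. Your version is the more classical textbook presentation and arguably more transparent about where each inequality is used; the paper's Lyapunov formulation has the advantage of setting up the template that is reused later for the FISTA analysis (compare the sequence $S_n$ there), which is presumably why the authors chose it. One stylistic point: you invoke~\eqref{eq:SDC_GD2} only at the very end for monotonicity, but your first displayed inequality $f(x_{n+1})\leqslant f(x_n)-\frac{1}{2L}\norm{\nabla f(x_n)}^2$ is already~\eqref{eq:SDC_GD2} rewritten via $x_{n+1}-x_n=-\frac{1}{L}\nabla f(x_n)$, so you could cite it once and derive both consequences from it.
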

\begin{proof}
The proof uses a Lyapunov analysis, i.e. a sequence $S_n$ is defined such that it is bounded along the trajectory. As it is usually the case for Lyapunov analysis, the sequence we define is non-negative and non-increasing: 
\begin{equation}\label{def:En}
    S_n:=n(f(x_n)-f(x^*))+\frac{L}{2}\norm{x_n-x^*}_2^2.
\end{equation}
Notice that in practice, $x^*$ and $f(x^*)$ may be unknown and the Lyapunov sequence can not be  computed. We first show that there exists $n_0$ such that $(S_n)_{n\geqslant n_0}$ is non-increasing :
\begin{equation*}
    S_{n+1}-S_n=(n+1)(f(x_{n+1})-f(x_n))+f(x_n)-f(x^*)
    +\frac{L}{2}\norm{x_{n+1}-x^*}_2^2-\frac{L}{2}\norm{x_n-x^*}_2^2.
\end{equation*}
Using \eqref{eq:SDC_GD2} we get 
\begin{equation*}
    S_{n+1}-S_n\leqslant- (n+1)\frac{L}{2}\norm{x_{n+1}-x_n}^2+f(x_n)-f(x^*)
    +\frac{L}{2}\ps{x_{n+1}-x_{n}}{x_{n+1}+x_n-2x^*} 
\end{equation*}
and then 
\begin{equation*}
    S_{n+1}-S_n\leqslant- n\frac{L}{2}\norm{x_{n+1}-x_n}^2+f(x_n)-f(x^*)
    +L\ps{x_{n+1}-x_{n}}{x_n-x^*}.
\end{equation*}
Using the fact that $x_{n+1}-x_n=-\frac{1}{L}\nabla f(x_n)$, we get
\begin{equation*}
    S_{n+1}-S_n\leqslant- n\frac{L}{2}\norm{x_{n+1}-x_n}^2+f(x_n)-f(x^*)
    -\ps{\nabla f(x_n)}{x_n-x^*}.
\end{equation*}
'Using the convexity of $f$ we finally obtain that $S_{n+1}-S_n\leqslant 0$. Then we deduce that for all $n\in\N$, $S_n\leqslant S_0$ and thus 
\begin{equation}
f(x_n)-f(x^*)\leqslant \frac{L}{2n}\norm{x_0-x^*}^2.
\end{equation}
\end{proof}
If additional assumptions, such as strong convexity, are made on $f$, a better convergence can be achieved. To prove this, we need the following lemma. 
\begin{lemme}\label{lem:strongconv}
If $f$ is $\alpha$-strongly convex and differentiable, we have for all $x\in \E$
\begin{equation}
f(x)-f(x^*)\leqslant \frac{1}{2\alpha}\norm{\nabla f(x)}^2, 
\end{equation}
where $x^*$ is the unique minimizer of $f$.
\end{lemme}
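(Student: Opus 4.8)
The plan is to start from the strong-convexity lower bound~\eqref{eqIneqSC} of Lemma~\ref{LemmeMinQuadSC} and to minimize its right-hand side over the free variable. Fixing $x\in\E$ and letting $y$ range over $\E$, inequality~\eqref{eqIneqSC} reads
\begin{equation*}
f(y)\geq f(x)+\ps{\nabla f(x)}{y-x}+\frac{\alpha}{2}\norm{y-x}^2.
\end{equation*}
The right-hand side is a strictly convex quadratic in $y$ (here $\alpha>0$ since $f$ is $\alpha$-strongly convex), so it attains its global minimum at the point where its gradient vanishes, namely at $y=x-\frac{1}{\alpha}\nabla f(x)$.

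Next I would substitute this optimal $y$, or equivalently complete the square by setting $v=y-x$ and minimizing $\ps{\nabla f(x)}{v}+\frac{\alpha}{2}\norm{v}^2$, whose minimal value is $-\frac{1}{2\alpha}\norm{\nabla f(x)}^2$. This turns the displayed inequality into the uniform lower bound
\begin{equation*}
f(y)\geq f(x)-\frac{1}{2\alpha}\norm{\nabla f(x)}^2\qquad\text{for all }y\in\E.
\end{equation*}
Finally I would evaluate this at $y=x^*$, the unique global minimizer guaranteed by strong convexity, which gives $f(x^*)\geq f(x)-\frac{1}{2\alpha}\norm{\nabla f(x)}^2$; rearranging this immediately yields the claimed inequality.

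There is no genuine obstacle here: the only point requiring a little care is checking that the quadratic minorant is indeed minimized at $y=x-\frac{1}{\alpha}\nabla f(x)$ and that its minimal value equals $-\frac{1}{2\alpha}\norm{\nabla f(x)}^2$, which is immediate because $\alpha>0$ makes it strictly convex and coercive. One could equally invoke the global minimality of $x^*$ through $f(x^*)\leq f\bigl(x-\frac{1}{\alpha}\nabla f(x)\bigr)$ and then bound the latter quantity, but minimizing the minorant directly is the most economical route and keeps the argument to a single completion of squares.
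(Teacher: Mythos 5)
Your proof is correct and follows essentially the same route as the paper: both start from the strong-convexity minorant of Lemma~\ref{LemmeMinQuadSC}, minimize that quadratic minorant over the free variable at $y=x-\frac{1}{\alpha}\nabla f(x)$, and compare the resulting value $f(x)-\frac{1}{2\alpha}\norm{\nabla f(x)}^2$ with $f(x^*)$. No gaps.
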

\begin{proof}
For any $x\in\E$, we define the function 
\begin{equation*}
    \phi_{x}(y):=f(x)+\ps{\nabla f(x)}{y-x}+\frac{\alpha}{2}\norm{y-x}^2. 
\end{equation*}
Lemma \ref{LemmeMinQuadSC} ensures that for any $(x,y)\in \E^2$, 
\begin{equation*}
    f(y)\geqslant \phi_{x}(y).
\end{equation*}
For any $x$, $\phi_x$ is a quadratic function whose minimizer is $x-\frac{1}{\alpha}\nabla f(x)$. Taking the minimum value in the above inequality, we get 
\begin{equation}
    f(x^*)\geqslant \phi_x(x-\frac{1}{\alpha}\nabla f(x))=
    f(x)-\frac{1}{2\alpha}\norm{\nabla f(x)}^2,
\end{equation}
which ends the proof of the Lemma.
\end{proof}
With this lemma we can now state the following Theorem.
\begin{theoreme}\label{thm:rate_sc}
If $f$ is a $L$-smooth and 
$\alpha$-strongly convex function, then the sequence generated by the gradient descend with $\gamma=\frac{1}{L}$ satisfies 
\begin{equation}\label{rate_sc}
    f(x_n)-f(x^*)\leqslant \left(1-\frac{\alpha}{L}\right)^n(f(x_0)-f(x^*)).
\end{equation}
\end{theoreme}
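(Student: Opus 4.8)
The plan is to establish a one-step contraction of the energy gap $f(x_n)-f(x^*)$ and then conclude by a trivial induction. The two ingredients I would combine are the sufficient-decrease inequality~\eqref{eq:SDC_GD2} (valid here since $\gamma=\frac1L$) and the strong-convexity bound from Lemma~\ref{lem:strongconv}. The bridge between them is the defining relation of the iteration, $x_{n+1}-x_n=-\frac1L\nabla f(x_n)$, which lets me rewrite every $\norm{x_{n+1}-x_n}$ in terms of $\norm{\nabla f(x_n)}$.

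First I would start from~\eqref{eq:SDC_GD2} and substitute $\norm{x_{n+1}-x_n}^2=\frac{1}{L^2}\norm{\nabla f(x_n)}^2$, which turns the descent inequality into
\begin{equation*}
f(x_{n+1})\leqslant f(x_n)-\frac{1}{2L}\norm{\nabla f(x_n)}^2.
\end{equation*}
Subtracting $f(x^*)$ from both sides keeps the left-hand side and the first right-hand term in the form of energy gaps, so the only object left to control is $\norm{\nabla f(x_n)}^2$.

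Next I would invoke Lemma~\ref{lem:strongconv}, which gives $\norm{\nabla f(x_n)}^2\geqslant 2\alpha\pa{f(x_n)-f(x^*)}$; note that this lemma applies because $f$ is $\alpha$-strongly convex and differentiable, and $x^*$ is its unique minimizer. Plugging this lower bound into the previous display (it enters with a minus sign, so the inequality direction is preserved) yields the one-step estimate
\begin{equation*}
f(x_{n+1})-f(x^*)\leqslant\pa{1-\frac{\alpha}{L}}\pa{f(x_n)-f(x^*)}.
\end{equation*}
Here it is worth recording that $1-\frac{\alpha}{L}\in[0,1)$, since the simultaneous use of~\eqref{eqIneqSC} and~\eqref{eqGradLip} forces $\alpha\leqslant L$; this guarantees the contraction factor is genuinely in $[0,1)$ and that the right-hand side stays nonnegative.

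Finally I would iterate this contraction from $n$ down to $0$, i.e. apply it $n$ times, to obtain $f(x_n)-f(x^*)\leqslant\pa{1-\frac{\alpha}{L}}^n\pa{f(x_0)-f(x^*)}$, which is exactly~\eqref{rate_sc}. I do not anticipate any real obstacle: the argument is a direct combination of two previously proved inequalities, and the only point requiring a little care is the bookkeeping that converts the step-size-dependent term $\norm{x_{n+1}-x_n}^2$ into $\norm{\nabla f(x_n)}^2$ using $\gamma=\frac1L$, so that Lemma~\ref{lem:strongconv} can be applied cleanly.
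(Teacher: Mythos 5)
Your proof is correct and follows essentially the same route as the paper: rewrite the sufficient-decrease inequality~\eqref{eq:SDC_GD2} as $f(x_{n+1})-f(x^*)\leqslant f(x_n)-f(x^*)-\frac{1}{2L}\norm{\nabla f(x_n)}^2$ using $x_{n+1}-x_n=-\frac1L\nabla f(x_n)$, then apply Lemma~\ref{lem:strongconv} and iterate. Your version just makes explicit the bookkeeping and the observation that $\alpha\leqslant L$, which the paper leaves implicit.
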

\begin{proof}
The inequality \eqref{eq:SDC_GD2} can be written 
\begin{equation}
    f(x_{n+1})-f(x^*)\leqslant f(x_n)-f(x^*)-\frac{1}{2L}\norm{\nabla f(x_n)}^2.
\end{equation}
Using Lemma \ref{lem:strongconv} we have 
\begin{equation}
    f(x_{n+1})-f(x^*)\leqslant f(x_n)-f(x^*)-\frac{\alpha}{L}(f(x_n)-f(x^*)).
\end{equation}
which concludes the proof of the Theorem.
\end{proof}
Notice that the strong convexity assumption does not improve the convergence rate already given by  the so-called quadratic growth condition \eqref{eq:SDC_GD2}.%

\subsection{Other gradient descent algorithms}\label{sec:other_gd}
We now describe some variants and accelerations of the gradient descent algorithm.
\subsubsection{Gradient descent with optimal stepsize}
The gradient descent with optimal stepsize consists in adapting the descent stepsize $\gamma$ at each iteration in order to have a maximal decrease of the value of the functional $f$. We define a sequence for $x_0\in\E$ and for all $n\in\N$:
\begin{equation}
x_{n+1}=x_n-\gamma_n\nabla f(x_n)\text{ with }
\gamma_n=\arg\min_{\gamma>0}f(x_n-\gamma \nabla f(x_n)) 
\end{equation} 
This method converges to the unique minimizer of $f$ if $f$ is $\alpha-$strongly convex.
%
%
It has the advantage of converging faster than the gradient descent method with fixed stepsize in terms of number of iterations, but each iteration requires the resolution of an optimization problem that is not simple to solve in general. A close form expression of the optimal parameter $\gamma$ is available in few cases, including quadratic functions~$f$.

\subsubsection{Newton's method}
Newton's method was originally intended for approching a regular zero of a function $F$ of class $C^2$ defined from $\E$ to $\E$. We say that $x$ is a {\it regular zero} of $F$ if
\begin{equation}
F(x)=0\text{ and }F'(x) \text{ is invertible.}
\end{equation}
Newton's method is based on a first order Taylor development in the neighbourhood of a zero of $F$: $F(x_{n+1})=F(x_n)+\langle F'(x_n),x_{n+1}-x_n\rangle$. In order to find $x_{n+1}$ that is a zero of $F$, it builds,  from an element $x_0\in \E$, a sequence defined for all $n\in\N$ by
\begin{equation}\label{eqNewton}
x_{n+1}=x_n-(F'(x_n))^{-1}F(x_n).
\end{equation}
In order for this sequence to be defined  the matrix $F'(x_n)$ has to be invertible. This is guaranteed as soon as the departure point $x_0$ is close enough to the zero of $F$.

In the case of the minimization of a smooth function $f$
\be\label{eq:pbconv}\min_x f(x),
\ee
the Newton's method corresponds to a gradient descent algorithm with an optimized (non scalar) step.
Recalling that an optimum $x^*$ of problem~\eqref{eq:pbconv} is characterized by $\nabla f(x^*)=0$, Newton's method can  be applied to find a zero of $\nabla f$.  Assuming that $f$ is  $\mathcal{C}^2$, the algorithm writes $$x_{n+1}=x_{n}-H_f^{-1}(x_n)\nabla f(x_n),$$ where $H_f(x_n)$ is the Hessian matrix of $f$ taken at point $x_n$.
If $f$ is convex, such a sequence is guarantee to converge to a global minima of $f$.

\subsubsection{Gradient descent with backtracking}
We showed in Theorem~\ref{TheoGradPasFixe} that the gradient descent algorithm $x_{n+1}=x_n-\gamma \nabla f(x_n)$ applied to the minimization of a convex $L$-smooth function $f$ converges for $\gamma<2/L$. This global upper bound may be suboptimal locally: there exists  points $x_n$ for which  larger stepsizes $\gamma$ can be safely considered. Moreover, the constant $L$ may be unknown in real use cases. The idea of backtracking is  to consider a gradient descent with a potentially too large stepsize, and to reduce it until a sufficient  decrease condition is met.
The decrease condition is built from relation~\eqref{eqGradLip} that writes, for $x=x_n$ and $y=x_n-\gamma\nabla f(x_n)$:
\be\label{bt}f(x_n)-f\left(x_n-\gamma\nabla f(x_n)\right)\geq \gamma\left(1-\frac{\gamma L}{2}\right)||\nabla f(x_n)|| ^2.\ee

Given a parameter a large stepsize $\gamma^0>2/L$ and a parameter $\beta\in]0;1[$, backtracking consists in performing,  at each point $x_n$ a gradient descent with stepsize $\gamma^0$ and to update it with $\gamma^{\ell+1}=\gamma^\ell\beta$ until
$$ f(x_n)-f\left(x_n-\gamma^\ell\nabla f(x_n)\right)>\frac{\gamma^\ell}2||\nabla f(x_n)||^2.$$ 
In the worst case, we obtain $\gamma^\ell<2/L$ and backtracking gets back to standard gradient descent.
This simple approach generally performs well. When the constant $L$ is large, gradient descent relies on a small parameter $\gamma<2/L$. With backtracking,  the number of iterations $n$ required by $x_n$ to reach the neighborhood of a global minimizer $x^*$ may be drastically reduced.  Backtracking may also be considered to optimize smooth functions whose gradients are not Lipschitz. In this case, the choice of the initial point is essential.

\subsubsection{Implicit gradient descent with fixed stepsize}
The implicit gradient descent with fixed stepsize can be expressed  as the explicit method, but it raises a practical problem of solving an implicit problem at each step.

We consider a strictly positive real number $\gamma$ called {\it stepsize} and an element $x_0\in \E$. For all $n\in\mathbb{N}$ we define:
\begin{equation}
x_{n+1}=x_n-\gamma \nabla f(x_{n+1}).
\end{equation}
Without  hypothesis on $f$ this relation does not guarantee the existence and uniqueness of $x_{n+1}$, and the resolution of this implicit equation can be practically complicated.
The convexity of $f$ ensures the existence and uniqueness of $x_{n+1}$, as we may note that $x_{n+1}$ is the unique minimizer of a strongly convex and coercive function:
\begin{equation}
x_{n+1}=\arg\min_{x\in \E}f(x)+\frac{1}{2}\norm{x_n-x}^2
\end{equation}
Thus $x_{n+1}$ is the unique element of $\E$ such that $x_{n+1}+\gamma \nabla f(x_{n+1})=x_n$, which allows us to write $x_{n+1}=(\id+\gamma \nabla f)^{-1}(x_n)$, becoming $x_{n+1}=Tx_n$ by introducing $T=(\id+\gamma \nabla f)^{-1}$.

We will later see that this second definition of $x_{n+1}$s that does not take into account the gradient at all, can be generalized to any convex, proper, lower semi-continuous function without making any hypothesis of differentiability.\\
 
We will show that the implicit gradient descent method constructs a minimizing sequence, regardless of the choice of $x_0$ and $\gamma>0$. This method has the advantage over the explicit method of not having a condition on $\gamma$, but the disadvantage of requiring the resolution of a possibly difficult problem at each iteration.

\begin{theoreme}\label{TheoGradImplicite}
Let $f$ be a convex, differentiable function, admitting a minimizer. Then for all $x_0\in \E$, the sequence defined for all $n\in\N$ by $x_{n+1}=(\id+\gamma \nabla f)^{-1}x_n$ converges to a minimizer of $f$.
\end{theoreme}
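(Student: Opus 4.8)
The plan is to reproduce the architecture of the proof of Theorem~\ref{TheoGradPasFixe}: I would verify that the operator $T=(\id+\gamma\nabla f)^{-1}$ satisfies the three hypotheses of Lemma~\ref{LemmeConvergence}, namely that $T$ is $1$-Lipschitz, that it admits a fixed point, and that the iterates satisfy $\lim_{n\to\infty}\norm{x_{n+1}-x_n}=0$. Note first that $T$ is well-defined as a single-valued map on all of $\E$: as recalled just above the statement, $x_{n+1}$ is the unique minimizer of the strongly convex coercive function $x\mapsto f(x)+\frac{1}{2\gamma}\norm{x-x_n}^2$, so the resolvent is a genuine function.

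First I would establish nonexpansiveness. Writing $u=Tx$ and $v=Ty$, the definition of the resolvent gives $x=u+\gamma\nabla f(u)$ and $y=v+\gamma\nabla f(v)$, hence
\begin{equation*}
\ps{x-y}{u-v}=\norm{u-v}^2+\gamma\ps{\nabla f(u)-\nabla f(v)}{u-v}.
\end{equation*}
The second term on the right is nonnegative by monotonicity of $\nabla f$ (Corollary~\ref{cor:grad_mono}), so $\ps{x-y}{u-v}\geq\norm{u-v}^2$, and Cauchy--Schwarz then yields $\norm{Tx-Ty}\leq\norm{x-y}$. This is the only place convexity is used, and crucially no Lipschitz assumption on $\nabla f$ is required here. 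The fixed points of $T$ then coincide with the minimizers of $f$, since $x=(\id+\gamma\nabla f)^{-1}x$ is equivalent to $\nabla f(x)=0$, which characterizes $\argmin f$ exactly as in Proposition~\ref{PropGDArgmin}; existence of a fixed point is thus guaranteed by the assumption that $f$ admits a minimizer.

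The main obstacle is the last condition, $\norm{x_{n+1}-x_n}\to 0$, because here $f$ is only convex and differentiable, so the Descent Lemma~\ref{LemmeMajQuad} used in the explicit case is unavailable. The remedy is to exploit directly the variational definition of $x_{n+1}$ as a minimizer: comparing the value of $f(\cdot)+\frac{1}{2\gamma}\norm{\cdot-x_n}^2$ at $x_{n+1}$ with its value at $x_n$ gives the sufficient-decrease inequality
\begin{equation*}
f(x_{n+1})+\frac{1}{2\gamma}\norm{x_{n+1}-x_n}^2\leq f(x_n).
\end{equation*}
Summing this telescoping bound and using that $f$ is bounded below by $f(x^*)$ shows that $\sum_n\norm{x_{n+1}-x_n}^2$ converges, whence $\norm{x_{n+1}-x_n}\to 0$. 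With the three hypotheses verified, Lemma~\ref{LemmeConvergence} delivers convergence of $(x_n)$ to a fixed point of $T$, that is, to a minimizer of $f$, for any $x_0\in\E$ and any $\gamma>0$.
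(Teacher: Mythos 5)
Your proof is correct and follows exactly the route the paper intends: the paper does not prove this theorem directly but defers to the Proximal Point algorithm of Section~\ref{ssec:ppa}, whose argument is precisely yours (fixed points of the resolvent are minimizers, the resolvent is nonexpansive, the variational definition of $x_{n+1}$ gives the sufficient decrease $f(x_{n+1})+\frac{1}{2\gamma}\norm{x_{n+1}-x_n}^2\leq f(x_n)$, and Lemma~\ref{LemmeConvergence} concludes). Your direct monotonicity argument for nonexpansiveness is a clean specialization of Proposition~\ref{PropProxLip} to the differentiable case, and the proof is complete as written.
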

We do not prove this result as it is a particular case of more general algorithms, called Proximal Point and Forward-Backward algorithms, that will be treated in  sections~\ref{ssec:ppa} and~\ref{ssec:fb}.

\subsubsection{Projected gradient}
We now consider the following constrained optimization problem:
\begin{equation}\label{eq1cons}
\min_{x\in C}f(x),
\end{equation}
where $K$ is a closed, convex set and $f$ is differentiable with an $L-$Lipschitz gradient. 
In this case, we characterize the minimizers in the following way.
\begin{theoreme}\label{thm:grad_proj}
Let $f$ be a convex and smooth function defined from $\E$ to $\R$ and $C\subset\E$ a closed, convex set. Then $x^*$ is a solution to~\eqref{eq1cons} if and only if for all $y\in C$,
\begin{equation}\label{eqCondOpCont}
\ps{\nabla f(x^*)}{y-x^*}\geqslant 0.
\end{equation}
\end{theoreme} 

\begin{proof}
$\Rightarrow$ Let $(x^*,y)\in C^2$. For all $t\in[0;1]$, $x^*+t(y-x^*)\in C$ as $C$ is convex. If $x^*$ minimizes  $f$ on $C$, then 
\begin{align*}
f (x^* + t(y-x^*))-f (x^*) &\geq  0    \\
\lim_{t\to 0}\frac{f (x^* + t(y-x^*))-f (x^*) }t &\geq  0 \\
\ps{\nabla f(x^*)}{y-x^*}&\geq 0.
\end{align*}
$\Leftarrow$ From Proposition~\ref{prop:affine}, we know that for a convex function
$$f(y)\geq f(x^*)+\ps{\nabla f(x^*)}{y-x^*}.$$
Assuming that $\ps{\nabla f(x^*)}{y-x^*}\geq 0$, we deduce that for all $y\in C$, 
$$f(y)\geq f(x^*)+\ps{\nabla f(x^*)}{y-x^*}\geq f(x^*),$$
which implies that $x^*\in K$ solves problem~\eqref{eq1cons}.
\end{proof}

The result below states that it is possible to adapt the gradient descent method with fixed step~\eqref{algo:gd} to solve the constrained problem~\eqref{eq1cons}.
\begin{theoreme}
Let $f$ be a convex, differentiable function such that $\nabla f$ is $L-$Lipschitz. Let $K$ be a closed, convex set and $\gamma<\frac{2}{L}$. Then for all $x_0\in\E$, the sequence defined~by
\begin{equation}
x_{n+1}=\proj_{K}(x_n-\gamma \nabla f(x_n))
\end{equation}
converges to a solution of problem~\eqref{eq1cons}.
\end{theoreme}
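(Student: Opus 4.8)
The plan is to mirror the proof of Theorem~\ref{TheoGradPasFixe}, applying the convergence criterion of Lemma~\ref{LemmeConvergence} to the operator $T=\proj_K\circ(\id-\gamma\nabla f)$. The first task is to verify that $T$ is $1$-Lipschitz: the inner map $\id-\gamma\nabla f$ is non expansive for $\gamma<\frac2L$ by Proposition~\ref{LemmeGradLip}, and the Euclidean projection $\proj_K$ onto the closed convex set $K$ is non expansive. This last fact is a standard property of projections onto closed convex sets, which follows from the variational characterization $p=\proj_K(z)\Leftrightarrow \big(p\in K \text{ and } \ps{z-p}{y-p}\leqslant 0 \;\forall y\in K\big)$. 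Since a composition of non expansive maps is non expansive, $T$ is $1$-Lipschitz.

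Next I would identify the fixed points of $T$ with the solutions of~\eqref{eq1cons}. Applying the variational inequality of the projection at $z=x^*-\gamma\nabla f(x^*)$ and $p=x^*$, the fixed-point equation $x^*=\proj_K(x^*-\gamma\nabla f(x^*))$ reduces, after dividing by $\gamma>0$, to $\ps{\nabla f(x^*)}{y-x^*}\geqslant 0$ for all $y\in K$, which is exactly the optimality condition~\eqref{eqCondOpCont} of Theorem~\ref{thm:grad_proj}. Hence $\text{Fix}\,T$ coincides with the solution set of~\eqref{eq1cons}; in particular, a fixed point exists precisely when a constrained minimizer exists, which the statement presupposes.

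The crucial remaining step --- and the one I expect to be the main obstacle --- is to establish $\lim_{n\to\infty}\norm{x_{n+1}-x_n}=0$, since, unlike in the unconstrained case, the descent lemma cannot be applied directly to the increment. The idea is to exploit the variational inequality of the projection with the test point $y=x_n\in K$: from $x_{n+1}=\proj_K(x_n-\gamma\nabla f(x_n))$ one obtains $\ps{\nabla f(x_n)}{x_{n+1}-x_n}\leqslant-\frac1\gamma\norm{x_{n+1}-x_n}^2$. Substituting this into the descent lemma~\eqref{eqGradLip} applied at $y=x_n$ and $z=x_{n+1}$ yields the sufficient decrease inequality
\begin{equation*}
f(x_{n+1})+\left(\frac{2-\gamma L}{2\gamma}\right)\norm{x_{n+1}-x_n}^2\leqslant f(x_n),
\end{equation*}
which is the exact analogue of~\eqref{eq:SDC_GD}, with a strictly positive coefficient because $\gamma<\frac2L$.

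Finally I would close the argument as in Theorem~\ref{TheoGradPasFixe}. Since $x_n\in K$ for every $n\geqslant 1$ and $f$ is bounded below on $K$ by $f(x^*)$, the sequence $(f(x_n))_{n\in\N}$ is non-increasing and bounded below, hence convergent; summing the sufficient decrease inequality then forces $\norm{x_{n+1}-x_n}\to 0$. All hypotheses of Lemma~\ref{LemmeConvergence} being met, $(x_n)_{n\in\N}$ converges to a fixed point of $T$, that is, to a solution of~\eqref{eq1cons}.
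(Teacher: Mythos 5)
The paper does not actually prove this theorem; it only remarks that the proof is a simple adaptation of that of Theorem~\ref{TheoGradPasFixe} (equivalently, a special case of the Forward-Backward analysis), and your argument is precisely that adaptation, carried out correctly. In particular you supply the one genuinely new ingredient --- the obtuse-angle inequality of the projection tested at $y=x_n\in K$ (valid for $n\geqslant 1$, since $x_n$ is then the output of a projection), which restores the sufficient decrease~\eqref{eq:SDC_GD} --- so the proposal is complete.
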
  

We do not prove this result for two reasons. First,  it is a simple adaptation of the convergence proof for the gradient descent method with fixed step. Second, the projected gradient method is a particular case of the more general Forward-Backward algorithm, that will be treated in section~\ref{ssec:fb}.

\section{Optimization of non smooth convex functions}\label{sec:nonsmooth_opt}
In this part we consider non smooth convex optimization problems, i.e. the function that we want to minimize is non-differentiable. A particular case that we will consider is the case of the sum of two convex functions where at least one (let say $g$ is non-differentiable:
\begin{equation}\label{eq:pb}
\min_{x\in\E}f(x)+g(x).
\end{equation}
The problem of minimizing a differentiable function $f$ under convex constraints~\eqref{eq1cons} can be rewritten in this form, by using the indicator function of the convex set $\K$:
\begin{equation}
\min_{x\in C}f(x)\Longleftrightarrow \min_{x\in\E}f(x)+i_{K}(x),
\end{equation}    
where $$i_{K}(x)=\left\{\begin{array}{ll}0&\textrm{if }x\in K\\
+\infty&\textrm{otherwise.}\end{array}\right.$$
However, this is not the only situation we will consider. For example, in image processing, it is  common to minimize functionals of the form of a sum of two terms; a {\em data fidelity} part and a {\em regularization} part, the latter being chosen to promote certain characteristics to the image to be reconstructed. Such a regularity prior is  often non-differentiable, as in the case of an $\ell_1$ regularization, that promotes sparsity, or the total variation regularization, that promotes piecewise constant images.

In this section, we will introduce  {\it proximal splitting algorithms}, that are designed to solve problem~\eqref{eq:pb}. 
We first recall the key concept of subdifferentials $\partial f$ of convex functions $f$ in section~\ref{sec:sousdiff}.  In section~\ref{sec:prox}, we introduce and study the properties of the proximal operator $(\id+\gamma \partial f)^{-1}$, which can be seen as the implicit equivalent of the explicit gradient descent operator. In section~\ref{sec:prox_algo}, we finally present different proximal splitting algorithms to solve problem~\eqref{eq:pb}. Such algorithms minimize the sum of (non-differentiable) functions by alternating elementary explicit $(\id+\gamma\nabla f)$ and/or implicit $(\id+\partial f)^{-1}$ gradient operations on each function separately. In particular, we will analyze the Forward-Backward algorithm and its acceleration, the Alternating Direction Method of Multipliers and the {Douglas-Rachford} algorithm.
\subsection{Properties of subdifferentials}\label{sec:sousdiff}

\begin{definition}[Subdifferential] \label{def:subdif}
Let $f$ be a function from $E$ to $\R\cup \{+\infty\}$. The subdifferential of $f$ is the multivalued operator that to each $x\in E$ associates the set of slopes
\begin{equation}\label{eq:sousdiff}\partial f(x)=\{u\in\E\text{ such that }\forall y\in\E,\,\,\ps{y-x}{u}+f(x)\leqslant f(y)\}.\end{equation}
\end{definition}
\begin{remark}
We recall the following properties related to subdiffrentials:
\bi
\item An element $p\in \partial f$ of the subdifferential is called a {\bf subgradient}.
\item The subdifferential of a function at a point $x$ can be either empty or a convex set, possibly reduced to a singleton.
\item If $f$ is convex and differentiable in each point $x \in E$, the subdifferential is reduced to the singleton $\{\nabla f(x)\}$, the gradient of $f$ in $x$.
\item The subdifferential of the function $x\mapsto -x^2$ is empty in each point of $E$, even though $f$ is differentiable in each point. In this example $f$ is not convex.
\item It can be shown that in each point $x$ of the relative interior of the domain of a {\bf convex} function $f$, the subdifferential $\partial f (x)$ is non-empty.
\item On the boundary of the domain things are a somewhat more complicated. The function $f$ defined from $\R$ to $\R\cap +\infty$ by
$$f(x)=\left\{
\begin{array}{cc}
-\sqrt{x}&\text{ si }x\geqslant 0\\
+\infty& \text{ si }x<0
\end{array}
\right.$$ 
is convex, takes a finite value in 0 but does not admit a subdifferential in zero.
\ei
\end{remark}
The importance of the subdifferential is made clear through Fermat's rule, as defined by the following theorem.
\begin{theoreme} \label{thm:fermat}
Let $f$ be a proper function defined from $E$ to $\R\cup{+\infty}$. Then
$$\argmin f=\text{zer }\partial f=\{x\in\E\text{ such that }0\in\partial f(x)\}.$$
\end{theoreme}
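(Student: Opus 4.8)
The plan is to prove the two set equalities directly from the definition of the subdifferential (Definition~\ref{def:subdif}), since Fermat's rule is essentially a restatement of that definition when the subgradient is taken to be zero. I would establish the chain $\argmin f = \{x \in \E \text{ such that } 0 \in \partial f(x)\} = \text{zer }\partial f$, where the second equality is merely the definition of the zero set of a multivalued operator and requires no argument. So the mathematical content reduces to showing the first equality.

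The key step is to unwind what $0 \in \partial f(x)$ means. By Definition~\ref{def:subdif}, substituting $u = 0$, the condition $0 \in \partial f(x)$ reads: for all $y \in \E$, $\ps{y-x}{0} + f(x) \leqslant f(y)$, that is, $f(x) \leqslant f(y)$ for all $y \in \E$. This last statement is precisely the assertion that $x$ is a global minimizer of $f$, i.e. $x \in \argmin f$. First I would write out this equivalence in both directions: if $x \in \argmin f$, then $f(x) \leqslant f(y)$ for all $y$, so the defining inequality of the subdifferential holds with $u = 0$, giving $0 \in \partial f(x)$; conversely, if $0 \in \partial f(x)$, the defining inequality collapses to $f(x) \leqslant f(y)$ for all $y$, which is the definition of a global minimizer.

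I expect there to be essentially no obstacle here, as the result is a tautological consequence of how the subdifferential is defined. The only point requiring minimal care is the properness hypothesis: the statement concerns $\argmin f$, and one should note that properness guarantees $f$ is not identically $+\infty$ and takes no value $-\infty$, so that $\argmin f$ is a meaningful object (a minimizer, if it exists, attains a finite value). The equivalence itself, however, does not even require existence of a minimizer — it is a pointwise statement valid for every $x \in \E$, and the set equality follows immediately by taking the collection of all such $x$. In short, the proof is a one-line manipulation of the defining inequality, and the properness assumption serves only to ensure the objects under discussion are well-posed rather than to drive the argument.
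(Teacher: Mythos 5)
Your proof is correct and follows exactly the paper's own argument: both unwind the defining inequality of the subdifferential at $u=0$ to obtain $f(x)\leqslant f(y)$ for all $y$, which is the definition of a global minimizer. Your additional remark on the role of properness is accurate but not needed for the equivalence itself.
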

\begin{proof}\vspace{-.2cm}
$$x\in\argmin f\Leftrightarrow \forall y\in\E,\,\ps{y-x}{0}+f(x)\leqslant f(y)\Leftrightarrow 0\in\partial f(x).$$
\end{proof}
In order to minimize a functional we thus have to find a zero of its subdifferential. If by lower semi-continuity and coercivity, we know that a functional admits at least one minimum, then we know that in this minimum, the subdifferential is non-empty and contains 0.
This rule is an extension of the well known fact that in the global minimum of a differentiable function, the gradient is zero.

We end this paragraph by some rules on the sum of subdifferentials.
\begin{lemme}[Proof in Rockafellar~\cite{RockOnTheMax}]\label{LSSD}
Let $(f_i)_{i\leqslant p}$ be a family of proper, lower semi-continuous functions and let $x\in\E$. Then
$$\partial (\sum_{i=1}^pf_i)(x)\supset \sum_{i=1}^p\partial f_i(x).$$
Additionally, if
$$\bigcap_{1\leqslant i\leqslant p}\operatorname{inter}(\operatorname{dom}(f_i))\neq \emptyset$$
then we have equality between the subdifferential of the sum and the sum of subdifferentials.
\end{lemme}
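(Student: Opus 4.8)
The plan is to prove the unconditional inclusion $\supset$ first, since it is an immediate consequence of the definition~\eqref{eq:sousdiff} and requires no hypothesis. I would pick any family of subgradients $u_i\in\partial f_i(x)$, so that for every $y\in\E$ and every index $i$,
\begin{equation*}
\ps{y-x}{u_i}+f_i(x)\leqslant f_i(y).
\end{equation*}
Summing these $p$ inequalities gives $\ps{y-x}{\sum_{i=1}^p u_i}+\sum_{i=1}^p f_i(x)\leqslant \sum_{i=1}^p f_i(y)$ for all $y\in\E$, which is precisely the assertion that $\sum_{i=1}^p u_i\in\partial(\sum_{i=1}^p f_i)(x)$. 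This settles one inclusion without any constraint qualification.

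For the reverse inclusion under the qualification condition, I would first reduce the problem. By induction it suffices to treat the case $p=2$, since one can peel off the functions two at a time while keeping a common interior point of the domains. Next, fixing $u\in\partial(f_1+f_2)(x)$ and replacing $f_1$ by $f_1-\ps{u}{\cdot}$ merely translates $\partial f_1(x)$ by $-u$ and leaves the domains unchanged, so matters reduce to the case $u=0$. In that case $x$ minimizes $f_1+f_2$ by Fermat's rule (Theorem~\ref{thm:fermat}), and the goal becomes to produce $u_1\in\partial f_1(x)$ with $-u_1\in\partial f_2(x)$, that is, a subgradient of $f_1$ at $x$ that is, up to sign, also a subgradient of $f_2$ at $x$.

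The core of the argument is a separation step in $\E\times\R$. I would introduce the two convex sets
\begin{equation*}
C_1=\{(y,t)\in\E\times\R:\ t\geqslant f_1(y)-f_1(x)\},\qquad C_2=\{(y,t)\in\E\times\R:\ t\leqslant f_2(x)-f_2(y)\},
\end{equation*}
which touch at $(x,0)$ and whose interiors are disjoint precisely because the minimality of $x$ forbids the existence of a $y$ with $f_1(y)-f_1(x)<f_2(x)-f_2(y)$. Applying the Hahn--Banach separation theorem then yields a nonzero separating hyperplane, whose slope read off in the $y$-component produces the desired common subgradient $u_1$. The main obstacle is the last point: one must exclude a \emph{vertical} separating hyperplane, which carries no information about slopes and fails to deliver a subgradient. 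This is exactly where the hypothesis $\bigcap_{1\leqslant i\leqslant p}\operatorname{inter}(\operatorname{dom}(f_i))\neq\emptyset$ becomes essential, as a common interior point forces $C_1$ and $C_2$ to be genuinely full-dimensional near that point and so guarantees the separating hyperplane is non-vertical. Carefully verifying this non-degeneracy, rather than the separation itself, is the delicate part of the proof; the remaining algebra converting the hyperplane into the identity $u_1+u_2=0$ is routine.
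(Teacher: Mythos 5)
The paper does not actually prove this lemma: it is stated with the annotation ``Proof in Rockafellar'' and the argument is deferred entirely to \cite{RockOnTheMax}. Your proposal therefore cannot be compared line-by-line with anything in the text, but it is a faithful sketch of the classical Moreau--Rockafellar argument that the cited reference contains: the easy inclusion by summing the subgradient inequalities from Definition~\ref{def:subdif}, the reduction to $p=2$ (which works because $\bigcap_{i<p}\operatorname{inter}(\operatorname{dom} f_i)\subset\operatorname{inter}(\operatorname{dom}(\sum_{i<p}f_i))$), the translation trick reducing to $0\in\partial(f_1+f_2)(x)$ via Fermat's rule, and the separation of the epigraph-type sets $C_1,C_2$ in $\E\times\R$, with the constraint qualification used exactly where it must be --- to rule out a vertical separating hyperplane (if the hyperplane were vertical, its normal $\xi$ would satisfy $\ps{\xi}{y-x}\leqslant 0$ on $\operatorname{dom}f_1$ and $\geqslant 0$ on $\operatorname{dom}f_2$, which is impossible at a common interior point unless $\xi=0$). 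One caveat worth making explicit: the lemma as stated omits the convexity of the $f_i$, but your separation argument (and indeed the reverse inclusion itself) requires $f_1$ and $f_2$ to be convex so that $C_1$ and $C_2$ are convex sets; you assume this silently, as does the paper, since the subdifferential of Definition~\ref{def:subdif} is only meaningful in the convex setting. With that understood, your outline is correct and complete at the level of detail appropriate for a sketch.
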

The hypothesis ensuring equality is not very constraining. In the majority of interesting practical cases, it is satisified.
\begin{remark}
If $J$ is the sum of a convex differentiable function $f$ and a convex function $g$, then under the hypotheses of Lemma \ref{LSSD},
$$\partial J=\nabla f+\partial g.$$
\end{remark}
\begin{remark}
The constrained optimization problem~\eqref{eq1cons} can be reformulated as 
$$\min_x f(x)+\iota_C(x),$$
where $f$ is a smooth and convex function and $\iota_C$ is the characteristic function of a close convex set $C$, that is to say $\iota_C(x)=0$ if $x\in C$ and $+\infty$ otherwise. The optimality condition thus gives $$0\in \partial (f+\iota_C)(x^*)\Leftrightarrow -\nabla f(x^*)\in\partial \iota_C(x^*).$$
We deduce from the subdifferential definition~\eqref{eq:sousdiff} that  $\langle \nabla f(x^*),x-x^*\rangle \geq 0$\, $\forall x\in C$, which corresponds to the characterization of minimizers shown in Theorem~\ref{thm:grad_proj}.
\end{remark}
\subsection{Proximal operator}\label{sec:prox}
In order to minimize non smooth convex functions, we now define the proximal (or proximity) operator of a convex function.

\begin{definition}[Proximal operator]
Let $g$ be a convex,  lower semi-continuous and proper function from $\E$ to $\R\cup +\infty$. The proximity operator of $g$, denoted $\prox_g$ and also called "$\prox$ of $g$" is the operator defined from $\E$ to $\E$ by
\begin{equation}\label{eq:prox}
\prox_g(x)=\underset{z\in\E}{\arg\min}\ g(z)+\frac{1}{2}\norm{x-z}_2^2
\end{equation}
\end{definition}
\begin{remark} The function $z\mapsto g(z)+\frac{1}{2}\norm{x-z}_2^2$ that defines the $\prox$ is convex, proper, lower semi-continuous and coercive, and thus admits at least one minimizer. The uniqueness of the minimizer comes from the strict convexity of the function $y\mapsto \frac{1}{2}\norm{x-z}^2$.
\end{remark}

The proximal operator can be interpreted as a generalization of the concept of projecting onto a convex set in the sense that if $f$ is the indicator of a closed convex set $C$ then $\prox_f(x)$ is the projection of $x$ onto $C$.
From now on we will say that {\bf a function $f$ is simple when its proximity operator can be computed easily}.\\

If $f$ is not convex, the proximity operator may not be defined or it may be multivalued. The convexity of $f$ is a sufficient condition for the existence and uniqueness of the proximity operator, but it is not a necessary condition.

\begin{example}
We here present the proximal operator of functions  used in many problems coming from statistics and image processing.
\begin{itemize}
\item Separable functions. If a function $f$ is separable, the proximity operator may be computed by blocks or even component by component.
Hence if $x=(x_1,x_2)$ and if $f(x)=f_1(x_1)+f_2(x_2)$ then 
\begin{align*}
\prox_{f}(x)&=\underset{z=(z_1,z_2)\in\mathbb{R}^n}{\arg\min}f_1(z_1)+f_2(z_2)+\frac{1}{2}\norm{z-x}_2^2\\&=\underset{z=(z_1,z_2)\in\mathbb{R}^n}{\arg\min}f_1(z_1)
+\frac{1}{2}\norm{z_1-x_1}_2^2+f_2(z_2)+\frac{1}{2}\norm{z_2-x_2}_2^2 
\end{align*}
it follows that 
\begin{equation*}\label{Pprox}
\prox_{f}(x)=(\prox_{f_1}(x_1),\prox_{f_2}(x_2))
\end{equation*}
This remark also applies to a sum of $M$ functions $f(x)=\sum_{i=1}^Mf_i(x_i)$, the proximity operator of $f$ at the point $x$ is a vector whose components $p_i$ are $\prox_{f_i}(x_i)$. A classical application of this rule is the proximal operator of $f(x)=\lambda\norm{x}_1$ which is given by the component-wise "soft thresholding" operation with threshold $\lambda$:
$$\prox_{f}(x)=\left\{\begin{matrix}
0& \text{if }|x|\leq\lambda\\
x-\lambda&\text{if }x>\lambda\\
x+\lambda&\text{if }x<-\lambda
\end{matrix}\right.$$
    \item Quadratic functions. Standard data fidelity terms in imaging write 
$f(x)=\frac{\lambda}{2}\norm{Ax-b}_2^2$. There is an explicit form for the proximal operator of such functions:
\begin{equation}
\prox_{f}(x)=(\id+\lambda A^*A)^{-1}(x+\lambda A^*b).
\end{equation}
Thus the computation of this proximity operator amounts to invert of a system of linear equations. In any cases the matrix $\id+\lambda A^*A$ is real and symmetric  and a conjugate gradient may be used to solve such a problem. It can also happen that the matrix $A$ is not available, and we may only be able to apply $A$ and $A^*$. It also exist situations in imaging where the linear problem can be solved exactly:
\begin{itemize}
\item If $A$ is a circular convolution by a filter $h$. In the Fourier domain $(\id+\lambda A^*A)^{-1}$ is a simple multiplication by $(1+\lambda|\hat h(k)|^2)^{-1}$.
\item If $A$ is a derivative operator like a discrete gradient, then if the derivative is circular, that may create strange boundary effect, it actually is a convolution and one may apply the previous formula. 
If the derivative is not shift invariant, the inversion in the Fourier domain is only an approximation and a conjugate gradient may provide more precise results.
\item If $A$ is a masking operator the operator $(\id+\lambda A^*A)$ is diagonal in the domain and the inversion of the system is a simple pointwise division.
\end{itemize}
\item Indicator functions of vectorial subspaces
Let $A$ a linear operator defined from $\R^n$ to $\R^d$ and $K=\{(x_1,x_2)\text{ such that }Ax_1=x_2\}$ a vectorial subspace of $\R^{n+d}$. The proximity operator of the indicator function $\iota_K$ of $K$ is defined by 
\begin{equation}
(p_1,p_2)=\prox_{\iota_K}(x_1,x_2)=\underset{(z_1,z_2)}{\arg\min }\frac{1}{2}\norm{z_1-x_1}_2^2+\frac{1}{2}\norm{z_2-x_2}_2^2+\iota_{K}(z_1,z_2)
\end{equation}
It follows that $p_2=Ap_1$ with 
\begin{equation}\label{EqProj}
p_1=\underset{z_1}{\arg\min }\frac{1}{2}\norm{z_1-x_1}_2^2+\frac{1}{2}\norm{Az_1-x_2}_2^2=(\id+A^*A)^{-1}(x_1+A^*x_2)
\end{equation}
Once again, the computation of this proximity operator may be fast or not depending on $A$. For example if $A=\id$ then $p_1=\frac{1}{2}(x_1+x_2)$ and $p_2=p_1$. If we are able to diagonalize the matrix $I+A^*A$, for circular convolutions or circular derivative operators, the computation may be fast. If not, one needs to use an algorithm to solve this linear system, like conjugate gradient.
\end{itemize}
\end{example}

The proximal operator has many remarkable properties that make it useful  when minimizing non-differentiable convex functions.
\begin{proposition}\label{PropProxSD}
Let $f$ be a convex function on $E$. Then for all $(x,p)\in \E^2$, we have
\be\label{charac_sg}
p=\prox_f(x)\Leftrightarrow \forall y\in\E,\,
\ps{y-p}{x-p}+f(p)\leqslant f(y).
\ee
Thus $p=\prox_f(x)$ is the unique vector  $p\in \E$ such that $x-p\in\partial f(p)$.
\end{proposition}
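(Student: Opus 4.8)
The plan is to exploit the defining variational characterization of the proximal operator: by definition $p=\prox_f(x)$ holds exactly when $p$ minimizes the function $g(z):=f(z)+\frac12\norm{x-z}^2$, which is convex, proper, l.s.c.\ and coercive and hence has a unique minimizer (as recalled in the Remark following the definition of $\prox$). Everything then reduces to translating the minimality of $p$ into the subgradient inequality appearing on the right-hand side.

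The quickest route goes through Fermat's rule (Theorem~\ref{thm:fermat}) together with the sum rule for subdifferentials. Since the quadratic term $z\mapsto\frac12\norm{x-z}^2$ is finite and differentiable on all of $\E$, with gradient $z-x$ at $z$, the sum rule (the equality case of Lemma~\ref{LSSD}, whose hypothesis is trivially met here) gives $\partial g(p)=\partial f(p)+(p-x)$. Fermat's rule says $p$ minimizes $g$ iff $0\in\partial g(p)$, i.e.\ $0\in\partial f(p)+(p-x)$, which is precisely $x-p\in\partial f(p)$. Reading off Definition~\ref{def:subdif}, the statement $x-p\in\partial f(p)$ means exactly that $\forall y\in\E,\ \ps{y-p}{x-p}+f(p)\leq f(y)$, which is the claimed equivalence; the uniqueness of such a $p$ is nothing but the uniqueness of the minimizer of $g$.

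I would also record a self-contained proof of the two implications that avoids invoking the sum rule. For ($\Leftarrow$), assuming the subgradient inequality, I apply it with $y=z$ to lower-bound $f(z)$, add $\frac12\norm{x-z}^2$, and complete the square: writing $u=z-p$ and $a=x-p$, the cross terms cancel and one gets $g(z)-g(p)\geq\frac12\norm{z-p}^2\geq0$, so $p$ minimizes $g$ and thus $p=\prox_f(x)$ (the strict inequality for $z\neq p$ even re-proves uniqueness). For ($\Rightarrow$), assuming $p$ minimizes $g$, I test minimality along the segment $p_t=(1-t)p+ty$ for $t\in(0,1]$, bound $f(p_t)\leq(1-t)f(p)+tf(y)$ by convexity, expand $\norm{x-p_t}^2$, divide by $t>0$, and let $t\to0^+$ to recover $\ps{y-p}{x-p}+f(p)\leq f(y)$.

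The only delicate point is the ($\Rightarrow$) direction: one must introduce the convex combination and divide by $t$ \emph{before} passing to the limit, while keeping the signs correct in the expansion of $\norm{x-p_t}^2$. If instead one prefers the Fermat route, the single thing to verify is that the sum rule really yields equality here, which is immediate since the quadratic term is finite, continuous and differentiable on the whole of $\E$.
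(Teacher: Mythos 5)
Your self-contained proof of the two implications is essentially the paper's own argument: the same convex combination $p_\alpha=\alpha y+(1-\alpha)p$, division by $\alpha$ and passage to the limit for ($\Rightarrow$), and the same completion of the square for ($\Leftarrow$) (the paper writes it as $\frac{1}{2}\norm{x-p}^2+\ps{x-p}{p-y}+\frac{1}{2}\norm{p-y}^2=\frac{1}{2}\norm{x-y}^2$, which is your cancellation of cross terms), so that part needs no further comment. Your preferred route via Fermat's rule and the sum rule is a legitimate alternative in spirit, but be careful with the claim that the hypothesis of Lemma~\ref{LSSD} is ``trivially met here'': that lemma asks for $\bigcap_i\operatorname{inter}(\operatorname{dom}(f_i))\neq\emptyset$, and while the quadratic term has full domain, you still need $\operatorname{inter}(\operatorname{dom}(f))\neq\emptyset$, which fails for instance when $f$ is the indicator of an affine subspace of lower dimension — a case the proposition is certainly meant to cover, since $\prox_f$ is then a projection. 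So as stated, the Fermat-plus-sum-rule route does not cover the full generality of the proposition with the tools available in the paper, and the direct variational argument (which is what the paper uses, and which you also supply) is the one to keep as the actual proof; the Fermat route is best presented as a heuristic explanation of why the characterization $x-p\in\partial f(p)$ is the ``right'' statement.
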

This last characterization implies that the decomposition of  $x=p+z$, as the sum of one element $p$ from $E$ and one element from the subdifferential of $f$ at the point $p$, is unique. This  property is the origin of the notation

$$\prox_f(x)=(\id+\partial f)^{-1}(x)$$ 
to which we can give a sense even though the map $\partial f$ is multivalued.
\begin{proof}

$\Rightarrow$ Suppose that $p=\prox_f(x)$. Let $\alpha\in]0,1[$ and $p_{\alpha}=\alpha y+(1-\alpha)p$. Then, by using the definition of $\prox_f(x)$ and the convexity of $f$, we obtain
\begin{align*}
f(p)&\leqslant f(p_{\alpha})+\frac{1}{2}\norm{x-p_{\alpha}}^2-\frac{1}{2}\norm{x-p}^2\\
&\leqslant \alpha f(y)+(1-\alpha)f(p)-\alpha\ps{y-p}{x-p}
+\frac{\alpha^2}{2}\norm{y-p}^2
\end{align*}
and thus
$$\ps{y-p}{x-p}+f(p)\leqslant f(y)+\frac{\alpha}{2}\norm{y-p}^2.$$
By letting $\alpha$ go to zero we obtain the direct implication.\\
$\Leftarrow$  Conversely, suppose that $\forall y$ we have $\ps{y-p}{x-p}+f(p)\leqslant f(y)$. Then
\begin{align*}
f(p)+\frac{1}{2}\norm{x-p}^2&\leqslant 
f(y)+\frac{1}{2}\norm{x-p}^2+\ps{x-p}{p-y}+\frac{1}{2}\norm{p-y}^2\\&\leqslant 
f(y)+\frac{1}{2}\norm{x-y}^2    
\end{align*}
\noindent
from which we deduce that $p=\prox_f(x)$ by using the definition of $\prox_f(x)$.
\end{proof}

An important property of  fixed points of the proximity operator is the following:
\begin{proposition}\label{PropProxArgmin}
Let $f$ be a proper, convex function defined on $E$. Then
$$\operatorname{Fix}\, \prox_f=\argmin f.$$
\end{proposition}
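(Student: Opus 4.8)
The plan is to establish the set equality $\operatorname{Fix}\prox_f = \argmin f$ by a direct chain of equivalences, leveraging the subdifferential characterization of the proximal operator proved just above in Proposition~\ref{PropProxSD} together with Fermat's rule from Theorem~\ref{thm:fermat}. The whole statement should reduce to a one-line computation once those two results are invoked, so the real content is simply recognizing which earlier tools to chain.

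Concretely, I would start from the definition of a fixed point and write, for any $x \in \E$,
\begin{equation*}
x \in \operatorname{Fix}\prox_f \Longleftrightarrow \prox_f(x) = x.
\end{equation*}
Then I would apply Proposition~\ref{PropProxSD}, which states that $p = \prox_f(x)$ is equivalent to $x - p \in \partial f(p)$. Setting $p = x$ (since we are at a fixed point), this becomes $x - x = 0 \in \partial f(x)$. Finally, Fermat's rule (Theorem~\ref{thm:fermat}) identifies the set $\{x : 0 \in \partial f(x)\}$ with $\argmin f$. Concatenating these gives
\begin{equation*}
x \in \operatorname{Fix}\prox_f \Longleftrightarrow 0 \in \partial f(x) \Longleftrightarrow x \in \argmin f,
\end{equation*}
which is exactly the claimed equality.

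I do not anticipate a genuine obstacle here, as this proposition is essentially a corollary of the preceding material; the only point requiring mild care is the specialization of the characterization in Proposition~\ref{PropProxSD} to the case $p = x$, ensuring the substitution $x - p = 0$ is made correctly. One should also note that the hypotheses (proper and convex) guarantee $\prox_f$ is well defined and single-valued and that Fermat's rule applies, so no further regularity assumptions are needed. This mirrors the structure of the earlier Proposition~\ref{PropGDArgmin}, where $\operatorname{Fix}(\id - \gamma\nabla f) = \argmin f$ was obtained analogously in the smooth case, and the present result is its natural non-smooth counterpart obtained by replacing $\nabla f$ with $\partial f$ and the gradient-descent operator with the proximal operator.
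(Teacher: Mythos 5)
Your proof is correct and follows essentially the same route as the paper: the paper applies the variational characterization of Proposition~\ref{PropProxSD} directly with $p=x$ (so the inner product term vanishes and $f(x)\leqslant f(y)$ for all $y$), while you phrase the identical step as $0\in\partial f(x)$ followed by Fermat's rule, which is just the same inequality unfolded through the definition of the subdifferential. No substantive difference.
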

\begin{proof}

$$x=\prox_f(x)\Leftrightarrow \forall y\in\E,\,\ps{y-x}{x-x}+f(x)\leqslant f(y)
\Leftrightarrow x\in\argmin f.$$
\end{proof}
Thanks to this characterization, one can consider an iterative algorithm involving the proximity operator of $f$ to minimize a convex functional $f$.
\subsubsection{Firm nonexpansiveness}
To establish the convergence of proximal algorithms, we first show that the proximity operator of convex functions is $1$-Lipschitz. To do so, we rely on the notion of firm nonexpansiveness, which implies $1$-Lipschitzity.

\begin{definition}[Firm nonexpansiveness]\label{def:FNE}
A function $T$ is said to be {\it firmly nonexpansive} if for all $(x,y)\in\E^2$,
\begin{equation*}\label{fne}
\norm{T(x)-T(y)}^2+\norm{x-T(x)-y+T(y)}^2\leqslant \norm{x-y}^2.
\end{equation*} 
\end{definition}
A classic example of a firmly nonexpansive function is the case of a projection onto a closed convex set. nonexpansiveness will be a key ingredient to show the convergence of the Douglas-Rachford algorithm. 

\begin{proposition}\label{PropProxLip}
If $f$ is a proper, convex function on $E$ then the maps $\prox_f$ and $\id-\prox_f$ are firmly nonexpansive; that is, for all $(x,y)\in \E^2$,
$$\norm{\prox_f(x)-\prox_f(y)}^2+\norm{(x-\prox_f(x))-(y-\prox_f(y))}^2\leqslant 
\norm{x-y}^2$$ 
In particular, these two operators are nonexpansive (1-Lipschitz).
\end{proposition}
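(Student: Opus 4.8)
The plan is to rely entirely on the variational characterization of the proximal operator provided by Proposition~\ref{PropProxSD}. First I would set $p=\prox_f(x)$ and $q=\prox_f(y)$, so that the characterization gives $x-p\in\partial f(p)$ and $y-q\in\partial f(q)$; concretely, this means that for all $z\in\E$,
\begin{equation*}
\ps{z-p}{x-p}+f(p)\leqslant f(z)\quad\text{and}\quad\ps{z-q}{y-q}+f(q)\leqslant f(z).
\end{equation*}
The key step is to evaluate the first inequality at the cross point $z=q$ and the second at $z=p$, then add the two. The function values $f(p)$ and $f(q)$ cancel on both sides, leaving the single scalar inequality
\begin{equation*}
\ps{q-p}{x-p}+\ps{p-q}{y-q}\leqslant 0.
\end{equation*}

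Next I would introduce the two vectors $a=\prox_f(x)-\prox_f(y)=p-q$ and $b=(x-\prox_f(x))-(y-\prox_f(y))=(x-p)-(y-q)$, and observe the identity $a+b=x-y$. Regrouping the scalar inequality above in terms of these vectors turns it into exactly $\ps{a}{b}\geqslant 0$. Expanding $\norm{x-y}^2=\norm{a+b}^2=\norm{a}^2+2\ps{a}{b}+\norm{b}^2$ and discarding the nonnegative cross term then yields $\norm{a}^2+\norm{b}^2\leqslant\norm{x-y}^2$, which is precisely the firm nonexpansiveness inequality for $\prox_f$ in the sense of Definition~\ref{def:FNE}.

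The statement about $\id-\prox_f$ then costs nothing further: writing $S=\id-\prox_f$, one has $S(x)-S(y)=b$ and $(x-S(x))-(y-S(y))=a$, so the defining inequality for $S$ reads $\norm{b}^2+\norm{a}^2\leqslant\norm{x-y}^2$, which is the same inequality just established. In effect, the firm nonexpansiveness condition is invariant under the exchange $T\leftrightarrow\id-T$, so both operators inherit the property simultaneously. Finally, the $1$-Lipschitz (nonexpansive) conclusion follows at once by dropping the second nonnegative square on the left-hand side and taking square roots.

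I expect the only genuinely delicate point to be the choice of the cross evaluation points in the two subgradient inequalities; once $z=q$ and $z=p$ are selected and the inequalities are summed, the remainder is a routine expansion of a squared norm and presents no real obstacle.
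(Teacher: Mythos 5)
Your proposal is correct and follows essentially the same route as the paper: both use the subgradient characterization of Proposition~\ref{PropProxSD} at the cross points $z=q$ and $z=p$, sum to get $\ps{p-q}{(x-p)-(y-q)}\geqslant 0$, and conclude by expanding $\norm{x-y}^2=\norm{(p-q)+((x-p)-(y-q))}^2$. Your explicit remark that firm nonexpansiveness is symmetric under $T\leftrightarrow\id-T$ is a small, welcome clarification the paper leaves implicit.
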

\begin{proof}
Let $(x,y)\in \E^2$. By noting $p=\prox_f(x)$ and $q=\prox_f(y)$, we have, by definition of $p$ and $q$:
$$\ps{q-p}{x-p}+f(p)\leqslant f(q)\text{ and }\ps{p-q}{y-q}+f(q)\leqslant f(p).$$
Adding these two inequalities gives
$$0\leqslant \ps{p-q}{(x-p)-(y-q)}$$
and we conclude by observing that
\begin{align*}
\norm{x-y}^2&=\norm{p-q+(x-p)-(y-q)}^2\\
&=\norm{p-q}^2+\norm{(x-p)-(y-q)}^2+
 2\ps{p-q}{(x-p)-(y-q)}.
\end{align*}
 
\end{proof}

We underline that the gradient descent operator of a smooth convex function, which is $1$-Lipschitz for $\gamma <\frac2{L}$ (see Proposition \ref{LemmeGradLip}), is also nonexpansive with an additional restriction on the stepsize $\gamma$.
\begin{proposition}\label{PropGradLip}
Let $f$ be a differentiable, convex function defined on $\E$ with an $L-$Lipschitz gradient. If $\gamma<\dfrac{1}{L}$, then the map $\id-\gamma \nabla f$ is firmly nonexpansive.
\end{proposition}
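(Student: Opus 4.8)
The plan is to unwind the definition of firm nonexpansiveness directly and show that the resulting scalar inequality is precisely what the co-coercivity Lemma~\ref{coco} delivers under the hypothesis $\gamma<\frac1L$. Writing $T=\id-\gamma\nabla f$, the two quantities appearing in Definition~\ref{def:FNE} are easy to identify: $T(x)-T(y)=(x-y)-\gamma(\nabla f(x)-\nabla f(y))$, while $x-T(x)=\gamma\nabla f(x)$ so that $(x-T(x))-(y-T(y))=\gamma(\nabla f(x)-\nabla f(y))$. Introducing the shorthand $a=x-y$ and $b=\nabla f(x)-\nabla f(y)$, the inequality to be proved reads $\norm{a-\gamma b}^2+\norm{\gamma b}^2\leqslant\norm{a}^2$.

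The second step is to expand the left-hand side. This gives
\begin{equation*}
\norm{a}^2-2\gamma\ps{a}{b}+\gamma^2\norm{b}^2+\gamma^2\norm{b}^2\leqslant\norm{a}^2,
\end{equation*}
which after cancelling $\norm{a}^2$ and dividing by $\gamma>0$ is equivalent to $\gamma\norm{b}^2\leqslant\ps{a}{b}$, that is,
\begin{equation*}
\ps{\nabla f(x)-\nabla f(y)}{x-y}\geqslant\gamma\norm{\nabla f(x)-\nabla f(y)}^2.
\end{equation*}

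The final step is simply to invoke co-coercivity. Lemma~\ref{coco} asserts that $\ps{\nabla f(x)-\nabla f(y)}{x-y}\geqslant\frac1L\norm{\nabla f(x)-\nabla f(y)}^2$, and since $\gamma<\frac1L$ we have $\frac1L\norm{b}^2\geqslant\gamma\norm{b}^2$, so the required inequality holds. Chaining these two bounds closes the argument, and firm nonexpansiveness follows. There is no genuine obstacle here: the only nontrivial ingredient, the co-coercivity of $\nabla f$, has already been established, so the work reduces to the routine expansion of a squared norm and the observation that the stepsize restriction $\gamma<\frac1L$ is exactly the slack needed to absorb the co-coercivity constant. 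It is worth remarking that this is a strictly stronger conclusion than Proposition~\ref{LemmeGradLip}, which only required $\gamma\leqslant\frac2L$ for $1$-Lipschitzity; firm nonexpansiveness costs the extra factor of two in the admissible stepsize range.
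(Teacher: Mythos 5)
Your proof is correct and follows essentially the same route as the paper's: both reduce firm nonexpansiveness to the inequality $\ps{\nabla f(x)-\nabla f(y)}{x-y}\geqslant\gamma\norm{\nabla f(x)-\nabla f(y)}^2$ (the paper phrases this as the nonnegativity of the cross term $\ps{u}{v}$ in the decomposition $x-y=u+v$, which is the same computation) and then close with Lemma~\ref{coco} and $\gamma<\frac1L$. The only difference is presentational, so there is nothing to add.
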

\begin{proof}
The proof is based on determining the sign of a scalar product. 
Let $(x,y)\in\E^2$. 
We set  $u=\gamma(\nabla f(x)-\nabla f(y))$ and $v=x-\gamma\nabla f(x)-(y-\gamma\nabla f(y))$. Hence we have 
$$\norm{x-y}^2=\norm{u+v}^2=\norm{u}^2+\norm{v}^2+2\ps{u}{v}.
$$
Showing that $\id-\gamma \nabla f$ is firmly nonexpansive amounts to demonstrate that $\ps{u}{v}$ is non negative. The result directly comes from the co-coercivity of $\nabla f$ shown in Lemma~\ref{coco} and the fact that  $\gamma<\frac{1}{L}$:
\begin{align*}\ps{u}{v}&=\gamma \ps{x-y}{\nabla f(x)-\nabla f(y)}-\gamma^2||\nabla f(x)-\nabla f(y)||^2\\
&\geq \gamma\left(\frac1L-\gamma\right) ||\nabla f(x)-\nabla f(y)||^2.\end{align*}
\end{proof}

\subsubsection{Strongly convex functions}
The proximal operator of a strongly convex function is a contraction.
\begin{proposition}\label{prop:prox_strong}
Let $f$ be a $\alpha$-strongly  convex function, then  $\prox_{\gamma f}$ is a 
$\frac{1}{1+\alpha\gamma}$-lipschitz application.
\end{proposition}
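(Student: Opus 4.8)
The plan is to exploit the resolvent (implicit) characterization of the proximal operator from Proposition~\ref{PropProxSD}, combined with the fact that the subdifferential of an $\alpha$-strongly convex function is $\alpha$-strongly monotone. Throughout I assume $\gamma>0$, so that $\gamma f$ is again convex. First I would apply Proposition~\ref{PropProxSD} to the convex function $\gamma f$: setting $p=\prox_{\gamma f}(x)$ and $q=\prox_{\gamma f}(y)$, this yields $x-p\in\gamma\,\partial f(p)$ and $y-q\in\gamma\,\partial f(q)$, that is $\tfrac1\gamma(x-p)\in\partial f(p)$ and $\tfrac1\gamma(y-q)\in\partial f(q)$. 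This reduces the Lipschitz estimate to a statement purely about subgradients of $f$ at $p$ and $q$.

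Next I would establish the strong monotonicity of $\partial f$. Writing $g(z)=f(z)-\frac{\alpha}{2}\norm{z}^2$, which is convex by Definition~\ref{def:strong_weak_convex}, a subgradient $u\in\partial f(p)$ corresponds to $u-\alpha p\in\partial g(p)$; plugging the subgradient inequality for $g$ back into $f$ gives the strongly convex subgradient inequality
\begin{equation*}
f(z)\geqslant f(p)+\ps{u}{z-p}+\frac{\alpha}{2}\norm{z-p}^2,\qquad u\in\partial f(p).
\end{equation*}
Writing this with $(p,u,z)=(p,u,q)$ and symmetrically with $(q,v,p)$ for $v\in\partial f(q)$, then summing the two inequalities produces $\ps{u-v}{p-q}\geqslant\alpha\norm{p-q}^2$. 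This is exactly the non-smooth analogue of inequality~\eqref{eq:strong} in Lemma~\ref{LemmeMinQuadSC}.

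Finally I would substitute $u=\tfrac1\gamma(x-p)$ and $v=\tfrac1\gamma(y-q)$ into the strong monotonicity inequality. After multiplying by $\gamma$ and expanding the term $\ps{(x-y)-(p-q)}{p-q}$, this rearranges to
\begin{equation*}
\ps{x-y}{p-q}\geqslant(1+\alpha\gamma)\norm{p-q}^2.
\end{equation*}
Cauchy--Schwarz on the left-hand side gives $\norm{x-y}\,\norm{p-q}\geqslant(1+\alpha\gamma)\norm{p-q}^2$, and dividing by $\norm{p-q}$ (the case $p=q$ being trivial) yields $\norm{p-q}\leqslant\frac{1}{1+\alpha\gamma}\norm{x-y}$, which is the claim.

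The only genuinely delicate point is the strong monotonicity step: since $f$ need not be differentiable here, Lemma~\ref{LemmeMinQuadSC} does not apply verbatim, and the fix is to transfer the argument to the convex function $g(z)=f(z)-\frac{\alpha}{2}\norm{z}^2$ and use the plain subgradient inequality there. Everything else is an algebraic rearrangement together with one application of Cauchy--Schwarz.
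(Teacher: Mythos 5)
Your proof is correct and lands on exactly the same key inequality as the paper, namely $\ps{p-q}{x-y}\geqslant(1+\gamma\alpha)\norm{p-q}^2$ followed by Cauchy--Schwarz; the difference lies only in how that inequality is reached. The paper re-runs the perturbation argument from the proof of Proposition~\ref{PropProxSD} (evaluating the prox objective at $p_\alpha=\alpha z+(1-\alpha)p$ and letting $\alpha\to0$) while carrying the strong-convexity correction term along, which yields the strengthened characterization $\frac1\gamma\ps{z-p}{x-p}+f(p)\leqslant f(z)-\frac{\alpha}{2}\norm{z-p}^2$ directly. You instead invoke Proposition~\ref{PropProxSD} as a black box to get $x-p\in\gamma\,\partial f(p)$ and separately establish that $\partial f$ is $\alpha$-strongly monotone. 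Your route is more modular and avoids repeating the limiting argument, at the cost of one ingredient you pass over quickly: the identification ``$u\in\partial f(p)$ corresponds to $u-\alpha p\in\partial g(p)$'' is the subdifferential sum rule for $f=g+\frac{\alpha}{2}\norm{\cdot}^2$ (Lemma~\ref{LSSD}, applicable since the quadratic term is everywhere finite and differentiable), and the direction you actually need --- passing from a subgradient of $f$ to one of $g$ --- is the equality part of that rule, not the trivial inclusion. With that reference supplied, every step checks out, including the final rearrangement and the trivial case $p=q$.
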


\begin{proof}
We recall that a $\alpha$-strongly  convex function $f$ satisfies for all $x$, $y$ and $\alpha\in[0;1]$: 
$$f(\alpha x+(1-\alpha) y)\leq \alpha f(x)+(1-\alpha) f(y)-\frac{\alpha(1-\alpha)\alpha}{2}||x-y|||^2.$$
Let $p=\prox_{\gamma f}(x)$, $\alpha\in(0,1)$ and denote
$p_{\alpha}=\alpha z+(1-\alpha)p$ for some $z\in\mathbb{R}^n$.
From the definition~\eqref{eq:prox} of $\prox_f(x)$ and the $\alpha$-strong  convexity of $f$, we have:
\begin{align*}
&f(p)\\
\leq &f(p_{\alpha})+\frac{1}{2\gamma}\norm{x-p_{\alpha}}^2-\frac{1}{2\gamma}\norm{x-p}^2\\
\leq &  \alpha f(z)+(1-\alpha)f(p-\frac{\alpha(1-\alpha)\alpha}2||z-p||^2\hspace{-0.5pt}-\hspace{-0.5pt}\frac\alpha\gamma\langle z-p,x-p\rangle
\hspace{-0.5pt}+\hspace{-0.5pt}\frac{\alpha^2}{2\gamma}\norm{z-p}^2,
\end{align*}
and thus
$$\frac1\gamma\langle z-p,x-p\rangle+f(p\leq f(z)+\frac{1}{2}\left(\frac{\alpha}\gamma -(1-\alpha)\alpha\right)\norm{z-p}^2.
$$
For $\alpha\to 0$, we deduce that
\begin{align*}
\frac1\gamma\langle z-p,x-p\rangle +f(p)&\leq f(z)-\frac\alpha{2}\norm{z-p}^2.
\end{align*}
Taking $z=q:=\prox_{\gamma f}(y)$ we get:
$$\frac1\gamma\langle q-p,x-p\rangle +f(p)\leq f(q)-\frac\alpha{2}\norm{q-p}^2$$
Switching the roles of $x$ and $y$ and summing both expressions we obtain:
\begin{equation}\label{eq:strongcarac}\langle p-q,(x-p)-(y-q\rangle \geq \gamma \alpha \norm{q-p}^2,\end{equation}
and we get the co-coercivity property :
\begin{equation}\label{eq:strongcarac2}\begin{split}
\langle p-q,x-y\rangle &\geq (1+\gamma \alpha) \norm{p-q}^2\\
\end{split}\end{equation}
From \eqref{eq:strongcarac2}, and recalling that $p=\prox_f(x)$ and $q=\prox_f(y)$, we get that $\prox_{\gamma f}$ is $\frac1{1+\gamma\alpha}$-Lipschitz  and thus a contraction. 
\end{proof}
\newpage

\subsection{Proximal algorithms}\label{sec:prox_algo}
We now present proximal optimization algorithms to solve nonsmooth convex minimization problems.

\subsubsection{Proximal Point algorithm}\label{ssec:ppa}
In order to  solve the problem
\begin{equation*}
\min_{x\in\E}g(x),
\end{equation*}
for a (nonsmooth) proper convex function $g$,
the proximal point algorithm applies recursively the proximity operator of the function $\gamma g$.
Choosing $\gamma>0$ and $x_0\in E$, this algorithms defines the sequence $(x_n)_{n\in\N}$ as
\begin{equation}\label{algo_prox}
x_{n+1}=Tx_n=\prox_{\gamma g}(x_n).
\end{equation}
The fixed points of the proximity operator are minimizers of $g$ (Proposition~\ref{PropProxArgmin})  and this operator is 1-Lipschitz (Proposition~\ref{PropProxLip}). By construction,~\eqref{algo_prox} gives that 
\begin{equation*}
g(x_{n+1})+\frac{1}{2\gamma}\norm{x_n-x_{n+1}}^2\leqslant g(x_n),
\end{equation*} 
which implies that the sequence  $\norm{x_n-x_{n+1}}^2$ goes to zero.
Applying Lemma \ref{LemmeConvergence}, we  get that the sequence generated by the algorithm~\eqref{algo_prox} converges to a minimizer of~$g$.

In practice, this algorithm is barely used, in the sense that computing the proximity operator may be just as difficult as minimizing $g$ directly (by taking $\gamma\to 0$). In what follows, we will rather use the proximity operator as an ingredient in more sophisticated {\em splitting} algorithms.

\subsubsection{Forward-Backward algorithm (FB)}\label{ssec:fb}
The Forward-Backward, also known as proximal Gradient Descent (PGD) is an algorithm designed to solve the following optimization problem:
\begin{equation}\label{pb:FB}
\min_{x\in\E}\mathcal{J}(x)=\min_{x\in\E}f(x)+g(x)
\end{equation}
where $f$ is a convex, differentiable function with an $L-$Lipschitz gradient and $g$ is a convex function. This algorithm consists in alternating an explicit gradient descent step on $f$ and a proximity operator on $g$.

The Forward-Backward algorithm is defined by an initial point $x_0\in \E$ and a parameter $\gamma>0$ as follows:
\begin{equation}\label{algo:FB}
\forall n\in\N\quad x_{n+1}=Tx_n=\prox_{\gamma g}(x_n-\gamma \nabla f(x_n))
\end{equation}

The algorithm results from the following proposition:
\begin{proposition}
Let $\mathcal{J}=f+g$ be a functional defined from $\E$ to $\R\cup+\infty$ having the form of a sum of two convex, proper, lower semi-continuous functions satisfying the hypotheses of Lemma \ref{LSSD} (on the sum of subdifferentials), such that $f$ is differentiable. Let $\gamma>0$, then
\be
\operatorname{zeros}(\partial \mathcal{J})=\operatorname{Fix}(\prox_{\gamma g}(\id-\gamma \nabla f)).
\ee   
\end{proposition}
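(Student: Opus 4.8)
The plan is to establish the identity as a chain of elementary equivalences: unfold the fixed-point condition on one side, unfold the zero-of-subdifferential condition on the other, and meet in the middle at the single inclusion $-\gamma\nabla f(x)\in\partial(\gamma g)(x)$.

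First I would fix $x\in\E$ and treat the fixed-point side. Writing $T=\prox_{\gamma g}(\id-\gamma\nabla f)$, the condition $x\in\operatorname{Fix}(T)$ reads $x=\prox_{\gamma g}(x-\gamma\nabla f(x))$. Applying the subdifferential characterization of the proximal operator (Proposition~\ref{PropProxSD}) to the convex function $\gamma g$, with input $y=x-\gamma\nabla f(x)$ and output $p=x$, this is equivalent to $y-x\in\partial(\gamma g)(x)$, i.e. $-\gamma\nabla f(x)\in\partial(\gamma g)(x)$.

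Next I would treat the zero side. By Fermat's rule (Theorem~\ref{thm:fermat}), $\operatorname{zeros}(\partial\mathcal J)=\{x:0\in\partial\mathcal J(x)\}$. The hypotheses borrowed from Lemma~\ref{LSSD} (nonempty intersection of the interiors of the domains) are exactly what guarantees the sum rule with \emph{equality}, namely $\partial\mathcal J(x)=\nabla f(x)+\partial g(x)$, where the differentiability of $f$ collapses its subdifferential to the singleton $\{\nabla f(x)\}$. Hence $0\in\partial\mathcal J(x)$ is equivalent to $-\nabla f(x)\in\partial g(x)$, and multiplying through by $\gamma>0$ via the scaling identity $\partial(\gamma g)=\gamma\,\partial g$ yields $-\gamma\nabla f(x)\in\partial(\gamma g)(x)$, the very inclusion reached from the fixed-point side.

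There is no genuinely hard step: the proof is a double equivalence pivoting on one inclusion. The only points deserving care are, first, the explicit use of the equality case of the sum rule in Lemma~\ref{LSSD} (the reason the domain hypothesis is carried in the statement), and second, the scaling property $\partial(\gamma g)=\gamma\,\partial g$ for $\gamma>0$, which lets the factor $\gamma$ pass freely between the gradient term and the subdifferential. Concatenating the two equivalences then gives $\operatorname{Fix}(T)=\operatorname{zeros}(\partial\mathcal J)$ immediately.
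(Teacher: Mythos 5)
Your proposal is correct and follows essentially the same route as the paper: both reduce each side to the pivot inclusion $-\gamma\nabla f(x)\in\partial(\gamma g)(x)$, using Fermat's rule with the sum rule of Lemma~\ref{LSSD} on one side and the subdifferential characterization of the proximal operator (Proposition~\ref{PropProxSD}) on the other. The only cosmetic difference is that the paper multiplies $\mathcal{J}$ by $\gamma$ before applying the sum rule, whereas you apply the sum rule first and then invoke $\partial(\gamma g)=\gamma\,\partial g$.
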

\begin{proof}
\begin{align*}
0\in \partial \mathcal{J}(x)&\Leftrightarrow 0\in\partial \gamma \mathcal{J}(x)\\ 
&\Leftrightarrow 0\in\nabla \gamma f(x)+\partial \gamma g(x)\\
& \Leftrightarrow -\gamma \nabla f(x)\in \partial \gamma g(x)\\
& \Leftrightarrow x-\gamma\nabla  f(x)\in (\id+\partial \gamma g)(x)\\
& \Leftrightarrow x=\prox_{\gamma g}(x-\gamma \nabla f(x)).
\end{align*}
\end{proof}

Having an operator $T$ whose fixed points are the minimizers of $\mathcal{J}$ does not guarantee that the sequence $(x_n)_{n\in\N}$ converges to one of these minimizers if $T$ is not a contraction (take for instance $x_{n+1}=-x_n$ as a counterexample).

For convex functions $f$ and $g$, the Forward-Backward operator is only nonexpansive, and thus not a contraction, as the composition of $1$-Lipschitz operators. We indeed know from Propositions~\ref{LemmeGradLip} and~\ref{PropProxLip} that $\id-\tau\nabla f$ is $1$-Lipschitz for $\tau<\frac2L$ and $\prox_{\tau g}$ is $1$-Lipschitz for all $\tau>0$.

In the strongly convex case, convergence may directly be ensured.
If $f$ (resp. $g$) is strongly convex, then Proposition~\ref{prop:contratSC} (resp. Proposition~\ref{prop:prox_strong}) states that $\id-\tau\nabla f$ (resp. $\prox_{\tau g}$ is a contraction for $\tau<\frac1L$ (resp. $\tau>0$). Combining a contraction with a $1$-Lipschitz operator preserves the contraction. Hence Banach's Theorem  directly gives the convergence of the sequence generated by the Forward-Backward algorithm in case $f$ or $g$ are strongly convex.\\

To show convergence of the iterates in the general case, we introduce the concept of {\it surrogate functions}. The following lemma shows that at each iteration of the algorithm, the value of the functional $\mathcal{J}$ decreases. Additionally, this decrease ensures that the sequence with the general term $\norm{x_{n+1}-x_n}^2$ is summable and thus goes to zero when $n$ goes to $+\infty$. 

\begin{lemme}\label{lemmasurrogate}
The sequence $(x_n)_{n\in\N}$ defined by $x_{n+1}=\prox_{\gamma g}(x_n-\gamma\nabla f(x_n))$ satisfies the following relation:
\be\label{eqdecJ}
\mathcal{J}(x_{n+1})+\left(\frac{1}{\gamma}-\frac{L}{2}\right)\norm{x_{n+1}-x_n}^2\leqslant \mathcal{J}(x_n).
\ee
\end{lemme}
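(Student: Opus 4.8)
The plan is to combine two one-step inequalities: a descent inequality for the smooth part $f$ coming from the Descent Lemma, and a subgradient-type inequality for the nonsmooth part $g$ coming from the variational characterization of the proximity operator. Writing $x_{n+1}=\prox_{\gamma g}(x_n-\gamma\nabla f(x_n))$, the first thing I would do is unfold the optimality characterization of Proposition~\ref{PropProxSD}, applied to the function $\gamma g$ evaluated at the input point $x_n-\gamma\nabla f(x_n)$. This gives, for every $y\in\E$, the inequality $\ps{y-x_{n+1}}{(x_n-\gamma\nabla f(x_n))-x_{n+1}}+\gamma g(x_{n+1})\leqslant \gamma g(y)$, which encodes the effect of the implicit (backward) step on $g$.

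Next I would specialize this inequality to the single useful test point $y=x_n$, expand the inner product $\ps{x_n-x_{n+1}}{(x_n-x_{n+1})-\gamma\nabla f(x_n)}$, and divide through by $\gamma>0$. After collecting terms this yields a bound of the form $g(x_{n+1})\leqslant g(x_n)-\frac{1}{\gamma}\norm{x_{n+1}-x_n}^2+\ps{\nabla f(x_n)}{x_n-x_{n+1}}$. In parallel, I would apply the Descent Lemma (Lemma~\ref{LemmeMajQuad}) to the $L$-smooth function $f$ at the pair $y=x_n$, $z=x_{n+1}$, giving $f(x_{n+1})\leqslant f(x_n)+\ps{\nabla f(x_n)}{x_{n+1}-x_n}+\frac{L}{2}\norm{x_{n+1}-x_n}^2$, which captures the explicit (forward) step.

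The final step is simply to add these two inequalities. The crucial—and essentially only—observation is that the two linear-in-gradient terms $\ps{\nabla f(x_n)}{x_{n+1}-x_n}$ and $\ps{\nabla f(x_n)}{x_n-x_{n+1}}$ are exact opposites and cancel, leaving $\mathcal{J}(x_{n+1})\leqslant \mathcal{J}(x_n)-\frac{1}{\gamma}\norm{x_{n+1}-x_n}^2+\frac{L}{2}\norm{x_{n+1}-x_n}^2$, which rearranges to the claimed relation~\eqref{eqdecJ}.

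There is no genuine obstacle in this argument: it is a bookkeeping combination of two known estimates. The only points requiring care are to track the sign conventions in the prox characterization so that the $\frac{1}{\gamma}$ coefficient and the gradient term emerge correctly, and to ensure that the forward and backward contributions are expressed on the same displacement $x_{n+1}-x_n$ so that the gradient terms cancel exactly. It is worth noting in passing that convexity of $f$ is not actually used here—only its $L$-smoothness (via the Descent Lemma) and the convexity of $g$ (via the prox characterization)—which is why the decrease holds before any convergence analysis is invoked.
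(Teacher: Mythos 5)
Your proof is correct and follows essentially the same route as the paper: the paper derives your intermediate inequality $g(x_{n+1})+\ps{\nabla f(x_n)}{x_{n+1}-x_n}\leqslant g(x_n)-\frac{1}{\gamma}\norm{x_{n+1}-x_n}^2$ from the $\frac{1}{\gamma}$-strong convexity of the surrogate that $x_{n+1}$ minimizes, which is exactly the content of the prox characterization (Proposition~\ref{PropProxSD}) you invoke, and then adds the Descent Lemma just as you do. Your closing remark that only $L$-smoothness of $f$ (not its convexity) and convexity of $g$ are used is also accurate.
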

\noindent
We underline that relation~\eqref{eqdecJ}, ensured by the Forward-Backward algorithm  when minimizing $\mathcal{J}=f+g$, is the extension of the property~\eqref{eq:SDC_GD} satisfied by the iterates of the gradient descent algorithm when minimizing a differentiable function~$f$.

\begin{proof}
By definition of $x_{n+1}$, we have
$$x_{n+1}=\uargmin{x\in\E}\,\gamma g(x)+\frac{1}{2}\norm{x-x_n+\gamma \nabla f(x_n)}^2.$$
Note that $x_{n+1}$ is also the minimizer of the following functional:
\be\label{eqxnp2}
x_{n+1}=\uargmin{x\in\E}\, g(x)+f(x_n)+\ps{\nabla f(x_n)}{x-x_n}+\frac{1}{2\gamma}\norm{x-x_n}^2.
\ee
According to the inequality \eqref{eqGradLip}, we have, for all $x\in E$,
\be\label{eqxnp1}
f(x)\leqslant f(x_n)+\ps{\nabla f(x_n)}{x-x_n}+\frac{L}{2}\norm{x-x_n}^2.
\ee
The right hand-side is a function that bounds $\mathcal{J}$ from above, and it is equal to $\mathcal{J}$ in the point $x_n$. We call this function a {\it surrogate function}.
The function in~\eqref{eqxnp2} being $\frac{1}{\gamma}$-strongly convex, we deduce from \eqref{eqxnp2} that\vspace{-0.1cm}
\begin{align*}
&\;g(x_{n+1})+f(x_n)+\ps{\nabla f(x_n)}{x_{n+1}-x_n}+\frac{1}{2\gamma}\norm{x_{n+1}-x_n}^2\\\leqslant&\; \mathcal{J}(x_n)-\frac{1}{2\gamma}\norm{x_{n+1}-x_n}^2.
\end{align*}
By applying \eqref{eqxnp1} to $x_{n+1}$ and adding the previous inequality we obtain
$$\mathcal{J}(x_{n+1})+\frac{1}{\gamma}\norm{x_{n+1}-x_n}^2\leqslant \mathcal{J}(x_n)+\frac{L}{2}\norm{x_{n+1}-x_n}^2.\vspace*{-0.75cm}$$
\end{proof} 

With all these elements, we can now show the convergence of the Forward-Backward algorithm, also known in the literature as the Proximal Gradient Descent (PGD) algorithm.

\begin{theoreme}[Forward-Backward algorithm]\label{ThFB2}
Let $\mathcal{J}=f+g$ be a sum of two convex, coercive, lower semi-continuous functions that are bounded from below. We suppose that $f$ is differentiable with an $L-$Lipschitz gradient. Let $\gamma<\dfrac{2}{L}$ and $x_0\in\E$, and let $(x_n)_{n\in\N}$ be the sequence defined for all $n\in \N$ by
\be
x_{n+1}=\prox_{\gamma g}(\id-\gamma \nabla f)(x_n).
\ee
Then the sequence $(x_n)_{\in\N}$ converges to a minimizer of $\mathcal{J}$.
\end{theoreme}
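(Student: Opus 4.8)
The plan is to follow exactly the template used for the gradient descent result (Theorem~\ref{TheoGradPasFixe}): I would show that the Forward-Backward operator $T=\prox_{\gamma g}\circ(\id-\gamma\nabla f)$ meets all three hypotheses of Lemma~\ref{LemmeConvergence}, namely that it is $1$-Lipschitz, that it admits a fixed point, and that the iterates satisfy $\lim_{n\to\infty}\norm{x_{n+1}-x_n}=0$. Once these are in place, Lemma~\ref{LemmeConvergence} delivers convergence of $(x_n)_{n\in\N}$ to a fixed point of $T$, and it only remains to identify that fixed point as a minimizer of $\mathcal{J}$.

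For the nonexpansiveness, I would simply note that $T$ is the composition of two $1$-Lipschitz maps: the operator $\id-\gamma\nabla f$ is $1$-Lipschitz for $\gamma\leqslant\frac2L$ by Proposition~\ref{LemmeGradLip}, and $\prox_{\gamma g}$ is $1$-Lipschitz for every $\gamma>0$ by Proposition~\ref{PropProxLip}; a composition of $1$-Lipschitz operators is again $1$-Lipschitz. For the existence of a fixed point, I would first check that $\mathcal{J}=f+g$ satisfies the hypotheses of Proposition~\ref{PropExistence}: it is convex and lower semi-continuous as a sum of such functions, it is proper, and it is coercive since one summand is coercive while the other is bounded from below, so their sum tends to $+\infty$ as $\norm{x}\to+\infty$. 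Proposition~\ref{PropExistence} then furnishes a minimizer $x^*$ of $\mathcal{J}$. Combining Fermat's rule (Theorem~\ref{thm:fermat}) with the proposition characterizing $\operatorname{Fix}(T)$ as $\operatorname{zeros}(\partial\mathcal{J})$, this $x^*$ is precisely a fixed point of $T$.

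The remaining step, and the technical heart of the argument, is to establish $\norm{x_{n+1}-x_n}\to 0$. Here I would invoke the surrogate inequality already proved in Lemma~\ref{lemmasurrogate},
$$\mathcal{J}(x_{n+1})+\left(\frac1\gamma-\frac L2\right)\norm{x_{n+1}-x_n}^2\leqslant\mathcal{J}(x_n).$$
Because $\gamma<\frac2L$, the coefficient $\frac1\gamma-\frac L2$ is strictly positive, so $(\mathcal{J}(x_n))_{n\in\N}$ is nonincreasing; since $\mathcal{J}$ is bounded from below it converges, and summing the telescoping inequality over $n$ shows that $\sum_n\norm{x_{n+1}-x_n}^2<+\infty$, whence $\norm{x_{n+1}-x_n}\to0$. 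With all three hypotheses verified, Lemma~\ref{LemmeConvergence} yields convergence of $(x_n)_{n\in\N}$ to a fixed point of $T$, that is, to a minimizer of $\mathcal{J}$.

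I expect the only genuinely delicate point to be the coercivity of the sum (and hence the existence of the fixed point), since both the nonexpansiveness and the vanishing of $\norm{x_{n+1}-x_n}$ are immediate consequences of results already established. In essence the whole proof is an assembly of Propositions~\ref{LemmeGradLip}, \ref{PropProxLip} and~\ref{PropExistence}, Theorem~\ref{thm:fermat}, and Lemmas~\ref{lemmasurrogate} and~\ref{LemmeConvergence}, mirroring the structure of the fixed-stepsize gradient descent proof.
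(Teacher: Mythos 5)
Your proposal is correct and follows essentially the same route as the paper: nonexpansiveness of $T$ from Propositions~\ref{LemmeGradLip} and~\ref{PropProxLip}, vanishing of $\norm{x_{n+1}-x_n}$ from the surrogate inequality of Lemma~\ref{lemmasurrogate}, and then Lemma~\ref{LemmeConvergence}. The only difference is that you spell out the existence of a fixed point (via Proposition~\ref{PropExistence}, Fermat's rule and the fixed-point characterization) more explicitly than the paper does, which is a welcome addition rather than a divergence.
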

\begin{proof}

As for the proof of the gradient descent method, we use Lemma~\ref{LemmeConvergence} to prove the theorem.  We first show that the operator $T$ is $1-$Lipschitz and then demonstrate that the sequence $\norm{x_n-x_{n+1}}$ goes to zero.

The operator $T$ is $1-$Lipschitz as a composition of a proximity operator that is 1-Lipschitz (Proposition~\ref{PropProxLip}) and an operator of the form $\id-\gamma\nabla f$ that is also 1-Lipschitz under the condition that $\gamma\leqslant \frac{2}{L}$ (Proposition~\ref{LemmeGradLip}).
The second point is then given by Lemma~\ref{lemmasurrogate}.
\end{proof}
In the following section we will see that it is possible to control the speed of the functional's decrease towards its minimum.

\subsubsection{Fast Iterative Shrinkage Thresholding Algorithm (FISTA)}
In the early 60' Polyak introduced inertial methods in~\cite{polyak1964some} to improve the convergence rate of gradient descent algorithm for $C^2$ strongly convex functions. The goal of inertial methods is to accelerate the gradient descent algorithm without any second order information on $f$, that is the Hessian of $f$. 

Such an inertial algorithm  uses the memory of the past trajectory at points $x_n$ and $x_{n-1}$ to compute $x_{n+1}$. It is also known as Heavy Ball because the optimization scheme can    be seen as a discretization of an ODE describing the position of a ball submitted to a force field $\nabla f$ with a friction proportional to its speed:
\begin{equation}\label{eq:odehb0}
\ddot{x}(t)+a \dot{x}(t)+\nabla f(x(t))=0.
\end{equation} 
If $f$ is $L$-smooth and $\alpha$-strongly convex, we have seen in Theorem~\ref{thm:rate_sc} (relation~\eqref{rate_sc}) that the sequence $(x_n)_{n\in\N}$ provided by the gradient descent satisfies
\begin{equation*}
f(x_n)-f(x^*)=O(e^{-\frac{\alpha}{L}n}).
\end{equation*}
If we also assume that $f$ is $C^2$, the Heavy Ball method  provides a sequence $(x_n)_{n\in\N}$ ensuring the following rate, that is better if $\frac{\alpha}{L}<<1$, which is often the case for large scale problems:
\begin{equation*}
f(x_n)-f(x^*)=O(e^{-4\sqrt{\frac{\alpha}{L}}n}).\vspace{0.1cm}
\end{equation*}

In this section we focus on two inertial algorithms inspired by the work of Polyak to minimize  composite functions:
\begin{equation}
\min_{x\in\E}\mathcal{J}(x)=\min_{x\in\E}f(x)+g(x).
\end{equation} The first one is FISTA  by Beck and Teboulle~\cite{BeckTeboulle}. This algorithm, based on an original idea by Nesterov~\cite{Nes83}, is dedicated to convex functions. The second one, called V-FISTA~\cite{beck2017first}, can be seen as an adaptation of the original Heavy ball algorithm to composite strongly convex functions.
\paragraph{FISTA for convex functions}
To minimize a convex function $\mathcal{J}=f+g$ where $f$ is convex and  $L-$smooth and $g$ is convex, the FISTA  (Fast Iterative Soft Thresholding Algorithm) algorithm~\cite{BeckTeboulle} writes
\begin{equation}\label{eq:defFISTA0}
\left\{
\begin{array}{lll}
y_n&=x_n+\frac{t_n-1}{t_{n+1}}(x_n-x_{n-1})\\
x_{n+1}&=\prox_{\gamma g}(y_n-\gamma \nabla f(y_n)),
\end{array}
\right.
\end{equation}
where $\gamma\leqslant \frac{1}{L}$, $t_1=1$ and $t_{n+1}=\frac{1+\sqrt{1+4t_n^2}}{2}$.
The following result shows that this sequence generally performs  better than Forward-Backward.
\begin{theoreme}\label{theo:FISTA}
The sequence generated by~\eqref{eq:defFISTA0} satisfies 
\begin{equation}
\mathcal{J}(x_n)-\mathcal{J}(x^*)\leqslant \frac{2}{\gamma(n+1)^2}\norm{x_0-x^*}^2.
\end{equation}
\end{theoreme}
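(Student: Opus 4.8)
The plan is to follow the estimate-sequence/Lyapunov strategy of Beck and Teboulle: I would first isolate a single progress inequality governing one proximal-gradient step, and then combine it across iterations with the weights $t_n$ so that a suitable energy telescopes.

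First I would establish the key one-step lemma: for any $y\in\E$, writing $p=\prox_{\gamma g}(y-\gamma\nabla f(y))$ with $\gamma\leqslant\frac1L$, the inequality
$$\mathcal{J}(x)-\mathcal{J}(p)\geqslant \frac{1}{2\gamma}\norm{p-y}^2+\frac{1}{\gamma}\ps{y-x}{p-y}$$
holds for every $x\in\E$. I would prove it by adding three ingredients: the descent Lemma~\ref{LemmeMajQuad} (relation~\eqref{eqGradLip}), applied with stepsize $\gamma\leqslant\frac1L$ to bound $f(p)$ from above by its linearization at $y$ plus $\frac{1}{2\gamma}\norm{p-y}^2$; the convexity of $f$ (Proposition~\ref{prop:affine}) to bound $f(x)$ from below by its linearization at $y$; and the subgradient characterization of the prox (Proposition~\ref{PropProxSD}), which gives $\frac1\gamma(y-p)-\nabla f(y)\in\partial g(p)$ and hence a lower bound on $g(x)$. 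When these are summed, the $\nabla f(y)$ terms cancel and the stated inequality remains.

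Next I would instantiate this lemma at the FISTA step, taking $y=y_n$ and $p=x_{n+1}$, for the two choices $x=x_n$ and $x=x^*$, and set $v_n=\mathcal{J}(x_n)-\mathcal{J}(x^*)\geqslant 0$. Multiplying the first inequality by $(t_{n+1}-1)$, adding the second, and then multiplying through by $t_{n+1}$, I would invoke the defining recurrence for $t_n$, which is exactly equivalent to $t_{n+1}^2-t_{n+1}=t_n^2$, so that the combination of function-value terms collapses to $t_n^2 v_n-t_{n+1}^2 v_{n+1}$. The crux is then to recognize the remaining right-hand side as a difference of squared norms: introducing $b_n=t_{n+1}x_{n+1}-(t_{n+1}-1)x_n-x^*$ and using the polarization identity $\frac{1}{2\gamma}\norm{a}^2+\frac1\gamma\ps{a}{c}=\frac{1}{2\gamma}\left(\norm{a+c}^2-\norm{c}^2\right)$ with $a=t_{n+1}(x_{n+1}-y_n)$, the inertial definition $y_n=x_n+\frac{t_n-1}{t_{n+1}}(x_n-x_{n-1})$ makes $a+c=b_n$ and $c=b_{n-1}$. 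This yields $t_{n+1}^2 v_{n+1}+\frac{1}{2\gamma}\norm{b_n}^2\leqslant t_n^2 v_n+\frac{1}{2\gamma}\norm{b_{n-1}}^2$, i.e. the energy $E_n:=t_n^2 v_n+\frac{1}{2\gamma}\norm{b_{n-1}}^2$ is non-increasing.

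Finally I would treat the base case: since $t_1=1$ and $y_0=x_0$, one more application of the one-step lemma with $x=x^*$, combined with the expansion of $\norm{x_1-x^*}^2$ around $x_0$, gives $E_1\leqslant\frac{1}{2\gamma}\norm{x_0-x^*}^2$. Monotonicity then forces $t_n^2 v_n\leqslant E_n\leqslant E_1\leqslant\frac{1}{2\gamma}\norm{x_0-x^*}^2$, and an elementary induction $t_{n+1}\geqslant t_n+\frac12$ (from $\sqrt{1+4t_n^2}\geqslant 2t_n$) gives $t_n\geqslant\frac{n+1}{2}$, hence $t_n^2\geqslant\frac{(n+1)^2}{4}$, which delivers the announced rate. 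The main obstacle I anticipate is the bookkeeping in the third paragraph — the algebraic recognition that the weighted combination telescopes through the $b_n$ sequence, and the careful treatment of the first step so that the final constant comes out precisely as $\norm{x_0-x^*}^2$ rather than a quantity involving $x_1$.
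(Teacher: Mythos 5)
Your proposal is correct and is essentially the Beck--Teboulle argument that the paper itself only sketches: both hinge on showing that an energy of the form $t_n^2(\mathcal{J}(x_n)-\mathcal{J}(x^*))+\frac{1}{2\gamma}\norm{\cdot}^2$ is non-increasing thanks to the identity $t_{n+1}^2-t_{n+1}=t_n^2$, and your one-step inequality, telescoping via the vectors $b_n=t_{n+1}x_{n+1}-(t_{n+1}-1)x_n-x^*$, and the bound $t_n\geqslant\frac{n+1}{2}$ all check out. Your write-up is in fact more detailed than the text, which only states the Lyapunov sequence $S_n$ (with a slightly different, and apparently mistyped, normalization of the quadratic term and of the condition on $t_n$) and defers the actual computation.
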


Several remarks can be done before going any further.
\begin{itemize}
\item The convergence rate of FISTA, $O(\frac{1}{n^2})$ is better than the rate $O(\frac{1}{n})$ of Forward-Backward (FB). Numerically it performs better, try it.
\item FISTA should have been called Fast Forward Backward, but the authors prefer a reference to ISTA (Iterative Soft Thresholding Algorithm) which is a particular case of FB to get a better name... 
\item Y. Nesterov proposed the same scheme with $T=Id-\gamma \nabla f$ to minimize a differentiable function in 1983 with the same rate~\cite{Nes83}. For this reason, FISTA is also called a Nesterov's acceleration of FB. 
\item The decay is optimal in the sense that no first order method (using only gradient or subgradient) may provide a rate $O(\frac{1}{n^\delta})$ with $\delta>2$~\cite{Nes83}. Using extra previous iterates $x_k$ with $k<n-1$ does not help to get a better rate.
\item With the original choice of the sequence $(t_n)_{n\in\N^*}$, there is no known proof of the convergence of the sequence $(x_n)_{n\in\N}$ generated by FISTA.
\end{itemize}
The analysis of this algorithm with the  sequence $(t_n)_{n\in\N^*}$ defined in~\eqref{eq:defFISTA0} was  not understood until the works of~\cite{chambolle2015convergence} and~\cite{su2016differential} who proposed a complete  proof of convergence for a modified sequence $(t_n)_{n\in\N^*}$, while noticing that FISTA can be seen as a discretization of the following  ODE 
\begin{equation}\label{eq:odehb}
\ddot{x}(t)+\frac{3}{t} \dot{x}(t)+\nabla f(x(t))=0.
\end{equation} 

More precisely, the proof of Theorem \ref{theo:FISTA} can be obtained by relying on a Lyapunov analysis, by showing that the following sequence
\begin{equation}
S_n=t_n^2(f(x_n)-f(x^*))+\frac{1}{2\gamma}\norm{t_n(x_n-x_{n-1})+2(x_n-x^*)}^2
\end{equation} 
is non-increasing if 
\begin{equation}\label{eq:decFISTA}
t_{n+1}^2-t_{n+1}+t_{n}^2\leqslant 0.
\end{equation}
The choice in~\cite{Nes83,BeckTeboulle} is to maximize $t_n$ at each step, to get the best decay. Other definition of $t_n$ satisfying \eqref{eq:decFISTA} can nevertheless be considered, to ensure both similar asymptotic decay and  weak convergence of the sequence $(x_n)_{n\in\N^*}$. As proposed in~\cite{chambolle2015convergence},  the following version of FISTA
\begin{equation}\label{eq:defFISTA}
\left\{
\begin{array}{lll}
y_n&=x_n+\frac{n}{n+\beta}(x_n-x_{n-1})\\
x_{n+1}&=\prox_{\gamma g}(y_n-\gamma \nabla f(y_n))
\end{array}
\right.
\end{equation}
ensures that 
\begin{equation}
\mathcal{J}(x_n)-\mathcal{J}(x^*)\leqslant \frac{\beta+1}{2\gamma(n+1)^2}\norm{x_0-x^*}^2,
\end{equation}
for $\gamma\leqslant \frac{1}{L}$ and $\beta>3$.
Unfortunately, there is no rule to choose $\beta$. In practice, if no additional assumptions are made on $\mathcal{J}$, i.e. $\mathcal{J}$ is only assumed convex, any value $\beta>3$ seems to be a good choice. When $\mathcal{J}$ is strongly convex, $\beta$ must be chosen depending on $\varepsilon$ if  $\varepsilon$ is the numerical precision we want to reach~\cite{aujol2023fista}.   
\paragraph{V-FISTA for strongly convex functions.}
When the function $\mathcal{J}=f+g$ is $\alpha-$strongly convex and $\alpha$ is known, there exist several ways to improve the convergence reached by the Forward-Backward algorithm. To avoid many complex definitions, we only focus here on the strongly convex hypothesis, but this assumption could be weaken.  The analysis of Polyak in his seminal work~\cite{polyak1964some} deals with $C^2$ functions. The Heavy Ball has been extended to composite functions in~\cite{beck2017first}, with an inertial scheme that can be seen as a discretization of a Heavy Ball ODE~\eqref{eq:odehb}  with a different friction parameter $a$ and a different stepsize $\gamma=\frac{1}{L}$:
\begin{equation}\label{eq:defFISTA2}
\left\{
\begin{array}{lll}
y_n&=&x_n+\frac{\sqrt{L}-\sqrt{\alpha}}{\sqrt{L}+\sqrt{\alpha}}(x_n-x_{n-1})\\
x_{n+1}&=&T(y_n):=\prox_{\gamma g}(y_n-\gamma \nabla f(y_n)).
\end{array}
\right.
\end{equation}
This scheme provides a sequence such that 
\begin{equation}
\mathcal{J}(x_n)-\mathcal{J}(x^*)\leqslant \left(1-\sqrt{\frac{\alpha}{L}}\right)^n(\mathcal{J}(x_0)-\mathcal{J}(x^*)).
\end{equation}
We finally make two remarks on this result.
\begin{itemize}
\item 
The strong convexity parameter $\alpha$ must be known to fix the inertial parameter, which is a difference between FISTA and V-FISTA. 
\item 
If $\frac{\alpha}{L}<<1$, this rate is much better than Forward-Backward.
\end{itemize}
\subsubsection{Douglas-Rachford algorithm (DR)} \label{sec:DR}
We now target  the problem
\begin{equation*} 
\min_{x\in\E}\mathcal{J}(x)=\min_{x\in\E}f(x)+g(x),
\end{equation*}
where the functions $f$ and $g$ are convex, proper, lower semi-continuous and where $\mathcal{J}$ is bounded from below. The difference with the previous setting is that we do not make any assumptions on the differentiablity of $f$. We also assume that we are able to compute the proximity operators of both $f$ and $g$. The first thing to notice is that under these assumptions, there exists a minimizer of $\mathcal{J}$.\\

As before, we can identify an operator that is $1-$Lipschitz and whose fixed points are associated to the minimizers of $\mathcal{J}$. In the case of Forward-Backward, these fixed point were the minimizers of $\mathcal{J}$, whereas for the Douglas-Rachford algorithm, the minimizers are the images of these fixed points through an operator. 
To introduce this operator, we first need to define the {\it reflected proximity operator}.

\begin{definition}\label{def:rprox}
 The {\it reflected prox} or $\rprox$ of  a proper and convex function $f$ is 
$$\rprox_f=2\prox_f-\id.$$
\end{definition}
The Douglas-Rachford algorithm is then based on the following proposition.
\begin{proposition}\label{prop:fixed_DR}
Let $\mathcal{J}=f+g$ be a functional defined from $E$ to $\R\cup+\infty$, where the two functions $f$ and $g$ are both convex, proper, lower semi-continuous and satisfying the hypotheses of Lemma~\ref{LSSD} (on the sum of subdifferentials), and let $\gamma$ be a strictly positive real number. Then we have
\be
\text{zeros }(\partial \mathcal{J})=\prox_{\gamma g}\left(Fix(\rprox_{\gamma f}\rprox_{\gamma g})\right).
\ee
\end{proposition}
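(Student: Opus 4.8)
The plan is to make the fixed-point condition for $T := \rprox_{\gamma f}\rprox_{\gamma g}$ fully explicit, then translate it into a subdifferential inclusion using the characterization of the proximal operator from Proposition~\ref{PropProxSD}. First I would take an arbitrary point $z$ and set $x = \prox_{\gamma g}(z)$, so that by Definition~\ref{def:rprox} we have $\rprox_{\gamma g}(z) = 2x - z$. Writing $y = \prox_{\gamma f}(2x-z)$, this gives $\rprox_{\gamma f}(\rprox_{\gamma g}(z)) = 2y - (2x - z)$, and the fixed-point equation $T(z) = z$ collapses, after cancelling $z$, to $2y - 2x = 0$, i.e.\ $y = x$. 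Hence $z \in \operatorname{Fix}(T)$ if and only if, with $x = \prox_{\gamma g}(z)$, one also has $x = \prox_{\gamma f}(2x - z)$.

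Next I would convert these two proximal identities into inclusions. Recalling $\prox_{\gamma g} = (\id + \gamma \partial g)^{-1}$, Proposition~\ref{PropProxSD} applied to $\gamma g$ says that $x = \prox_{\gamma g}(z)$ is equivalent to $z - x \in \gamma \partial g(x)$; applied to $\gamma f$, it says that $x = \prox_{\gamma f}(2x - z)$ is equivalent to $(2x - z) - x = x - z \in \gamma \partial f(x)$. Adding the two inclusions, the terms $z - x$ and $x - z$ cancel and I obtain $0 \in \gamma \partial f(x) + \gamma \partial g(x)$.

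Under the qualification hypothesis of Lemma~\ref{LSSD}, the sum rule yields $\partial f(x) + \partial g(x) = \partial(f+g)(x) = \partial \mathcal{J}(x)$, so the displayed inclusion is exactly $0 \in \partial \mathcal{J}(x)$, that is $x \in \text{zeros}(\partial \mathcal{J})$. This proves $\prox_{\gamma g}(\operatorname{Fix}(T)) \subseteq \text{zeros}(\partial \mathcal{J})$. For the reverse inclusion I would start from $x$ with $0 \in \partial \mathcal{J}(x)$, use the sum rule once more to choose $u \in \partial f(x)$ and $v \in \partial g(x)$ with $u + v = 0$, and reconstruct a fixed point by setting $z = x + \gamma v$. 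Then $z - x = \gamma v \in \gamma \partial g(x)$, so the reverse direction of Proposition~\ref{PropProxSD} gives $x = \prox_{\gamma g}(z)$; and since $2x - z = x - \gamma v = x + \gamma u$ with $\gamma u \in \gamma \partial f(x)$, the same proposition gives $x = \prox_{\gamma f}(2x - z)$. By the first paragraph, $z \in \operatorname{Fix}(T)$ and $\prox_{\gamma g}(z) = x$, which closes the argument.

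The main obstacle is not the algebra but the subdifferential sum rule: the equality $\partial(f+g) = \partial f + \partial g$ is exactly where the qualification hypothesis of Lemma~\ref{LSSD} is used, and without it only one inclusion of the two survives. I would also be careful with the signs and directions of the inclusions coming from Proposition~\ref{PropProxSD}, since that is where errors typically creep in, and would emphasize that the map $z \mapsto \prox_{\gamma g}(z)$ sending fixed points to zeros of $\partial\mathcal{J}$ need not be injective, so the statement is a set equality on the image $\prox_{\gamma g}(\operatorname{Fix}(T))$ rather than a bijection between the two sets.
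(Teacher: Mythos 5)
Your proof is correct and follows essentially the same route as the paper's: both reduce the fixed-point equation for $\rprox_{\gamma f}\rprox_{\gamma g}$ to the pair of inclusions $z-x\in\gamma\partial g(x)$ and $x-z\in\gamma\partial f(x)$ via Proposition~\ref{PropProxSD}, and both invoke the sum rule of Lemma~\ref{LSSD} to identify $0\in\partial f(x)+\partial g(x)$ with $0\in\partial\mathcal{J}(x)$. The only difference is presentational (you split the set equality into two explicit inclusions where the paper writes a single chain of equivalences), and you correctly locate where the qualification hypothesis is actually needed, namely in the direction $\mathrm{zeros}(\partial\mathcal{J})\subseteq\prox_{\gamma g}(\mathrm{Fix}(T))$.
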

\begin{proof}
Under the hypotheses of Lemma \ref{LSSD}, we have
\begin{align*}
0\in \partial \mathcal{J}(x)&\Leftrightarrow 0\in\partial \gamma \mathcal{J}(x)\\ 
&\Leftrightarrow 0\in\partial \gamma f(x)+\partial \gamma g(x)\\
&\Leftrightarrow \exists z\in\E \text{ such that }-z\in\partial \gamma f(x)
\text{ and } z\in\partial \gamma g(x)\\
&\Leftrightarrow \exists y\in\E \text{ such that }x-y\in\partial \gamma f(x)
\text{ and } y-x\in\partial \gamma g(x)
\end{align*}
We rewrite $x-y\in\partial \gamma f(x)$ as  $2x-y\in(\id+\partial\gamma f)(x)$. The relation $y-x\in\partial \gamma g(x)$ can also be rewritten as $y\in (\id+\gamma \partial g)(x)$ and thus $x=\prox_{\gamma g}(y)$, which gives
$$0\in \partial \mathcal{J}(x)\Leftrightarrow \exists y\in\E \text{ such that } 2x-y\in(\id+\partial\gamma f)(x)\text{ and }x=\prox_{\gamma g}(y).$$
By using the definition of the operator $\rprox$ we obtain:\vspace{-0.1cm}
\begin{align*}
&0\in \partial \mathcal{J}(x)\\\Leftrightarrow &\exists y\in\E \text{ such that } x=\prox_{\gamma f}(\rprox_{\gamma g}y)\text{ and }x=\prox_{\gamma g}(y)\\
\Leftrightarrow& \exists y\in\E \text{ such that } y=2x-\rprox_{\gamma g}y=\rprox_{\gamma f}(\rprox_{\gamma g} y)
\text{ and }x=\prox_{\gamma g}(y)\\
\Leftrightarrow& \exists y\in\E \text{ such that } y\in Fix\left(\rprox_{\gamma f}\rprox_{\gamma g}\right)\text{ and }x=\prox_{\gamma g}(y).
\end{align*} 
\end{proof}

From this proposition we can define a minimization algorithm for $\mathcal{J}$ that estimates a fixed point of the operator $T=\rprox_{\gamma f}\rprox_{\gamma g}$ and then applies 
$\prox_{\gamma f}$ to this fixed point to get a minimizer of $\mathcal{J}$. In order to estimate such a fixed point, 
we first give a key property of $\rprox$ operators related to firm nonexpansiveness (see Definition~\ref{def:FNE}).


\begin{lemme}\label{lemma:rprox}
Let $T$ be an operator defined from $E$ to $E$. The following two statements are equivalent:
\bi
\item $T$ is firmly nonexpansive
\item $R=2T-\id$ is nonexpansive (1-Lipschitz)
\ei
\end{lemme}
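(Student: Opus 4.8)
The plan is to reduce both statements to one and the same scalar inequality through a direct algebraic expansion. I would fix an arbitrary pair $(x,y)\in\E^2$ and introduce the shorthands $a=x-y$ and $b=T(x)-T(y)$. With this notation the firm nonexpansiveness of $T$ reads $\norm{b}^2+\norm{a-b}^2\leqslant\norm{a}^2$. On the other hand, since $R=2T-\id$, we have $R(x)-R(y)=2b-a$, so the nonexpansiveness of $R$ reads $\norm{2b-a}^2\leqslant\norm{a}^2$.

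The key step is then to expand both squared norms with the identity $\norm{u+v}^2=\norm{u}^2+2\ps{u}{v}+\norm{v}^2$. The firm nonexpansiveness inequality becomes $\norm{a}^2+2\norm{b}^2-2\ps{a}{b}\leqslant\norm{a}^2$, which after cancelling $\norm{a}^2$ is equivalent to
\be
\norm{b}^2\leqslant\ps{a}{b}.
\ee
Expanding the second inequality gives $4\norm{b}^2-4\ps{a}{b}+\norm{a}^2\leqslant\norm{a}^2$, that is $4\bigl(\norm{b}^2-\ps{a}{b}\bigr)\leqslant 0$, which is the very same condition up to the positive factor $4$. Hence, for each fixed pair $(x,y)$, both properties are equivalent to $\norm{T(x)-T(y)}^2\leqslant\ps{x-y}{T(x)-T(y)}$.

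Since this equivalence holds pointwise, requiring either property for all $(x,y)\in\E^2$ amounts to requiring the common inequality for all $(x,y)$, so the two statements of the lemma are equivalent. There is no genuine obstacle here: the only point demanding a little care is to check that both expansions collapse to \emph{exactly} the same inequality, so that neither direction of the equivalence is lost. It is also worth noting that the intermediate inequality $\norm{b}^2\leqslant\ps{a}{b}$ is itself the standard scalar characterization of firm nonexpansiveness, which makes the reduction transparent and explains why the $\rprox$ operators will be handled through this reflection $R=2T-\id$.
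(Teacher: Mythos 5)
Your proof is correct and follows essentially the same route as the paper: both arguments expand the two squared norms and reduce each property to the single scalar condition $\norm{T(x)-T(y)}^2\leqslant\ps{x-y}{T(x)-T(y)}$ (the paper phrases this as a sign condition on $\ps{u}{v}$ with $u=Tx-Ty$ and $v=Tx-x-(Ty-y)$, which is the same quantity up to sign). No gap; nothing further needed.
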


\begin{proof}
We  show that these two properties are equivalent to the non negativeness of a scalar product. Let $(x,y)\in\E^2$. By setting $u=Tx-Ty$ and $v=Tx-x-(Ty-y)$, we have
$$\norm{x-y}^2=\norm{u-v}^2=\norm{u}^2+\norm{v}^2-2\ps{u}{v}\vspace{0.1cm}$$
thus $T$ is firmly nonexpansive if and only if $\ps{u}{v}\geqslant 0$. Additionally,
$$\norm{Rx-Ry}^2=\norm{u+v}^2=\norm{u}^2+\norm{v}^2+2\ps{u}{v}
=\norm{x-y}^2+4\ps{u}{v}$$
which shows that the nonexpansiveness of $R$  is equivalent to $\ps{u}{v}\leqslant 0$.
\end{proof}
\vspace*{0.3cm}

\begin{corollaire}\label{cor:rprox}
For any convex function $f$ and parameter $\tau>0$, the operator $\rprox_{\tau f}$ is nonexpansive.
\end{corollaire}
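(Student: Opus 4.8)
The plan is to recognize that this corollary is an immediate consequence of chaining together two results already established in the excerpt, with essentially no additional work beyond verifying that the hypotheses transfer. The key observation is that $\rprox_{\tau f}$ is, by Definition~\ref{def:rprox}, exactly the operator $2\,\prox_{\tau f}-\id$, which is precisely the operator $R=2T-\id$ appearing in Lemma~\ref{lemma:rprox} once we take $T=\prox_{\tau f}$.

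First I would check that $\prox_{\tau f}$ is firmly nonexpansive. Since $f$ is convex and $\tau>0$, the scaled function $\tau f$ is itself proper and convex, so Proposition~\ref{PropProxLip} applies verbatim to $\tau f$ and tells us that $\prox_{\tau f}$ is firmly nonexpansive. This is the only place where the convexity hypothesis and positivity of $\tau$ are genuinely used.

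Next I would invoke Lemma~\ref{lemma:rprox} with $T=\prox_{\tau f}$. The lemma states that $T$ is firmly nonexpansive if and only if $R=2T-\id$ is nonexpansive. Having just established the left-hand side, the equivalence gives that $R=2\,\prox_{\tau f}-\id=\rprox_{\tau f}$ is nonexpansive, which is exactly the claim.

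There is really no main obstacle here: the corollary is a formal combination of Proposition~\ref{PropProxLip} and Lemma~\ref{lemma:rprox}, and the entire proof fits in two sentences. The only point worth stating explicitly, so as not to leave a gap, is that the hypotheses of Proposition~\ref{PropProxLip} are satisfied by $\tau f$ rather than $f$ itself, which holds because convexity and properness are preserved under multiplication by the positive scalar $\tau$.
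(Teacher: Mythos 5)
Your proof is correct and follows exactly the paper's own argument: apply Proposition~\ref{PropProxLip} to the convex function $\tau f$ to get firm nonexpansiveness of $\prox_{\tau f}$, then conclude via the equivalence in Lemma~\ref{lemma:rprox}. The extra remark that convexity and properness are preserved under scaling by $\tau>0$ is a welcome clarification the paper leaves implicit.
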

\begin{proof}
The result follows from Lemma~\ref{lemma:rprox}, given the definition of the $\rprox$ operator (Definition~\ref{def:rprox}) and the fact that the proximal operator of a convex function is firmly nonexpansive (Proposition~\ref{PropProxLip}). 
\end{proof}

The $\rprox$ operator is thus nonexpansive ($1$-Lipschitz), but this is not sufficient to show the convergence of $x_{n+1}=\rprox_{\gamma f}\rprox_{\gamma g}(x_n)$. Indeed, contrary to gradient descent and Forward-Backward algorithms, we do not have a descent Lemma ensuring that $\lim_{n\to\infty}\norm{x_{n+1}-x_n}=0$ to apply Lemma~\ref{LemmeConvergence}. 
Hence we rather rely on the Krasnosel'skii-Mann Algorithm that considers an averaging process to target fixed-points of nonexpansive operators. 
\begin{theoreme}[Krasnosel'skii-Mann Algorithm]\label{thm:KM}
Let $D$ be a non-empty, closed and convex subset of $E$ and let $T,\,D\to D$ be a 1-Lipschitz operator such that the set of fixed points of $T$ is non-empty. Let $(\lambda_n)_{n\in\N}$ be a sequence of real numbers in $[0,1]$ such that $\sum_{n\in\N}\lambda_n(1-\lambda_n)=+\infty$, and take $x_0\in D$. We define
$$\forall n\in\N,\quad x_{n+1}=x_n+\lambda_n(Tx_n-x_n).$$
\end{theoreme}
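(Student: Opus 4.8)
The statement as given concludes (once completed) that the sequence $(x_n)_{n\in\N}$ converges to a fixed point of $T$, and the plan is to obtain this through a Fejér-monotonicity argument combined with the finite-dimensional closure argument already used in Lemma~\ref{LemmeConvergence}. The starting point is the averaging structure of the iteration: writing $x_{n+1}-y=(1-\lambda_n)(x_n-y)+\lambda_n(Tx_n-y)$ for an arbitrary fixed point $y$ of $T$, I would apply the convex-combination identity
$$\norm{(1-\lambda)a+\lambda b}^2=(1-\lambda)\norm{a}^2+\lambda\norm{b}^2-\lambda(1-\lambda)\norm{a-b}^2$$
with $a=x_n-y$, $b=Tx_n-y$ and $\lambda=\lambda_n$. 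Since $T$ is $1$-Lipschitz and $Ty=y$, we have $\norm{Tx_n-y}=\norm{Tx_n-Ty}\leqslant\norm{x_n-y}$, which yields the key contraction-type estimate
$$\norm{x_{n+1}-y}^2\leqslant\norm{x_n-y}^2-\lambda_n(1-\lambda_n)\norm{Tx_n-x_n}^2.$$

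From this single inequality two facts follow. First, the sequence $(\norm{x_n-y})_{n\in\N}$ is non-increasing, hence convergent and bounded, so $(x_n)_{n\in\N}$ is bounded. Second, summing the telescoping bound over $n$ gives $\sum_{n\in\N}\lambda_n(1-\lambda_n)\norm{Tx_n-x_n}^2\leqslant\norm{x_0-y}^2<+\infty$. Combined with the hypothesis $\sum_{n\in\N}\lambda_n(1-\lambda_n)=+\infty$, this forces $\liminf_{n\to\infty}\norm{Tx_n-x_n}=0$: if the residual norm were bounded below by some $c>0$, the weighted series would diverge, a contradiction. (One can optionally sharpen this to $\lim_{n\to\infty}\norm{Tx_n-x_n}=0$ by checking that $\norm{Tx_n-x_n}$ is itself non-increasing, using $x_{n+1}-Tx_n=(1-\lambda_n)(x_n-Tx_n)$ together with $\norm{Tx_{n+1}-Tx_n}\leqslant\norm{x_{n+1}-x_n}=\lambda_n\norm{Tx_n-x_n}$, but the $\liminf$ suffices for what follows.)

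To close the argument I would mimic the proof of Lemma~\ref{LemmeConvergence}. Because $\E$ is finite-dimensional and $(x_n)_{n\in\N}$ is bounded, and because $\liminf_{n\to\infty}\norm{Tx_n-x_n}=0$, there exists a subsequence $(x_{n_k})_{k\in\N}$ converging to some $z\in\E$ along which $\norm{Tx_{n_k}-x_{n_k}}\to 0$. Continuity of $T$ (it is $1$-Lipschitz) then gives $Tz=z$, so $z$ is a fixed point. Applying the Fejér estimate above with the specific choice $y=z$ shows that $\norm{x_n-z}$ is non-increasing; since a subsequence converges to $0$, the whole monotone sequence $\norm{x_n-z}$ converges to $0$, that is, $x_n\to z$.

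The main obstacle is the transition from the weighted summability $\sum_n\lambda_n(1-\lambda_n)\norm{Tx_n-x_n}^2<+\infty$ to the vanishing of the residual and the simultaneous extraction of a subsequence that both converges and has vanishing residual: the divergence of $\sum_n\lambda_n(1-\lambda_n)$ is precisely the structural hypothesis that rules out the residual norm staying bounded away from zero, and this is where the role of the averaging parameters $\lambda_n$ is essential. Everything else is the standard combination of Fejér monotonicity with compactness in finite dimension, exactly as in Lemma~\ref{LemmeConvergence}.
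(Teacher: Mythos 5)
Your proposal is correct and follows essentially the same route as the paper's proof: the convex-combination identity applied to $x_{n+1}-y$, the resulting Fej\'er inequality and weighted summability of $\norm{Tx_n-x_n}^2$, and the finite-dimensional extraction plus monotonicity of $\norm{x_n-z}$ to conclude. The only cosmetic difference is that you work with $\liminf_n\norm{Tx_n-x_n}=0$ where the paper upgrades to a full limit via the non-increasing residual bound $\norm{Tx_{n+1}-x_{n+1}}\leqslant\norm{Tx_n-x_n}$ (the sharpening you mention as optional), and either variant suffices.
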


\begin{proof}
Let $y$ be a fixed point point of $T$ (i.e. $Ty=y$) and $n\in\N$, we have \begin{align*}
\norm{x_{n+1}-y}^2&=\norm{(1-\lambda_n)(x_n-y)+\lambda_n(Tx_n-y)}^2\\
&=(1\hspace{-.5pt}-\hspace{-.5pt}\lambda_n)\norm{x_n\hspace{-.5pt}-\hspace{-.5pt}y}^2\hspace{-.5pt}+\hspace{-.5pt}\lambda_n\norm{Tx_n\hspace{-.5pt}-\hspace{-.5pt}Ty}^2\hspace{-.5pt}-\hspace{-.5pt}\lambda_n(1\hspace{-.5pt}-\hspace{-.5pt}\lambda_n)\norm{Tx_n\hspace{-.5pt}-\hspace{-.5pt}x_n}^2\\
&\leqslant \norm{x_n-y}^2-\lambda_n(1-\lambda_n)\norm{Tx_n-x_n}^2,
\end{align*} 
where we used the following relation:
\begin{align*}
2\langle x_n-y, Tx_n-y\rangle
= \norm{Tx_n-y}^2- \norm{Tx_n-x_n}^2+\norm{x_n-y}^2,
\end{align*} 
as well as the fact that $T$ is non expansive  for the last inequality.   
We deduce that
\be\label{eqKM}
\sum_{n\in\N}\lambda_n(1-\lambda_n)\norm{Tx_n-x_n}^2\leqslant \norm{x_0-y}^2.
\ee
Also observe that
\begin{align*}
\norm{Tx_{n+1}-x_{n+1}}&=\norm{Tx_{n+1}-Tx_n+(1-\lambda_n)(Tx_n-x_n)}\\
&\leqslant \norm{x_{n+1}-x_n}+(1-\lambda_n)\norm{Tx_n-x_n}\\
&\leqslant \norm{Tx_n-x_n}
\end{align*}
As $\sum_{n\in\N}\lambda_n(1-\lambda_n)=+\infty$ and as the sequence $(\norm{Tx_n-x_n})_{n\in\N}$ is non increasing we deduce from \eqref{eqKM} that $(Tx_n-x_n)_{n\in\N}$ converges to 0.
The sequence $\norm{x_n-y}$ being non increasing, we get that all elements from the sequence $(x_n)_{n\in N}$
belong to a close ball of center $y$ and radius $\norm{y-x_0}$, which is a compact since $\E$ is finite-dimensional. 
Hence we can extract a subsequence $(x_{n_k})_{k\in\N}$ that converges to a point  $x\in \E$.
\newpage

As $Tx_n-x_n$ converges to $0$, the sequence $(Tx_{n_k}-x_{n_k})_{k\in\N}$ goes to $0$ too. We deduce that the sequence $(Tx_{n_k})_{k\in\N}$ also converges to $x$. Since this sequence also converges to $Tx$, we get that  $x=Tx$ is a fixed point of $T$.
Finally, since $x$ is a fixed point of $T$, the sequence
 $(\norm{x_n-x})_{n\in\N}$ is non increasing. We can extract a subsequence that converges to $0$, which implies that $(x_n)_{n\in\N}$ converges to $x$ a fixed point of~$T$. 
\end{proof}
We can now show the convergence of the Douglas-Rachford algorithm.
\begin{theoreme}[Douglas-Rachford algorithm]
Let $f$ and $g$ be two convex, proper, lower semi-continuous functions, bounded from below. Let $(\mu_n)_{n\in\N}$ be a sequence of elements in $[0,2]$ such that $\sum_{n\in\N}\mu_n(2-\mu_n)=+\infty$. Let $\gamma>0$ and $x_0\in \E$. Let $(x_n)_{n\in\N},\,(y_n)_{n\in\N}$ and $(z_n)_{n\in\N}$ be the sequences defined by
\be
\forall n\in\N \left\{\begin{array}{ll}
y_n&=\prox_{\gamma g}(x_n),\\
z_n&=\prox_{\gamma f}(2y_n-x_n),\\
x_{n+1}&=x_n+\mu_n(z_n-y_n),
\end{array}\right.
\ee
which is equivalent to $$x_{n+1}=x_n+\dfrac{\mu_n}{2}(\rprox_{\gamma f}\rprox_{\gamma g}(x_n) -x_n).$$
Then there exists $x\in E$ minimizing $(f+g)$ such that  $(x_n)_{n\in\N}$ converges to $x$.
\end{theoreme}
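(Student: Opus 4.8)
The plan is to recognize the Douglas-Rachford recursion as a Krasnosel'skii-Mann iteration applied to the operator $T=\rprox_{\gamma f}\rprox_{\gamma g}$, and then to invoke Theorem~\ref{thm:KM} essentially as a black box. First I would record the algebraic identity already displayed in the statement, namely that the three-line scheme coincides with $x_{n+1}=x_n+\frac{\mu_n}{2}(Tx_n-x_n)$. A direct substitution using $\rprox_{\gamma g}(x_n)=2y_n-x_n$ and $Tx_n=\rprox_{\gamma f}(2y_n-x_n)=2z_n-(2y_n-x_n)$ gives $Tx_n-x_n=2(z_n-y_n)$, hence $\frac{\mu_n}{2}(Tx_n-x_n)=\mu_n(z_n-y_n)$, which confirms the equivalence. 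Setting $\lambda_n=\frac{\mu_n}{2}$, the hypothesis $\mu_n\in[0,2]$ gives $\lambda_n\in[0,1]$, and since $\lambda_n(1-\lambda_n)=\frac{1}{4}\mu_n(2-\mu_n)$, the condition $\sum_n\mu_n(2-\mu_n)=+\infty$ is exactly $\sum_n\lambda_n(1-\lambda_n)=+\infty$. Thus the iteration fits the template of Theorem~\ref{thm:KM} verbatim.

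Next I would check the two remaining hypotheses of Theorem~\ref{thm:KM}. The operator $T$ is nonexpansive: by Corollary~\ref{cor:rprox} both $\rprox_{\gamma f}$ and $\rprox_{\gamma g}$ are $1$-Lipschitz (as $f,g$ are convex and $\gamma>0$), and a composition of $1$-Lipschitz maps is $1$-Lipschitz. To see that the fixed-point set is non-empty, I would use that $\mathcal{J}=f+g$ admits a minimizer $x^*$ under the stated assumptions (as noted in the text preceding the theorem); by Fermat's rule (Theorem~\ref{thm:fermat}) this means $0\in\partial\mathcal{J}(x^*)$, i.e. $x^*\in\operatorname{zeros}(\partial\mathcal{J})$, and then Proposition~\ref{prop:fixed_DR}, which asserts $\operatorname{zeros}(\partial\mathcal{J})=\prox_{\gamma g}(\operatorname{Fix}(T))$, forces $\operatorname{Fix}(T)\neq\emptyset$. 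Taking $D=\E$, which is trivially closed and convex, completes the verification of the hypotheses.

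Applying Theorem~\ref{thm:KM} then yields convergence of $(x_n)_{n\in\N}$ to some $x^\star\in\operatorname{Fix}(T)$, and the final step is to pass from the fixed point to a minimizer. By Proposition~\ref{prop:fixed_DR} the point $\prox_{\gamma g}(x^\star)$ is a minimizer of $\mathcal{J}$, and since $\prox_{\gamma g}$ is $1$-Lipschitz (Proposition~\ref{PropProxLip}) and hence continuous, the shadow sequence $y_n=\prox_{\gamma g}(x_n)$ converges to $\prox_{\gamma g}(x^\star)$; evaluating the recursion at the limit gives $z^\star=y^\star$, so $z_n$ converges to the same minimizer. The subtle point I would flag is precisely this identification: the quantity that Krasnosel'skii-Mann makes convergent is $x_n\to x^\star$, a fixed point of $T$ that is in general \emph{not} itself a minimizer; the minimizer is its image $\prox_{\gamma g}(x^\star)$, so the phrase ``converges to a minimizer'' in the statement is most naturally read through the shadow sequences $y_n$ and $z_n$. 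Apart from this bookkeeping, no further analytic work is needed, since the nonexpansiveness of $\rprox$ and the convergence of averaged nonexpansive iterations have already been established in Corollary~\ref{cor:rprox} and Theorem~\ref{thm:KM}.
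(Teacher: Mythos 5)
Your proof is correct and follows essentially the same route as the paper's: verify the algebraic equivalence with the averaged iteration of $T=\rprox_{\gamma f}\rprox_{\gamma g}$, check nonexpansiveness via Corollary~\ref{cor:rprox}, check $\operatorname{Fix}(T)\neq\emptyset$ via Proposition~\ref{prop:fixed_DR}, and invoke Theorem~\ref{thm:KM}. Your closing remark is well taken: the limit of $(x_n)$ is a fixed point of $T$ and not in general a minimizer of $f+g$, so the minimizer is only reached through the shadow sequence $y_n=\prox_{\gamma g}(x_n)$ --- a point the paper's statement and proof leave implicit.
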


\begin{proof}
First observe that 
\begin{align*}
    x_{n+1}&=x_n+\mu_n(z_n-y_n)\\
    &=x_n+\mu_n\left(\prox_{\gamma f}(2y_n-x_n)-\prox_{\gamma g}(x_n)\right)\\
    &=x_n+\mu_n\left(\prox_{\gamma f}(\rprox_{\gamma g}(x_n))-\prox_{\gamma g}(x_n)\right)\\
    &=x_n+\frac{\mu_n}2\left(2\prox_{\gamma f}(\rprox_{\gamma g}(x_n))-2\prox_{\gamma g}(x_n)+x_n-x_n\right)\\
    &=x_n+\frac{\mu_n}2\left(\rprox_{\gamma f}(\rprox_{\gamma g}(x_n))-\rprox_{\gamma g}(x_n)-x_n\right)\\
    &=x_n+\frac{\mu_n}2\left(\rprox_{\gamma f}(\rprox_{\gamma g}(x_n))-x_n\right).
\end{align*}

Corollary~\ref{cor:rprox}  states that the $\rprox$ operator is  1-Lipschitz.
We now define the operator $T=\rprox_{\gamma f}\rprox_{\gamma g}$. We know that this operator is 1-Lipschitz as a composition of two operators that are 1-Lipschitz. 
From Proposition~\ref{prop:fixed_DR}, we also know that  the set of fixed points of $T$ is equal to the image by the $1$-Lipschitz operator $\prox_{\gamma f}$ of the set of minimizers of $f+g$, which is non-empty according to the hypotheses on $f$ and $g$. We note that $x_{n+1}=x_n+\dfrac{\mu_n}{2}(Tx_n -x_n)$. We conclude by applying Krasnoselsky-Mann's algorithm presented in Theorem~\ref{thm:KM}.
\end{proof}

\newpage
\paragraph{Other formulations of Douglas-Rachford} The Douglas-Rachford algorithm can be expressed in many forms in the literature. 
For example, it is frequent for the parameters $\mu_n$ to be fixed to $1$. The algorithm is then expressed in the following way:
\be\label{algoDR}
\forall n\in\N \left\{\begin{array}{ll}
y_n&=\prox_{\gamma g}(x_n),\\
z_n&=\prox_{\gamma f}(2y_n-x_n),\\
x_{n+1}&=x_n+z_n-y_n
\end{array}\right.
\ee

By omitting the variable $z_n$ we obtain the following description of the sequences $(x_n)_{n\in\N}$ and $(y_n)_{n\in\N}$:
\be\label{algoDR00}
\forall n\geqslant 1 \left\{\begin{array}{cl}
x_{n+1}=&x_{n}+\prox_{\gamma f}(2y_{n}-x_{n})-y_{n},\\
y_{n+1}=&\prox_{\gamma g}(x_{n+1}).
\end{array}\right.
\ee
We also mention the change of variables $w_n=y_n-x_n$ that appears in the literature:
\be\label{eqDR4}
\forall n\geqslant 1 \left\{\begin{array}{cl}
u_{n+1}=&\prox_{\gamma f}(y_{n}+w_{n}) \\
y_{n+1}=&\prox_{\gamma g}(u_{n+1}-w_{n}),\\
w_{n+1}=&w_{n}+y_{n+1}-u_{n+1}.
\end{array}\right.
\ee 
This algorithm can also be adapted to the case where the minimization problem is of the form:
\begin{equation*}
\min_{x\in\E}f(Ax)+g(x)
\end{equation*}
where $A$ is a linear operator from $\E$ to $F$.
Indeed, one can consider the equivalent problem:
\begin{equation*}
\min_{x,y\in\E\times F}f(y)+g(x)+\iota_{Ax=y}(x,y),
\end{equation*}

\noindent
and apply the Douglas-Rachford algorithm to minimize the sum of the two convex functions functions $f(y)+g(x)$ and $\iota_{Ax=y}(x,y)$, with respect to the augmented  variable  $v=(x,y)$.

\subsubsection{Parallel ProXimal Algorithm (PPXA)}\label{sec:PPXA}
The PPXA algorithm (see Chapter 10 in \cite{CombettesPesquet}) adapts the Douglas-Rachford algorithm  to solve:
\begin{equation}\label{eq:PPXA}
\min_{x\in E}\sum_{i=1}^M f_i(x),
\end{equation}
where the $M$ function $f_i$  are convex proper and l.s.c. 
The PPXA algorithm increases the dimension of the problem and it allows for the parallel computation of the proximal operators $\prox_{f_i}$.
Let us denote as $X$ a vector in  $E^M$
that writes $X=(x_1,x_2,\cdots,x_M)$ with $x_i\in \E$ for all $i\leqslant M$.
PPXA then consists in reformulating the problem~\eqref{eq:PPXA} as an optimization problem in $\E^M$:
\begin{equation}\label{eq:PPXA2}
\min_{X=(x_i)_{i\leqslant n}\in E^M}\sum_{i=1}^M f_i(x_i)+\iota_{D}(X)
\end{equation}
where $D=\{X=(x_n)_{k\leqslant M}\in E^M\text{ such that }x_n=x_1,\,\forall k\leqslant M\}$.

Assuming that the optimization problem~\eqref{eq:PPXA} has at least one solution, a minimizer of~\eqref{eq:PPXA} can be obtained by applying the Douglas-Rachford  algorithm to the problem $\min_{X\in\E^M}\tilde f(X)+\tilde g(X)$ with 
$\tilde f(X)=\sum_{i=1}^M f_i(x_i)$ and $\tilde g(X)=\iota_{D}(X)$.
The proximity operator of $\tilde f$ can be computed in a parallel way using the separability of $\tilde f$, following Example~\eqref{Pprox}.
One can also show that the proximity operator of the indicator of the set $D$ is a simple mean.

We finally provide the following observations about PPXA:
\begin{itemize}
\item Even when only $M=2$ functions are involved, applying PPXA is different from applying Douglas-Rachford. The computation of proximal operators is parallel with PPXA, and it may be faster to use PPXA than Douglas-Rachford  for $M=2$ functions. 
\item If the functions $f_i$ for $i\geqslant 2$ are composed with  linear operators $L_i$:
\begin{equation}
\min_{x\in E}f_1(x)+\sum_{i=2}^M f_i(L_ix)
\end{equation}
a similar reformulation can be considered:
\begin{equation}\label{ppxa2}
\min_{X=(x_i)_{i\leqslant M}\in E^n}\sum_{i=1}^M f_i(x_i)+\iota_{D_2}(X)
\end{equation}
where $D_2=\{X=(x_i)_{i\leqslant M}\in E^M\text{ such that }L_ix_1=x_i,\,\forall i\leqslant M\}$.
To solve problem~\eqref{ppxa2} with the Douglas-Rachford algorithm, one needs to perform the projection on $D_2$. It exists a closed form for this projection, which is similar to \eqref{EqProj}. If we denote 
$(p_1,p_2,\cdots,p_n)=Proj_{D_2}(X)$ then $\forall k\geqslant 2$ we have $p_k=L_kp_1$ and 
\begin{equation*}
p_1=\left(\id+\sum_{i=2}^M L_i^*L_i\right)^{-1}\left(x_1+\sum_{i=2}^M L_i^*x_i\right).
\end{equation*}
To perform  the  inversion in a general case (without assumptions on the operators $L_i$),  a conjugate gradient method can be considered. 

\end{itemize}

\subsubsection{Alternating Direction Method of Multipliers (ADMM)} \label{sec:ADMM}
The ADMM algorithm~\cite{GabayMercier,GlowinskiMarroco} is designed to solve optimization problems of the form:
\begin{equation}\label{eqADMM}
\min_{(x,y)\in\E\times F,\,Ax+By=b}f(x)+g(y)
\end{equation}

where $A$ and $B$ are two linear operators taking values from $\E$ and  $F$ to $G$, $b$ is a vector in $G$ and $f$ and $g$ are two convex, proper, lower semi-continuous functions. It is a general formulation, containing  cases such as $y=x$ or  $y=Ax$. 

The Lagrangian associated to~\eqref{eqADMM} to account for the constraint $Ax+By=b$ with a multiplier variable $z\in G$ writes 
\be
	L(x,y,z)= f(x) + g(y) + \ps{z}{Ax+By-b}, 
\ee
and the augmented Lagrangian for $\gamma > 0$ is 
\be
	L_\gamma(x,y,z) = L(x,y,z)  + \frac{\gamma}{2}\norm{ Ax+By-b}^2.
\ee

Let $(x_0,y_0)\in \E\times F$, $\gamma>0$ and $z^0\in G$. The ADMM introduces the sequences $(x_n)_{n\in\N},\, (y_n)_{n\in\N}$ and $(z_n)_{n\in\N}$ defined  $\forall n\geqslant 0$ as:
\be\label{algo:ADMM}
\left\{\begin{array}{ll}
x_{n+1}&=\uargmin{x}f(x)+\ps{z_{n}}{Ax}+\frac{\gamma}{2}\norm{Ax+By_{n}-b}^2\\
y_{n+1}&=\uargmin{y}g(y)+\ps{z_{n}}{By}+\frac{\gamma}{2}\norm{Ax_{n+1}+By-b}^2\\
z_{n+1}&=z_{n}+\gamma(Ax_{n+1}+By_{n+1}-b)
\end{array}\right.
\ee 

The ADMM is so called because it can be seen as a variant of an algorithm known as the Augmented Lagrangian Method. If we replace the iterative updates of $x$ and $y$ by a joint update step: 
\begin{equation*}
(x_{n+1},y_{n+1})=\arg\min_{(x,y)\in E\times F}f(x)+g(y)+\ps{z_{n}}{Ax+By}+\frac{\gamma}{2}\norm{Ax+B y-b}^2,
\end{equation*}
we obtain the Augmented Lagrangian method that consists in penalizing the constraints with a Lagrange multiplier $z$ and a quadratic term. One of the problems with this method is that joint minimization is generally difficult to perform. The ADMM separates this problem by optimizing with respect to the first variable $x$ first, and then the second variable $y$, while the variable $z$ can be interpreted as a Lagrange multiplier that is updated at each iteration.

\begin{theoreme}[ADMM algorithm]\label{thm:admm}
Let $f$ and $g$ be two proper, convex, coercive, lower semi-continuous functions. Let the sequences $(x_n)_{n\in\N}$, $(x_n)_{n\in\N}$ and $(z_n)_{n\in\N}$ be defined as in the ADMM algorithm~\eqref{algo:ADMM}, then
\begin{enumerate}
\item The sequence $(f(x_n)+g(y_n))_{n\in\N}$ converges to the minimum value of $f+g$.
\item The sequences $(x_n)_{n\in\N}$ and $(y_n)_{n\in\N}$ converge.
\item The sequence $(Ax_n+By_n-b)_{n\in\N}$ goes to zero.
\end{enumerate} 
\end{theoreme}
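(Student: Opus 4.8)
The plan is to reduce the theorem to the Douglas-Rachford convergence result already established, via the classical equivalence (Gabay) between ADMM on the primal problem~\eqref{eqADMM} and Douglas-Rachford applied to its Fenchel dual. First I would form the dual. Minimizing the Lagrangian $L(x,y,z)=f(x)+g(y)+\langle z, Ax+By-b\rangle$ in $(x,y)$ turns the dual into a sum of two convex, proper, l.s.c. functions,
\[
\min_{z\in G}\ \Phi(z)+\Psi(z),\qquad \Phi(z)=f^*(-A^*z),\quad \Psi(z)=g^*(-B^*z)+\langle b,z\rangle,
\]
which is exactly the setting to which the Douglas-Rachford theorem applies.

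\textbf{Core step.} I would then check, term by term, that the ADMM recursion~\eqref{algo:ADMM} is the Douglas-Rachford recursion~\eqref{algoDR} for $\Phi+\Psi$ with parameter $\gamma$ and $\mu_n\equiv 1$. The key tool is the Moreau identity relating $\prox_{\gamma h}$ and $\prox_{h^*/\gamma}$: the two inner minimizations, which carry the data $f$ and $g$, become proximal steps on the conjugates $f^*(-A^*\cdot)$ and $g^*(-B^*\cdot)$, while the multiplier update $z_{n+1}=z_n+\gamma(Ax_{n+1}+By_{n+1}-b)$ plays the role of the Douglas-Rachford main variable. Matching the schemes identifies $z_n$ with the Douglas-Rachford iterate and the quantities $Ax_n,\,By_n$ with the associated shadow (prox) sequences.

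\textbf{Consequences.} With the correspondence in hand, the Douglas-Rachford theorem immediately gives that $z_n$ converges to a dual optimizer $z^*$ and, through the shadow sequences, that the residual $Ax_n+By_n-b\to 0$, which is point~3; it also yields $z_{n+1}-z_n\to 0$ and $B(y_{n+1}-y_n)\to 0$. For point~1 I would extract the optimality relations of the two inner minimizations, namely $-B^*z_{n+1}\in\partial g(y_{n+1})$ and $-A^*z_{n+1}+\gamma A^*B(y_{n+1}-y_n)\in\partial f(x_{n+1})$; combined with the convergence of $z_n$ and the vanishing residual, these pass to the limit as KKT conditions $-A^*z^*\in\partial f(x^\infty)$, $-B^*z^*\in\partial g(y^\infty)$, and strong duality forces $f(x_n)+g(y_n)$ to the common optimal value.

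\textbf{Main obstacle.} The delicate point is point~2, genuine convergence of the primal iterates $x_n,y_n$ as sequences. The dual/Douglas-Rachford machinery directly controls only $z_n$ and the images $Ax_n,\,By_n$, and passing from $Ax_n\to\ell$ to $x_n\to x^\infty$ is not automatic when $A$ is not injective. This is precisely where the \emph{coercivity} of $f$ and $g$ is needed: coercivity guarantees that the argmin subproblems defining $x_{n+1},y_{n+1}$ are solvable with solutions confined to a fixed bounded set, so the primal iterates are bounded, and every cluster point satisfies the KKT system above together with $Ax^\infty+By^\infty=b$. Upgrading this boundedness-plus-optimality into convergence of the \emph{full} sequences is the technical heart of the argument, and is where I would expect to spend most of the effort (using, e.g., the Fej\'er-type monotonicity inherited from the Douglas-Rachford iterate to rule out distinct cluster points).
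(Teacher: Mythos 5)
Your strategy — dualize the constrained problem, identify the ADMM recursion with Douglas--Rachford on the dual via Moreau's identity, and import the DR convergence theorem — is exactly the route the paper takes, which states that ADMM is Douglas--Rachford applied to the dual of~\eqref{eqADMM} and establishes the corresponding change of variables in Proposition~\ref{equiv_admmdr} (under injectivity of the relevant adjoint). The paper does not itself carry the argument through to items 1--3, so your explicit flagging of the passage from convergence of $z_n$, $Ax_n$, $By_n$ to convergence of the primal iterates $x_n$, $y_n$ as the genuinely delicate step (where coercivity must be invoked) is consistent with, and somewhat more candid than, the source.
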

In the specific case where 
 $A=\id$, $B=-\id$ and $b=0$ (i.e. $x=y$), the ADMM algorithm~\eqref{algo:ADMM} can be expressed using proximity operators:
\be
\label{eq:ADMM_simple}
\left\{\begin{array}{cl}
x_{n+1}&=\prox_{f/\gamma}(y_{n}-z_{n}/\gamma)\\
y_{n+1}&=\prox_{g/\gamma}(x_{n+1}+ z_{n}/\gamma)\\
z_{n+1}&=z_{n}+ \gamma(x_{n+1}-y_{n+1}),
\end{array}\right.
\ee  
which, for $\gamma=1$, is one of the forms of Douglas-Rachford, see \eqref{eqDR4}.

To prove Theorem~\ref{thm:admm} in the general case, it is possible to show that applying  ADMM is equivalent to applying the Douglas-Rachford algorithm on a different problem, called the dual problem of \eqref{eqADMM}, and this independently of the choice of the operators $A$ and $B$.
We will show an equivalence between both algorithms in section~\ref{sec:equivADMMDR}, once  the adequate material on duality has been introduced.

\section{Duality}\label{sec:duality}

In this section we present optimization algorithms that rely on the  dual formulations of convex problems. 
In section~\ref{sec:convconj}, we first introduce the key dual notion of convex conjugate $f^*$ of a function $f$, and detail relevant properties of this transform.
Then, we study primal-dual algorithms in section~\ref{sec:pd}.  In section~\ref{sec:equiv}, we finally show equivalence, in particular settings, between the Chambolle-Pock primal dual algorithm, the Douglas-Rachford algorithm and the ADMM.

\subsection{Properties of convex conjugates}\label{sec:convconj}
There exist close relationships between convex conjugate, subdifferential and proximal operators. These links will be useful to show the equivalence 
between primal and dual problems.
\begin{definition}[Convex conjugate]
Let $f$ be a function defined from 
$\E$ to $\bar{\R}$, the convex (or Fenchel or Legendre-Fenchel) conjugate of $f$ is the function defined from  $\E$ to $\bar{\R}$ by
\be\label{def:conconj}
f^*(u)=\underset{x\in\E}{\sup}(\ps{x}{u}-f(x)).
\ee
\end{definition}
\begin{remark}
The function $\ps{x}{u}-f(x)$ is linear (and thus convex) in $u$. The supremum of convex functions being convex, $f^*$ is necessarily a convex function.
\end{remark}
Conjugation offers a geometric interpretation of convex, proper and l.s.c functions: 
at any point, $f$ is equal to the supremum of all affine functions that are lower bounds of $f$.
\begin{example}
Let us give examples of convex conjugates
\begin{itemize}
\item If $f(x)=\ps{a}{x}-b$ then 
$$f^*(u)=\left\{
\begin{array}{ll}
b&\text{if }u=a\\
+\infty& \text{if }u\neq a.
\end{array}
\right.$$
\item If $f(x)=\norm{x}_p$ where $\norm{\cdot}_p$ is the $\ell_p$ norm, then $f^*$ is the indicator function of the unit ball for the dual norm $\ell_q$ (such that $1/p+1/q=1$), i.e $f^*(u)=i_{\mathcal{B}_{q}}(u)$, for $\mathcal{B}_{q}=\{u,\, \norm{u}_{q}\leq 1$\}. 
Thus, for $f(x)=\norm{x}_1$, we have
$$f^*(u)=\left\{
\begin{array}{ll}
0&\text{if }\norm{u}_{\infty}\leqslant 1\\
+\infty& \text{if }\norm{u}_{\infty}>1.
\end{array}
\right.$$
\end{itemize}
 
\end{example}

\noindent Next lemma gives a main relation between subdifferentials and convex conjugates.
\begin{lemme}\label{LSD}
Let $f$ be a function defined on $\E$ and $x\in\E$ , then
\be
u\in\partial f(x)\Leftrightarrow f^{*}(u)+f(x)=\ps{u}{x}
\ee
\end{lemme}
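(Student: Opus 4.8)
The plan is to prove the equivalence directly as a short chain of reformulations, using only the definitions of the subdifferential~\eqref{eq:sousdiff} and of the convex conjugate~\eqref{def:conconj}. The starting point is to unfold what $u\in\partial f(x)$ means: by definition, it is the statement that for all $y\in\E$,
\begin{equation*}
\ps{y-x}{u}+f(x)\leqslant f(y).
\end{equation*}
The first key step is to rearrange this single family of inequalities into the form
\begin{equation*}
\ps{y}{u}-f(y)\leqslant \ps{x}{u}-f(x)\qquad\forall y\in\E,
\end{equation*}
which exhibits $\ps{x}{u}-f(x)$ as an upper bound of the family $\{\ps{y}{u}-f(y)\}_{y\in\E}$ whose supremum is exactly $f^*(u)$.

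The second key step is to observe that this upper bound is attained at $y=x$ itself, so the inequality for all $y$ is equivalent to saying that $\ps{x}{u}-f(x)$ equals the supremum, i.e. $f^*(u)=\ps{x}{u}-f(x)$, which rewrites as $f^*(u)+f(x)=\ps{u}{x}$. Reading the equivalence in the reverse direction is immediate: if $f^*(u)+f(x)=\ps{u}{x}$, then for every $y$ one has $\ps{y}{u}-f(y)\leqslant f^*(u)=\ps{x}{u}-f(x)$ by definition of the supremum, which is precisely the subdifferential inequality. I would present the whole argument as a single displayed chain of $\Leftrightarrow$, since each passage is a genuine equivalence and no direction requires extra hypotheses.

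I do not expect a serious obstacle here, as the statement is essentially a repackaging of the definitions; the one point deserving care is the bookkeeping around the supremum. Specifically, I would note that the chain implicitly lives at a point where $f(x)$ is finite, so that the equality $f^*(u)+f(x)=\ps{u}{x}$ is a bona fide identity of real numbers rather than an indeterminate $+\infty-\infty$; this is automatic whenever $\partial f(x)\neq\emptyset$. The half of the argument worth stating explicitly is that one always has the Fenchel--Young inequality $f^*(u)+f(x)\geqslant\ps{u}{x}$ (just take the particular point $x$ in the supremum defining $f^*(u)$), and that membership $u\in\partial f(x)$ is exactly the case of equality. Framing it this way makes transparent that the subdifferential records the pairs $(x,u)$ saturating Fenchel--Young.
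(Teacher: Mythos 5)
Your proof is correct and follows essentially the same chain of equivalences as the paper: rewrite the subdifferential inequality as $\ps{y}{u}-f(y)\leqslant\ps{x}{u}-f(x)$ for all $y$, note the bound is attained at $y=x$, and conclude that this is exactly $f^*(u)=\ps{x}{u}-f(x)$. Your added remarks on finiteness of $f(x)$ and the Fenchel--Young saturation viewpoint are sound refinements but do not change the argument.
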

\begin{proof}
\begin{align*}
u\in\partial f(x)& \Leftrightarrow \forall y\in\E,\; f(y)
\geqslant f(x)+\ps{y-x}{u}.\\
&\Leftrightarrow \forall y\in\E,\;\ps{x}{u}-f(x)
\geqslant \ps{y}{u}-f(y).\\
&\Leftrightarrow \ps{x}{u}-f(x)=f^{*}(u)\\
&\Leftrightarrow f^{*}(u)+f(x)=\ps{x}{u}.
\end{align*}
\end{proof}
One essential property of convex functions is equality to its biconjugate. First we show a property of the biconjugate for a general, nonconvex function, and then prove the equality between $f$ and $f**$ is $f$ is convex.
\begin{lemme}\label{Lbiconj}
Let $f$ be a function defined from $\E$ to $[-\infty,+\infty]$, for all $x\in\E$, $f^{**}(x)\leqslant f(x)$. Moreover, $f^{**}$ is the largest convex lower semi-continuous envelope of the function satisfying $f^{**}(x)\leq f(x)$, for all $x\in\E$
\end{lemme}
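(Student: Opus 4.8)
The plan is to separate the two assertions—the universal inequality $f^{**}\leqslant f$ and the maximality of $f^{**}$ among convex l.s.c.\ minorants—and to exploit the description of $f^{**}$ as a supremum of affine functions. For the inequality, I would argue directly from the definition of the conjugate: for every $(x,u)\in\E^2$ one has $f^*(u)\geqslant \ps{x}{u}-f(x)$, i.e.\ $f(x)\geqslant \ps{x}{u}-f^*(u)$; taking the supremum over $u$ gives $f(x)\geqslant \sup_{u\in\E}\big(\ps{x}{u}-f^*(u)\big)=f^{**}(x)$. This uses no convexity, which is consistent with the statement being valid for a general $f$.

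For the second part I would first identify $f^{**}$ with the upper envelope of the affine minorants of $f$. The elementary observation is that an affine map $x\mapsto \ps{x}{u}-\beta$ satisfies $\ps{x}{u}-\beta\leqslant f(x)$ for all $x$ if and only if $\beta\geqslant \sup_x\big(\ps{x}{u}-f(x)\big)=f^*(u)$. Hence the tightest affine minorant of slope $u$ is $x\mapsto \ps{x}{u}-f^*(u)$, and $f^{**}(x)=\sup_u\big(\ps{x}{u}-f^*(u)\big)$ is exactly the pointwise supremum of all affine functions lying below $f$. Being a supremum of affine (hence convex and continuous) functions, $f^{**}$ is automatically convex and lower semi-continuous, and it is a minorant of $f$ by the first part.

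It then remains to prove maximality: any convex, l.s.c.\ function $g$ with $g\leqslant f$ satisfies $g\leqslant f^{**}$. Since $g\leqslant f$, every affine function lying below $g$ also lies below $f$ and thus contributes to the supremum defining $f^{**}$; consequently it suffices to show that $g$ coincides with the supremum of its own affine minorants, for then passing to suprema yields $g\leqslant f^{**}$. This last representation is the genuine obstacle and the only point where a separation argument is unavoidable: for a point $(x_0,r)$ with $r<g(x_0)$, the epigraph of $g$ is closed (by lower semi-continuity) and convex, so the Hahn--Banach theorem strictly separates it from $(x_0,r)$; checking that the separating hyperplane may be taken non-vertical produces an affine minorant of $g$ whose value at $x_0$ exceeds $r$, and letting $r\uparrow g(x_0)$ recovers $g(x_0)$. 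Combined with $f^{**}\leqslant f$ and the convexity and lower semi-continuity of $f^{**}$, this shows that $f^{**}$ is the largest convex l.s.c.\ minorant of $f$; note that I deliberately avoid invoking $g=g^{**}$ here, since applying the present lemma to a convex l.s.c.\ $f$ itself is precisely what will yield the biconjugation identity $f=f^{**}$ announced for the next lemma.
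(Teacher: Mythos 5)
Your proposal is correct, and its core — identifying $f^{**}(x)=\sup_u\big(\ps{x}{u}-f^*(u)\big)$ with the pointwise supremum of all affine minorants of $f$, via the observation that $x\mapsto\ps{x}{u}-\beta$ lies below $f$ iff $\beta\geqslant f^*(u)$ — is exactly the computation the paper carries out with its set $\Sigma$ of pairs $(u,\alpha)$. Where you genuinely go further is on the two claims the paper leaves implicit. First, you give a direct Fenchel--Young derivation of $f^{**}\leqslant f$ (the paper never states this step, though it follows at once from the representation, since each affine function in the supremum is below $f$). Second, and more importantly, you actually prove the maximality assertion: that any convex l.s.c.\ minorant $g$ of $f$ satisfies $g\leqslant f^{**}$. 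The paper's proof stops at the affine-envelope representation and never addresses this, yet it is the substance of the phrase ``largest convex lower semi-continuous envelope'' and is what Theorem~\ref{Tbiconj} subsequently leans on. Your reduction — every affine minorant of $g$ is an affine minorant of $f$, so it suffices that $g$ equal the supremum of its own affine minorants, which is where Hahn--Banach separation of the closed convex epigraph from a point $(x_0,r)$ with $r<g(x_0)$ enters — is the standard and correct way to close this gap, and you are right that one must check the separating hyperplane is non-vertical and right to avoid invoking $g=g^{**}$, which would be circular given that the biconjugation theorem is deduced from this very lemma. In short, your argument is a strict completion of the paper's: same representation of $f^{**}$, plus the separation step that the paper's ``proof'' of the maximality claim is actually missing.
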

\begin{proof}
Let $\Sigma\subset \E \times \R$ the set of pairs $(u,\alpha)$ representing affine functions $x\mapsto \langle u,x\rangle-\alpha$ that are upper bounded by $f$:
\begin{align*}
    (u,\alpha)\in \Sigma&\Leftrightarrow f(x)\geq \langle u,x\rangle-\alpha,\;\forall x\in\E\\
    &\Leftrightarrow \alpha\geq \sup_{x\in\E}\langle u,x\rangle-f(x)\\
    &\Leftrightarrow\alpha\geq f^*(u),\;\textrm{and }u\in \dom(f^*).
\end{align*}
Then we obtain, for all $\in\E$:
\begin{align*}
    \sup_{(u,\alpha)\in\Sigma} \langle u,x\rangle -\alpha&=\sup_{\begin{array}{c}u\in\dom(f^*)\\-\alpha\leq -f^*(u) \end{array}} \langle u,x\rangle -\alpha\\
    &=\sup_{u\in\dom(f^*)} \langle u,x\rangle-f^*(u)
    &=f^{**}(x).
\end{align*}
\end{proof}

\begin{theoreme}[Biconjugation]\label{Tbiconj}
If $f$ and is a proper l.s.c convex function defined from  $\E$ to $[-\infty,+\infty]$, then $f^{**}=f$.
\end{theoreme}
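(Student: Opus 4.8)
The plan is to combine the inequality $f^{**}\le f$ already furnished by Lemma~\ref{Lbiconj} with the reverse inequality $f^{**}\ge f$, which is the real content. Since the proof of Lemma~\ref{Lbiconj} identifies $f^{**}(x)$ with the supremum of $\langle u,x\rangle-\alpha$ taken over all affine minorants $(u,\alpha)\in\Sigma$ of $f$, it suffices to produce, for each $x_0$ and each real $\beta<f(x_0)$, an affine function $a\le f$ on all of $\E$ with $a(x_0)>\beta$; this gives $f^{**}(x_0)\ge a(x_0)>\beta$, and letting $\beta\uparrow f(x_0)$ yields $f^{**}(x_0)\ge f(x_0)$.

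The geometric object is the epigraph $\mathrm{epi}\,f=\{(x,t)\in\E\times\R:\ f(x)\le t\}$, which is nonempty (as $f$ is proper), convex (as $f$ is convex) and closed (as $f$ is l.s.c.). The only tool needed is the strict separation of a point from a nonempty closed convex set, valid here since $\E\times\R$ is finite-dimensional. In every application of separation the separating functional has the form $(u,s)\in\E\times\R$, and testing against points $(x,t)\in\mathrm{epi}\,f$ with $t\to+\infty$ forces $s\ge0$; the whole difficulty lies in excluding or handling the vertical case $s=0$.

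First I would establish that $f$ admits at least one affine minorant. Fixing $y_0\in\dom f$, the point $(y_0,f(y_0)-1)$ lies outside $\mathrm{epi}\,f$, so separation gives $(u,s)\neq 0$ and $c$ with $\langle u,x\rangle+st\ge c>\langle u,y_0\rangle+s(f(y_0)-1)$ for all $(x,t)\in\mathrm{epi}\,f$. Here $s=0$ is impossible: it would force $\langle u,y_0\rangle\ge c$ while the strict inequality gives $\langle u,y_0\rangle<c$, the contradiction arising precisely because the first coordinate $y_0$ belongs to $\dom f$. Hence $s>0$, and after normalizing $s=1$ and taking $t=f(x)$ one obtains the affine minorant $a_0(x)=c-\langle u,x\rangle\le f(x)$ on $\E$.

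For the reverse inequality itself, fix $x_0$ and $\beta<f(x_0)$, so $(x_0,\beta)\notin\mathrm{epi}\,f$; separation yields $(u,s)$ with $s\ge 0$ and $c$ with $\langle u,x\rangle+st\ge c>\langle u,x_0\rangle+s\beta$ on $\mathrm{epi}\,f$. If $s>0$, the normalization above gives an affine minorant $a$ of $f$ with $a(x_0)=c-\langle u,x_0\rangle>\beta$, as wanted. The main obstacle is the vertical case $s=0$, which can genuinely occur when $x_0$ lies outside $\overline{\dom f}$: one then only gets $\langle u,x\rangle\ge c$ on $\dom f$ and $\langle u,x_0\rangle<c$. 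The remedy is to tilt the minorant $a_0$ from the previous step: for $\lambda\ge0$ set $a_\lambda(x)=a_0(x)+\lambda\bigl(c-\langle u,x\rangle\bigr)$, which stays $\le f$ on $\dom f$ since $c-\langle u,x\rangle\le0$ there, hence is an affine minorant of $f$; evaluating at $x_0$ gives $a_\lambda(x_0)=a_0(x_0)+\lambda\bigl(c-\langle u,x_0\rangle\bigr)\to+\infty$ because $c-\langle u,x_0\rangle>0$, so $f^{**}(x_0)=+\infty>\beta$ in this case too. In both cases $f^{**}(x_0)>\beta$ for every $\beta<f(x_0)$, whence $f^{**}(x_0)\ge f(x_0)$; combined with Lemma~\ref{Lbiconj} this yields $f^{**}=f$.
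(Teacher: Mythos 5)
Your proof is correct, and it is substantially more complete than the paper's. The paper disposes of the theorem in one line by appealing to Lemma~\ref{Lbiconj}, asserting that $f^{**}$ is ``the largest convex lower semi-continuous envelope'' of $f$ and that a proper l.s.c.\ convex $f$ must coincide with that envelope. But the computation actually carried out in the proof of Lemma~\ref{Lbiconj} only identifies $f^{**}(x)$ with the supremum of the affine minorants of $f$ and establishes $f^{**}\leqslant f$; the nontrivial half --- that a proper, convex, l.s.c.\ function \emph{equals} the supremum of its affine minorants --- is never proved there. That is precisely the content you supply: the strict separation of a point $(x_0,\beta)$ with $\beta<f(x_0)$ from the closed convex nonempty epigraph, the observation that the vertical component $s$ of the separating functional is nonnegative, the exclusion of $s=0$ when producing the first minorant $a_0$ (using $y_0\in\operatorname{dom}f$), and the tilting trick $a_\lambda=a_0+\lambda(c-\langle u,\cdot\rangle)$ to handle the genuinely vertical case when $x_0\notin\overline{\operatorname{dom}f}$. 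This is the standard Fenchel--Moreau argument, and each step checks out (in particular the passage $\beta\uparrow f(x_0)$ covers both $f(x_0)$ finite and $f(x_0)=+\infty$). What your route buys is a self-contained proof resting only on finite-dimensional separation of convex sets; what the paper's route buys is brevity, at the cost of leaving the key inequality $f^{**}\geqslant f$ essentially unproved.
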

\begin{proof}
    If $f$ is convex proper and l.s.c then Lemma~\ref{Lbiconj} ensures that $f$ is equal to its largest convex lower semi-continuous envelope, that is to say $f^{**}=f$.
\end{proof}

We deduce from this theorem that a convex,  proper and l.s.c function $f$ satisfies  $\forall x\in\E$
\be\label{conj2}
f(x)=\underset{u\in\E}{\text{sup}}(\ps{x}{u}-f^{*}(u))
\ee   
\begin{remark}
Computations in the proof of Lemma~\ref{Lbiconj} show that the supremum in~\eqref{def:conconj} is reached at all points $u\subset \partial f(x)$. Hence, if the subdifferential $\partial f(x)$ is not empty, which is the case in most practical applications, the supremum ({\it sup}) is a maximum ({\it max}) in relation~\eqref{def:conconj}. 
\end{remark}
\begin{remark}
Relation~\eqref{conj2} is useful when $f$ is composed with an operator. For instance, if $K$ is a linear application from 
 $\E$ to an another space $F$, we have 
$\forall x\in E$
\be
f(Kx)=\underset{u\in F}{\text{sup}}(\ps{Kx}{u}-f^{*}(u))=
\underset{u\in F}{\text{sup}}(\ps{x}{K^{*}u}-f^{*}(u)).
\ee
With such an expression, the operator $K$ can be decoupled from the convex function $f$. This is  a key ingredient of the primal-dual algorithms detailed below.
\end{remark}

Let us give a relevant properties between subgradients of conjugate functions.
\begin{proposition}
If $f$ is a convex, proper and l.s.c function, then 
\be\label{eqProxSD}
u\in\partial f(x)\Leftrightarrow x\in \partial f^{*}(u).
\ee  
\end{proposition}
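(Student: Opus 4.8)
The plan is to reduce everything to the single symmetric relation
\begin{equation*}
f(x)+f^{*}(u)=\ps{x}{u},
\end{equation*}
which Lemma~\ref{LSD} already identifies as equivalent to $u\in\partial f(x)$. The key observation is that this relation is completely symmetric in the pair $(f,x)$ and $(f^{*},u)$, so it should simultaneously characterize membership $x\in\partial f^{*}(u)$. The only ingredient needed to close the loop is the biconjugation identity $f^{**}=f$, which holds here because $f$ is assumed convex, proper and l.s.c.\ (Theorem~\ref{Tbiconj}).

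Concretely, I would first apply Lemma~\ref{LSD} directly to $f$ at the point $x$: this gives
\begin{equation*}
u\in\partial f(x)\Longleftrightarrow f^{*}(u)+f(x)=\ps{u}{x}.
\end{equation*}
Next I would apply the \emph{same} Lemma~\ref{LSD}, but now to the function $f^{*}$ evaluated at the point $u$, with the roles of the variable and the subgradient interchanged. Since $f^{*}$ is itself a convex function (being a supremum of affine functions, as noted just after the definition of the conjugate), the lemma yields
\begin{equation*}
x\in\partial f^{*}(u)\Longleftrightarrow (f^{*})^{*}(x)+f^{*}(u)=\ps{x}{u},
\end{equation*}
that is, $f^{**}(x)+f^{*}(u)=\ps{x}{u}$.

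Finally I would invoke Theorem~\ref{Tbiconj} to replace $f^{**}(x)$ by $f(x)$, so that the right-hand characterizations of the two displayed equivalences become literally the same equation (using the symmetry $\ps{u}{x}=\ps{x}{u}$ of the inner product). Chaining the two equivalences then gives $u\in\partial f(x)\Leftrightarrow x\in\partial f^{*}(u)$, as claimed. I do not expect a genuine obstacle here: the whole argument is a bookkeeping exercise in applying Lemma~\ref{LSD} twice. The only point requiring a little care is the legitimacy of applying that lemma to $f^{*}$ and recognizing $(f^{*})^{*}=f^{**}$; this is exactly where the convexity, properness and lower semi-continuity of $f$ enter, through Theorem~\ref{Tbiconj}, to guarantee $f^{**}=f$.
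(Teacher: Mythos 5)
Your argument is correct and is essentially identical to the paper's own proof: both apply Lemma~\ref{LSD} to $f$ at $x$ and to $f^{*}$ at $u$, and then use the biconjugation identity $f^{**}=f$ (Theorem~\ref{Tbiconj}) to identify the two symmetric characterizations $f(x)+f^{*}(u)=\ps{x}{u}$. Nothing is missing.
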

\begin{proof}
Let  $(x,u)\in\E^2$ such that  $u\in \partial f(x)$. Lemma~\ref{LSD} ensures that
$$u\in\partial f(x)\Leftrightarrow f^{*}(u)+f(x)=\ps{u}{x}.$$
As $f$ is convex, proper and l.s.c, we have $f=f^{**}$ so that
$$u\in\partial f(x)\Leftrightarrow f^{*}(u)+f^{**}(x)=\ps{u}{x}$$
which implies $u\in\partial f(x)\Leftrightarrow x\in \partial f^{*}(u)$.
\end{proof}
\noindent
We now give a useful theorem involving convex conjugates and proximal operators.
\begin{corollaire}[Moreau's identity~\cite{Moreau1965}]\label{Moreau}
If $f$ is a convex, proper and l.s.c function, then for all $\gamma>0$
\be\label{eq-moreau-identity}
\prox_{\gamma f}(x)+\gamma \prox_{f^*/\gamma}(x/\gamma) = x.
\ee
that is to say, for $\gamma=1$, $\prox_f+\prox_{f^*}=\id$.
\end{corollaire}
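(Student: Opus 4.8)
The plan is to reduce the identity to two facts already established: the subdifferential characterization of the proximal operator (Proposition~\ref{PropProxSD}), which says $p=\prox_{h}(x)$ iff $x-p\in\partial h(p)$, and the conjugate duality \eqref{eqProxSD}, namely $u\in\partial f(x)\Leftrightarrow x\in\partial f^*(u)$. Both tools apply to $f^*$ as well: since $f$ is convex, proper and l.s.c., so is $f^*$, and $f=f^{**}$ by Theorem~\ref{Tbiconj}. I will also use the elementary scaling rule $\partial(\lambda h)=\lambda\,\partial h$ for $\lambda>0$, which follows directly from the definition of the subdifferential. The idea is to start from $p=\prox_{\gamma f}(x)$, transport its defining inclusion through \eqref{eqProxSD} to an inclusion involving $f^*$, and then recognize that inclusion as precisely the one characterizing $\prox_{f^*/\gamma}(x/\gamma)$.

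Concretely, first I would set $p=\prox_{\gamma f}(x)$. By Proposition~\ref{PropProxSD} applied to $h=\gamma f$ together with $\partial(\gamma f)=\gamma\,\partial f$, this is equivalent to
\begin{equation*}
\frac{x-p}{\gamma}\in\partial f(p).
\end{equation*}
Applying the duality \eqref{eqProxSD} swaps the roles of the two arguments, giving $p\in\partial f^*\!\left(\frac{x-p}{\gamma}\right)$. Next I would introduce the candidate $q:=\frac{x-p}{\gamma}$ and check that it satisfies the characterization of $\prox_{f^*/\gamma}(x/\gamma)$. By Proposition~\ref{PropProxSD} applied to $h=f^*/\gamma$, and using $\partial(f^*/\gamma)=\gamma^{-1}\partial f^*$, the statement $q=\prox_{f^*/\gamma}(x/\gamma)$ is equivalent to $\frac{x}{\gamma}-q\in\frac{1}{\gamma}\partial f^*(q)$, i.e. to $x-\gamma q\in\partial f^*(q)$. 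Since $\gamma q=x-p$ we have $x-\gamma q=p$, so this required inclusion is exactly $p\in\partial f^*(q)=\partial f^*\!\left(\frac{x-p}{\gamma}\right)$, which is what we just derived. Hence $q=\prox_{f^*/\gamma}(x/\gamma)$, and $p+\gamma q=x$ is precisely the claimed identity; specializing $\gamma=1$ recovers $\prox_f+\prox_{f^*}=\id$.

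I do not expect a genuine conceptual obstacle here; the entire content is the interplay between Proposition~\ref{PropProxSD} and \eqref{eqProxSD}. The only point requiring care is the bookkeeping with the scalar $\gamma$: one must correctly propagate the factors through $\partial(\gamma f)=\gamma\,\partial f$ and $\partial(f^*/\gamma)=\gamma^{-1}\partial f^*$, and remember that the proximal operator on the dual side is evaluated at $x/\gamma$ rather than at $x$. A cleaner-looking but essentially equivalent alternative would be to prove the case $\gamma=1$ first and then deduce the general case by scaling, using $g=\gamma f$ and the conjugate scaling $g^*(u)=\gamma f^*(u/\gamma)$; however, the direct subdifferential computation above avoids having to track the scaling of conjugates and keeps every step as a one-line application of results already in hand.
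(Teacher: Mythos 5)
Your proof is correct and follows essentially the same route as the paper: both start from the characterization $p=\prox_{\gamma f}(x)\Leftrightarrow (x-p)/\gamma\in\partial f(p)$ (Proposition~\ref{PropProxSD}), pass to $p\in\partial f^*\bigl((x-p)/\gamma\bigr)$ via the duality~\eqref{eqProxSD}, and then recognize this inclusion as the characterization of $(x-p)/\gamma=\prox_{f^*/\gamma}(x/\gamma)$. The only difference is presentational: the paper writes the argument as a single chain of equivalences, while you split it into a forward derivation plus a verification of the candidate $q$.
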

\begin{proof}
Using Proposition~\ref{PropProxSD} and  relation~\eqref{eqProxSD}, we have
\begin{align*}
p=\prox_{\gamma f}(x)&\Leftrightarrow (x-p)\in\gamma \partial f(p)\\
&\Leftrightarrow p \in \partial f^{*}((x-p)/\gamma)\\
&\Leftrightarrow p/\gamma \in \frac1\gamma\partial f^{*}((x-p)/\gamma)\\
&\Leftrightarrow x/\gamma-(x-p)/\gamma \in\partial (f^{*}/\gamma)((x-p)/\gamma)\\
&\Leftrightarrow (x-p)/\gamma =\prox_{f^*/\gamma }(x/\gamma)\\
&\Leftrightarrow \prox_{\gamma f}+\gamma \prox_{f^*/\gamma}(./\gamma)=\id.
\end{align*}   
\end{proof}

We finally give an equivalence between $L$-smoothness and strong convexity of conjugate functions.
\begin{proposition} 
Let $f$ be a convex proper and l.s.c function, then the two following assertions are equivalent: (i) $f$ is $1/L$-strongly ; (ii) $f^*$ is $L$-smooth.
\end{proposition}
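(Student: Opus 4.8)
The plan is to prove both implications through the duality dictionary relating $f$ and $f^*$: the Fenchel--Young identity (Lemma~\ref{LSD}), the subgradient correspondence $u\in\partial f(x)\Leftrightarrow x\in\partial f^*(u)$ of relation~\eqref{eqProxSD}, and biconjugation $f=f^{**}$ (Theorem~\ref{Tbiconj}), combined with the descent lemma (Lemma~\ref{LemmeMajQuad}) and the strong-convexity characterization~\eqref{eqIneqSC}. Note first that $f^*$ is always convex, so statement (ii) carries content only through differentiability and the Lipschitz bound. Throughout I write $\mu=1/L$.

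First I would treat (i)$\Rightarrow$(ii). Assuming $f$ is $\mu$-strongly convex, write $f=g+\tfrac{\mu}{2}\norm{\cdot}^2$ with $g$ convex, proper and l.s.c. For every $u\in\E$ the map $x\mapsto f(x)-\ps{u}{x}$ is then proper, l.s.c. and coercive (the quadratic term dominates an affine minorant of $g$), so by Proposition~\ref{PropExistence} it admits a unique minimizer $x_u$; hence $f^*(u)=-\min_x\big(f(x)-\ps{u}{x}\big)$ is finite and $\dom f^*=\E$. This minimizer is exactly the unique point with $u\in\partial f(x_u)$, i.e. by~\eqref{eqProxSD} the unique element of $\partial f^*(u)$, so $\partial f^*$ is single valued on all of $\E$ and $f^*$ is differentiable with $\nabla f^*(u)=x_u$. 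To bound the gradient, set $x=\nabla f^*(u)$ and $y=\nabla f^*(v)$, so that $u\in\partial f(x)$ and $v\in\partial f(y)$; writing the subgradient form of the strong-convexity inequality~\eqref{eqIneqSC} at $(x,y)$ and at $(y,x)$ and summing gives $\ps{u-v}{x-y}\ge\mu\norm{x-y}^2$, and Cauchy--Schwarz then yields $\norm{\nabla f^*(u)-\nabla f^*(v)}\le\tfrac1\mu\norm{u-v}=L\norm{u-v}$. Thus $f^*$ is $L$-smooth.

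For (ii)$\Rightarrow$(i) I would argue directly by conjugation. Since $f$ is convex, proper and l.s.c., $f=f^{**}$, and by~\eqref{conj2} we have $f(y)=\sup_{u}\big(\ps{y}{u}-f^*(u)\big)$. Fix $x,y\in\E$ and $p\in\partial f(x)$; by~\eqref{eqProxSD} and the differentiability of $f^*$ this means $x=\nabla f^*(p)$, and the Fenchel--Young equality (Lemma~\ref{LSD}) gives $f^*(p)=\ps{x}{p}-f(x)$. Applying the descent lemma~\eqref{eqGradLip} to the $L$-smooth function $f^*$ around $p$ gives $f^*(u)\le f^*(p)+\ps{x}{u-p}+\tfrac L2\norm{u-p}^2$ for all $u$; substituting into the supremum above and simplifying with $f^*(p)=\ps{x}{p}-f(x)$, the lower bound becomes
\begin{equation*}
f(y)\ \ge\ f(x)+\ps{y-x}{u}-\tfrac{L}{2}\norm{u-p}^2\qquad\text{for every }u\in\E .
\end{equation*}
Maximizing the right-hand side over $u$, attained at $u=p+\tfrac1L(y-x)$, produces
\begin{equation*}
f(y)\ \ge\ f(x)+\ps{p}{y-x}+\tfrac1{2L}\norm{y-x}^2 .
\end{equation*}
This is precisely inequality~\eqref{eqIneqSC} with $\alpha=\mu=1/L$ (in subgradient form), equivalently the statement that $f-\tfrac1{2L}\norm{\cdot}^2$ is convex, i.e. $f$ is $\tfrac1L$-strongly convex in the sense of Definition~\ref{def:strong_weak_convex}.

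The main obstacle is not the algebra but the convex-analytic bookkeeping at two points. In the forward direction one must justify that a finite convex function on $\E$ with a single-valued subdifferential at every point is genuinely differentiable there, so that $\nabla f^*$ is well defined everywhere. In the reverse direction the derived inequality holds only at points $x$ where $\partial f(x)\neq\emptyset$, and one must upgrade this to strong convexity of $f$ on all of its domain; this relies on $\dom\partial f$ being dense in $\dom f$ and on the fact that a l.s.c. function dominating and touching a family of affine minorants is convex. Both facts are standard in convex analysis and are exactly the kind of equivalence the paper imports from Bauschke--Combettes (as in Remark~\ref{rem:bauschke}), so I would either invoke them as such or sketch the affine-minorant representation of $f-\tfrac1{2L}\norm{\cdot}^2$.
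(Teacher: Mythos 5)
Your proof is correct and, in the forward direction, follows essentially the same mechanism as the paper: both arguments transfer the strong monotonicity inequality $\ps{u-v}{x-y}\geq\frac1L\norm{x-y}^2$ for subgradients of $f$ across the inverse relation~\eqref{eqProxSD} to get the $L$-Lipschitz bound on $\nabla f^*$. You are in fact more careful than the paper here: the paper never justifies that $f^*$ is differentiable with full domain, whereas you derive single-valuedness of $\partial f^*$ from the unique minimizer of the coercive, strongly convex function $x\mapsto f(x)-\ps{u}{x}$ — a genuine improvement, modulo the standard fact (which you flag) that a finite convex function with singleton subdifferential everywhere is differentiable. In the reverse direction your route diverges from the paper's: the paper applies co-coercivity (Lemma~\ref{coco}) to $\nabla f^*$ and reads off strong monotonicity of $\partial f$, while you apply the descent lemma to $f^*$, plug the resulting quadratic upper bound into the biconjugation formula $f=f^{**}$, and optimize over the dual variable to obtain the quadratic lower bound~\eqref{eqIneqSC} directly at the level of function values. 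Your version buys an explicit strong-convexity inequality rather than just monotonicity of the subdifferential, at the cost of the density/l.s.c.\ argument needed to extend it from $\dom\partial f$ to all of $\dom f$; the paper's version is shorter but has the symmetric gap of passing from strong monotonicity of a multivalued $\partial f$ back to convexity of $f-\frac{1}{2L}\norm{\cdot}^2$ without comment (and its final sentence even misstates the conclusion as a property of $f^*$). Both deferred facts are standard and appropriately sourced to Bauschke--Combettes, so your proposal stands as a valid, and in places tighter, alternative.
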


\begin{proof}
$(i)\Rightarrow(ii)$: If $f$ is $1/L$- strongly convex, then we can extend relation~\eqref{eq:strong} from Proposition~\eqref{prop:contratSC} to non smooth function using the definition of subdifferential~\eqref{eq:sousdiff}. For all $(x,y)\in\E^2$, $p\in \partial f(x)$  and $q\in\partial f(y)$, we obtain:
$$||p-q||\geq \frac1L||x-y||.\vspace{-0.1cm}$$
Using relation~\eqref{eqProxSD} it implies:
$$||p-q||\geq \frac1L||\partial f^*(p)-\partial f^*(q)||,$$
so that $\partial f^*$ has a $L$-Lipschitz continuous gradient.\\
$(ii)\Rightarrow(i)$: If $f^*$ is $L$-smooth, then the co-coercivity property~\eqref{eq:coco} gives $\forall(p,q)\in\E^2$:
$$\langle \nabla f^*(p)-\nabla f^*(q),p-q\rangle \geq \frac1L||\nabla f^*(p)-\nabla f^*(q)||^2.$$
Denoting as $x=\nabla f^*(p)$ and $y=\nabla f^*(q)$, relation~\eqref{eqProxSD} leads to $p=\partial f(x)$ and  $q=\partial f(y)$. Hence we have\vspace{-0.2cm}  
\begin{align*}   
\langle x-y,\partial f(x)-\partial f(y)\rangle&\geq \frac1L||x-y||^2\\
||\partial f(x)-\partial f(y)||&\geq \frac1L||x-y||,
\end{align*}
and $f^*$ is $1/L$-strongly convex.
\end{proof}

\subsection{Primal-Dual Algorithms}\label{sec:pd}
We now consider the optimization problem:
\be\label{eqPD1}
\min_{x\in E}f(Kx)+g(x),
\ee
where $K$ is a linear operator from $\E$ to $F$ and $f$ and $g$ are two convex proper l.s.c functions defined from $F$ (resp. $\E$) to $[-\infty,+\infty]$.
In the following, we assume that the proximal operators of both $f$ and $g$ are simple to compute, but the proximal operator of $f\circ K$ is not easily accessible.
To tackle this issue, we  consider the  {\it primal dual} formulation of problem~\eqref{eqPD1}. We first detail the corresponding saddle point problem and then present algorithms dedicated to its resolution.

\subsubsection{Saddle-point problem}
If  $f$ is convex proper and l.s.c, we use conjugation
\be
\forall z\in F,\quad f(z)=\sup_{y\in F}(\ps{y}{z}-f^{*}(y))\vspace{-0.1cm}
\ee
to rewrite the term $f(Kx)$ as
\be
\forall x\in E,\quad f(Kx)=\sup_{y\in F}(\ps{y}{Kx}-f^{*}(y))=\sup_{y\in F}(\ps{K^{*}y}{x}-f^{*}(y)),
\ee
where $K^*$ denotes the adjoint operator of $K$ defined from $F$ to $\E$. \newpage

Introducing this reformulation into the initial problem~\eqref{eqPD1}, we obtain the primal-dual problem
\be\label{eqPD2}
\min_{x\in\E}\sup_{y\in F}\,\ps{K^{*}y}{x}-f^{*}(y)+g(x),
\ee
defined for the primal unknown $x$ and the dual variable $y$.

In the following, we assume that the above supremum is a maximum. 
As soon as  the subdifferential of $f$ at point $z$ is non empty, the supremum is reached at a point $y$ of the subdifferential  $\partial f(z)$ 
and the supremum is indeed a maximum. We refer the reader to the book of R. Rockafellar~\cite{rockafellar1997convex} for further details. 
Having made this assumption, we return to the following {\bf saddle point} problem:
\be\label{eqPD3}
\min_{x\in\E}\max_{y\in F}\,h(x,y):=\ps{K^{*}y}{x}-f^{*}(y)+g(x), 
\ee
whose saddle-points $(x^*,y^*)$ provide minimizers $x^*$ of problem~\eqref{eqPD1}. In the following we assume that such saddle-points exist.
Problems~\eqref{eqPD1} and~\eqref{eqPD3} are equivalent in the following sense: 
if $(x,y)$ solves~\eqref{eqPD3} then $x$ is a minimizer of problem~\eqref{eqPD1}.

\begin{proposition}\label{prop_carac_pd}
Let $f$ and $g$ be convex and proper functions respectively defined from $F$ and $\E$ to $\R$, and $K$ be an operator from $\E$ to $F$. If $(x^*,y^*)$ is a saddle point of~\eqref{eqPD3}, then $x^*$ is a solution  of the primal problem~\eqref{eqPD1}.
Moreover, the saddle points $(x^*,y^*)$ of~\eqref{eqPD3} satisfy
\begin{equation}\label{eq:saddlePD}
\left\{\begin{array}{ll}
\hspace{-2pt}\forall \tau>0,&\hspace{-2pt}x^*=\prox_{\tau g}(x^*-\tau K^*y^*)\\
\hspace{-2pt}\forall \sigma>0,&\hspace{-2pt}y^*=\prox_{\sigma f^*}(y^*+\sigma Kx^*).
\end{array}\right.
\end{equation}
\end{proposition}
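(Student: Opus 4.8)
The plan is to prove the two claims separately, starting from the saddle-point inequalities
$$h(x^*,y)\leqslant h(x^*,y^*)\leqslant h(x,y^*),\quad\forall x\in\E,\ \forall y\in F,$$
which by definition characterise $(x^*,y^*)$ as a saddle point of~\eqref{eqPD3}.

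For the first claim, I would first evaluate $h(x^*,y^*)$ by using the left inequality, which says that $y^*$ maximises $y\mapsto h(x^*,y)$. Since $f$ is convex, proper and l.s.c, Theorem~\ref{Tbiconj} gives $f=f^{**}$, and under the standing assumption that the supremum is attained one obtains
$$h(x^*,y^*)=\max_{y\in F}\left(\ps{y}{Kx^*}-f^{*}(y)\right)+g(x^*)=f^{**}(Kx^*)+g(x^*)=f(Kx^*)+g(x^*).$$
Then I would combine the right inequality with the Fenchel--Young inequality $f(Kx)\geqslant\ps{y^*}{Kx}-f^{*}(y^*)$, which is immediate from the definition of $f^{*}$, to get, for every $x\in\E$,
$$f(Kx^*)+g(x^*)=h(x^*,y^*)\leqslant h(x,y^*)=\ps{y^*}{Kx}-f^{*}(y^*)+g(x)\leqslant f(Kx)+g(x),$$
which is exactly the statement that $x^*$ minimises the primal objective of~\eqref{eqPD1}.

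For the characterisation~\eqref{eq:saddlePD}, I would read the saddle-point inequalities as two elementary minimisation statements: $x^*$ minimises $x\mapsto g(x)+\ps{K^{*}y^*}{x}-f^{*}(y^*)$, and $y^*$ minimises $y\mapsto f^{*}(y)-\ps{y}{Kx^*}$ (the maximisation of $h(x^*,\cdot)$ with the sign flipped and the constant $g(x^*)$ dropped). Applying Fermat's rule (Theorem~\ref{thm:fermat}) together with the sum rule for subdifferentials (Lemma~\ref{LSSD}, the coupling terms being linear hence differentiable) yields the two inclusions $-K^{*}y^*\in\partial g(x^*)$ and $Kx^*\in\partial f^{*}(y^*)$. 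I would then convert each inclusion into a proximal fixed point via the characterisation $p=\prox_{\varphi}(z)\Leftrightarrow z-p\in\partial\varphi(p)$ of Proposition~\ref{PropProxSD}: scaling the first inclusion by $\tau>0$ gives $(x^*-\tau K^{*}y^*)-x^*\in\partial(\tau g)(x^*)$, i.e. $x^*=\prox_{\tau g}(x^*-\tau K^{*}y^*)$, and scaling the second by $\sigma>0$ gives $(y^*+\sigma Kx^*)-y^*\in\partial(\sigma f^{*})(y^*)$, i.e. $y^*=\prox_{\sigma f^{*}}(y^*+\sigma Kx^*)$.

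The individual steps are short, so I do not expect a genuine obstacle. The two points deserving care are the identification of $h(x^*,y^*)$ with the primal value, where biconjugation $f=f^{**}$ and the assumption that the inner supremum is attained are both essential, and the bookkeeping of the adjoint $\ps{K^{*}y}{x}=\ps{y}{Kx}$ when moving between the primal and dual pairings. I would also verify that the sum rule of Lemma~\ref{LSSD} applies: since the coupling terms $\ps{K^{*}y^*}{\cdot}$ and $\ps{\cdot}{Kx^*}$ are everywhere finite and differentiable, its qualification condition holds automatically, so convexity and properness of $f$ and $g$ suffice.
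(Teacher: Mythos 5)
Your proof is correct. The derivation of the characterisation~\eqref{eq:saddlePD} is essentially the paper's argument: both of you read the saddle-point conditions as $0\in K^*y^*+\partial g(x^*)$ and $0\in -Kx^*+\partial f^*(y^*)$ via Fermat's rule and the sum rule, and then convert the inclusions into proximal fixed points through Proposition~\ref{PropProxSD}. Where you genuinely diverge is in showing that $x^*$ solves the primal problem. The paper stays at the level of first-order conditions: it inverts $Kx^*\in\partial f^*(y^*)$ into $y^*\in\partial f(Kx^*)$ using~\eqref{eqProxSD}, proves an auxiliary chain-rule lemma to get $K^*y^*\in\partial(f\circ K)(x^*)$, and concludes $0\in\partial(f\circ K+g)(x^*)$. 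You instead argue globally from the saddle-point inequalities: the left inequality plus biconjugation identifies $h(x^*,y^*)$ with the primal value $f(Kx^*)+g(x^*)$, and the right inequality plus Fenchel--Young gives $h(x,y^*)\leqslant f(Kx)+g(x)$ for every $x$. Your route is shorter and avoids both the subdifferential inversion and the chain-rule lemma, at the price of invoking $f=f^{**}$ (Theorem~\ref{Tbiconj}), which requires $f$ to be l.s.c.\ --- an assumption not written in the proposition's statement but standing throughout the section and also used implicitly by the paper's own proof. Both arguments are sound; yours is the more elementary of the two for the first claim.
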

\begin{proof}
Let us recall that $h$ is the function defined over  $\E\times F$ by
\be\label{eqdefh}
h(x,y)=\ps{Kx}{y}+g(x)-f^*(y).
\ee
For $(x^*,y^*)\in \E\times F$, we introduce $h_1$ and $h_2$ the partial functions respectively defined on $\E$ and $F$ by 
$h_1(x)=h(x,y^*)$ and $h_2(y)=h(x^*,y)$. 
For any $x^*$ and $y^*$, 
$h_1$ is convex in $x$ and $h_2$ is concave in $y$ (i.e. $-h_2$ is convex). 

Hence, one can observe that
\begin{align}
\hspace*{-7pt}(x^*,y^*)\text{ is a saddle-point of  }h&\Leftrightarrow\nonumber
\hspace{-2pt}\left\{\begin{array}{ll}\hspace{-2pt}
\forall x\in\E,&h(x^*,y^*)\leqslant h(x,y^*)\\
\hspace{-2pt}\forall y\in F,&h(x^*,y)\leqslant h(x^*,y^*)\\
\end{array}\right.\\
&\Leftrightarrow\nonumber
\hspace{-2pt}\left\{\begin{array}{ll}
\hspace{-2pt}\forall x\in\E,&h_1(x^*)\leqslant h_1(x)\\
\hspace{-2pt}\forall y\in F,&-h_2(y^*)\leqslant -h_2(y)\\
\end{array}\right.\\
&\Leftrightarrow\nonumber
\hspace{-2pt}\left\{\begin{array}{l}
\hspace{-2pt}0\in\partial h_1(x^*)\\
\hspace{-2pt} 0\in\partial h_2(y^*)\\
\end{array}\right.\\
&\Leftrightarrow\nonumber
\hspace{-2pt}\left\{\begin{array}{l}
\hspace{-2pt}0\in(K^*y^*+\partial g(x^*))\\
\hspace{-2pt}0\in(-Kx^*+\partial f^*(y^*))\\
\end{array}\right.\\
\hspace{-2pt}&\Leftrightarrow\label{eqPFPD}
\left\{\begin{array}{ll}
\hspace{-2pt}-K^*y^*&\hspace{-2pt}\in\partial g(x^*)\\
\hspace{-2pt}Kx^*&\hspace{-2pt}\in\partial f^*(y^*)\\
\end{array}\right.\\
\hspace{-2pt}&\Leftrightarrow\nonumber
\left\{\begin{array}{ll}
\hspace{-2pt}\forall \tau>0,&\hspace{-2pt}x^*=\prox_{\tau g}(x^*-\tau K^*y^*)\\
\hspace{-2pt}\forall \sigma>0,&\hspace{-2pt}y^*=\prox_{\sigma f^*}(y^*+\sigma Kx^*).
\end{array}\right.
\end{align}

To conclude on the fact that $x^*$ is a minimizer of problem~\eqref{eqPD1}, 
we first provide the following lemma.
\begin{lemme}
Let $f$ be a convex and close function defined from $F$  to $\R$  and $K$ be an operator from $\E$ to $F$, then  $\forall y\in \partial f(Kx)$, $K^*y\in\partial (f\circ K)(x)$.\end{lemme}
\begin{proof}
Let $y\in \partial f(Kx)$, then from definition~\eqref{eq:sousdiff}, we have  $\forall x'\in\E$:  
\begin{align*}
    f(Kx')&\geq \langle y,Kx'-Kx\rangle + f(Kx)\\
    (f\circ K)(x')&\geq  \langle K^*y,x'-x\rangle + (f\circ K)(x),
\end{align*}
which is also equivalent to $K^*y\in \partial (f\circ K)(x)$ from definition~\eqref{eq:sousdiff}.
\end{proof}

Using relation~\eqref{eqProxSD} to switch from $f^*$ to $f$, as well as the previous lemma, we obtain from~\eqref{eqPFPD}:
\begin{equation*}\begin{split}
&\; \exists y^*\in F \textrm{ s. t. }  \left\{\begin{array}{ll}
-K^*y^*&\in \partial g(x^*)\\
Kx^*&\in \partial f^*(y^*)\end{array}\right.\\
\Leftrightarrow&\; \exists y^*\in F \textrm{ s. t. } \left\{\begin{array}{ll}
-K^*y^*&\in \partial g(x^*)\\y^*&\in \partial f(Kx^*)
\end{array}\right.\\
\Rightarrow&\; \exists y^*\in F \textrm{ s. t. }  \left\{\begin{array}{ll}
-K^*y^*&\in \partial g(x^*)\\
K^*y^*&\in \partial (f\circ K)(x^*)\end{array}\right.\\
 \Leftrightarrow &\; 0\in \partial(f\circ K)(x^*)+\partial g(x^*)\\
 \Leftrightarrow &\;x^*\in \argmin_{x\in \E} f(Kx)+g(x).
\end{split}
\end{equation*}
\end{proof}

As a consequence from Proposition~\eqref{prop_carac_pd}, algorithms dedicated to primal-dual problems are looking for pairs of variables $(x,y)$ satisfying relations~\eqref{eq:saddlePD}.\\
In particular, the Arrow-Hurwicz algorithm computes, for a given $x_0\in\E$,  the following pair of sequences $(x_n)_{n\in\N}$ and
$(y_n)_{n\in\N}$:
 \be\label{eqAlgoAH}
\left\{\begin{array}{lll}
y_{n+1}&=&\prox_{\sigma f^*}(y_n+\sigma K x_n)\\
x_{n+1}&=&\prox_{\tau g}(x_n-\tau K^*y_{n+1}).
\end{array}
\right.
\ee

The proof of convergence of this algorithm to a saddle-point of~\eqref{eqPD3} requires additional assumptions, such as the strong convexity of $f$ or $g$ to make the proximal operators contractions (see Proposition~\ref{prop:prox_strong}). 
We now focus on generalizations of this algorithm that involve minimal conditions.
\subsubsection{Chambolle-Pock algorithm (CP)}
Before presenting the algorithm proposed by Chambolle and Pock in~\cite{ChambollePock}, we first give a fundamental property of saddle-points of problem~\eqref{eqPD3}, by introducing the {\it partial primal-dual gap}:
\be
\mathcal{G}_{B_1\times B_2}(x,y)=\max_{y'\in B_2}\,\ps{y'}{Kx}-f^{*}(y')+g(x)-\min_{x'\in B_1}\,\ps{y}{Kx'}+g(x')-f^*(y),
\ee
that is defined on bounded sets $(B_1\times B_2)\subset \E\times F$.
\begin{proposition}\label{prop_gap}
Let $(x^*,y^*)\in B_1\times B_2$ be a solution to problem~\eqref{eqPD3}, then, $\forall (x,y)\in B_1\times B_2$, we have  $\mathcal{G}_{B_1\times B_2}(x,y)\geq 0$, and this gap is zero if and only if $(x,y)$ is a solution of~\eqref{eqPD3}.
\end{proposition}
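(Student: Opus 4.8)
The plan is first to put the partial gap into a transparent form. Because $g(x)$ is independent of $y'$ and $f^*(y)$ is independent of $x'$, the two terms defining $\mathcal{G}_{B_1\times B_2}$ are exactly a partial maximization and a partial minimization of the saddle function $h(x,y)=\ps{Kx}{y}+g(x)-f^*(y)$ of~\eqref{eqdefh}:
\begin{equation*}
\mathcal{G}_{B_1\times B_2}(x,y)=\max_{y'\in B_2}h(x,y')-\min_{x'\in B_1}h(x',y).
\end{equation*}
Throughout I would use that $(x^*,y^*)$ is a saddle point, that is $h(x^*,y')\leqslant h(x^*,y^*)\leqslant h(x',y^*)$ for all $(x',y')\in\E\times F$, together with the assumptions $x^*\in B_1$ and $y^*\in B_2$.

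For the nonnegativity I would simply insert the saddle coordinates into the partial optima. Since $y^*\in B_2$ and $x^*\in B_1$,
\begin{equation*}
\max_{y'\in B_2}h(x,y')\geqslant h(x,y^*)\quad\text{and}\quad \min_{x'\in B_1}h(x',y)\leqslant h(x^*,y),
\end{equation*}
so that $\mathcal{G}_{B_1\times B_2}(x,y)\geqslant h(x,y^*)-h(x^*,y)$. The saddle inequalities give $h(x,y^*)\geqslant h(x^*,y^*)\geqslant h(x^*,y)$, whence $h(x,y^*)-h(x^*,y)\geqslant 0$ and therefore $\mathcal{G}_{B_1\times B_2}(x,y)\geqslant 0$.

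The equivalence splits into two implications. For the direct one, if $(x,y)$ is a saddle point then $y$ maximizes $h(x,\cdot)$ and $x$ minimizes $h(\cdot,y)$ over the whole space; since $y\in B_2$ and $x\in B_1$ these extrema are already attained inside the bounded sets, so $\max_{y'\in B_2}h(x,y')=h(x,y)=\min_{x'\in B_1}h(x',y)$ and the gap vanishes. For the converse, assume $\mathcal{G}_{B_1\times B_2}(x,y)=0$. Then both inequalities of the nonnegativity step are equalities and the squeeze $h(x,y^*)\geqslant h(x^*,y^*)\geqslant h(x^*,y)$ collapses to $h(x,y^*)=h(x^*,y^*)=h(x^*,y)$. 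In particular $x$ attains $\min_{x'\in\E}h(x',y^*)$ and $y$ attains $\max_{y'\in F}h(x^*,y')$.

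The main obstacle is precisely this last step of the converse: deducing that $(x,y)$ is a genuine saddle point from the partial facts that $x$ minimizes $h(\cdot,y^*)$ and $y$ maximizes $h(x^*,\cdot)$. I would close it using the convex-concave structure of $h$: whenever a saddle point exists the saddle set is a product $X^*\times Y^*$ on which $h$ is constant, equal to the value $v=h(x^*,y^*)$, and once $x\in X^*$ and $y\in Y^*$ are known the pair $(x,y)\in X^*\times Y^*$ is automatically a saddle point. Concretely I would translate the value equalities above into the subgradient inclusions of~\eqref{eqPFPD} through Lemma~\ref{LSD}, obtaining $-K^*y^*\in\partial g(x)$ and $Kx^*\in\partial f^*(y)$, and then reconcile these with the inclusions satisfied by $(x^*,y^*)$. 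The care needed here is that the optimizations in $\mathcal{G}_{B_1\times B_2}$ range only over $B_1$ and $B_2$, so one must ensure (as is implicit in the primal-dual setting) that these sets are large enough not to truncate the relevant extremizers; this reconciliation between the bounded-set gap and the global saddle-point characterization is the delicate part of the argument.
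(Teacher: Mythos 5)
Your nonnegativity argument and the direction ``saddle point $\Rightarrow$ zero gap'' are correct and are essentially the paper's own proof: the paper bounds $\mathcal{G}_{B_1\times B_2}(x,y)\geq h(x,y^*)-h(x^*,y)$ by inserting $y^*\in B_2$ and $x^*\in B_1$ into the partial optima and then splits $h(x,y^*)-h(x^*,y)$ through $h(x^*,y^*)$ exactly as you do; your two-line chain is the paper's four-line chain compressed. Note, however, that this is \emph{all} the paper proves: its displayed computation establishes $\mathcal{G}\geq 0$ and $\mathcal{G}(x^*,y^*)=0$, and the converse implication ``$\mathcal{G}(x,y)=0\Rightarrow(x,y)$ solves~\eqref{eqPD3}'' is asserted in the statement but never argued.

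It is precisely in that converse that your proposal has a genuine gap, and the step you flag as ``delicate'' is in fact an obstruction rather than a technicality. From $\mathcal{G}(x,y)=0$ you correctly deduce $h(x,y^*)=h(x^*,y^*)=h(x^*,y)=:v$, so $x$ minimizes $h(\cdot,y^*)$ over $\E$ and $y$ maximizes $h(x^*,\cdot)$ over $F$. But this does \emph{not} place $x$ in $X^*=\argmin_{x'}\sup_{y'}h(x',y')$: the saddle set is indeed a rectangle $X^*\times Y^*$, yet minimizers of the single slice $h(\cdot,y^*)$ can lie far outside $X^*$ (the slice can be flat). A concrete instance inside the paper's framework: take $\E=F=\R$, $K=\id$, $g=0$, $f^*=0$, so $h(x,y)=xy$ with unique saddle point $(0,0)$; with $B_1=[-1,1]$ and $B_2=[0,1]$ one computes $\mathcal{G}_{B_1\times B_2}(x,y)=\max(x,0)+y$, which vanishes at $(-1,0)$ even though $\sup_{y'\in\R}h(-1,y')=+\infty$, so $(-1,0)$ is not a saddle point. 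Hence no argument can close your last step without an additional hypothesis on $B_1\times B_2$ — e.g.\ that the saddle point lies in its \emph{interior}, in which case ``$h(x,\cdot)\leq v$ on $B_2$ with equality at the interior point $y^*$'' makes $y^*$ a local, hence by concavity global, maximizer of $h(x,\cdot)$, and symmetrically for $x^*$, which does yield the saddle inequalities for $(x,y)$. Your plan to route the converse through Lemma~\ref{LSD} and the inclusions~\eqref{eqPFPD} runs into the same wall, since those inclusions are equivalent to the global extremality you cannot yet establish.
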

\begin{proof}
We recall that the function $h$ is defined in~\eqref{eqPD3} as $h(x,y)=\ps{Kx}{y}+g(x)-f^*(y)$. A saddle point $(x^*,y^*)$ of problem~\eqref{eqPD3} thus satisfies:
$h(x^*,y^*)=\min_{x'\in B_1}h(x',y^*)=\max_{y'\in B_2}h(x^*,y')$, which implies that 
\begin{equation}\label{gap_sad}
\mathcal{G}_{B_1\times B_2}(x^*,y^*)=\max_{y'\in B_2}h(x^*,y')-\min_{x'\in B_1}h(x',y^*)=h(x^*,y^*)-h(x^*,y^*)=0.
\end{equation}
Next we have
\begin{align}\nonumber
\mathcal{G}_{B_1\times B_2}(x,y)
&=\max_{y'\in B_2}\,h(x,y')-\min_{x'\in B_1}\,h(x',y)\nonumber\\
&\geqslant h(x,y^*)-h(x^*,y)\nonumber\\
&\geqslant h(x,y^*)-h(x^*,y^*)+h(x^*,y^*)-h(x^*,y)\nonumber\\
&\geqslant h(x,y^*)-\min_{x'\in B_1}h(x',y^*)+\max_{y'\in B_2}h(x^*,y')-h(x^*,y)\nonumber\\
&\geqslant 0.\label{eq_pd_gap}
\end{align}

\end{proof}

We can now present the Chambolle-Pock algorithm~\cite{ChambollePock}, that is based on a 
modification of the scheme~\eqref{eqAlgoAH} 
with a nonconvex update of $x_n$.
\begin{theoreme}[Chambolle-Pock algorithm]\label{TPD}
Let $f$ and $g$ be two convex, proper, lower semi-continuous functions, with $f$ defined from $F$ to $\R$, $g$ defined from $\E$ to $\R$ and $K$ a linear operator from $\E$ to $F$. Assume that the problem \eqref{eqPD3} admits a solution $(x^*,y^*)$. We  denote $L=\norm{K}=\sqrt{||K^*K||}$, and take $\sigma$ and $\tau$ such that $\tau \sigma L^2<1$. We choose $(x_0,y_0)\in\E\times F$ and set $\bar{x}_0=x_0$ and define the sequences $(x_n)_{n\in\N},\,(y_n)_{n\in\N},\,\text{and }(\bar{x}_n)_{n\in\N}$ by
\be\label{eqAlgoPD}
\left\{\begin{array}{lll}
y_{n+1}&=&\prox_{\sigma f^*}(y_n+\sigma K\bar{x}_n)\\
x_{n+1}&=&\prox_{\tau g}(x_n-\tau K^*y_{n+1})\\
\bar{x}_{n+1}&=&2x_{n+1}-x_n.
\end{array}
\right.
\ee   
Setting $x^N=(\sum_{n=1}^N x_n)/N$ and $y^N=(\sum_{n=1}^N y_n)/N$, then for each bounded set $(B_1\times B_2)\subset \E\times F$, the {\it partial primal-dual gap} satisfies
\begin{align*}
0\leq\mathcal{G}_{B_1\times B_2}(x^N,y^N)&\leqslant \frac1{N}\left(\frac{||x_0-x^*||^2}{2\tau}+\frac{||y_0-y^*||^2}{2\sigma}-\langle K(x_0-x^*),y_0-y^*\rangle\right).
\end{align*}

\end{theoreme}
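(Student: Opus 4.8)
The plan is to establish a fundamental one-step inequality for the Chambolle-Pock iteration, then sum it over $n=1,\dots,N$ and exploit convexity-concavity of $h$ together with Jensen's inequality to pass from the running averages to the primal-dual gap. The core of the argument is purely algebraic: it rests on the characterization of the proximal operator as a (sub)gradient inclusion, namely Proposition~\ref{PropProxSD}, applied to each of the two prox steps in~\eqref{eqAlgoPD}.

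\textbf{Step 1: extract variational inequalities from the prox steps.} First I would apply Proposition~\ref{PropProxSD} to $x_{n+1}=\prox_{\tau g}(x_n-\tau K^*y_{n+1})$, which gives, for every $x\in\E$,
\begin{equation*}
g(x)\geq g(x_{n+1})+\frac1\tau\ps{x_n-\tau K^*y_{n+1}-x_{n+1}}{x-x_{n+1}}.
\end{equation*}
Similarly, applying it to $y_{n+1}=\prox_{\sigma f^*}(y_n+\sigma K\bar x_n)$ yields, for every $y\in F$,
\begin{equation*}
f^*(y)\geq f^*(y_{n+1})+\frac1\sigma\ps{y_n+\sigma K\bar x_n-y_{n+1}}{y-y_{n+1}}.
\end{equation*}
These two inequalities are the workhorses. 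The key step here is to rewrite the inner products so that the quadratic terms combine into squared-norm differences using the polarization identity $2\ps{a-b}{a-c}=\norm{a-b}^2+\norm{a-c}^2-\norm{b-c}^2$, turning each inner product $\ps{x_n-x_{n+1}}{x-x_{n+1}}$ into telescoping distances $\norm{x-x_n}^2$, $\norm{x-x_{n+1}}^2$ and a leftover $\norm{x_{n+1}-x_n}^2$.

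\textbf{Step 2: assemble and telescope.} Adding the two inequalities and rearranging, I would obtain a bound of the form
\begin{equation*}
h(x_{n+1},y)-h(x,y_{n+1})\leq \frac{\norm{x-x_n}^2}{2\tau}-\frac{\norm{x-x_{n+1}}^2}{2\tau}+\frac{\norm{y-y_n}^2}{2\sigma}-\frac{\norm{y-y_{n+1}}^2}{2\sigma}+R_n,
\end{equation*}
where $R_n$ collects the cross terms coming from the coupling $\ps{Kx}{y}$ and the extrapolation $\bar x_n=2x_n-x_{n-1}$, minus the nonnegative residuals $\tfrac1{2\tau}\norm{x_{n+1}-x_n}^2+\tfrac1{2\sigma}\norm{y_{n+1}-y_n}^2$. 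The crucial design feature of the algorithm is that the extrapolation step is engineered precisely so that, after summing over $n$, the terms $R_n$ telescope: the coupling term $\sigma\ps{K(\bar x_n-x_{n+1})}{y-y_{n+1}}$ can be split using $\bar x_n-x_{n+1}=(x_n-x_{n+1})-(x_{n-1}-x_n)$ into a discrete-difference form, and the condition $\tau\sigma L^2<1$ is exactly what guarantees that the residual cross terms are dominated by the negative quadratic residuals (via Cauchy-Schwarz and $\norm{K}=L$), so their net contribution is bounded by the single boundary term $-\ps{K(x_0-x^*)}{y_0-y^*}$.

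\textbf{Step 3: convexity and Jensen.} Summing the telescoped inequality from $n=0$ to $N-1$ with the specific choice $x=x^*$, $y=y^*$ for the distance terms, but keeping $x,y$ arbitrary in $B_1\times B_2$ for the $h$ terms, gives
\begin{equation*}
\sum_{n=1}^N\big(h(x_n,y)-h(x,y_n)\big)\leq \frac{\norm{x-x_0}^2}{2\tau}+\frac{\norm{y-y_0}^2}{2\sigma}-\ps{K(x_0-x^*)}{y_0-y^*}.
\end{equation*}
Since $x\mapsto h(x,y)$ is convex and $y\mapsto h(x,y)$ is concave, Jensen's inequality applied to the averages $x^N=\frac1N\sum x_n$ and $y^N=\frac1N\sum y_n$ yields $h(x^N,y)-h(x,y^N)\leq \frac1N\sum_{n=1}^N(h(x_n,y)-h(x,y_n))$. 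Taking the maximum over $y\in B_2$ and minimum over $x\in B_1$ produces exactly $\mathcal{G}_{B_1\times B_2}(x^N,y^N)$ on the left, and the bound becomes $\frac1N$ times the right-hand side with $\norm{x-x_0}^2,\norm{y-y_0}^2$ replaced by their values at $x^*,y^*$; the nonnegativity of the gap is Proposition~\ref{prop_gap}.

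\textbf{The main obstacle} will be Step 2: correctly handling the extrapolation term and verifying that the condition $\tau\sigma L^2<1$ suffices to keep the leftover cross terms nonpositive after telescoping. Concretely, one must show $\sigma\ps{K(x_n-x_{n-1})}{y_n-y_{n+1}}$ and its companions can be absorbed into $-\tfrac1{2\tau}\norm{x_n-x_{n-1}}^2-\tfrac1{2\sigma}\norm{y_n-y_{n+1}}^2$; Cauchy-Schwarz bounds the inner product by $L\norm{x_n-x_{n-1}}\norm{y_n-y_{n+1}}$, and Young's inequality then forces the algebraic constraint on $\tau\sigma L^2$. Getting the bookkeeping right so that only the single boundary correction $-\ps{K(x_0-x^*)}{y_0-y^*}$ survives is the delicate part; everything else is routine convexity and summation.
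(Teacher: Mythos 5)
Your Steps 1 and 3 are sound and match the paper: the prox characterization (Proposition~\ref{PropProxSD}), the polarization identity, Jensen's inequality for the averages, and Proposition~\ref{prop_gap} for nonnegativity all appear in the same role. The gap is in Step 2, where you assert that after telescoping the coupling terms their net contribution ``is bounded by the single boundary term $-\ps{K(x_0-x^*)}{y_0-y^*}$''. With your setup this is not what happens. Splitting $\ps{K(x_{n+1}-\bar x_n)}{y_{n+1}-y}$ via $x_{n+1}-\bar x_n=(x_{n+1}-x_n)-(x_n-x_{n-1})$ leaves, after summation with the initialization $\bar x_0=x_0$ (i.e.\ $x_{-1}=x_0$), an \emph{initial} boundary term $\ps{K(x_0-x_{-1})}{y_0-y}=0$ and a \emph{final} boundary term $\ps{K(x_N-x_{N-1})}{y_N-y}$ that must itself be absorbed by Young's inequality into the remaining quadratic residuals. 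Carried out correctly (this is exactly the computation reproduced in Appendix~\ref{sec:app:CP}), this route yields the bound $\frac1N\left(\frac{\norm{x-x_0}^2}{2\tau}+\frac{\norm{y-y_0}^2}{2\sigma}\right)$ with no cross term; since $-\ps{K(x_0-x^*)}{y_0-y^*}$ can have either sign, that is a genuinely different inequality, neither implying nor implied by the one you are asked to prove.

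The stated constant is exactly $\frac12\norm{z_0-z^*}_M^2$ for $M=\begin{bmatrix}\frac{\id}{\tau}&-K^*\\-K&\frac{\id}{\sigma}\end{bmatrix}$, and the paper obtains it by a different device: reordering/reindexing the updates so that $y_{n+1}$ uses $\bar x_{n+1}=2x_{n+1}-x_n$, which turns one Chambolle--Pock step into one step of the proximal point algorithm in the metric induced by $M$ (positive definite precisely when $\sigma\tau L^2<1$). Then $h(x_{n+1},y)-h(x,y_{n+1})\leq\ps{z_n-z_{n+1}}{z_{n+1}-z}_M\leq\frac12\left(\norm{z_n-z}_M^2-\norm{z_{n+1}-z}_M^2\right)$ telescopes \emph{exactly}, with no Cauchy--Schwarz/Young loss, and the cross term $-\ps{K(x_0-x^*)}{y_0-y^*}$ appears simply as part of $\frac12\norm{z_0-z^*}_M^2$. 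To prove the theorem as stated you need this exact-telescoping step (or an equivalent reformulation), not the absorption argument you propose.
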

This theorem illustrates the ergodic convergence rate of the primal-dual algorithm~\cite{chambolle2016ergodic}. 
For completeness, we provide in Appendix~\ref{sec:app:CP} the  proof of~\cite{ChambollePock}, that shows the convergence of the iterates $(x_n,y_n)$ of algorithm~\eqref{eqAlgoPD} to a saddle-point $(x^*,y^*)$ of problem~\eqref{eqPD3}.


\begin{proof}
This demonstration is based on the works of~\cite{he2012convergence,lu2023unified}. 
Let us first switch the step order in algorithm~\eqref{eqAlgoPD}:
\be\label{eqAlgoPD2}
\left\{\begin{array}{lll}
x_{n+1}&=&\prox_{\tau g}(x_n-\tau K^*y_{n+1})\\
\bar{x}_{n+1}&=&2x_{n+1}-x_n\\
y_{n+1}&=&\prox_{\sigma f^*}(y_n+\sigma K\bar{x}_{n+1}).
\end{array}
\right.
\ee   
Using the characterization of proximal operators in Proposition~\ref{PropProxSD}, we have:
\begin{align}\nonumber
&\left\{\begin{array}{lll}
-(\frac{x_{n+1}-x_n}\tau +K^*y _n)&\in \partial g(x_{n+1})\\
-( \frac{y_{n+1}-y_n}\sigma -K(2x_{n+1}-x_n))&\in \partial f^*(x_{n+1})\\
\end{array}
\right.\\\nonumber
\Leftrightarrow &
0\in \left\{\begin{array}{lll}
\frac{x_{n+1}-x_n}\tau +K^*y _n+\partial g(x_{n+1})\\
 \frac{y_{n+1}-y_n}\sigma -K(2x_{n+1}-x_n)+\partial f^*(x_{n+1})\\
\end{array}
\right.\\\nonumber
\Leftrightarrow &
0\in  
\begin{bmatrix}
    \frac{\id}\tau&0\\0&\frac{\id}\sigma
\end{bmatrix}\begin{bmatrix}
    x_{n+1}-x_n\\ y_{n+1}-y_n
\end{bmatrix}
-\begin{bmatrix}
    0&K^*\\K&0
\end{bmatrix}\begin{bmatrix}
    x_{n+1}-x_n\\ y_{n+1}-y_n
\end{bmatrix}+\begin{bmatrix}
    \partial g&K^*\\-K&\partial f^*
\end{bmatrix}\begin{bmatrix}
    x_{n+1}\\ y_{n+1}
\end{bmatrix}\\\label{eq:PD_CP}
\Leftrightarrow & 
0\in
\begin{bmatrix}
    \frac{\id}\tau&-K^*\\-K&\frac{\id}\sigma
\end{bmatrix}
\begin{bmatrix}
    x_{n+1}-x_n\\ y_{n+1}-y_n
\end{bmatrix}
+
\begin{bmatrix}
    \partial g&K^*\\-K&\partial f^*
\end{bmatrix}
\begin{bmatrix}
    x_{n+1}\\ y_{n+1}
\end{bmatrix}.
\end{align}
Recalling that $$h(x,y)=\ps{Kx}{y}+g(x)-f^*(y),$$
and introducing the matrix 
$$M=\begin{bmatrix}
    \frac{\id}\tau&-K^*\\-K&\frac{\id}\sigma
\end{bmatrix},$$
relation~\eqref{eq:PD_CP} can be written as 
\begin{equation}\label{eq_tmp}
M \begin{bmatrix}
    x_{n}-x_{n+1}\\ y_{n}-y_{n+1}
\end{bmatrix}\in 
\begin{bmatrix}
    \partial_x h(x_{n+1},y_{n+1})\\
    -\partial_y h(x_{n+1},y_{n+1})
\end{bmatrix}.
\end{equation}
In the following, we consider $\sigma\tau||K^*K||=\sigma\tau L^2<1$ to ensure that the matrix $M$ is  positive-definite. 
Since $h$ is convex  (resp. concave) with respect to $x$ (resp. $y$), we get from the definition of subdifferential~\eqref{def:subdif} that:
\begin{align*}
   h(x_{n+1},y_{n+1}) +\langle \partial_x h(x_{n+1},y_{n+1}),x-x_{n+1}\rangle& \leq  h(x,y_{n+1})\\
   -h(x_{n+1},y_{n+1})-\langle \partial_y h(x_{n+1},y_{n+1}),y-y_{n+1} \rangle & \leq -  h(x_{n+1},y).
\end{align*}

Summing both expressions and introducing $w_{n+1}=M(z_n-z_{n+1})$, we get from  relation~\eqref{eq_tmp} 
\begin{align*}
    h(x_{n+1},y)-h(x,y_{n+1})&\leq \langle w_{n+1},z_{n+1}-z\rangle=\langle z_n-z_{n+1},z_{n+1}-z\rangle_M\\
   &\leq \frac12\left( ||z_n-z ||_M^2- ||z_{n+1}-z ||_M^2- ||z_{n+1}-z_n ||_M^2\right)\\
   &\leq \frac12\left( ||z_n-z ||_M^2- ||z_{n+1}-z ||_M^2\right),
\end{align*}
where we used the fact that $||.||^2_M=\langle M.,.\rangle$ is a semi-norm since $M$ is positive-definite.
Summing the previous expression from $n=1$ to $N$ and introducing $x^N=(\sum_{n=1}^N x_n)/N$ and $y^N=(\sum_{n=1}^N y_n)/N$,  we obtain:

$$ h(x^N,y)-h(x,y^N)\leq \frac1N \sum_{n=1}^N \left(h(x_{n},y)-h(x,y_{n})\right)\leq \frac1{2N} ||z_{0}-z ||_M^2.$$
Considering previous expression for a saddle point $z^*=(x^*,y^*)$, we get from Proposition~\ref{prop_gap} that 
$$0\leq \mathcal{G}_{B_1\times B_2}(x^N,y^N)\leqslant \frac1{2N} ||z_{0}-z^* ||_M^2.\vspace*{-0.5cm}$$
\end{proof}

\begin{remark}
The work in~\cite{he2012convergence} offers an alternative to the proof of convergence of the primal-dual algorithm provided in Appendix~\ref{sec:app:CP}. 
The convergence of primal-dual can indeed be obtained from the fact that the scheme~\eqref{eq:PD_CP} is an
instance of the proximal point algorithm of~\cite{rockafellar1976monotone} applied to the variable $z=(x,y)$ in which  $\begin{bmatrix}
    \partial g&K^*\\-K&\partial f^*
\end{bmatrix}$ corresponds to a maximal monotone operator (a monotone operator $T:\mathcal{H}\to\mathcal{H}$  satisfies $\langle T(z)-T(z'),z-z'\rangle \geq 0$ for all $z,z'\in\mathcal{H}^2$ and it is maximal if and only if $\text{range}(T+\id)=\mathcal{H}$, with $\text{range}(T) = \{z \in\mathcal{H},\,\text{such that } \exists w\in\mathcal{H},\,\text{satisfying }z=T(w)\}$. The iterates $z_n=(x_n,y_n)$ then converge if the matrix $M$ is  positive-definite, which is the case  as soon as $\sigma\tau||K^*K||=\sigma\tau L^2<1$.
\end{remark}

\subsubsection{Condat algorithm}
The Condat algorithm generalizes the primal dual one to the minimization of problems involving  more convex functions:
\begin{equation}\label{pb:condat}
\min_{x\in \E}f(x)+g(x)+\sum_{i=1}^M h_i(L_ix).
\end{equation} 
where $f$ is a convex differentiable function whose gradient is $L$-Lipschitz and the functions $g$ and $h_i$ are convex, and $L_k$ are linear operators.  
For any initial points $x_0\in E$ and $(u_0^{i})_{i\leqslant M}$ initial dual variables, the Condat algorithm introduced in~\cite{Condat13} defines the sequences $(\tilde  x_n)_{n\in\N}$, $(x_n)_{n\in\N}$, $(\tilde  u_n)_{n\in\N}$, $(u_n)_{n\in\N}$ as

\begin{equation}\label{algo:condat}
    \left\{\begin{array}{lll}
         \tilde x_{n+1}&=\prox_{\tau g}\left(x_n-\tau \nabla f(x_n)-\tau \sum_{i=1}L^*_m u_n^i\right) \\
         x_{n+1}&=\rho \tilde x_{n+1}+(1-\rho)x_n\\
         \tilde u_{n+1}^i&=\prox_{\sigma h^*_i}\left(u_k^i+\sigma L_i(2\tilde x_{n+1}-x_n)\right)&\text{ for }i=1\ldots M\\
         u_{n+1}^i&=\rho \tilde u_{n+1}^i+(1-\rho)u_k^i&\text{ for }i=1\ldots M.\\
    \end{array}\right.
\end{equation}
For  $\rho \in (0,1]$ and $\sigma>0$, $\tau>0$ such that 
\begin{equation}
\tau\left(\frac{L}{2}+\sigma \left|\left|\left|\sum_{i=1}^ML_i^*L_i\right|\right|\right|\right)<1,
\end{equation}
it is shown in~\cite{Condat13} (the proof is not reproduced here) that the sequence $(x_n)_{n\in\N}$ defined in~\eqref{algo:condat} weakly converges to a minimizer of~\eqref{pb:condat}.\newpage
 
 \noindent
The Condat  algorithm have the following properties:
\begin{itemize}
\item It allows to use an explicit gradient descend on the differentiable part, as the Forward-Backward algorithm;
\item It  separates functions and operators, as done in the Chambolle-Pock algorithm;
\item The computations of $(u_n^i)_{i\leqslant M}$ are independent and can be done separately and allow parallel computing as with PPXA.
\end{itemize}
\subsection{Equivalence between proximal splitting algorithms}\label{sec:equiv}
We now show that in particular cases, some proximal splitting algorithms are equivalent. To that end, we consider the general problem
\be\label{eq-addm-dr-primal}
	\min_{x\in \E} f(Kx) + g(x) ,    
\ee
where the linear operator $K$ is defined from $\E$ to $F$. 
For convex proper and l.s.c functions $f$ and $g$, this problem can be solved with the Douglas-Rachford algorithm~\eqref{algoDR00}: 
\begin{equation}\label{Algodr_equiv}
\left\{\begin{array}{ll}
w_{n+1} &=w_{n}  + \prox_{\gamma f \circ K} (2v_{n} -w_{n} )-v_n,\\
v_{n+1} &= \prox_{\gamma g}(w_{n+1}).
\end{array}\right.
\end{equation}
We now detail the equivalence of this algorithm with the Chambolle-Pock and the ADMM ones.
\subsubsection{Chambolle-Pock and Douglas-Rachford equivalence}
We  show the equivalence between Chambolle-Pock and Douglas-Rachford algorithms when the linear operator is the identity.
\begin{proposition}
	Considering the primal problem~\eqref{eq-addm-dr-primal}, in the case $K=Id$ and setting
	\be\label{eq-chg-var-dr-pd}
		\sigma = \frac{1}{\gamma}\text{ and }
		\tau=\gamma, 
	\ee
	the Douglas-Rachford iterations~\eqref{Algodr_equiv} are recovered from the Chambolle-Pock iterations~\eqref{eqAlgoPD} using
	\begin{equation}\label{change_var}
		\left\{\begin{array}{ll}
  x_{n+1}&=v_{n+1},\\
        \gamma y_{n+1}&=v_n-w_{n+1}. 
        \end{array}\right.
	\end{equation}
\end{proposition}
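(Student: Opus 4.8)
The plan is to start from the Chambolle--Pock iterations~\eqref{eqAlgoPD} specialized to $K=\id$, $\sigma=1/\gamma$, $\tau=\gamma$, and to reconstruct the two Douglas--Rachford updates~\eqref{Algodr_equiv} one at a time through the change of variables~\eqref{change_var}. With these choices the scheme reads $y_{n+1}=\prox_{f^*/\gamma}(y_n+\bar x_n/\gamma)$, $x_{n+1}=\prox_{\gamma g}(x_n-\gamma y_{n+1})$ and $\bar x_{n+1}=2x_{n+1}-x_n$. Substituting $x_n=v_n$ everywhere (so that $\bar x_n=2v_n-v_{n-1}$) together with $\gamma y_n=v_{n-1}-w_n$ is the only bookkeeping that will be needed.

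The easy half is the primal update. Since $x_{n+1}=v_{n+1}$ and $\gamma y_{n+1}=v_n-w_{n+1}$, the Chambolle--Pock line $x_{n+1}=\prox_{\gamma g}(x_n-\gamma y_{n+1})$ becomes $v_{n+1}=\prox_{\gamma g}(v_n-(v_n-w_{n+1}))=\prox_{\gamma g}(w_{n+1})$, which is exactly the second line of~\eqref{Algodr_equiv}; no further input is required here.

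The harder half is the dual update, and the key tool is Moreau's identity (Corollary~\ref{Moreau}), which converts the conjugate prox $\prox_{f^*/\gamma}$ appearing in Chambolle--Pock into the prox $\prox_{\gamma f}=\prox_{\gamma f\circ K}$ appearing in Douglas--Rachford. Writing the argument as $(\gamma y_n+\bar x_n)/\gamma$ and applying $\gamma\prox_{f^*/\gamma}(z/\gamma)=z-\prox_{\gamma f}(z)$ with $z=\gamma y_n+\bar x_n$ gives $\gamma y_{n+1}=(\gamma y_n+\bar x_n)-\prox_{\gamma f}(\gamma y_n+\bar x_n)$. I would then substitute the change of variables: $\gamma y_n+\bar x_n=(v_{n-1}-w_n)+(2v_n-v_{n-1})=2v_n-w_n$, so the right-hand side equals $(2v_n-w_n)-\prox_{\gamma f}(2v_n-w_n)$ while the left-hand side is $v_n-w_{n+1}$. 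Solving for $w_{n+1}$ yields $w_{n+1}=w_n+\prox_{\gamma f}(2v_n-w_n)-v_n$, which is the first line of~\eqref{Algodr_equiv} since $K=\id$ forces $f\circ K=f$.

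The only genuine obstacle is the index bookkeeping: one must track that $\bar x_n=2x_n-x_{n-1}$ translates into $2v_n-v_{n-1}$ and that the combination $\gamma y_n+\bar x_n$ telescopes to $2v_n-w_n$; once this is set up, the verification is a single application of Moreau's identity followed by elementary rearrangement. I would also remark that each step above is an equivalence, so the derivation is reversible and the two schemes are genuinely equivalent under~\eqref{change_var}.
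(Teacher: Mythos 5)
Your proposal is correct and follows essentially the same route as the paper: apply Moreau's identity to rewrite $\prox_{f^*/\gamma}$ in the dual update as $\prox_{\gamma f}$, substitute the change of variables so that $\gamma y_n+\bar x_n$ telescopes to $2v_n-w_n$, and read off the two Douglas--Rachford lines. The index bookkeeping and the treatment of the primal update match the paper's computation exactly.
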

\begin{proof}
The Chambolle-Pock algorithm with the above parameter reads:
\begin{align*}
&\left\{\begin{array}{ll}
	y_{n+1} &= \prox_{{f^*}/\gamma}( y_{n} +    \bar x_n /\gamma)=y_n +    \bar x_n /\gamma -\frac{1}\gamma\prox_{\gamma{f}}( \gamma y_n +    \bar x_n) \\
	x_{n+1} &= \prox_{\tau g}(  x_n - \gamma y_{n+1}   ), \\
	\bar x_{n+1} &= 2x_{n+1} -x_n,
\end{array}\right.\\
\Leftrightarrow &
\left\{\begin{array}{ll}
	-\gamma y_{n+1}+\gamma y_n + 2x_{n}-x_{n-1} &= \prox_{\gamma{f}}( \gamma y_n +    2x_{n}-x_{n-1}) \\
	x_{n+1} &= \prox_{\tau g}(  x_n - \gamma y_{n+1}   )\\
\end{array}\right.
\end{align*}
Incorporating the change of variables~\eqref{change_var}, we obtain
\begin{align*}
\left\{\begin{array}{ll}
	w_{n+1}-w_n+v_n &= \prox_{\gamma{f}}( 2v_n-w_n ) \\
	v_{n+1} &= \prox_{\tau g}(  w_{n+1}   ),
\end{array}\right.
\end{align*}
that corresponds to  the Douglas-Rachford iterations~\eqref{Algodr_equiv}.
\end{proof}

\subsubsection{ADMM and Douglas-Rachford  equivalence}\label{sec:equivADMMDR}

To show the equivalence between Douglas-Rachford and ADMM, we consider the Fenchel-Rockafeller dual to the primal problem~\eqref{eq-addm-dr-primal} that writes: 
\be\label{eq-addm-dr-primal2}
\max_{z\in F} - \left(g^*(-K^*z)+f^*(z)\right)
\Leftrightarrow \min_{z\in F} f^*(z) + g^*(-K^*z),   
\ee
where we assume that the adjoint operator $K^*$ defined from $F$ to $\E$ is  injective. 
This problem can be reformulated as 
\be\label{eq-addm-dr-primal3}
	\min_{(x,y)\in F \times \E,\, K^*x+y=0} f^*(x) + g^*(y).   
\ee
This corresponds to a particular form of the problem~\eqref{eqADMM}  for the two convex conjugate $f^*$ and $g^*$ defined on $F$ and $\E$,  the operators  $A=K^*$, $B=\id$ and the vector $b=0$. It can be solved with the Alternating Direction Method of Multipliers (ADMM) algorithm presented in~\eqref{algo:ADMM}:
\be\label{eq-admm-iter}
\left\{\begin{array}{ll}
x_{n+1}&=\uargmin{x}f^*(x)+\ps{z_{n}}{K^*x}+\frac{\gamma}{2}\norm{y_{n}+K^*x}^2\\
y_{n+1}&=\uargmin{y}g^*(y) + \ps{z_{n}}{y}+\frac{\gamma}{2}\norm{K^*x_{n+1}+y}^2\\
z_{n+1}&=z_n+\gamma(y_{n+1}+K^*x_{n+1}).
\end{array}\right.
\ee
Introducing the proximal operator $\prox_{\gamma f}^{A}$   with a metric induced by an injective map~$A$ 
\be\label{eq-def-proxB}
	\prox_{\gamma f}^A(x) = \uargmin{s} \frac{1}{2\gamma} \norm{A s - x}^2 + f(s).
\ee
 the ADMM iterations~\eqref{eq-admm-iter} can be rewritten using proximal maps
\begin{equation}\label{eq-admm-prox-iters}
\left\{\begin{array}{ll}
	x_{n+1} &= \prox_{f^*/\gamma}^{K^*} ( -y_n -  z_n/\gamma), \\
	y_{n+1} &= \prox_{ g^*/\gamma } ( -K^*x_{n+1} -  z_n/\gamma ), \\
	z_{n+1} &= z_n + \gamma(y_{n+1}+K^*x_{n+1}).
\end{array}\right.
\end{equation}

The following proposition, which was initially proved in~\cite{Gabay83}, shows that applying the ADMM algorithm to the dual problem~\eqref{eq-addm-dr-primal2} is equivalent to solving the primal~\eqref{eq-addm-dr-primal} using the Douglas-Rachford algorithm. 
This result was further extended by~\cite{Eckstein1992} which shows the equivalence of ADMM with the proximal point algorithm and propose several generalizations.

\begin{proposition}\label{equiv_admmdr}
	For $f$ and $g$ convex, the Douglas-Rachford iterations~\eqref{Algodr_equiv}  are recovered from the ADMM iterations~\eqref{eq-admm-prox-iters} using
	\begin{equation}\label{change_var2}
 \left\{\begin{array}{ll}
		   z_n  &= -v_n,\\
		\gamma  y_n &= w_n-v_n, \\
		 -\gamma K^* x_{n+1} &= w_{n+1}-v_n. 
   \end{array}\right.  
	\end{equation}
\end{proposition}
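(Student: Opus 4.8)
The plan is to substitute the change of variables~\eqref{change_var2} into the ADMM iterations~\eqref{eq-admm-prox-iters} and verify that the two Douglas--Rachford updates~\eqref{Algodr_equiv} are reproduced line by line. Rather than manipulating the minimization problems directly, I would work with the first-order optimality conditions of each proximal step and convert subgradients of the conjugates $f^*,g^*$ into subgradients of $f,g$ through the relation~\eqref{eqProxSD}, namely $u\in\partial f(x)\Leftrightarrow x\in\partial f^*(u)$. The two ingredients that make everything fit are this conjugation rule together with the chain-rule inclusion $K^*\partial f(Kp)\subseteq\partial(f\circ K)(p)$ established earlier in the Chambolle--Pock section.

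First I would treat the $x$-update. Setting $a=2v_n-w_n$, the change of variables gives $a=-\gamma y_n-z_n$, so the argument of the metric prox equals $-y_n-z_n/\gamma=a/\gamma$ and $x_{n+1}=\uargmin{s}\frac{\gamma}{2}\norm{K^*s-a/\gamma}^2+f^*(s)$. Its optimality condition reads $K(a-\gamma K^*x_{n+1})\in\partial f^*(x_{n+1})$, and I would introduce the candidate $p=a-\gamma K^*x_{n+1}$. By~\eqref{eqProxSD} the inclusion $Kp\in\partial f^*(x_{n+1})$ is equivalent to $x_{n+1}\in\partial f(Kp)$, so that $a-p=\gamma K^*x_{n+1}\in\gamma K^*\partial f(Kp)\subseteq\partial(\gamma f\circ K)(p)$. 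This is exactly the optimality condition characterizing $p=\prox_{\gamma f\circ K}(a)$ via Proposition~\ref{PropProxSD}, whence $w_{n+1}-v_n=-\gamma K^*x_{n+1}=\prox_{\gamma f\circ K}(2v_n-w_n)-(2v_n-w_n)$. Combined with $w_n-v_n=\gamma y_n$, a one-line rearrangement recovers the first line of~\eqref{Algodr_equiv}.

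Next I would handle the $y$- and $z$-updates jointly. The optimality condition of the second ADMM step is $0\in\partial g^*(y_{n+1})+z_n+\gamma(y_{n+1}+K^*x_{n+1})$, and the third line of~\eqref{eq-admm-prox-iters} collapses the last three terms into $z_{n+1}$, giving $-z_{n+1}\in\partial g^*(y_{n+1})$, i.e. $y_{n+1}\in\partial g(-z_{n+1})$ by~\eqref{eqProxSD}. Using $-z_{n+1}=v_{n+1}$ and $\gamma y_{n+1}=w_{n+1}-v_{n+1}$ from~\eqref{change_var2}, this becomes $w_{n+1}-v_{n+1}\in\gamma\partial g(v_{n+1})$, which is precisely the characterization $v_{n+1}=\prox_{\gamma g}(w_{n+1})$ of Proposition~\ref{PropProxSD}, i.e. the second line of~\eqref{Algodr_equiv}. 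I would check separately that the $z$-update is consistent with the three relations in~\eqref{change_var2}: substituting them turns it into the identity $-v_{n+1}=-v_{n+1}$, so it carries no content beyond linking $y_{n+1}$ and $z_{n+1}$.

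The main obstacle is the first step, where the ADMM update uses the non-standard proximal map $\prox^{K^*}$ in the metric induced by $K^*$; one cannot simply invoke Moreau's identity (Corollary~\ref{Moreau}) because of the operator $K$. The device that circumvents this is to pass through the optimality conditions and use only the forward chain-rule inclusion $K^*\partial f(Kp)\subseteq\partial(f\circ K)(p)$, which is enough to certify that the candidate $p$ is the minimizer defining $\prox_{\gamma f\circ K}$. I would also record that the injectivity of $K^*$ assumed for~\eqref{eq-addm-dr-primal2} makes $s\mapsto\frac12\norm{K^*s}^2$ strongly convex, so each $x_{n+1}$ is uniquely defined and the change of variables $-\gamma K^*x_{n+1}=w_{n+1}-v_n$ is meaningful.
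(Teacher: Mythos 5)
Your proof is correct, but it follows a genuinely different route from the paper's. The paper first establishes the generalized Moreau identity of Proposition~\ref{eq-proxB-proxD}, namely $\prox_{f/\gamma}^{A}(x)=A^{+}\big(x-\frac{1}{\gamma}\prox_{\gamma f^{*}\circ A^{*}}(\gamma x)\big)$ (proved via inversion of the maximal monotone operator $\partial f$), then substitutes this closed form into the ADMM iterations~\eqref{eq-admm-prox-iters}, introduces $\bar x_n=-K^{*}x_n$, and performs the change of variables~\eqref{change_var2} by pure algebra. You instead bypass that auxiliary identity entirely: you write the first-order optimality condition of each ADMM subproblem, flip subgradients of $f^{*},g^{*}$ into subgradients of $f,g$ via~\eqref{eqProxSD}, use the chain-rule inclusion $K^{*}\partial f(Kp)\subseteq\partial(f\circ K)(p)$ to certify that $p=2v_n-w_n-\gamma K^{*}x_{n+1}$ satisfies the characterization of $\prox_{\gamma f\circ K}(2v_n-w_n)$ from Proposition~\ref{PropProxSD}, and observe that the $z$-update reduces to the consistency identity between the two expressions for $w_{n+1}$ in~\eqref{change_var2}. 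Both arguments ultimately rest on the same duality $\partial f^{*}=(\partial f)^{-1}$, but yours is self-contained and avoids manipulating the pseudo-inverse $A^{+}$ and set-valued operator calculus, at the cost of re-deriving in place the one instance of the generalized Moreau identity that is needed; the paper's version isolates that identity as a reusable statement (and recovers Corollary~\ref{Moreau} as the case $A=\id$). Your remark that the injectivity of $K^{*}$ makes $s\mapsto\frac{1}{2}\norm{K^{*}s}^{2}$ strongly convex, so that $x_{n+1}$ is uniquely defined, is a useful point that the paper leaves implicit.
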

To show this proposition, we first require the following result~\cite{papadakis2014optimal}.
\begin{proposition}\label{eq-proxB-proxD}
One has
\be	
		\prox_{f/\gamma}^A(x) = A^+ \pa{ x - \frac{1}{\gamma} \prox_{\gamma f^* \circ A^* }(\gamma x) },
\ee
where $A^+=(A^*A)^{-1}A^*$ is the pseudo-inverse of $A$. Note that in the case   $A=\id$, one recovers Moreau's identity~\eqref{eq-moreau-identity}.
\end{proposition}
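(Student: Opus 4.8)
The plan is to characterize the left-hand side through its first-order optimality condition, then \emph{construct} explicitly the vector whose ordinary proximal map appears on the right and check that it satisfies the corresponding optimality condition. Throughout I assume $f$ convex, proper and l.s.c.\ and $A$ injective, so that $A^+=(A^*A)^{-1}A^*$ is well defined with $A^+A=\id$, and the map $s\mapsto \frac{\gamma}{2}\norm{As-x}^2+f(s)$ is strictly convex, making $\prox^A_{f/\gamma}(x)$ a genuine (single-valued) minimizer.

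First I would unfold the definition~\eqref{eq-def-proxB}: $s^\star:=\prox^A_{f/\gamma}(x)=\argmin_s \frac{\gamma}{2}\norm{As-x}^2+f(s)$. The quadratic term is smooth with gradient $\gamma A^*(As-x)$, so the sum rule (as in the Remark following Lemma~\ref{LSSD}) gives the optimality condition $0\in \gamma A^*(As^\star-x)+\partial f(s^\star)$, i.e.
\[
\gamma A^*(x-As^\star)\in \partial f(s^\star).
\]

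Next I would introduce the candidate $z^\star:=\gamma(x-As^\star)$, so that $A^*z^\star=\gamma A^*(x-As^\star)\in\partial f(s^\star)$. Applying the conjugate--subgradient equivalence~\eqref{eqProxSD}, namely $u\in\partial f(s)\Leftrightarrow s\in\partial f^*(u)$, yields $s^\star\in\partial f^*(A^*z^\star)$. I would then confirm that $z^\star=\prox_{\gamma f^*\circ A^*}(\gamma x)$ by checking the optimality condition of the strictly convex map $z\mapsto \frac12\norm{z-\gamma x}^2+\gamma f^*(A^*z)$, which reads $0\in (z-\gamma x)+\gamma\,\partial(f^*\circ A^*)(z)$. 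Here $z^\star-\gamma x=-\gamma As^\star$, while $s^\star\in\partial f^*(A^*z^\star)$ gives $As^\star\in A\,\partial f^*(A^*z^\star)$, so the required inclusion holds. To close, I rearrange $z^\star=\gamma(x-As^\star)$ into $As^\star=x-\frac1\gamma z^\star$ and apply $A^+$, using $A^+A=\id$, to obtain $s^\star=A^+\big(x-\frac1\gamma\prox_{\gamma f^*\circ A^*}(\gamma x)\big)$, which is the claim.

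The only subtle point is the chain rule for the subdifferential of the composition $f^*\circ A^*$. Crucially I do not need the full equality $\partial(f^*\circ A^*)(z)=A\,\partial f^*(A^*z)$ (which would require a constraint qualification), but only the inclusion $A\,\partial f^*(A^*z)\subseteq \partial(f^*\circ A^*)(z)$: if $v\in\partial f^*(A^*z)$, then substituting $w=A^*z'$ into the subgradient inequality $f^*(w)\geq f^*(A^*z)+\ps{v}{w-A^*z}$ shows directly that $Av\in\partial(f^*\circ A^*)(z)$, and this holds unconditionally. With that inclusion the verification of the previous paragraph goes through. Finally I would note that for $A=\id$ one has $A^+=\id$ and the identity collapses to the general Moreau identity~\eqref{eq-moreau-identity}, as announced.
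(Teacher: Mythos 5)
Your proof is correct and follows essentially the same route as the paper's: both start from the optimality condition $\gamma A^*(x-As^\star)\in\partial f(s^\star)$, pass to the conjugate via $\partial f^*=(\partial f)^{-1}$, identify $\gamma(x-As^\star)$ as $\prox_{\gamma f^*\circ A^*}(\gamma x)$, and invert with $A^+$ using injectivity of $A$. The only difference is presentational — the paper runs the argument as a chain of resolvent identities with $\Vv=A\circ(\partial f)^{-1}\circ A^*$ and implicitly uses the full chain rule $\partial(f^*\circ A^*)=A\,\partial f^*(A^*\cdot)$ (valid here since $A^*$ is surjective), whereas you verify the candidate directly and correctly observe that only the unconditional inclusion $A\,\partial f^*(A^*z)\subseteq\partial(f^*\circ A^*)(z)$ is needed, which is a slightly cleaner handling of that point.
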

\begin{proof}
	We denote $ \Uu=\partial f$ the set-valued maximal monotone operator. One has
	$\partial f^* =  \Uu^{-1}$, where $ \Uu^{-1}$ is the set-valued inverse operator. We thus have  $\partial (f^* \circ A^*) = A \circ \Uu^{-1} \circ A^*=\Vv $ and $\prox_{\gamma f^* \circ A^* } = (\id + \gamma  \Vv)^{-1}$, which is a single-valued operator. 
	Denoting  $x^* = \prox_{f/\gamma}^A(x)$, the optimality condition of~\eqref{eq-def-proxB} leads to
	\begin{align*}
		& 
		0 \in A^*( A x^* - x) + \frac{1}{\gamma}  \Uu (x^*) 
		\Leftrightarrow
		x^* \in  \Uu^{-1} \pa{ \gamma A^* A x^* - \gamma A^* x   }\\
		\Leftrightarrow& \, 
		\gamma A x^* \in \gamma \Vv  \pa{ \gamma x - \gamma A x^*    } 	 
		\Leftrightarrow
		\gamma A x^* \in ( \id + \gamma \Vv )\pa{ \gamma x - \gamma A x^*   } + \gamma A x^* - \gamma x \\
		\Leftrightarrow &\, 
		\gamma x \in ( \id + \gamma \Vv )\pa{ \gamma x - \gamma A x^*   } 
		\Leftrightarrow
		\gamma x - \gamma A x^* = ( \id + \gamma \Vv )^{-1}(\gamma x)\\ 
		\Leftrightarrow&\,  x^* = A^{+}\pa{ x - \frac{1}{\gamma} ( \id + \gamma \Vv )^{-1}(\gamma x) }		
	\end{align*}
	where we used the fact that $A$ is injective. 
\end{proof}

\begin{proof}[Proof of Proposition~\ref{equiv_admmdr}]
	Denoting $\bar x_n = -K^* x_n\in \text{Im}(-K^*)$ (recall that $K^*$ is injective) and using the result of Proposition~\ref{eq-proxB-proxD}, the iterates~\eqref{eq-admm-prox-iters} write
	\begin{equation}\label{eq-equiv-iter1}
		\left\{\begin{array}{ll}
		\bar x_{n+1} &= y_n+z_n/\gamma  + \frac{1}{\gamma} \prox_{\gamma f \circ K}\pa{ -\gamma y_n-z_n }, \\
		y_{n+1} &=  \bar x_{n+1}-z_n/\gamma  - \frac{1}{\gamma} \prox_{\gamma g}\pa{ \gamma \bar x_{n+1}-z_n )}, \\
		z_{n+1} &= z_n+\gamma (y_{n+1}-\bar x_{n+1}),
		\end{array}\right.
	\end{equation}
so that 
	\begin{equation}\label{eq-equiv-iter2}
		\left\{\begin{array}{ll}
		-z_n+\gamma(\bar x_{n+1}-y_n) &=  \prox_{\gamma f \circ K}\pa{ -\gamma y_n-z_n }, \\
		-z_n +\gamma (\bar x_{n+1}-y_{n+1}) &=   \prox_{\gamma g}\pa{ \gamma \bar x_{n+1}-z_n )}, \\
		z_{n+1} &= z_n+\gamma (y_{n+1}-\bar x_{n+1}).
		\end{array}\right.
	\end{equation}
Considering the following change of variables~\eqref{change_var2}:
\begin{align*}
\left\{\begin{array}{ll}
		   z_n  &=  -v_n,\\
		\gamma  y_n &= w_n-v_n, \\
		\gamma  \bar x_{n+1} &= w_{n+1}-v_n,
  \end{array}\right.\vspace{-0.1cm}
	\end{align*}
relations~\eqref{eq-equiv-iter2} become
\begin{equation}\label{eq-equiv-iter2a}
  		\left\{\begin{array}{ll}
		w_{n+1}&= w_n+\prox_{\gamma f\circ K}(2 v_n-w_n)-v_n, \\
		v_{n+1} &= \prox_{\gamma g} (w_{n+1}),\\
        0&=0,
		\end{array}\right.
	\end{equation}
that gives  the Douglas-Rachford iterations~\eqref{Algodr_equiv}.
\end{proof}

\begin{remark} \label{rmk:DRS_ADMM}
For $K=\id$, the previous result remains true for $f$ and $g$ nonconvex functions. Indeed, as already mentioned in Section~\ref{sec:ADMM}, for $K=\id$, there is a primal equivalence between ADMM~\eqref{eq:ADMM_simple} and DRS~\eqref{eqDR4}. 
\end{remark}

\section{Extension to nonconvex optimization}\label{sec:nonconvex_opt}
We now study the application of previously introduced gradient based algorithms to the optimization of nonconvex functionals. Without convexity, a lot of issues arise
\bi
\item A local minimum is no longer necessarily global.  The convergence  to a (local) minimum of the nonconvex function is not ensured. Iterative optimization algorithms rather target the convergence to critical points of the function.
\item The  subdifferential may be reduced to an empty set at many points.
\item The proximal operator may be 
undefined or multivalued.
\item The identity between a function 
$f$ and its biconjugate $f^{**}$ is no longer valid and can not be exploited in primal-dual formulations.
\ei
Despite these limits, we can still prove some convergence results. In particular, by finding decreasing Lyapunov functions, we can show convergence to zero of the norm between consecutive iterates. Then, assuming additional regularity on the optimized function around its critical points, such as the Kurdyka-Łojaseiwicz property, one can prove convergence of the iterates towards a critical point of the function. 

In this section, we first provide  preliminary results on the convergence of the gradient descent scheme in the nonconvex setting in section~\ref{sec:nonconvex_gd}. Next a detailed analysis of the Forward-Backward algorithm for nonconvex problems is given in section~\ref{sec:nonconvex_FB}. In section~\ref{sec:nonconvDR}, we present existing results relative to the use of Douglas-Rachford, ADMM and Chambolle-Pock algorithms in the nonconvex setting.

\subsection{From convex to nonconvex optimization, the case of gradient descent}\label{sec:nonconvex_gd}

A part of the convergence results that were proved in the convex setting are still valid on the nonconvex setting. As in Section~\ref{sec:smooth_opt}, we consider 
\begin{equation}\label{eq:min_ncvx}
\min_{x\in E}f(x)
\end{equation} 
with a nonconvex, $L$-smooth function $f: E \subseteq \R^d \to \R \cup \{+\infty\}$. The main difference with the convex case is that for minimizing a nonconvex function, there is usually no hope to target a global minimum. The gradient of $f$ is still defined and we instead look for a stationary point of $f$, i.e. a point $x$ such that $\nabla f(x) = 0$. Notice that in the nonconvex case, such a stationary point is not necessarily a global minimum.

We consider again the gradient-descent algorithm with stepsize $\gamma>0$,
\begin{equation}\label{algo:gd_ncvx}
x_{n+1}=x_n-\gamma \nabla f(x_n).
\end{equation} 
In the proof of convergence of the gradient-descent algorithm in the convex setting in Theorem~\ref{TheoGradPasFixe}, without making use of the convexity of $f$ and using the fact that $f$ is $L$-smooth, we proved the following sufficient decrease property:
\begin{equation} \label{eq:SDC_GD_ncvx}
f(x_{n+1})+\left(\frac{2-\gamma L}{2\gamma}\right)\norm{x_{n+1}-x_n}^2\leqslant f(x_n).
\end{equation}
Therefore, as soon as $\gamma < \frac{2}{L}$, the function $f$ decreases along the iterates. Assuming that $f$ is lower-bounded, the sequence of function values $f(x_n)$ then converges. Moreover, by summing~\eqref{eq:SDC_GD_ncvx}, between $n=0$ and $n=N>0$, we get that $\norm{x_{n+1}-x_n} = \norm{\nabla f(x_n}$ converges to $0$ with rate $\mathcal{O}(1/\sqrt{N})$. Eventually, assuming $f$ of class $\mathcal{C}^1$, we get that any cluster point $x^*$ of the sequence $(x_n)$ verifies $\nabla f(x^*)=0$,  i.e. $x^*$ is a critical points of the objective function $f$.
These results are summarized in the following Proposition.
\begin{proposition}[Nonconvex Gradient Descent]
\label{prop:nonconvex_PGD}
Assume $f$ proper, lsc, bounded from below and $L$-smooth. Then, for $\gamma < 2/L$, the iterates $(x_n)$ given by the Gradient Descent algorithm~\eqref{algo:gd_ncvx} verify 
\begin{itemize}
     \item[(i)] $f(x_n)$ is non-increasing and converges.
    \item[(ii)] The sequence $\norm{x_{n+1}-x_n}$ converges to $0$ at rate $\min_{n < N} \norm{x_{n+1}-x_n} = \mathcal{O}(1/\sqrt{N})$
    \item[(iii)] All cluster points of the sequence $(x_n)$ are critical points of $f$.
\end{itemize}
\end{proposition}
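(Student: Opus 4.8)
The entire proposition follows from the sufficient decrease inequality~\eqref{eq:SDC_GD_ncvx}, so the plan is to extract each of the three claims from it. The key observation is that the constant $c:=\frac{2-\gamma L}{2\gamma}$ is strictly positive exactly when $\gamma<\frac{2}{L}$, and that~\eqref{eq:SDC_GD_ncvx} was obtained from the Descent Lemma~\eqref{eqGradLip} (Lemma~\ref{LemmeMajQuad}) using only the $L$-smoothness of $f$---convexity played no role---so it remains available in the nonconvex setting. Claim~(i) is then immediate: since $c>0$, inequality~\eqref{eq:SDC_GD_ncvx} gives $f(x_{n+1})\leqslant f(x_n)$, so $(f(x_n))_{n\in\N}$ is non-increasing, and being bounded from below by hypothesis it converges.

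For~(ii), I would sum~\eqref{eq:SDC_GD_ncvx} from $n=0$ to $N-1$ and let the $f$-terms telescope:
\[
c\sum_{n=0}^{N-1}\norm{x_{n+1}-x_n}^2\leqslant f(x_0)-f(x_N)\leqslant f(x_0)-\inf_{\E} f,
\]
where the last inequality uses the lower bound on $f$. Sending $N\to\infty$ shows $\sum_{n}\norm{x_{n+1}-x_n}^2<\infty$, hence $\norm{x_{n+1}-x_n}\to 0$. For the rate, I bound the window minimum by the average,
\[
N\min_{n<N}\norm{x_{n+1}-x_n}^2\leqslant\sum_{n=0}^{N-1}\norm{x_{n+1}-x_n}^2\leqslant\frac{f(x_0)-\inf_{\E} f}{c},
\]
so that $\min_{n<N}\norm{x_{n+1}-x_n}\leqslant\sqrt{(f(x_0)-\inf_{\E} f)/(cN)}=\mathcal{O}(1/\sqrt{N})$. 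Since $x_{n+1}-x_n=-\gamma\nabla f(x_n)$, this is equivalently a decay rate on $\norm{\nabla f(x_n)}$.

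For~(iii), let $x^*$ be a cluster point and $(x_{n_k})_{k\in\N}$ a subsequence with $x_{n_k}\to x^*$. Since $f$ is $L$-smooth, $\nabla f$ is continuous, so $\nabla f(x_{n_k})\to\nabla f(x^*)$; on the other hand $\norm{\nabla f(x_n)}=\frac{1}{\gamma}\norm{x_{n+1}-x_n}\to 0$ by~(ii), whence $\nabla f(x^*)=0$ and $x^*$ is critical. I do not expect a genuine obstacle here: all the analytic content is front-loaded into the descent inequality, so the argument is essentially a bookkeeping exercise. The only points needing care are that~(iii) relies on continuity of $\nabla f$ (guaranteed by $L$-smoothness), and that the statement claims nothing about convergence of the full sequence $(x_n)$---only about its function values and its cluster points---so no Kurdyka-Łojasiewicz-type argument is required at this stage.
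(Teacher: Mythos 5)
Your proof is correct and follows essentially the same route as the paper: both arguments rest entirely on the sufficient decrease inequality~\eqref{eq:SDC_GD_ncvx} (valid without convexity since the Descent Lemma only uses $L$-smoothness), telescoping it for the summability and the $\mathcal{O}(1/\sqrt{N})$ rate, and using continuity of $\nabla f$ together with $x_{n+1}-x_n=-\gamma\nabla f(x_n)$ to identify cluster points as critical points. Your write-up is in fact slightly more careful than the paper's (which has a harmless typo, writing $\norm{x_{n+1}-x_n}=\norm{\nabla f(x_n)}$ instead of $\gamma\norm{\nabla f(x_n)}$).
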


It is important to note that this result does not guarantee the convergence of the sequence $(x_n)_{n\in\N}$ itself, which could infinitely turn around a level set of $f+g$ that is not a local minima.  In the following section, we will explain that under an additional assumption on the geometry of the objective function around its critical points, we can prove single-point convergence of the sequence $(x_n)$ towards a critical point of~$f$.

\subsection{Single-point nonconvex convergence of Forward-Backward}\label{sec:nonconvex_FB}

Let's consider again the more general problem
\begin{equation}\label{pb:nonconvFB}
\min_{x\in \E} f(x)+g(x)
\end{equation}
under the main assumption of a $L$-smooth function $f$. 
Note that even if the functions $f$ and/or $g$ are nonconvex, the following Forward-Backward algorithm is well defined:
\begin{equation}
    \label{eq:PGD}
    x_{n+1} \in \prox_{\gamma g} \left(x_n- \gamma \nabla f(x_n)\right).
\end{equation}
Note that, due to the nonconvexity of $g$, the output of the proximal operator may not be unique.
We consider this algorithm because it is more general than the Gradient Descent algorithm. In particular, it allows to treat non-differentiable term $g$. \\

In this section, we study the point-wise convergence of the iterates of the Forward-Backward algorithm~\eqref{eq:PGD} in the nonconvex setting. As the objective function is not necessarily differentiable, we look for a zero of its subdifferential. We first need to define the right notion of nonconvex subdifferential. Then, we introduce the Kurdyka-Łojasiewicz property, a local property that characterizes the shape of a function around its critical points. Finally, we prove iterate convergence of the Forward-Backward algorithm~\eqref{eq:PGD} under this property.

\subsubsection{Nonconvex subdifferential and critical points}\label{sec:ncvx_subdif}
We first define the notion of optimum for the nonconvex problem~\eqref{pb:nonconvFB}. For a nonconvex function $f$, the usual notion of subdifferential from Definition~\ref{def:subdif} may not be informative enough. Indeed, with this notion, with Fermat's rule (Theorem~\ref{thm:fermat}), the critical points of $f$, i.e. the zeros of the subdifferential $\partial f$ corresponds to the minimizers of $f$. Therefore, targeting a critical point comes back to looking for a global minimum. We explained that this is out of reach when minimizing a general nonconvex function. Instead, one can hope to reach a point that is critical for some generalized notion of subdifferential.

To study the convergence of proximal iterative schemes for minimizing a nonconvex function, an adequate notion of subdifferential \cite{ABS13} is the \textit{limiting subdifferential}, also called \emph{general subgradient} in \cite{rt1998wets}:
\begin{equation} \label{eq:limiting_subdifferential}
   \partial^{lim} f(x) = \left\{\omega \in \E, \exists x_n \to x, f(x_n) \rightarrow f(x), \omega_n \rightarrow \omega, \omega_n \in \hat \partial f(x_n) \right\}
\end{equation}
with $\hat \partial f$ the  \textit{Fréchet subdifferential} of $f$ (also called \emph{regular subgradient} in \cite{rt1998wets}) defined as
\begin{align}
    \hat \partial f(x) &= \left\{ \omega \in \E, \liminf_{y \to x} \frac{f(y) - f(x) - \langle \omega, y-x \rangle }{\norm{x-y}} \geq 0 \right\} .
 \end{align}
The three introduced notions of subdifferential verify for $x \in \dom f$
\begin{equation} \label{eq:inclusion_subdiff}
    \partial f(x) \subset \hat \partial f(x) \subset \partial^{lim} f(x) .
\end{equation}
For $f$ \emph{convex}, the three subdifferentials coincide \cite[Proposition 8.12]{rt1998wets}. For $f$ of class $\mathcal{C}^1$ they also coincide with the usual concept of gradient $\hat \partial f(x) = \partial^{lim} f(x) = \{ \nabla f(x)\}$. 
This generalized notion of subdifferential gives birth to generalized notions of
\emph{critical point} or \emph{stationary point} \emph{i.e.} $x \in \E$ such that
\begin{equation}
    0 \in \partial^{lim} f(x).
\end{equation} 
A necessary (but not sufficient) condition for $x \in \E$ to be a local minimizer of a nonconvex function $f$ is  $0 \in \hat \partial f(x)$ and thus  $0 \in \partial^{lim}  f(x) $. 
Last but not least, the limiting subdifferential also verifies the sum rule.
\begin{proposition}[Sum rule {\cite[8.8(c)]{rt1998wets}}] \label{prop:subdif_sum}
    If $\mathcal{J} = f + g$ with $f$ of class $\mathcal{C}^1$ and $g$ proper. Then for $x \in \dom(g)$, 
    \begin{equation}
        \partial^{lim} \mathcal{J}(x) = \nabla f(x) + \partial^{lim} g(x).
    \end{equation}
\end{proposition}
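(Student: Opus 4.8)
The plan is to first establish the sum rule at the level of the Fréchet subdifferential, where the smoothness of $f$ makes the argument essentially immediate, and then to transfer it to the limiting subdifferential by a sequential argument relying on the continuity of $\nabla f$ and of $f$ itself.

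\textbf{Step 1 (Fréchet level).} I would first show that when $f$ is differentiable at $x$ one has $\hat\partial(f+g)(x)=\nabla f(x)+\hat\partial g(x)$. Starting from the definition, $\omega\in\hat\partial(f+g)(x)$ means $\liminf_{y\to x}\frac{(f+g)(y)-(f+g)(x)-\ps{\omega}{y-x}}{\norm{y-x}}\geqslant 0$. Using the first-order expansion $f(y)-f(x)=\ps{\nabla f(x)}{y-x}+o(\norm{y-x})$ supplied by the $\mathcal{C}^1$ hypothesis, the $f$-terms recombine with $\ps{\omega}{y-x}$ and the remainder contributes $o(\norm{y-x})/\norm{y-x}\to 0$ inside the $\liminf$. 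This shows the defining condition is equivalent to $\liminf_{y\to x}\frac{g(y)-g(x)-\ps{\omega-\nabla f(x)}{y-x}}{\norm{y-x}}\geqslant 0$, that is, $\omega-\nabla f(x)\in\hat\partial g(x)$, which is exactly the claimed equality.

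\textbf{Step 2 (limiting level).} For the inclusion $\subseteq$, I would take $\omega\in\partial^{lim}(f+g)(x)$ together with approximating sequences $x_n\to x$, $(f+g)(x_n)\to(f+g)(x)$ and $\omega_n\to\omega$ with $\omega_n\in\hat\partial(f+g)(x_n)$. By Step~1 I can write $\omega_n=\nabla f(x_n)+v_n$ with $v_n\in\hat\partial g(x_n)$; continuity of $\nabla f$ gives $\nabla f(x_n)\to\nabla f(x)$, hence $v_n\to\omega-\nabla f(x)$. Continuity of $f$ converts $(f+g)(x_n)\to(f+g)(x)$ into $g(x_n)=(f+g)(x_n)-f(x_n)\to g(x)$, so the sequences $(x_n,v_n)$ form admissible data and $\omega-\nabla f(x)\in\partial^{lim}g(x)$. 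The reverse inclusion $\supseteq$ is symmetric: starting from $v\in\partial^{lim}g(x)$ with data $x_n\to x$, $g(x_n)\to g(x)$, $v_n\to v$, $v_n\in\hat\partial g(x_n)$, I set $\omega_n=\nabla f(x_n)+v_n\in\hat\partial(f+g)(x_n)$ by Step~1 and use the same continuity facts to conclude $\nabla f(x)+v\in\partial^{lim}(f+g)(x)$.

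The computation is otherwise routine; the only points needing care are the vanishing of the $o(\norm{y-x})$ remainder inside the $\liminf$ in Step~1 and, in Step~2, the passage from convergence of $(f+g)(x_n)$ to convergence of $g(x_n)$, which genuinely uses continuity of the smooth part $f$. The main conceptual obstacle is simply recognizing that smoothness of one summand is precisely what upgrades the general inclusion sum rule~\eqref{eq:inclusion_subdiff} to an equality, since the differentiable summand contributes a single, continuously varying subgradient $\nabla f(x_n)$ that can be carried through the limiting construction without loss.
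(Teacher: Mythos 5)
Your proof is correct. Note, however, that the paper does not actually prove this proposition: it simply cites \cite[8.8(c)]{rt1998wets} and moves on, so there is no in-paper argument to compare against. Your two-step strategy --- first establishing the \emph{exact} sum rule at the Fr\'echet level, where differentiability of $f$ at the point lets the first-order remainder $o(\norm{y-x})$ be absorbed into the $\liminf$, and then transferring it to the limiting subdifferential by carrying the approximating sequences through, using continuity of $\nabla f$ to pass $\nabla f(x_n)\to\nabla f(x)$ and continuity of $f$ to convert $(f+g)(x_n)\to(f+g)(x)$ into $g(x_n)\to g(x)$ --- is precisely the standard argument behind the cited exercise in Rockafellar--Wets, and both inclusions are handled correctly. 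Your closing observation is also the right diagnosis: Step~1 needs only differentiability of $f$ at the individual points, and the full $\mathcal{C}^1$ hypothesis is used exactly once, to guarantee that the single-valued subgradient $\nabla f(x_n)$ varies continuously along the approximating sequence so that nothing is lost in the limit. The only cosmetic caveat is that your Step~1 equivalence should be read at points of $\dom(g)$ (so that $g(x)$ is finite and the difference quotients make sense), but this is guaranteed along the sequences you use since the function values converge to a finite limit.
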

In particular, thanks to this property, a fixed point of the Forward-Backward algorithm~\eqref{eq:PGD} is a critical point of $f+g$
\begin{align}
x^{*}\in \prox_{\gamma g}(x^{*}-\gamma \nabla f(x^{*}))  
&\Rightarrow 0 \in \gamma \partial^{lim} g(x^{*}) + (x^{*} - (x^{*}-\gamma \nabla f(x^{*}))) \\
&\Rightarrow 
-\nabla f(x^{*})\in\partial^{lim} g(x^{*}) \\
&\Rightarrow 0 \in \partial^{lim} (f + g)(x^{*})
\end{align}

For additional details on the notion of limiting subdifferential, we refer to \cite[Chapter 8]{rt1998wets}.
In the rest of this section, we will denote for simplicity $\partial f$ as the limiting subdifferential $\partial^{lim} f$.

\subsubsection{Kurdyka-Łojasiewicz (KŁ) property }\label{sec:KŁ}
To ensure the single-point convergence of the Forward-Backward algorithm towards a critical point of $\mathcal{J} = f + g$ (i.e. a zero of the limiting subdifferential of $\mathcal{J}$), we introduce the Kurdyka-Łojasiewicz (KŁ) property~\cite{bolte2007lojasiewicz}. It is a local property that characterizes the shape of the function $\mathcal{J}$ around its critical points.
Before entering into technical details, we  first present  a simpler condition given by Łojaseiwicz for smooth functions \cite{lojasiewicz1963propriete, lojasiewicz1982trajectoires}.
\begin{definition}[Łojaseiwicz condition]
Let $f: \E \to \R\cup \{+\infty\}$. We say that the function  $f$ satisfies the Łojasiewicz condition if, for any critical point $x^*$ of $f$, there exists a neighborhood $V$ of $x^*$, $C>0$,  and $\alpha\in[0,1[$
such that for all $x\in V$
 \begin{equation}\label{def:lowa}
 \norm{\nabla f(x)}\geqslant C|f(x)-f(x^*)|^{\alpha}.
 \end{equation}
\end{definition}

For a smooth function $f$, this condition is  verified at points $x$ such that \\ $\nabla f(x)\neq 0$. The Łojaseiwicz  condition thus ensures that a function 
is not too flat around its critical points. 
It is an essential property to ensure  convergence of optimization algorithms in the nonconvex setting. The
Łojasiewicz property is verified in particular by \textit{real analytic functions}.

The Kurdyka-Łojaseiwicz condition \cite{kurdyka1998gradients} generalizes the previous Łojaseiwicz condition to non\-smooth functions (see also~\cite{ABS13} for more details).

\begin{definition}[Kurdyka-Łojaseiwicz property]\label{defKŁ}
The function $f\,:\,\E\to \R\cup \{+\infty\}$ satisfies the Kurdyka-Łojaseiwicz property at point $x^*\in \dom\partial f$ if there exists $\eta>0$, a neighborhood  $V$ of $x^*$ and a function $\varphi\,:\,[0,\eta[\to \R_+$ such that:
\begin{enumerate}
	\item $\varphi(0)=0$,
	\item $\varphi$ is $C^1$ on $]0,\eta[$,
	\item For all $s\in]0,\eta[,\,\varphi(s)>0$,
	\item For all  $x\in V$ such that $f(x)\in]f(x^{*}),f(x^*)+\eta[\}$, the KŁ property is verified:
	\begin{equation}\label{def:kl}
	\varphi'(f(x)-f(x^{*}))\text{dist}(0,\partial f(x))\geqslant 1.
	\end{equation}
\end{enumerate}
Proper and l.s.c functions satisfying the KŁ property  $\forall x\in \dom\partial f$ are called KŁ functions. 
\end{definition} 
The KŁ condition can be interpreted as the fact that, up to a reparameterization, the function is locally sharp around its critical points, i.e. we can bound its subgradients away from 0. As before, this condition is verified at all points  $x$ such that $0\notin \partial f(x)$.

\paragraph{How to verify that a function verifies the Kurdyka–Łojasiewicz property?} 
In the next section, we will make use of the KŁ property to study the convergence of iterative algorithms for minimizing the sum of two functions $F = f + g$ with $f$ or $g$ nonconvex. \newpage

The first condition will be to have $F$ KŁ. However, the KŁ condition is not stable by sum. Therefore, we  need to introduce conditions on $f$ and $g$ such that $f+g$ is KŁ. Moreover, directly verifying that a function verifies the KŁ inequality~\eqref{def:kl} is not easy. We need to choose a subclass of KŁ functions that, on the one hand, is large enough to encompass most functions of interest and, on the other hand, has minimal stability properties so that inclusion to that set is easy to verify. 
\textit{Semialgebraic functions} \cite{coste2000introduction} is such a convenient class of functions and is used in most works on nonconvex optimization \cite{attouch2010proximal, ABS13, bolte2018first}. 

\begin{definition}[Semialgebraic function] \label{def:semialgebraic}~
\begin{itemize}
    \item A subset $S$ of $\mathbb{R}^n$ is a real semialgebraic set if there exists a finite number of real polynomial functions $P_{i,j}, Q_{i,j} : \mathbb{R}^n \to \mathbb{R}$ such that 
    \begin{equation} \label{eq:semialgebraic}
        S = \cup_{j=1}^p \cap_{i=1}^q \{ x \in \mathbb{R}^n, P_{i,j} = 0, Q_{i,j} <0 \}.
    \end{equation}
    \item A function $f : \mathbb{R}^n \to \mathbb{R} \cup \{+\infty \}$ (resp. $f : \mathbb{R}^n \to \mathbb{R}^m$) is called semialgebraic if its graph $\{(x,y) \in \mathbb{R}^n \times \mathbb{R}, y = f(x) \}$ (resp. $\{(x,y) \in \mathbb{R}^n \times \mathbb{R}^m, y = f(x) \}$) is a semialgebraic subset of $\mathbb{R}^n \times \mathbb{R}$ (resp. $\mathbb{R}^n \times \mathbb{R}^m$).
\end{itemize}
\end{definition}
A semialgebraic function satisfies the Kurdyka-Łojasiewicz property \cite{bolte2007lojasiewicz}.
From Definition~\ref{def:semialgebraic}, we first verify that polynomial functions are semialgebraic functions. Some other typical semialgebraic maps are the indicator function of a semialgebraic set or the Euclidean norm. What makes the semialgebraic set of functions useful is that it has strong stability properties. The main ones follow from the \textit{Tarski-Seidenberg principle}, which states that the image of a semialgebraic subset of $\R^{n+m}$ by projection on the first $n$ coordinates is a semialgebraic subset of $\R^{n}$. From this theorem, the sum, product, composition or derivative of a semialgebraic functions are semialgebraic. These stability properties are very useful to prove that a given function is KŁ. 

\begin{remark}[On the KŁ properties of neural networks]
One can  prove that neural networks with ReLU activations are semialgebraic functions, and thus KŁ. Indeed neural networks basically consists of composition and sums of linear maps and activation functions.  A linear map being semialgebraic, the main difficulty is to show that activation functions are semialgebraic. This is the case of the ReLU which can be expressed with a system of polynomial (in)equalities: 
\begin{equation*}
    \textrm{for }x,y \in \R\textrm{, }y = \textrm{ReLU}(x) = \max(0,x) \textrm{ if and only if }y(y-x) = 0\textrm{, }y \geq x, y \geq 0.
\end{equation*}
    
\end{remark}
\paragraph{Nonconvex convergence to a critical point with Kurdyka–Łojasiewicz property}

We now explain why Kurdyka–Łojasiewicz functions are useful for convergence in the nonconvex setting. Given a KŁ function $f$, we want to show convergence of a sequence $(x_n)_{n \in \mathbb{N}}$, produced by a certain iterative procedure, towards a critical point of $f$. The following lemma from~\cite{ABS13} gives sufficient conditions on $(x_n)$ such that this convergence is verified. This comprehensive lemma will be the basis of the nonconvex convergence analysis of first-order optimization algorithms like Forward-Backward. 

\begin{lemme}[{\cite[Theorem 2.9]{ABS13}}]
\label{lem:KŁ_convergence}
    Let $f: \E\to \mathbb{R} \cup \{ + \infty \}$ be a proper lsc function. Consider a sequence $(x_n)_{n \in \mathbb{N}}$ satisfying the following conditions: 
    \begin{itemize}
        \item \textbf{H1:\,Sufficient decrease condition} $\forall n \in \mathbb{N}$, \vspace{-.1cm}
        \begin{equation}
            f(x_{n+1}) + a \norm{x_{n+1}-x_n}^2 \leq f(x_n) .
        \end{equation}
        \item \textbf{H2:\,Relative error condition}
        $\forall n \in \mathbb{N}$, there exists $\omega_{n+1} \in \partial f(x_{n+1})$ such that \vspace{-.1cm}
        \begin{equation}
            \norm{\omega_{n+1}} \leq b\norm{x_{n+1}-x_n} .
        \end{equation}
        \item \textbf{H3:\,Continuity condition}
        There exists a subsequence $(x_{k_i})_{i\in\mathbb{N}}$ and $\hat x \in \E$ such that \vspace{-.1cm}
        \begin{equation}
           x_{n_i} \to \hat x \ \ \text{and} \ \ f(x_{n_i}) \to f(\hat x) \ \ \text{as} \ \ i \to +\infty .
        \end{equation}
    \end{itemize}
    If $f$ verifies the Kurdyka–Łojasiewicz property at the cluster point $\hat x$ specified in H3, then $(x_n)$ converges to $\hat x$ as $k \to+\infty$ and $\hat x$ is a critical point of $f$. 
\end{lemme}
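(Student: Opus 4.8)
The plan is to follow the now-standard argument behind Lemma~\ref{lem:KŁ_convergence} (due to Attouch--Bolte--Svaiter), whose heart is to upgrade the square-summability of the increments coming from \textbf{H1} into plain summability $\sum_n \norm{x_{n+1}-x_n} < \infty$, using the desingularizing function $\varphi$ to ``straighten'' the decay of $f(x_n)$. Summability forces $(x_n)$ to be Cauchy, hence convergent, and since \textbf{H3} supplies a subsequence converging to $\hat x$, the whole sequence must converge to $\hat x$.

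First I would record the easy reductions. By \textbf{H1} the sequence $f(x_n)$ is non-increasing; since \textbf{H3} gives a subsequence with $f(x_{n_i}) \to f(\hat x)$, a monotone sequence with a convergent subsequence converges, so $f(x_n)$ decreases to $f(\hat x)$ and in particular $f(x_n) \ge f(\hat x)$ for all $n$. If $f(x_{n_0}) = f(\hat x)$ for some $n_0$, then \textbf{H1} forces $x_{n+1}=x_n$ for all $n\ge n_0$, the sequence is eventually constant, and its limit must be $\hat x$; so I may assume $r_n := f(x_n)-f(\hat x) > 0$ for every $n$.

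Next comes the core chain of inequalities. Writing $d_n = \norm{x_{n+1}-x_n}$, I would combine three ingredients at each index $n$ at which $x_n$ lies in the KŁ neighborhood $V$ and $r_n \in \,]0,\eta[$: the sufficient decrease $r_n - r_{n+1} \ge a\, d_n^2$ from \textbf{H1}; the concavity of the desingularizing function $\varphi$ (a standard property one includes in the KŁ framework), giving $\varphi(r_n)-\varphi(r_{n+1}) \ge \varphi'(r_n)(r_n-r_{n+1})$; and the KŁ inequality~\eqref{def:kl} together with \textbf{H2}, which yields $\varphi'(r_n) \ge 1/\text{dist}(0,\partial f(x_n)) \ge 1/(b\, d_{n-1})$. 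Setting $\Delta_n = \varphi(r_n)$, these combine into $d_n^2 \le \tfrac{b}{a}(\Delta_n - \Delta_{n+1})\, d_{n-1}$, and an arithmetic--geometric mean step ($\sqrt{\alpha\beta}\le \tfrac12(\alpha+\beta)$) turns this into $d_n \le \tfrac14 d_{n-1} + \tfrac{b}{a}(\Delta_n - \Delta_{n+1})$. Summing over $n$ makes the $\Delta$-terms telescope and absorbs the $\tfrac14 d_{n-1}$ terms into the left-hand side, giving a bound on $\sum_n d_n$ independent of the number of terms, hence $\sum_n d_n < \infty$.

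The step I expect to be the main obstacle is justifying that the iterates never leave $V$, so that the KŁ inequality is legitimately available at \emph{every} index used above; this cannot be assumed and must be proved by an induction intertwined with the length estimate itself. Concretely, I would fix $\rho>0$ with $B(\hat x,\rho)\subset V$, then use \textbf{H3}, the monotone convergence $f(x_n)\to f(\hat x)$, and the continuity of $\varphi$ to choose a starting index $n_0$ for which $x_{n_0}$ is well inside $B(\hat x,\rho)$ and the quantity $\norm{x_{n_0}-\hat x} + \tfrac14 d_{n_0-1} + \tfrac{b}{a}\varphi(r_{n_0})$ is smaller than $\rho$; a careful induction then shows simultaneously that $x_n\in B(\hat x,\rho)$ and that the partial sums $\sum_{k=n_0}^{n} d_k$ stay controlled, closing the loop. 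Finally, once $x_n\to \hat x$ is established, \textbf{H2} gives $\omega_n\in\partial f(x_n)$ with $\norm{\omega_n}\le b\, d_{n-1}\to 0$, while $f(x_n)\to f(\hat x)$; the closedness of the graph of the limiting subdifferential then yields $0\in\partial f(\hat x)$, so $\hat x$ is a critical point, completing the proof.
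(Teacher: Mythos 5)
The paper does not actually prove this lemma: it is stated as a citation of \cite[Theorem 2.9]{ABS13}, and the paragraph that follows only comments on the roles of H1--H3. So there is no in-paper proof to compare against; what you have written is a reconstruction of the Attouch--Bolte--Svaiter argument, and it is the correct one. Your chain of estimates is sound: $r_n-r_{n+1}\ge a\,d_n^2$ from H1, $\operatorname{dist}(0,\partial f(x_n))\le b\,d_{n-1}$ from H2, the KŁ inequality and concavity of $\varphi$ give $d_n^2\le \tfrac{b}{a}\bigl(\varphi(r_n)-\varphi(r_{n+1})\bigr)d_{n-1}$, and the AM--GM step plus telescoping yields $\sum_n d_n<\infty$, hence a Cauchy sequence whose limit must be the cluster point $\hat x$ from H3; closedness of the graph of the limiting subdifferential then gives $0\in\partial f(\hat x)$. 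You also correctly identify the genuine technical crux, namely the induction that keeps the iterates inside the KŁ neighborhood $V$ while the length estimate is being established, and the degenerate case $f(x_{n_0})=f(\hat x)$ is handled properly. One caveat worth making explicit: the concavity of $\varphi$, which your argument uses in an essential way, is \emph{not} part of the KŁ property as stated in Definition~\ref{defKŁ} of this document (which only asks that $\varphi$ be $C^1$, positive and vanish at $0$); it is, however, part of the definition used in \cite{ABS13}, so either the definition should be read as including concavity or this hypothesis should be added for your proof to go through as written.
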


The first condition H1 represents the descent of the objective function $f$ along the iterates. Note that if $f$ is lower-bounded, with H1, as $f(x_k)$ decreases, we have convergence of the function values $f(x_n)$, as well as, by telescopic sum, \\ $\sum \norm{x_{n+1}-x_n}^2 < + \infty$. If $f$ is also coercive, then $\{f(x) < f(x_0)\}$ is necessarily bounded and the iterates remain bounded. We can then extract from $(x_n)$ a subsequence converging towards $\hat x$.  With condition H2 and H3, we obtain $0 \in \partial f(\hat x)$ i.e. the limit point is a critical point of $f$. 

When dealing with a nonconvex function $f$, a sequence ($x_n$) satisfying the above conditions is not guaranteed to converge to a single point. It can be the case if $f$ is flat or highly oscillating around its critical points. 
The KŁ property is a general and flexible condition that prevents the above cases and ensures single-point convergence of any sequence satisfying H1, H2 and H3.

\subsubsection{Nonconvex Forward-Backward single-point convergence}\label{sec:ncvx_single_point}

We remind the Forward-Backward algorithm
\begin{equation}
    \label{eq:PGD_2}
    x_{n+1}\in\prox_{\gamma g} \left(x_n- \gamma \nabla f(x_n)\right).
\end{equation}
We can use the abstract nonconvex convergence result of Lemma~\ref{lem:KŁ_convergence}, with the Kurdyka–Łojasiewicz (KŁ) property, to prove that if $\mathcal{J} = f + g$ is KŁ and the sequence generated by the Forward-Backward algorithm is bounded, the sequence converges towards a critical point of the objective function.  

\begin{theoreme}[Single point convergence of nonconvex Forward-Backward.]
\label{thm:nonconvex_PGD}
Assume $f$ and $g$ proper, lsc, lower-bounded, with $f$ $L$-smooth and $\mathcal{J} = f + g$ KŁ. Then, for $\gamma < 1/L$, if the iterates $(x_n)$ given by the Forward-Backward algorithm~\eqref{eq:PGD_2} are bounded, then they converge towards a critical point of $\mathcal{J} = f + g$.
\end{theoreme}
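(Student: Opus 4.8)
The plan is to reduce everything to the abstract convergence result of Lemma~\ref{lem:KŁ_convergence}. Since $\mathcal{J} = f+g$ is assumed to be a KŁ function and the iterates $(x_n)$ are assumed bounded, it suffices to verify the three structural conditions H1 (sufficient decrease), H2 (relative error) and H3 (continuity) for the sequence generated by~\eqref{eq:PGD_2}; the conclusion — convergence of $(x_n)$ to a single critical point of $\mathcal{J}$ — then follows directly from that lemma applied at the cluster point furnished by H3. Throughout, I would use that an $L$-smooth $f$ is $C^1$ (hence continuous, with continuous gradient) and that $\mathcal{J}$ is bounded from below since both $f$ and $g$ are.

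For H1, the key observation is that the surrogate/descent argument behind Lemma~\ref{lemmasurrogate} never really used convexity. Writing $w_n = x_n - \gamma\nabla f(x_n)$ and using only that $x_{n+1}$ is a \emph{global} minimizer of $z\mapsto g(z) + \frac{1}{2\gamma}\norm{z-w_n}^2$, I would compare the value at $z = x_{n+1}$ with the value at $z = x_n$ to get
\begin{equation*}
g(x_{n+1}) + \frac{1}{2\gamma}\norm{x_{n+1}-x_n}^2 + \ps{\nabla f(x_n)}{x_{n+1}-x_n} \leqslant g(x_n),
\end{equation*}
and then insert the descent inequality~\eqref{eqGradLip} (Lemma~\ref{LemmeMajQuad}, valid without convexity) to eliminate the inner product, yielding
\begin{equation*}
\mathcal{J}(x_{n+1}) + \left(\frac{1}{2\gamma}-\frac{L}{2}\right)\norm{x_{n+1}-x_n}^2 \leqslant \mathcal{J}(x_n).
\end{equation*}
This is exactly H1 with $a = \frac{1}{2\gamma}-\frac{L}{2} > 0$, and is precisely the point where the hypothesis $\gamma < 1/L$ (rather than $\gamma < 2/L$) is required. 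For H2, I would write the first-order optimality condition of the prox subproblem in terms of the limiting subdifferential, namely $\frac{1}{\gamma}(x_n - x_{n+1}) - \nabla f(x_n) \in \partial g(x_{n+1})$, and apply the sum rule (Proposition~\ref{prop:subdif_sum}) to exhibit
\begin{equation*}
\omega_{n+1} := \nabla f(x_{n+1}) - \nabla f(x_n) + \tfrac{1}{\gamma}(x_n-x_{n+1}) \in \partial\mathcal{J}(x_{n+1}),
\end{equation*}
whose norm is at most $(L + 1/\gamma)\norm{x_{n+1}-x_n}$ by $L$-Lipschitzness of $\nabla f$; this is H2 with $b = L + 1/\gamma$.

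The main obstacle is H3, the continuity condition, because $g$ is only lower semi-continuous, so that $f(x_{n_i}) \to f(\hat x)$ is automatic but $g(x_{n_i}) \to g(\hat x)$ is not. Using boundedness I would extract a subsequence $x_{n_i} \to \hat x$; summing H1 and invoking the lower bound on $\mathcal{J}$ gives $\sum \norm{x_{n+1}-x_n}^2 < \infty$, hence $\norm{x_{n+1}-x_n}\to 0$ and in particular $x_{n_i-1}\to\hat x$. The device to upgrade lower semi-continuity to genuine convergence is to evaluate the defining prox inequality for $x_{n_i}$ at the test point $z = \hat x$ and pass to the $\limsup$: the quadratic terms converge (by continuity of $\nabla f$ and $\norm{x_{n+1}-x_n}\to 0$) and cancel, leaving $\limsup_i g(x_{n_i}) \leqslant g(\hat x)$, which combined with lower semi-continuity forces $g(x_{n_i})\to g(\hat x)$ and thus $\mathcal{J}(x_{n_i})\to\mathcal{J}(\hat x)$. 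With H1, H2, H3 established and $\mathcal{J}$ KŁ at $\hat x$, Lemma~\ref{lem:KŁ_convergence} yields that the whole sequence $(x_n)$ converges to $\hat x$ and that $0 \in \partial\mathcal{J}(\hat x)$, i.e. $\hat x$ is a critical point of $f+g$.
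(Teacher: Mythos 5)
Your proposal is correct and follows essentially the same route as the paper: verify H1 via the global optimality of the prox subproblem combined with the descent lemma, H2 via the prox optimality condition and the sum rule, and H3 via lower semi-continuity plus evaluating the prox inequality at the cluster point, then invoke Lemma~\ref{lem:KŁ_convergence}. If anything, your H2 is written more carefully than the paper's (which drops the gradient-difference term $\nabla f(x_{n+1})-\nabla f(x_n)$ from the exhibited subgradient), but the argument is the same.
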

\begin{proof}
    This result is a direct application of the general nonconvex convergence result from Lemma~\ref{lem:KŁ_convergence}. We need to verify its assumptions \textbf{H1}, \textbf{H2} and \textbf{H3}.
    \begin{itemize}[leftmargin=.5cm]
        \item \textbf{H1: Sufficient decrease condition}
    \end{itemize}
        
We first reformulate the Forward-Backward iterates as 
\begin{align} 
      x_{n+1} &\in \prox_{\gamma g} \circ \left( x_n - \gamma \nabla f(x_n) \right) \\
      \Leftrightarrow \ \ x_{n+1} &\in \argmin_x g(x) + \frac{1}{2\gamma}\norm{x -  \left( x_n - \gamma \nabla f(x_n) \right)}^2 \\
     \Leftrightarrow \ \  x_{n+1} &\in\argmin_x g(x) + \langle x-x_n, \nabla f(x_n) \rangle + \frac{1}{2\gamma}\norm{x-x_n}^2 .
      \label{eq:PGD_form2}
\end{align}  
Hence by evaluating the right-hand side at $x_{n+1}$ and $x_{n}$, and adding $f(x_n)$ on both sides, we get
\begin{equation} \label{eq:ineq_proof_NC_PGD}
f(x_n) + g(x_{n+1}) + \langle x_{n+1}-x_n, \nabla f(x_n) \rangle + \frac{1}{2 \gamma}\norm{x_{n+1}-x_n}^2 \leq f(x_n) + g(x_n) = \mathcal{J}(x_n) .
\end{equation} 
Then, using the descent lemma (Lemma~\ref{LemmeMajQuad}), $f$ being $L$-smooth, we obtain
\begin{align}
&\,f(x_n)+\langle x_{n+1}-x_n, \nabla f(x_n) \rangle + \frac{1}{2\gamma}\norm{x_{n+1}-x_n}^2 \nonumber\\
=   &\,f(x_n)+\langle x_{n+1}-x_n, \nabla f(x_n) \rangle + \frac{L}{2}\norm{x_{n+1}-x_n}^2 + \left(\frac{1}{2\gamma} - \frac{L}{2}\right)\norm{x_{n+1}-x_n}^2\nonumber \\ \label{eq:DL_applied_PGD}
\geq  &\,f(x_{n+1})  + \left(\frac{1}{2\gamma} - \frac{L}{2}\right)\norm{x_{n+1}-x_n}^2,
\end{align} 
leading to the sufficient decrease relation 
\begin{equation} \label{eq:sufficient_decrease_PGD}
\mathcal{J}(x_{n}) \geq \mathcal{J}(x_{n+1}) + \left(\frac{1}{2\gamma} - \frac{L}{2}\right)\norm{x_{n+1}-x_n}^2 .
\end{equation} 
\begin{itemize}[leftmargin=.5cm]
\item \textbf{H2: Relative error condition}\vspace{-.1cm}
\end{itemize}
By optimality of the proximal operator of $g$, wehave , for all $n \geq 0$
        \begin{equation} \label{eq:subdif_PGD}
                \frac{x_{n+1}-x_{n}}{\gamma} - \nabla f(x_{n+1}) \in \partial g(x_{n+1}),
        \end{equation}
        and thus $\omega_n = \frac{x_{n+1}-x_n}{\gamma}  \in \partial g(x_{n+1}) + \nabla f(x_{n+1}) = \partial \mathcal{J}(x_{n+1})$. 
        
        \begin{itemize}[leftmargin=.5cm]
         \item \textbf{H3: Continuity condition}\vspace{-.1cm}
         \end{itemize}
      The iterates $(x_n)$ are assumed bounded. Thus, there exists a subsequence $(x_{n_i})_{i \in \mathbb{N}}$ converging towards $\hat x \in \E$ as $i \to +\infty$. If we can show that for such a subsequence  $\lim_{i \to \infty} g(x_{n_i}) = g(\hat x)$, by continuity of $f$, we get $\lim_{i \to \infty} \mathcal{J}(x_{n_i}) = \mathcal{J}(\hat x)$ and the continuity condition is verified.
       Using the fact that $g$ is l.s.c we first have
        \begin{equation}
            \liminf_{i \to \infty} g(x_{n_i})  \geq g(x).
        \end{equation}
        On the other hand, by optimality of \eqref{eq:PGD_form2}, we obtain 
        \begin{equation} 
        \begin{split}
                    &g(x_{n_i+1}) + \langle x_{n_i+1}-x_{n_i}, \nabla f(x_{n_i}) \rangle + \frac{1}{2 \gamma}\norm{x_{n_i+1}-x_{n_i}}^2 
                    \\ \leq &\;g(x) + \langle x-x_{n_i}, \nabla f(x_{n_i}) \rangle + \frac{1}{2 \gamma}\norm{x-x_{n_i}}^2 .
        \end{split}
        \end{equation} 
        Using that $x_{n_i} \to x$ and $x_{n_i+1}-x_{n_i} \to 0$ when $i \to +\infty$, we get
        \begin{equation}
        \limsup_{i \to \infty} g(x_{n_i}) \leq g(x),
        \end{equation}
        and 
        \begin{equation}
        \lim_{i \to \infty} g(x_{n_i}) = g(x).
        \end{equation}

\end{proof}
\begin{remark} 
The boundedness of the iterates is verified as soon as the objective $\mathcal{J}$ is \textbf{coercive}. Indeed, it ensures that $\{ \mathcal{J}(x) \leq \mathcal{J}(x_0) \}$ is bounded and, since $\mathcal{J}(x_k)$ is non-increasing, the iterates remain bounded. 
\end{remark}
\begin{remark} 
When the function under the proximal operator (here~$g$) is assumed $\alpha$-weakly convex (i.e. $g(x)+\frac{\alpha}2||x||^2$ is convex), one can make the inequality \eqref{eq:ineq_proof_NC_PGD} sharper and finally relax the the condition on the stepsize in Theorem~\ref{thm:nonconvex_PGD} from \\ $\gamma < 1/L$ to $\gamma < 2/(L+\alpha)$. In particular, if $g$ is convex, the condition on the stepsize becomes $\gamma < 2 / L$. 
\end{remark}

\subsection{Other proximal splitting algorithms}\label{sec:nonconvDR}
We finally present some generalizations of Douglas-Rachford, ADMM and Primal-dual algorithms to the nonconvex setting.
\subsubsection{Nonconvex Douglas-Rachford and ADMM}

We remind the Douglas-Rachford (DR) algorithm introduced in Sections~\ref{sec:DR} 
\be \label{eq:ncvx_DR}
\forall n\in\N \left\{\begin{array}{ll}
y_n&\in\prox_{\gamma f}(x_n),\\
z_n&\in\prox_{\gamma g}(2y_n-x_n),\\
x_{n+1}&=x_n+\mu(z_n-y_n).
\end{array}\right.
\ee
As shown in Proposition~\ref{equiv_admmdr}, for $\mu=1$, the DR algorithm is equivalent to the ADMM algorithm introduced in Section~\ref{sec:ADMM}. In \cite{li2016douglas} and~\cite{themelis2020douglas}, convergence proofs of the DR algorithm are proposed for the minimization of the sum of two nonconvex functions $\mathcal{J}=f+g$, one of the two functions (here $f$) being $L$-smooth. The authors of \cite{themelis2020douglas} generalize the result from~\cite{li2016douglas} with a less restrictive stepsize condition. Both consider as Lyapunov function the \emph{Douglas-Rachford envelope}~\cite{themelis2020douglas} (or \emph{Douglas-Rachford merit function}~\cite{li2016douglas})
\begin{equation}
    \mathcal{J}^{DR}_{\gamma}(x,y,z) = f(y) + g(z) + \frac{1}{\gamma} \langle y-x, y-z \rangle + \frac{1}{2\gamma}\norm{y-z}^2.
 \label{eq:DRE}
\end{equation}

Similar to the previous result on Forward-Backward convergence, two convergence results can be derived in the nonconvex setting. First, without the KŁ hypothesis, one can show convergence of the Douglas-Rachford envelope $\mathcal{J}^{DR}_{\gamma}(x_n, y_n, z_n)$ along the iterates and convergence to $0$ of $\norm{x_{n+1}-x_n}$. Second, invoking the Kurdyka–Łojasiewicz (KŁ) property, we get convergence towards a critical point of the objective function. Both results are encompassed in the following theorem. 

\begin{theoreme}[\cite{li2016douglas, themelis2020douglas}] \label{thm:nonconvex_DRS}
    Assume that $f$ and $g$ are proper, lsc, lower-bounded and that $f$ is $L$-smooth and $\alpha$-weakly convex. Then, for a stepsize
   \begin{equation}\label{thm:stepsize}
   0 < \gamma < \min \left( \frac{2-\mu}{2\alpha},\frac{1}{L} \right),
   \end{equation}
   the sequence $(x_n,y_n,z_n)$ generated by the DR algorithm~\eqref{eq:ncvx_DR} verifies
    \begin{itemize}
        \item[(i)] $\mathcal{J}^{DR}_{\gamma}(x_{n-1},y_n,z_n)$ is non-increasing and converges \cite[Theorem 4.1]{themelis2020douglas}.
        \item[(ii)] $x_n - x_{n-1} = \frac{\mu}{2} (y_n - z_n)$ tends to $0$ with rate $\min_{n \leq N} \norm{y_n - z_n} = \mathcal{O}(\frac{1}{\sqrt{N}})$ \cite[Theorem 4.2]{themelis2020douglas}.
        \item[(iii)] $(y_n)$ and $(z_n)$ have the same cluster points, which are critical points of $\mathcal{J}$ \cite[Theorem 4.2]{themelis2020douglas}.
        \item[(iv)] If the sequence $(x_n,y_n,z_n)$ is bounded, and if $\mathcal{J}^{DR}_\gamma$ is KŁ, then the sequences $(y_n)$ and $(z_n)$ converge to the same critical point of $\mathcal{J}$ \cite[Theorem 2]{li2016douglas}.
    \end{itemize}
    \end{theoreme}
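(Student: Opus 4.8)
The plan is to recycle the Lyapunov strategy already used for Forward-Backward in Theorem~\ref{thm:nonconvex_PGD}, but now with the Douglas-Rachford envelope $\mathcal{J}^{DR}_\gamma$ of~\eqref{eq:DRE} playing the role of the energy, and to reduce the four claims to an application of the abstract convergence result of Lemma~\ref{lem:KŁ_convergence}. First I would write down the first-order conditions generated by the two proximal steps of~\eqref{eq:ncvx_DR}. Since $f$ is $L$-smooth and $\alpha$-weakly convex, for $\gamma<1/L\le 1/\alpha$ the objective of $\prox_{\gamma f}$ is strongly convex, so this prox is single valued and the step $y_n\in\prox_{\gamma f}(x_n)$ yields $\nabla f(y_n)=\frac1\gamma(x_n-y_n)$, while $z_n\in\prox_{\gamma g}(2y_n-x_n)$ gives $\frac1\gamma(2y_n-x_n-z_n)\in\partial g(z_n)$ in the sense of the limiting subdifferential of Section~\ref{sec:ncvx_subdif}. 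I would also record that the update $x_{n+1}=x_n+\mu(z_n-y_n)$ makes $\norm{x_{n+1}-x_n}$ proportional to $\norm{y_n-z_n}$, which is the quantity the envelope will ultimately control.

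The core, and the main obstacle, is the sufficient decrease (hypothesis H1), namely showing that the envelope evaluated along the triples indexed as in claim~(i) decreases by at least a positive multiple of $\norm{y_n-z_n}^2$. I would obtain it by combining three ingredients. First, the descent lemma (Lemma~\ref{LemmeMajQuad}) gives the upper bound $f(z_n)\le f(y_n)+\ps{\nabla f(y_n)}{z_n-y_n}+\frac L2\norm{z_n-y_n}^2$. Second, the $\alpha$-weak convexity of $f$, used in the form $f(y)\ge f(x)+\ps{\nabla f(x)}{y-x}-\frac\alpha2\norm{y-x}^2$ (Lemma~\ref{LemmeMinQuadSC} with the sign adapted to weak convexity), controls the cross terms between successive iterates. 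Third, the defining inequality of $\prox_{\gamma g}$ bounds $g(z_n)$ by the value of $g$ at the previous reflection point plus a quadratic penalty. Substituting the two optimality relations into the definition~\eqref{eq:DRE} of $\mathcal{J}^{DR}_\gamma$ reduces the telescoped difference of envelope values to a quadratic form in the increments; requiring its leading coefficient to be strictly negative is exactly what produces the stepsize window~\eqref{thm:stepsize}, $0<\gamma<\min\!\left(\frac{2-\mu}{2\alpha},\frac1L\right)$. Producing precisely this constant rather than a looser one is the delicate bookkeeping, and is where I would follow the computation of~\cite{themelis2020douglas}.

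With H1 in hand, the remaining hypotheses are comparatively routine. For the relative error condition H2 I would exhibit an element of $\partial\mathcal{J}^{DR}_\gamma$ (equivalently, a subgradient of $\mathcal{J}=f+g$ at the relevant iterate) whose norm is bounded by $b\,\norm{x_{n+1}-x_n}$; the Lipschitz continuity of $\nabla f$ together with the optimality relations above makes this a direct estimate. For the continuity condition H3 I would argue exactly as in Theorem~\ref{thm:nonconvex_PGD}: the assumed boundedness of $(x_n,y_n,z_n)$ lets me extract a convergent subsequence, lower semicontinuity of $g$ gives $\liminf g(z_{n_i})\ge g(\hat z)$, and the prox inequality for $z_{n}$ together with $\norm{y_n-z_n}\to0$ supplies the reverse inequality, so the envelope values converge to the envelope of the limit.

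Finally I would assemble the four claims. Telescoping the decrease of H1, which is bounded below because $f$ and $g$ are, yields~(i) the convergence of $\mathcal{J}^{DR}_\gamma(x_{n-1},y_n,z_n)$ and the summability $\sum\norm{y_n-z_n}^2<\infty$, whence the $\mathcal{O}(1/\sqrt N)$ rate on $\min_{n\le N}\norm{y_n-z_n}$ and, through the update relation, claim~(ii). Since $\norm{y_n-z_n}\to0$, the sequences $(y_n)$ and $(z_n)$ share their cluster points; passing the optimality relations to the limit using closedness of the graph of the limiting subdifferential and continuity of $\nabla f$ shows each such point is critical for $\mathcal{J}$, giving~(iii). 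For~(iv), assuming the iterates bounded and $\mathcal{J}^{DR}_\gamma$ a KŁ function, the verified conditions H1--H3 let me invoke Lemma~\ref{lem:KŁ_convergence}, which upgrades subsequential convergence to convergence of the whole sequence to a single point; because $y_n-z_n\to0$, both $(y_n)$ and $(z_n)$ then converge to the same critical point of $\mathcal{J}$.
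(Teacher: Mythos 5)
The paper does not prove this theorem at all: it states the result and cites \cite{li2016douglas} and \cite{themelis2020douglas} item by item, so there is no in-paper argument to compare yours against. Your outline does reproduce the architecture of the cited proofs correctly --- single-valuedness of $\prox_{\gamma f}$ from $\gamma<1/L\leq 1/\alpha$, the optimality relations $\nabla f(y_n)=\frac{1}{\gamma}(x_n-y_n)$ and $\frac{1}{\gamma}(2y_n-x_n-z_n)\in\partial g(z_n)$, the DR envelope as Lyapunov function, and the reduction of (iv) to Lemma~\ref{lem:KŁ_convergence} via H1--H3. The assembly of claims (i)--(iii) from summability of $\norm{y_n-z_n}^2$ and closedness of the limiting subdifferential is also the right mechanism.

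The genuine gap is that the one step carrying the theorem's quantitative content --- the sufficient decrease
$\mathcal{J}^{DR}_{\gamma}(x_{n},y_{n+1},z_{n+1})\leq \mathcal{J}^{DR}_{\gamma}(x_{n-1},y_{n},z_{n}) - c\,\norm{y_n-z_n}^2$ with $c>0$ exactly when $\gamma<\min\left(\frac{2-\mu}{2\alpha},\frac{1}{L}\right)$ --- is described but never derived; you explicitly defer the bookkeeping to \cite{themelis2020douglas}. Everything else in the statement (the stepsize window, the $\mathcal{O}(1/\sqrt{N})$ rate, the applicability of the KŁ lemma) hinges on that inequality and on its precise constants, so as written the proposal is a correct plan rather than a proof. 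Two smaller points to watch if you carry it out: the envelope in claim~(i) is evaluated at the shifted triple $(x_{n-1},y_n,z_n)$, and the index bookkeeping between the $x$-update and the two prox steps is exactly where sign errors in the cross term $\frac{1}{\gamma}\ps{y-x}{y-z}$ creep in; and for H2 you must bound a subgradient of the three-variable function $\mathcal{J}^{DR}_{\gamma}$ (not merely of $\mathcal{J}$), which requires differentiating the coupling terms in~\eqref{eq:DRE} and is slightly more than "a direct estimate."
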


    \begin{remark}
        \begin{itemize}
            \item[(i)] We can assume $f$ weakly convex without loss of generality. Indeed, as $f$ is assumed $L$-smooth, $f$ is at least $L$-weakly convex and we have necessarily $\alpha \leq L$. 
            \item[(ii)] Contrary to what we had with Forward-Backward, for DR, the decreasing function is not $\mathcal{J}$ itself but the Lyapunov function  $\mathcal{J}^{DR}$. We can verify (\cite[Theorem 4]{li2016douglas}) that if $f$ or $g$ are coercive, $\mathcal{J}^{DR}$ is coercive and the iterates remain bounded. 
        \end{itemize}
    \end{remark}
We refer to \cite{wang2019global} for more specific results on the convergence of the ADMM in the nonconvex setting.
\subsubsection{Nonconvex primal version of  the  Chambolle-Pock algorithm}

The Chambolle-Pock algorithm~\eqref{eqAlgoPD} can be written in a fully primal version using Moreau's identity (Corollary~\ref{Moreau}). We then get rid of the convex conjugate and the algorithm is well-defined for \textbf{nonconvex} $f$ and $g$ functions, and writes
\begin{equation}\label{eq:CP_primal}
\left\{\begin{array}{lll}
z_{n+1}&\in \prox_{\frac{1}{\sigma} f}( \frac{1}{\sigma}y_n+ K\bar{x}_n) \\
y_{n+1}&= y_n + \sigma (K\bar{x}_k-z_{n+1}) \\
x_{n+1}&\in \prox_{\tau g}(x_n-\tau K^*y_{n+1})\\
\bar{x}_{n+1}&=x_{n+1} + \beta(x_{n+1} - x_n) .
\end{array}\right.
\end{equation}
    
The authors of \cite{mollenhoff2015primal} study the convergence of this algorithm for weakly convex $f$ and strongly convex $g$, also assuming that the strong convexity of $g$ dominates the weak convexity of $f$ in order to ensure that the overall objective $F = f(K.) + g$ is convex. In the fully nonconvex general case, the authors of \cite{sun2018precompact} study the convergence of the algorithm~\eqref{eq:CP_primal} with $\beta=0$, for nonconvex $f$ and $g$ functions. Similar to the DRS case in Theorem~\ref{thm:nonconvex_DRS}, they give a sufficient decrease condition on a particular Lyapunov function and prove convergence of the iterates only if these iterates are assumed to remain bounded. However, the coercivity of the functions $f$ and $g$ does not imply the coercivity of their proposed Lyapunov function and the boundedness of the iterates. 
The  boundedness of the iterates has next been proven in~\cite[Theorem 12]{hurault2023convergent} provided $f$ is differentiable with Lipschitz gradient.

\subsubsection{Nonlinear Chambolle-Pock algorithm}
We finally mention the problem
$$\min_x f(K(x))+g(x),$$ 
for convex functions $f$ and $g$ associated to a nonlinear operator $K$. As shown in~\cite{clason2017primal}, it is possible to study the nonconvex/concave  problem
$$\min_x\max_p \langle K(x),p\rangle -f^*(p)+g(x)$$
and show the local convergence of a linearized Chambolle-Pock algorithm inspired from~\eqref{eqAlgoPD2}:
\be\label{eqAlgoPD2nl}
\left\{\begin{array}{lll}
x_{n+1}&=&\prox_{\gamma g}(x_n-\gamma (\nabla_{x_n} K)^*y_{n+1})\\
\bar{x}_{n+1}&=&2x_{n+1}-x_n\\
y_{n+1}&=&\prox_{\sigma f^*}(y_n+\sigma K\bar{x}_{n+1}).
\end{array}
\right.
\ee   
\section{Various imaging problems, various optimization algorithms}
\label{sec:examples}

For any problem, several algorithms may be available. Choosing one algorithm among all possible existing ones may be a matter of personal taste and we do not provide any recommendation. In this section, we rather give practical imaging problems and detail which algorithms can be used to solve these problems. In particular cases, this will require a preliminary reformulation such as dualization or dimension extension.

In all the following examples, we want to minimize the sum of functions  defined in $\R^n$ and we assume that a minimizer always exists.
We will in particular focus on inverse problems in which we have a noisy signal $y=Ax^*+\epsilon$ corresponding to the observation of a clean signal $x^*$ measured through a linear operator $A$ and perturbed by an additive Gaussian noise $\epsilon$. 

Such a formulation includes problems such as deblurring (with a smoothing operator $A$), super-resolution (with the combination of a smoothing operator with a subsampling one) or inpainting (with a masking operator $A$).

\paragraph{Lasso}
The LASSO in statistics is an estimator that is computed by solving a problem of the  form:
\begin{equation}\label{eqLASSO}
\min_x \frac{1}{2}\norm{Ax-y}_2^2+\lambda\norm{x}_1 .
\end{equation}
The $L_1$-norm regularization term promotes sparse solutions.
Such a class of problems can be efficiently treated with the Forward-Backward algorithm, with a proximal step on the $L_1$-norm term. When the operator $A$ is positive semi-definite, the problem is even $||A||$-strongly convex and  accelerated schemes such as FISTA provide optimal convergence rates.
\paragraph{Orthogonal wavelet regularization}
When dealing with images, it is possible to regularize the problem using an orthogonal wavelet decomposition
$T^*T=TT^*=\id$:\begin{equation*}
\min_x \frac{1}{2}\norm{Ax-y}_2^2+\lambda\norm{Tx}_1
\end{equation*}
Thanks to the orthogonality of $T$, a closed form expression of the proximal operator of  $||Tx||_1$ is available.
Hence, we can consider all the algorithms dedicated to non smooth optimization, using the proximity operator of $g(x)=\norm{Tx}_1$ and an explicit or an implicit descent on $f(x)=\frac{1}{2}\norm{Ax-y}_2^2$. 
\paragraph{Total Variation Denoising}
A standard method to regularize ill-posed inverse problems is to promote piece-wise constant images through the minimization of  the total variation (TV):
\begin{equation}\label{ProxTV}
\min_x \frac{1}{2}\norm{x-y}_2^2+\lambda\norm{\nabla x}_1,
\end{equation}
where $\nabla=[\nabla_x, \nabla_y]$ is the $2D$ spatial gradient operator. 
The optimization of~\eqref{ProxTV} requires the  computation of the proximal operator of $g(x)=\norm{\nabla x}_1$ for which no closed-form expression exists. 
\noindent This issue can be treated in different ways:
\bi[leftmargin=.5cm]
\item Using Douglas-Rachford or PPXA algorithms (see sections~\ref{sec:DR} and~\ref{sec:PPXA}) thanks to the dimension extension:
\begin{equation*}
\hspace*{-0.5cm}\min_{x,z} \underbrace{\frac{1}{2}\norm{x-y}_2^2+\lambda\norm{z}_1}_{f(x,z)}+\underbrace{\iota_D(x,z)}_{g(x,z)}
\text{ with }D=\{(x,z)\text{ s.t. }\nabla z=x\}.\end{equation*}
The projection on $D$ can be computed using conjugate gradient (with inner loops) or in closed-form if the gradient is periodic, as the proximal operator can be computed in an explicit way in the Fourier domain.
\item Using Primal Dual algorithms (Chambolle-Pock and Condat) thanks to the saddle-point formulation:
\begin{equation}\label{ProxTV2}
\min_x \max_p\frac{1}{2}\norm{x-y}_2^2+\langle \nabla x,p\rangle-\iota_{B}(p),
\end{equation}
where $\iota_{B}$ is the convex conjugate of $||.||_1$, with $$B=\{p \text{ such that }\norm{p}_{\infty}\leqslant \lambda\}.$$
With respect to problem~\eqref{eqPD1}, this corresponds to  $f(.)=||.||_1$,  $K=\nabla$ and $g(x)=\frac12||x-y||^2$.
\newpage
\item Using the Forward-Backward algorithm~\eqref{algo:FB} on  the dual problem associated to~\eqref{ProxTV}:
\begin{equation}\label{ProxTVD}
\min_p \underbrace{\frac{1}{2}\norm{y+\nabla^*p}_2^2}_{f(p)}+\underbrace{\iota_{\mathcal{B}}(p)}_{g(p)}
\end{equation}
where $\mathcal{B}=\{p \text{ such that }\norm{p}_{\infty}\leqslant \lambda\}$. 
Indeed, as shown in \cite{Chambolle2004}, if $p^*$ is a minimizer of~\eqref{ProxTVD} then $x^*=y+\nabla^*p^*$ is a solution of \eqref{ProxTV}.
\ei

\paragraph{Total variation regularization} 
We now consider that an operator is included in the data fidelity term in the inverse problem regularized by TV
$$\min_x \frac{1}{2}\norm{Ax-y}_2^2+\lambda\norm{\nabla x}_1.$$
Without any assumptions on the operator $A$, the proximal operator of the data fidelity term $\frac{1}{2}\norm{Ax-y}_2^2$ has no closed-form expression. To avoid approximate computation of this proximal operator with inner loops, one can consider primal-dual strategies:
\begin{itemize}[leftmargin=.5cm]
    \item The Primal-dual algorithm~\eqref{algo:condat} by Condat can  be directly applied, with an  explicit descent on $f$ and a dualization of the TV term. Looking at the problem~\eqref{pb:condat}, it corresponds to  $f(x)=\frac12||Ax-y||^2$, $g=0$,  $h_1(.)=||.||_1$ and $L_1=\nabla$.

 \item The  Chambolle-Pock algorithm can also be considered with the following reformulation including two dualizations:
\begin{equation*}
\min_x\max_{p,q}\langle p, Ax\rangle +\langle q,\nabla x\rangle -\frac{1}{2}\norm{y+p}_2^2-\iota_{\mathcal{B}}(q)
\end{equation*}
where $\mathcal{B}=\{q \text{ such that }\norm{q}_{\infty}\leqslant \lambda\}$, $\iota_{B}$ is the convex conjugate of $||.||_1$ and $\frac12||.+y||^2$ is the convex conjugate of $\frac12||.-y||^2$.
The saddle-point problem~\eqref{eqPD3} is recovered with  $y=[p,q]$ $f^*(y)=\frac{1}{2}\norm{y+p}_2^2+\iota_{\mathcal{B}}(q)$, $K=[A,\nabla]$ and $g=0$.
\end{itemize}

\paragraph{TV Denoising with non smooth data fidelity} Problems involving non Gaussian noise (such as salt and pepper, speckle or Laplacian) may require non smooth data attachment terms such as:
\begin{equation*}
\min_x\frac{1}{2}\norm{x-y}_1+\lambda\norm{\nabla x}_1
\end{equation*}
Primal-dual algorithms can directly be applied to this problem, whereas Douglas-Rachford and PPXA algorithms require a dimension extension:
\begin{equation*}
\min_{x,z}\underbrace{\norm{x-y}_1+\lambda\norm{z}_1}_{f}+\underbrace{\iota_D(x,z)}_{g}\text{ with }D=\{(x,z)\text{ such that }\nabla z=x\},
\end{equation*}
and inner loops to project onto the set $D$.

\paragraph{Image fusion} We consider here the Poisson Image Editing problem proposed in~\cite{Perez03a} that consists in inserting in a smooth way a  source image $s$ into a target image $y$ within an area delimited by a mask $\Omega$. This problem can be solved by minimizing the following function:
\begin{equation}
\min_x\frac{1}{2}\norm{\nabla s-\nabla x}_{\Omega}^2+\iota_{D}(x)\quad \text{where }D=\{z\text{ such that }z_{|\overline{\Omega}}=y_{|\overline{\Omega}}\}.
\end{equation}
This optimization problem may be solved using the Forward-Backward algorithm, which here corresponds to a project gradient algorithm: an  explicit gradient descent on $\frac{1}{2}\norm{\nabla s-\nabla x}_{\Omega}^2$ is followed by a projection on the set $D$.

\paragraph{Regularization with an image denoiser}

Given an image denoiser operator $D_\sigma$ built to remove Gaussian noise with standard deviation $\sigma$ from an image, one can consider   regularization functions such as
\begin{equation}
    g_{\sigma}(x) = \langle x, x-D_\sigma(x) \rangle\hspace{.5cm}\textrm{or} \hspace{.5cm} g_{\sigma}(x) = \frac12|| x-D_\sigma(x)||^2,
\end{equation}
as respectively proposed in the  Regularization by Denoising (RED)~\cite{romano2017little} or gradient step Plug-and-Play~\cite{hurault2021gradient} frameworks.
These regularization are then used to solve a variety of inverse problems by minimizing
\begin{equation} \label{eq:obj_RED}
\min_x f(x)+\lambda g_{\sigma}(x),
\end{equation}
where $f$ is a data-fidelity term.

The denoiser $D_\sigma$ is typically a neural network which has been trained to denoise. With such denoiser, the RED regularization function $g_{\sigma}$ is typically smooth (i.e. with Lipschitz gradient) but nonconvex. Depending on the regularity of the data-fidelity term, different algorithms can be used to solve this nonconvex objective. If $f$ is smooth, one can use a standard (nonconvex) gradient descent; if $f$ is non-smooth but has closed-form proximal operator, one can use the forward-backward algorithm with proximal step on $f$ and gradient step on $g_\sigma$. Finally, it can happen that $f$ is non-smooth and has no closed form proximity operator, for example when the observation noise has a Poisson distribution. In such situation, it can be useful to change the optimization algorithm to adapt to the particular geometry of $f$, using Bregman optimization algorithms ~\cite{bolte2018first,hurault2024convergent}.

\section*{Acknowledgements}

This work was funded in part by the Agence nationale de la recherche (ANR), Grant ANR-23-
CE40-0017 and by the France 2030 program, with the reference ANR-23-PEIA-0004.

\appendix

\section{Convergence of Chambolle-Pock algorithm}\label{sec:app:CP}
We here reproduce the convergence proof of the Chambolle-Pock algorithm introduced in \cite{ChambollePock}. We first recall that this algorithm targets a saddle-point of the following minmax problem: \be\label{eqPD4}
\min_{x\in\E}\max_{y\in F}\,\ps{K^{*}y}{x}-f^{*}(y)+g(x),
\ee
and that we rely on the function:
\be\label{eqdefh2}
h(x,y)=\ps{Kx}{y}+g(x)-f^*(y).
\ee
related to the 
{\it partial primal-dual gap}:
\be
\mathcal{G}_{B_1\times B_2}(x,y)=\max_{y'\in B_2}\,\ps{y'}{Kx}-f^{*}(y')+g(x)-\min_{x'\in B_1}\,\ps{y}{Kx'}+g(x')-f^*(y)\geq 0.
\ee
\begin{theoreme}[Chambolle-Pock Algorithm]
Let $f$ and $g$ be two convex, proper, lower semi-continuous functions, with $f$ defined from $F$ to $[-\infty,+\infty]$, $g$ defined from $\E$ to $[-\infty,+\infty]$ and $K$ a linear operator from $F$ to $\E$. We suppose that the problem \eqref{eqPD3} admits a solution $(\hat x,\hat y)$. Let us denote $L=\norm{K}=\sqrt{||K^*K||}$, and take $\sigma$ and $\tau$ such that $\tau \sigma L^2<1$. We choose $(x_0,y_0)\in\E\times F$ and set $\bar{x}_0=x_0$. We define the sequences $(x_n)_{n\in\N},\,(y_n)_{n\in\N},\,\text{and }(\bar{x}_n)_{n\in\N}$ by
\be\label{eqAlgoPD0}
\left\{\begin{array}{lll}
y_{n+1}&=&\prox_{\sigma f^*}(y_n+\sigma K\bar{x}_n)\\
x_{n+1}&=&\prox_{\tau g}(x_n-\tau K^*y_{n+1})\\
\bar{x}_{n+1}&=&2x_{n+1}-x_n\\
\end{array}
\right.
\ee   
Then
\begin{enumerate}
\item For all $n\in\N$,
\be
\frac{\norm{y_n-\hat y}^2}{2\sigma}+\frac{\norm{x_n-\hat x}^2}{2\tau}\leqslant 
(1-\tau\sigma L^2)^{-1}\left(\frac{\norm{y_0-\hat y}^2}{2\sigma}+\frac{\norm{x_0-\hat x}^2}{2\tau}\right)
\ee
\item If we set $x^N=(\sum_{n=1}^N x_n)/N$ and $y^N=(\sum_{n=1}^N y_n)/N$, then for each bounded set $(B_1\times B_2)\subset \E\times F$, the {\it partial primal-dual gap} satisfies
$$\mathcal{G}_{B_1\times B_2}(x^N,y^N)\leqslant \frac{D(B_1,B_2)}{N}$$
where
$$D(B_1,B_2)=\underset{(x,y)\in B_1\times B_2}{\sup}\frac{\norm{x-x_0}^2}{2\tau}+\frac{\norm{y-y_0}^2}{2\sigma}.$$
\item The sequence $(x_n,y_n)_{n\in\N}$ converges to a solution $(x^*,y^*)$ of \eqref{eqPD3}.
\end{enumerate}
\end{theoreme}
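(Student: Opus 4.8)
The plan is to derive one fundamental inequality controlling $h(x_{n+1},y)-h(x,y_{n+1})$ for an arbitrary test pair $(x,y)$, and then specialize it three times: to $(\hat x,\hat y)$ for the Fejér estimate of point~(1), to the averaged iterates for the ergodic gap of point~(2), and to a cluster point for point~(3). First I would unfold the two proximal steps via their characterization in Proposition~\ref{PropProxSD}. The update $y_{n+1}=\prox_{\sigma f^*}(y_n+\sigma K\bar x_n)$ gives, for all $y\in F$,
$$\ps{y-y_{n+1}}{y_n+\sigma K\bar x_n - y_{n+1}} + \sigma f^*(y_{n+1}) \leq \sigma f^*(y),$$
and $x_{n+1}=\prox_{\tau g}(x_n-\tau K^* y_{n+1})$ gives, for all $x\in\E$,
$$\ps{x-x_{n+1}}{x_n-\tau K^* y_{n+1} - x_{n+1}} + \tau g(x_{n+1}) \leq \tau g(x).$$
Dividing by $\sigma$ and $\tau$, adding the bilinear part of $h$, reorganizing the coupling contributions into $\ps{K(x_{n+1}-\bar x_n)}{y-y_{n+1}}$, and rewriting the quadratic remainders with the three-point identity $\ps{a-b}{b-c}=\tfrac12(\norm{a-c}^2-\norm{b-c}^2-\norm{a-b}^2)$, I obtain
$$h(x_{n+1},y)-h(x,y_{n+1}) \leq \frac{\norm{x_n-x}^2-\norm{x_{n+1}-x}^2}{2\tau} + \frac{\norm{y_n-y}^2-\norm{y_{n+1}-y}^2}{2\sigma} - \frac{\norm{x_{n+1}-x_n}^2}{2\tau} - \frac{\norm{y_{n+1}-y_n}^2}{2\sigma} + \ps{K(x_{n+1}-\bar x_n)}{y-y_{n+1}}.$$

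The main obstacle is the coupling term. Using $\bar x_n=2x_n-x_{n-1}$ I write $x_{n+1}-\bar x_n=(x_{n+1}-x_n)-(x_n-x_{n-1})$ and split it as $\ps{K(x_{n+1}-x_n)}{y-y_{n+1}} - \ps{K(x_n-x_{n-1})}{y-y_n} - \ps{K(x_n-x_{n-1})}{y_n-y_{n+1}}$. The first two pieces telescope upon summation, while the last is bounded by $L\,\norm{x_n-x_{n-1}}\,\norm{y_n-y_{n+1}}$ and, through Young's inequality with a weight calibrated to $\sqrt{\tau/\sigma}$, is absorbed into the negative quadratic terms $-\tfrac{1}{2\tau}\norm{x_{n+1}-x_n}^2$ and $-\tfrac{1}{2\sigma}\norm{y_{n+1}-y_n}^2$ precisely when $\tau\sigma L^2<1$. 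This calibrated bookkeeping, where the stepsize condition is exactly consumed, is the delicate step of the whole argument.

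For \textbf{point~(1)} I set $(x,y)=(\hat x,\hat y)$; the saddle inequalities $h(\hat x,y_{n+1})\le h(\hat x,\hat y)\le h(x_{n+1},\hat y)$ force the left-hand side to be nonnegative, so the estimate becomes a Fejér-type recursion for the weighted distance $\tfrac{1}{2\tau}\norm{x_n-\hat x}^2+\tfrac{1}{2\sigma}\norm{y_n-\hat y}^2$, and controlling the residual coupling term by $\tau\sigma L^2$ yields the announced $(1-\tau\sigma L^2)^{-1}$ prefactor. For \textbf{point~(2)} I sum the fundamental inequality from $n=1$ to $N$; telescoping leaves only the initial data and nonpositive quadratic residuals, giving
$$\sum_{n=1}^N \big(h(x_n,y)-h(x,y_n)\big) \leq \frac{\norm{x_0-x}^2}{2\tau}+\frac{\norm{y_0-y}^2}{2\sigma}.$$
Since $(x,y)\mapsto h(x,y)$ is convex in $x$ and concave in $y$, Jensen's inequality transfers the bound to the ergodic averages $x^N,y^N$, and taking the supremum over $y\in B_2$ together with the infimum over $x\in B_1$ turns the left-hand side into $N\,\mathcal{G}_{B_1\times B_2}(x^N,y^N)$, producing the rate $D(B_1,B_2)/N$.

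For \textbf{point~(3)} I first note that point~(1) makes the iterates bounded, and summing the negative quadratic terms forces $\norm{x_{n+1}-x_n}\to0$ and $\norm{y_{n+1}-y_n}\to0$. I would then extract a subsequence $(x_{n_k},y_{n_k})\to(x^*,y^*)$; passing to the limit in the subgradient inclusions coming from the two proximal steps, and using the closedness of the graphs of $\partial g$ and $\partial f^*$ (valid for proper l.s.c.\ convex functions), identifies $(x^*,y^*)$ as a saddle point of~\eqref{eqPD3}. Finally, re-running the point~(1) estimate with $(\hat x,\hat y)=(x^*,y^*)$ shows that the weighted distance to $(x^*,y^*)$ is nonincreasing; since one of its subsequences tends to zero, the full sequence $(x_n,y_n)$ converges to $(x^*,y^*)$.
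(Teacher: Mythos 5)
Your proposal is correct and follows essentially the same route as the paper's appendix proof: the prox characterizations yield the fundamental per-iteration inequality, the coupling term is split via $\bar x_n=2x_n-x_{n-1}$ and absorbed by Young's inequality under $\tau\sigma L^2<1$, and the three conclusions follow by specializing to the saddle point, averaging with Jensen, and a quasi-Fej\'er subsequence argument. The only small imprecision is in point (3): the weighted distance to $(x^*,y^*)$ is not literally nonincreasing termwise because of the telescoping coupling term, so one should (as the paper does) sum the inequality from $n=n_k$ to $N$ and use that the increments vanish, which is the standard fix and does not affect the validity of your argument.
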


\begin{proof}
  
In order to prove this theorem we will rewrite the iterations \eqref{eqAlgoPD0} on the following form:
\be\label{eqAlgoPD3}
\left\{\begin{array}{ccc}
y_{n+1}&=&\prox_{\sigma f^*}(y_n+\sigma K\bar{x})\\
x_{n+1}&=&\prox_{\tau g}(x_n-\tau K^*\bar{y})\\
\end{array}
\right.
\ee
The fundamental properties of the proximity operators ensure that
\begin{align*}
K\bar{x}+\frac{y_n-y_{n+1}}{\sigma}\in\partial f^{*}(y_{n+1})\\
-K^*\bar{y}+\frac{x_n-x_{n+1}}{\sigma}\in\partial g(x_{n+1})
\end{align*}
and thus, for all $(x,y)\in \E\times F$,
\begin{align*}
f^{*}(y)\geqslant f^{*}(y_{n+1})+\ps{\frac{y_n-y_{n+1}}{\sigma}}{y-y_{n+1}}+\ps{K\bar{x}}{y-y_{n+1}}\\
g(x)\geqslant g(x_{n+1})+\ps{\frac{x_n-x_{n+1}}{\tau}}{x-x_{n+1}}-\ps{K(x-x_{n+1})}{\bar{y}}
\end{align*}
By adding these two inequalities we obtain:
\be\label{eq12}
\begin{split}
h(x_{n+1},y)-h(x,y_{n+1})\hspace{8cm}\\
+\frac{\norm{y-y_{n+1}}^2}{2\sigma}+\frac{\norm{x-x_{n+1}}^2}{2\tau}+
\frac{\norm{y_n-y_{n+1}}^2}{2\sigma}+\frac{\norm{x_n-x_{n+1}}^2}{2\tau}\\
+\ps{K(x_{n+1}-\bar{x})}{y_{n+1}-y}-\ps{K(x_{n+1}-x)}{y_{n+1}-\bar{y}}\\
\leqslant \frac{\norm{y-y_n}^2}{2\sigma}+\frac{\norm{x-x_n}^2}{2\tau}
\end{split}
\ee
Chambolle and Pock propose to choose $\bar{y}=y_{n+1}$ and $\bar{x}=2x_n-x_{n-1}$. Thus the second last line of the previous inequality can be written
\be
\begin{split}
&\,\ps{K(x_{n+1}-\bar{x})}{y_{n+1}-y}-\ps{K(x_{n+1}-x)}{y_{n+1}-\bar{y}}\\
=&\,\ps{K((x_{n+1}-x_n)-(x_n-x_{n-1}))}{y_{n+1}-y}\\
=&\,\ps{K(x_{n+1}-x_n)}{y_{n+1}-y}-\ps{K(x_{n}-x_{n-1})}{y_{n}-y}\\
&\,-\ps{K(x_{n}-x_{n-1})}{y_{n+1}-y_n}\\
\geqslant&\, \ps{K(x_{n+1}-x_n)}{y_{n+1}-y}-\ps{K(x_{n}-x_{n-1})}{y_{n}-y}\\
&\,-L\norm{x_n-x_{n-1}}\norm{y_{n+1}-y_n}.
\end{split}
\ee

By using the fact that for all $\alpha>0$ we have $2ab\leqslant \alpha a^2+\frac{b^2}{\alpha}$, we obtain
\be
L\norm{x_n-x_{n-1}}\norm{y_{n+1}-y_n}\leqslant \frac{L\alpha \tau}{2\tau}\norm{x_n-x_{n-1}}^2+\frac{L\sigma}{2\alpha\sigma}
\norm{y_{n+1}-y_n}^2
\ee
By combining~\eqref{eq12} with the previous inequality taken with $\alpha=\sqrt{\frac{\sigma}{\tau}}$, we deduce that for all $(x,y)\in\E\times F$,
\be\label{eq14}
\begin{split}
h(x_{n+1},y)-h(x,y_{n+1})\hspace{8cm}\\
+\frac{\norm{y-y_{n+1}}^2}{2\sigma}+\frac{\norm{x-x_{n+1}}^2}{2\tau}+(1-\sqrt{\sigma \tau}L)\frac{\norm{y_n-y_{n+1}}^2}{2\sigma}\\
+\frac{\norm{x_n-x_{n+1}}^2}{2\tau}-\sqrt{\sigma \tau}L\frac{\norm{x_{n-1}-x_n}^2}{2\tau}\\
+\ps{K(x_{n+1}-x_n)}{y_{n+1}-y}-\ps{K(x_{n}-x_{n-1})}{y_n-y}\\
\leqslant \frac{\norm{y-y_n}^2}{2\sigma}+\frac{\norm{x-x_n}^2}{2\tau}.
\end{split}
\ee
Summing the previous inequalities from $n=0$ to $N-1$, and taking $x_{-1}=x_0$, we obtain:
\begin{equation*}
\begin{split}
\sum_{n=1}^N h(x_{n},y)-h(x,y_{n})\hspace{7cm}\\
+\frac{\norm{y-y_{N}}^2}{2\sigma}+\frac{\norm{x-x_{N}}^2}{2\tau}+(1-\sqrt{\sigma \tau}L)\sum_{n=1}^N\frac{\norm{y_n-y_{n-1}}^2}{2\sigma}\\
+(1-\sqrt{\sigma \tau}L)\sum_{n=1}^{N-1}\frac{\norm{x_n-x_{n-1}}^2}{2\tau}+\frac{\norm{x_{N}-x_{N-1}}^2}{2\tau}\\
\leqslant \frac{\norm{y-y_0}^2}{2\sigma}+\frac{\norm{x-x_0}^2}{2\tau}
+\ps{K(x_{N}-x_{N-1})}{y_N-y}.
\end{split}
\end{equation*}
We use again the upper bound $2ab\leqslant \alpha a^2+\frac{b^2}{\alpha}$, now  with $\alpha=1/(\sigma\tau)$:
$$\ps{K(x_N-x_{N-1})}{y_N-y}\leqslant 
\norm{x_N-x_{N-1}}^2/(2\tau)+(\sigma\tau L^2)\norm{y-y_N}^2/(2\sigma)
$$
and we obtain the following relation:
\be\label{eqPDSplit}
\begin{split}
\sum_{n=1}^N h(x_{n},y)-h(x,y_{n})\hspace{7cm}\\
+(1-\sigma \tau L^2)\frac{\norm{y-y_N}^2}{2\sigma}+\frac{\norm{x-x_N}^2}{2\tau}
+(1-\sqrt{\sigma \tau}L)\sum_{n=1}^N\frac{\norm{y_n-y_{n-1}}^2}{2\sigma}\\
+(1-\sqrt{\sigma \tau}L)\sum_{n=1}^N\frac{\norm{x_n-x_{n-1}}^2}{2\tau}\leqslant 
\frac{\norm{y-y_0}^2}{2\sigma}+\frac{\norm{x-x_0}^2}{2\tau}
\end{split}
\ee

We apply this inequality to a saddle-point $(x,y)=(\hat x,\hat y)$ of \eqref{eqPD4}. The first line of \eqref{eqPDSplit} is the sum of {\it partial primal-dual gaps} with $(x,y)$ solutions to \eqref{eqPD4}. As we have noted previously, this implies that all the terms of the sum from this first line are positive. We deduce the first result of the theorem from the fact that $\tau\sigma L^2<1$.\\
We now consider an arbitrary pair $(x,y)\in B_1\times B_2$. As
\be
\sum_{n=1}^N h(x_n,y)-h(x,y_n)\leqslant 
\frac{\norm{y-y_0}^2}{2\sigma}+\frac{\norm{x-x_0}^2}{2\tau}.
\ee
As $f^*$ and $g$ are convex, we deduce that 
$$
h(x^N,y)-h(x,y^N)\leqslant \frac{1}{N}\left(
\frac{\norm{y-y_0}^2}{2\sigma}+\frac{\norm{x-x_0}^2}{2\tau}\right)
$$
By taking the supremum on $(x,y)\in B_1\times B_2$ of the two sides of the inequality, we obtain the second point of the Lemma.\\ 

Using the first point of the theorem we know that the sequences $(x_n)_{n\in\N}$ and $(y_n)_{n\in\N}$ are bounded. As the dimension of $E$ is finite, from each of these sequences we can extract subsequences that converge to some limits $x^*$ et $y^*$. By taking the limit when $N$ goes to $+\infty$ in the previous inequality we obtain
$$
\ps{Kx^*}{y}-f^{*}(y)+g(x^*)-(\ps{Kx}{y^*}-f^{*}(y^*)+g(x))\leqslant 0
$$
for all $(x,y)\in B_1\times B_2$. By taking the supremum over the pairs $(x,y)\in B_1\times B_2$, we deduce that the {\it partial primal-dual gap} in the point $(x^*,y^*)$ is negative and thus zero, and thus $(x^*,y^*)$ is a solution to \eqref{eqPD4}.\\

As the sequences  $(x_n)_{n\in\N}$ and $(y_n)_{n\in\N}$ are bounded we can extract subsequences $(x_{n_k})_{nk\in\N}$ and $(y_{n_k})_{k\in\N}$ that converge to some points $x^{\infty}$ et $y^{\infty}$. As the series having the general term $\norm{x_n-x_{n-1}}^2$ and $\norm{y_n-y_{n-1}}^2$ are convergent we deduce that the sequences $(\norm{x_n-x_{n-1}}^2)_{n\in\N}$ and $(\norm{y_n-y_{n-1}}^2)_{n\in\N}$ go to zero when $n$ goes to $+\infty$. We deduce that the sequences $(x_{n_k+1})_{nk\in\N}$ et $(y_{n_k+1})_{k\in\N}$ also converge to $x^{\infty}$ et $y^{\infty}$ and thus $x^{\infty}$ et $y^{\infty}$ are fixed points of \eqref{eqAlgoPD0}. 
Fixed points of \eqref{eqAlgoPD0} satisfy  relations \eqref{eq:saddlePD}. Thus they are saddle-points,  that is to say solutions of~\eqref{eqPD4}.\\
By adding the inequalities \eqref{eq14} for $n=n_k$ to $N$ we obtain
\be
\begin{split}
\frac{\norm{y^*-y_N}^2}{2\sigma}+\frac{\norm{x^*-x_N}^2}{2\tau}
+(1-\sqrt{\sigma \tau}L)\sum_{n=n_k+1}^N\frac{\norm{y_n-y_{n-1}}^2}{2\sigma}\hspace{1.5cm}\\
-\frac{\norm{x_{n_k}-x_{n_k-1}}^2}{2\tau}+
(1-\sqrt{\sigma \tau}L)\sum_{n=n_k}^{N-1}\frac{\norm{x_n-x_{n-1}}^2}{2\tau}+
\frac{\norm{x_N-x_{N-1}}}{2\tau}\\
+\ps{K(x_N-x_{N-1})}{y_N-y^*}-\ps{K(x_{n_k}-x_{n_k-1})}{y_{n_k}-y^*}\\
\leqslant \frac{\norm{y^*-y_{n_k}}^2}{2\sigma}+\frac{\norm{x^*-x_{n_k}}^2}{2\tau}.
\end{split}
\ee
By using the fact that  $\underset{n\to+\infty}{\lim}\norm{x_{n}-x_{n-1}}=\underset{n\to+\infty}{\lim}\norm{y_{n}-y_{n-1}}=0$, we deduce that the sequence $(x_N,y_N)_{N\in\N}$ goes to $(x^*,y^*)$, which gives the third point of the theorem.
\end{proof}

\newpage
\bibliographystyle{abbrv}

\bibliographystyle{abbrv}

\end{document}